\pgfplotsset{compat=1.15}
\def\bQ{\ensuremath{\mathbb{Q}}}
\def\bR{\ensuremath{\mathbb{R}}}
\def\bZ{\ensuremath{\mathbb{Z}}}
\def\bC{\ensuremath{\mathbb{C}}}
\DeclareMathOperator{\Pic}{Pic}
\DeclareMathOperator{\ev}{ev}
\def\cD{\ensuremath{\mathcal{D}}}
\def\cE{\ensuremath{\mathcal{E}}}
\def\cF{\ensuremath{\mathcal{F}}}
\def\cG{\ensuremath{\mathcal{G}}}
\def\cH{\ensuremath{\mathcal{H}}}
\def\cM{\ensuremath{\mathcal{M}}}
\def\cO{\ensuremath{\mathcal{O}}}
\def\cT{\ensuremath{\mathcal{T}}}
\def\cW{\ensuremath{\mathcal{W}}}
\def\cV{\ensuremath{\mathcal{V}}}
\def\mm{\overline{\mathcal{M}}}
\DeclareMathOperator{\ch}{ch}
\newtheorem*{rep@theorem}{\rep@title}
\newcommand{\newreptheorem}[2]{%
\newenvironment{rep#1}[1]{%
 \def\rep@title{#2 \ref{##1}}%
 \begin{rep@theorem}}%
 {\end{rep@theorem}}}
\newtheorem{Thm}{Theorem}[section]
\newtheorem{Prop}[Thm]{Proposition}
\newtheorem{Lem}[Thm]{Lemma}
\newtheorem{Cor}[Thm]{Corollary}
\newtheorem{thm-int}{Theorem}
\theoremstyle{definition}
\newtheorem{Def-s}[Thm]{Definition}
\newtheorem{Def}[Thm]{Definition}
\newtheorem{Rem}[Thm]{Remark}
\newtheorem{Ex}[Thm]{Example}
\newtheorem{Claim}[Thm]{Claim}
\def\Ker{\mathop{\mathrm{Ker}}\nolimits}
\def\Coh{\mathop{\mathrm{Coh}}\nolimits}
\def\Hom{\mathop{\mathrm{Hom}}\nolimits}
\def\hom{\mathop{\mathrm{hom}}\nolimits}
\def\Gr{\mathop{\mathrm{Gr}}\nolimits}
\def\Ext{\mathop{\mathrm{Ext}}\nolimits}
\def\ext{\mathop{\mathrm{ext}}\nolimits}
\def\ev{\mathop{\mathrm{ev}}\nolimits}
\newcommand{\ignore}[1]{}
\newcommand{\An}[1]{\textcolor{blue}{#1}}
\begin{document}

\title{Hurwitz-Brill-Noether theory via $K3$ surfaces and stability conditions}

\begin{abstract}
We develop a novel approach to the Brill--Noether theory of curves endowed with a degree $k$ cover of $\mathbf{P}^1$ via Bridgeland stability conditions on elliptic $K3$ surfaces.

We first develop the Brill--Noether theory on elliptic $K3$ surfaces via the notion of Bridgeland stability type for objects in their derived category. As a main application, we show that curves on elliptic $K3$ surfaces serve as the first known examples of smooth $k$-gonal curves which are general from the viewpoint of Hurwitz--Brill--Noether theory. In particular, we provide new proofs of the main non-existence and existence results in Hurwitz--Brill--Noether theory. Finally, using degree-$k$ Halphen surfaces, we construct explicit examples of curves defined over number fields which are general from the perspective of Hurwitz--Brill--Noether theory.
\end{abstract}

%We introduce the notion of Bridgeland stability type for an object in the derived category of $X$, then construct and describe the geometry of moduli spaces of stable objects of given stability type. We then these moduli spaces  to the Brill-Noether of $X$.As main application, we find new proofs of the main non-existence and existence results in Hurwitz-Brill-Noether theory. Finally, we produce via degree $k$ Halphen surfaces examples of explicit curves defined over number fields and which are general from the point of view of Hurwitz-Brill-Noether theory.

\author[G. Farkas]{Gavril Farkas}
\address{Gavril Farkas: Institut f\"ur Mathematik,   Humboldt-Universit\"at zu Berlin \hfill \newline\texttt{}
\indent Unter den Linden 6,
10099 Berlin, Germany}
\email{{\tt farkas@math.hu-berlin.de}}

\author[S. Feyzbakhsh]{Soheyla Feyzbakhsh}
\address{Soheyla Feyzbakhsh: Department of Mathematics,   Imperial College \hfill \newline\texttt{}
\indent London SW72 AZ, United Kingdom}
\email{{\tt s.feyzbakhsh@imperial.ac.uk}}

\author[A. Rojas]{Andrés Rojas}
\address{Andrés Rojas: Departament de Matemàtiques i Informàtica, Universitat de Barcelona \hfill \newline\texttt{}
\indent Gran Via de les Corts Catalanes 585,
08007 Barcelona, Spain}
\email{{\tt andresrojas@ub.edu}}

\maketitle
\setcounter{tocdepth}{1}
\tableofcontents

\section{Introduction}
The Brill-Noether theorem, asserting that for a general curve $C$ of genus $g$ the dimension of the variety of linear systems 
\[
W^r_d(C):=\{L\in \mbox{Pic}^d(C): h^0(C,L)\geq r+1\}
\]
equals the Brill-Noether number $\rho(g,r,d)=g-(r+1)(g-d+r)$, is one of the cornerstones of the theory of algebraic curves. After having been formulated as what we would nowadays call, a plausibility statement in the later 19th century, its proof was completed by Griffiths-Harris, Gieseker and Eisenbud-Harris in the 1980s using degeneration and limit linear series, see \cite{griffiths-harris}, \cite{gieseker:petri}, \cite{EH1}, \cite{EH2} and \cite{fulton-lazarsfeld}. A few years later, Lazarsfeld \cite{lazarsfeld-BNP} found a completely different approach to this problem by showing that every smooth curve $C$ on a $K3$ surface $X$ with $\mbox{Pic}(X)\cong \mathbb Z\cdot C$  satisfies the Brill-Noether theorem. These two approaches, via degeneration and limit linear series respectively via $K3$ surfaces have been then brought together in \cite{arbarello-explicit}, where it has been showed that a suitably general curve $C$ on a Halphen surface (that is, a rational surface shown in \cite{arbarello-bruno-sernesi} to be a limit of polarized $K3$ surfaces) satisfies the Brill-Noether theorem.

\vskip 4pt

Hurwitz-Brill-Noether theory is a much more recent development and concerns the loci $W^r_d(C)$, when $[C, A]$ is a general point of the Hurwitz space $\cH_{g,k}$ classyfing pairs consisting  of a genus $g$ curve $C$ and a pencil $A\in W^1_k(C)$ inducing a degree $k$ cover $C\rightarrow \mathbf{P}^1$. Without loss of generality, one may assume that $d\leq g-1$. After initial work of Coppens and Martens \cite{coppens-martens}, Pflueger \cite{pflueger} showed via tropical methods that if 
\begin{equation}\label{eq:Hurwitz-BN}
\rho_k(g, r,d):=\mbox{max}_{\ell=0, \ldots,r} \bigl\{\rho(g, r-\ell, d)-\ell k\bigr\}
\end{equation}
then for such a curve we have $\mbox{dim } W^r_d(C)\leq \rho_k(g,r,d)$; then Jensen-Ranganathan \cite{jensen-ranganathan}, also via tropical geometry, established the existence part and showed that indeed $\mbox{dim } W^r_d(C)=\rho_k(g,r,d)$. 
These results have been greatly refined through the notion of \emph{splitting type} of a linear system $L\in W^r_d(C)$, first considered in \cite{larson:inv} and \cite{cook-powell-jensen}. In particular, the results of
 H.~Larson \cite{larson:inv} and E.~Larson-H.~Larson-Vogt \cite{larson-larson-vogt}, describe via degeneration all the irreducible components of $W^r_d(C)$ for a general point $[C, A]\in \cH_{g,k}$ and single out an  open subset where these components  are smooth. For recent related work on these matters we also refer to \cite{coppens-new}.

\vskip 4pt

The goal of this paper is to provide a radically different approach to Hurwitz-Brill-Noether theory via \emph{Bridgeland stability conditions}. We consider $K3$ surfaces $X$ with $\Pic(X)\cong \bZ\cdot H\oplus \bZ\cdot E$, where $H$ is an ample class such that $H^2=2g-2$, and $|E|$ is an elliptic pencil with $E\cdot H=k$. In this way, the curves $C\in |H|$ are endowed with a degree $k$ pencil $A:=\cO_C(E)\in W^1_k(C)$. Such degree $k$ elliptic $K3$ surfaces form an irreducible Noether-Lefschetz divisor in the moduli space $\mathcal{F}_g$ of polarized $K3$ surfaces of genus $g$. 

\vskip 3pt
As a key step toward our final goal in Hurwitz–Brill–Noether theory, we first develop the Brill–Noether theory for elliptic K3 surfaces, as outlined below.

\subsection{Bridgeland stability types} Let $\cD(X)$ denote the bounded derived category of coherent sheaves on $X$. Stability conditions on $\cD(X)$, introduced by Bridgeland in his fundamental papers \cite{bridgeland:stability-condition-on-triangulated-category} and \cite{bridgeland:K3-surfaces}, generalize to objects in $\cD(X)$  the traditional notion of Gieseker stability of sheaves. For $\epsilon \in \mathbb{Q}_{>0}$, we consider the polarization 
\[
H_{\epsilon} := E + \epsilon H \in \Pic(X)_{\mathbb{Q}}.
\]
on $X$. With respect to $H_{\epsilon}$, we study a ray\footnote{To carry out a wall-crossing analysis, we in fact work over a two-dimensional slice of stability conditions. In the Introduction, for simplicity, we focus here on the central ray of this slice, where the walls correspond to certain non-negative real numbers.} of stability conditions $\sigma_w$ for $w > 0$. One has a  slope function $\nu_w$ defined on a fixed abelian subcategory $\Coh^0(X) \subseteq \mathcal{D}(X)$ parametrizing $2$-term complexes of sheaves on $X$, see also Theorem \ref{thm-space}.  Accordingly, one has a concept of $\sigma_w$-stability, defined in terms of $\nu_w$-slopes, on the category $\Coh^0(X)$. 

For any fixed Mukai vector $v = (\ch_0, \ch_1, \ch_2 + \ch_0)$, there are finitely many walls $w_1 < \cdots < w_q$ such that the moduli space $\cM_{\sigma_w}(v)$ parametrizing $\sigma_w$-stable objects in $\Coh^0(X)$ of class $v$ remains unchanged in the regions between these walls.

In the case of $K3$ surfaces of Picard number $1$, as studied in \cite{bayer:brill-noether, li-bn}, if $\ch_1(v)$ is minimal, then there is no wall for the class $v$ along the ray $\sigma_w$ for $w>0$, that is, the only wall we encounter is the one induced by  $\mathcal{O}_X$ (at $w=0$), via the exact triangle
\[
\mathcal{O}_X \otimes \Hom(\mathcal{O}_X, F) \stackrel{\ev}\longrightarrow F \longrightarrow \mathcal{E} ,
\]
where $\mathcal{E}$ denotes the shifted dual of the  \emph{Lazarsfeld–Mukai bundle} \cite{lazarsfeld-BNP, aprodu:lazarsfeld}  in the case where $F$ is a line bundle on a curve on $X$. In our setting however, line bundles of the form $\mathcal{O}_X(eE)$ for $e > 0$  define additional walls, making the wall-crossing analysis significantly more challenging. Motivated by the classical notion of scrollar invariants \cite{coppens-martens} and by H. Larson's recent concept of splitting type \cite{larson:inv} for line bundles on curves, we introduce the notion of \emph{Bridgeland stability type} (Definition~\ref{Def.splitting}) for arbitrary objects in the derived category, which enables us to handle such walls systematically.

\vskip 4pt

Start with a $\sigma_{w_0}$-stable object $F \in \Coh^0(X)$ for some $w_0 > 0$. We say that $F$ has \emph{stability type}
\[
\overline{e} = \bigl((e_1, m_1), \ldots, (e_p, m_p)\bigr),
\]
where $e_1 > \cdots > e_p \geq 0$ and $m_i > 0$, if the following holds: As we move down from $w_0$ toward the origin, the object $F$ is first destabilized at a wall $w_1 < w_0$ by an object $\mathcal{O}_X(e_1E) \otimes \Hom(\cO_X(e_1E), F)\cong \cO_X(e_1E)^{\oplus m_1}$, and the corresponding destabilizing quotient $F_1$ is $\sigma_{w_1}$-stable. We repeat this process with $F_1$, finding the first wall below $w_1$ destabilising it, and so on. In total, we obtain a series of destabilising short exact sequences in $\Coh^0(X)$
\begin{align*}
 0 \longrightarrow \mathcal{O}_X(e_1E)^{\oplus m_1} \longrightarrow &F \longrightarrow F_1 \longrightarrow 0, \\
 0 \longrightarrow  \mathcal{O}_X(e_2E)^{\oplus m_2} \longrightarrow &F_1 \longrightarrow F_2 \longrightarrow 0, \\
&\vdots \\
  0 \longrightarrow \mathcal{O}_X(e_pE)^{\oplus m_p} \longrightarrow &F_{p-1} \longrightarrow F_p \longrightarrow 0,
\end{align*}
where each $F_i$ is stable along the wall that destabilizes $F_{i-1}$, and the final object $F_p$ satisfies $\Hom(\mathcal{O}_X, F_p) = 0$.
\vskip 4pt

For the remainder of this subsection, we fix the Mukai vector
$$
v := (r_0, H - a_0E, s_0 + r_0),
$$
with $r_0 \leq 0$, $a_0 \geq 0$, and $s_0 < 0$. We choose $\epsilon > 0$ sufficiently small, depending on this class $v$. Our first result states that, for suitable $w$, any $\sigma_w$-stable object of class $v$ has an associated stability type (see Theorem~\ref{thm.splitting}):

\begin{Thm}\label{thm.intro1} 
Fix $w_0 > 0$ such that $\nu_{w_0}(v) < 0$. Every $\sigma_{w_0}$-stable object $F \in \Coh^0(X)$ of Mukai vector $v$ admits a stability type
$\overline{e} = \bigl((e_1, m_1), \ldots, (e_p, m_p)\bigr)$ with $p \geq 0$. 

Moreover, the following inequalities hold:
\begin{enumerate}
    \item $\displaystyle \sum_{i=1}^p m_i \leq \hom(\mathcal{O}_X, F)  \leq \sum_{i=1}^p m_i(e_i + 1),$
    \item $\displaystyle m_1(e_1 + 1) \leq \hom(\mathcal{O}_X, F).$
\end{enumerate}
\end{Thm}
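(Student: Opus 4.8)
The plan splits into two parts: producing the stability type, where essentially all the content lies, and then extracting the three inequalities, which is a formal computation applying $\Hom_{\mathcal{D}(X)}(\mathcal{O}_X,-)$ to the defining short exact sequences.

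\textbf{Step 1: the wall-crossing (the main obstacle).} Given a $\sigma_{w_0}$-stable $F$ of class $v=(r_0,H-a_0E,s_0+r_0)$ with $\nu_{w_0}(v)<0$, I would decrease $w$ from $w_0$ and let $w_1<w_0$ be the largest wall at which $\sigma_w$-stability of $F$ fails. The crux is to identify the maximal destabilizing subobject $A\subseteq F$ along $w_1$: both $A$ and $F/A$ are $\sigma_{w_1}$-semistable of the same $\nu_{w_1}$-slope as $v$, and since $\epsilon$ is chosen small relative to $v$ the coefficient $1$ of $H$ in $\ch_1(v)$ is minimal, so a Bogomolov-type inequality for the $\sigma_{w_1}$-semistable factors forces one of $A$, $F/A$ to have $\ch_1\in\mathbb{Z}E$ and small discriminant; the classification of $\sigma_{w_1}$-stable objects on this slice with $\ch_1\in\mathbb{Z}E$ then pins $A$ down to a direct sum of copies of a single line bundle $\mathcal{O}_X(e_1E)$, $e_1\geq 0$ (using $\Ext^1(\mathcal{O}_X(e_1E),\mathcal{O}_X(e_1E))\cong H^1(\mathcal{O}_X)=0$ to exclude non-split extensions), with $A\cong\mathcal{O}_X(e_1E)\otimes\Hom(\mathcal{O}_X(e_1E),F)$, evaluation map injective in $\Coh^0(X)$, and $\sigma_{w_1}$-stable cokernel $F_1$. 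This is precisely Theorem~\ref{thm.splitting}, and I expect the identification of the destabilizers to be the hard part, since it is where the elliptic $K3$ geometry and the smallness of $\epsilon$ enter.

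\textbf{Step 2: iteration and termination.} Repeating Step~1 with $F_1$ on $(0,w_1)$ gives $\mathcal{O}_X(e_2E)^{\oplus m_2}\hookrightarrow F_1$ with $\sigma_{w_2}$-stable quotient $F_2$, and so on. Applying $\Hom(\mathcal{O}_X(e_iE),-)$ to the $i$-th sequence and using $\Hom(\mathcal{O}_X(e_iE),\mathcal{O}_X(e_iE)^{\oplus m_i})\cong\mathbb{C}^{m_i}\cong\Hom(\mathcal{O}_X(e_iE),F_{i-1})$ together with $\Ext^1(\mathcal{O}_X(e_iE),\mathcal{O}_X(e_iE))=0$ gives $\Hom(\mathcal{O}_X(e_iE),F_i)=0$; combined with the monotonicity in $e$ of the numerical walls of the $\mathcal{O}_X(eE)$ and the constraint $e\geq 0$ (needed for $\mathcal{O}_X(eE)\in\Coh^0(X)$), this forces $e_1>e_2>\cdots\geq 0$. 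Termination, and the bound on the number of steps, follows by applying instead $\Hom(\mathcal{O}_X,-)$ and using
\[
h^0\bigl(X,\mathcal{O}_X(e_iE)\bigr)=e_i+1,\qquad h^1\bigl(X,\mathcal{O}_X(e_iE)\bigr)=\max(e_i-1,0)
\]
(immediate from $\mathcal{O}_X(e_iE)=\pi^*\mathcal{O}_{\mathbf{P}^1}(e_i)$, where $\pi\colon X\to\mathbf{P}^1$ is the map given by $|E|$, plus Riemann--Roch on $X$): the $i$-th sequence yields $\hom(\mathcal{O}_X,F_{i-1})\geq m_i+\hom(\mathcal{O}_X,F_i)$, so $\hom(\mathcal{O}_X,F_i)$ strictly decreases, the process stops after at most $\hom(\mathcal{O}_X,F)$ steps, and — since $\mathcal{O}_X$ itself produces a wall whenever $\Hom(\mathcal{O}_X,-)\neq 0$ — it stops at $F_p$ with $\Hom(\mathcal{O}_X,F_p)=0$. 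This gives a stability type $\overline{e}=\{(e_1,m_1),\dots,(e_p,m_p)\}$ with $p\geq 0$.

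\textbf{Step 3: the inequalities.} Apply $\Hom_{\mathcal{D}(X)}(\mathcal{O}_X,-)$ — left exact since $\mathcal{O}_X,F_i\in\Coh^0(X)$ — to the sequences $0\to\mathcal{O}_X(e_iE)^{\oplus m_i}\to F_{i-1}\to F_i\to 0$ with $F_0=F$, and use $\Hom(\mathcal{O}_X,F_p)=0$. For~(2), left exactness alone gives an injection $\Hom(\mathcal{O}_X,\mathcal{O}_X(e_1E)^{\oplus m_1})\hookrightarrow\Hom(\mathcal{O}_X,F)$, hence $\hom(\mathcal{O}_X,F)\geq m_1 h^0(\mathcal{O}_X(e_1E))=m_1(e_1+1)$. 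For the upper bound in~(1), the long exact sequences give $\hom(\mathcal{O}_X,F_{i-1})\leq m_i(e_i+1)+\hom(\mathcal{O}_X,F_i)$, and telescoping to $\hom(\mathcal{O}_X,F_p)=0$ gives $\hom(\mathcal{O}_X,F)\leq\sum_{i}m_i(e_i+1)$. For the lower bound in~(1), the inequality $\hom(\mathcal{O}_X,F_{i-1})\geq m_i+\hom(\mathcal{O}_X,F_i)$ found in Step~2 (where $\Ext^1(\mathcal{O}_X,\mathcal{O}_X(e_iE)^{\oplus m_i})$ has dimension $m_i\max(e_i-1,0)\leq m_i(e_i+1)-m_i$) telescopes to $\hom(\mathcal{O}_X,F)\geq\sum_{i}m_i$. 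For $p=0$ all three statements are vacuous or forced, in accordance with $\Hom(\mathcal{O}_X,F)=0$.
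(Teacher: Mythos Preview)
Your overall architecture matches the paper: iterate wall-crossing to produce $F=F_0,F_1,\ldots,F_p$, then extract the inequalities by applying $\Hom(\mathcal{O}_X,-)$. Steps~2 and~3 are correct. For the lower bound in~(1) the paper argues slightly differently: rather than telescoping $\hom(\mathcal{O}_X,F_{i-1})\geq m_i+\hom(\mathcal{O}_X,F_i)$, it tracks $\ext^1(\mathcal{O}_X,F_i)$, using $\ext^2(\mathcal{O}_X,F)=\hom(F,\mathcal{O}_X)=0$ from $\sigma_{w_0}$-stability together with $h^2(\mathcal{O}_X(e_iE))=0$ for $e_i>0$, and lands on the sharper bound $2(m_1+\cdots+m_{p-1})+m_p\leq\hom(\mathcal{O}_X,F)$. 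Your route yields this too if you keep the $2m_i$ coming from $h^0-h^1=2$ when $e_i\geq 1$ rather than weakening to $m_i$; this refinement is actually used later in the paper (see the proof of Corollary~\ref{cor:dominance_restricted}).

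Step~1 has a genuine gap beyond the circular pointer to Theorem~\ref{thm.splitting}. Your sketch says the classification of $\sigma_{w_1}$-stable objects with $\ch_1\in\mathbb{Z}E$ ``pins $A$ down'' to $\mathcal{O}_X(e_1E)^{\oplus m_1}$, but on an elliptic $K3$ that classification also contains rank-$0$ objects: stable sheaves with $\ch=(0,qE,s)$ supported on fibres of $|E|$. Proposition~\ref{prop-main-wallcrossing} in the paper allows precisely this as case~$(b_2)$, with such a sheaf appearing as the \emph{quotient} rather than the subobject, and nothing in a Bogomolov bound excludes it. The paper rules it out via Lemma~\ref{lem-wall-crossing-negative}, which is where the hypotheses $r_0\leq 0$ and $\nu_{w_0}(v)<0$ are used: these force the wall through $(0,w_1)$ to have slope bounded below by roughly $s_0/(E\cdot H)$ as $\epsilon\to 0^+$, whereas the constant slope $\nu$ of a rank-$0$ factor with $\ch_1\in\mathbb{Z}E$ tends to $-\infty$, so for $\epsilon$ small no such wall exists. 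Once $(b_2)$ is excluded, the remaining case~$(b_1)$ together with Lemma~\ref{lem-eE} gives the subobject $\mathcal{O}_X(e_1E)^{\oplus m_1}$ with $e_1\geq 0$, and your Steps~2--3 complete the proof.
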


In other words, flexibility in the choice of the polarization $H_\epsilon$ paves the way to performing a systematic study of wall-crossing for $v$. Observe also that the inequalities in Theorem \ref{thm.intro1} indicate a strong link between the stability type and the Brill-Noether properties of an object in $\mbox{Coh}^0(X)$.

We expect that the above theorem represents a first step toward a broader framework for studying wall-crossing phenomena on elliptic $K3$ surfaces—a topic of independent interest that we postpone to future work. This perspective has potential applications to Brill–Noether theory on moduli spaces of sheaves (in the spirit of \cite{markman, leyenson, CNY-K3, CNY-abelian} for $K$-trivial surfaces), as well as to the study of higher rank vector bundles on $k$-gonal curves (see \cite{bakker-farkas, soheyla-li, soheyla:restriction} for examples of such study on curves on Picard rank one $K3$ surfaces).

\vskip 4pt

The following theorem describes in geometric terms the moduli spaces of Bridgeland stable objects with a fixed stability type $\overline{e}$ (see Theorem~\ref{Prop-scheme structure}):

\begin{Thm}\label{thm.intro2} 
Let $w > 0$ be generic, and let $\overline{e} = \bigl((e_1, m_1), \ldots, (e_p, m_p)\bigr)$ be any stability type. Then the subset
\[
\cM_{\sigma_w}(v, \overline{e}) := \bigl\{ F \in \cM_{\sigma_w}(v) : \text{$F$ has stability type } \overline{e} \bigr\}
\]
admits a natural scheme structure as an iterated Grassmann bundle inside $\cM_{\sigma_w}(v)$. If non-empty, the space $\cM_{\sigma_w}(v, \overline{e})$ is smooth, quasi-projective, and irreducible of dimension
\[
\Bigl( v - \sum_{i=1}^p m_i(1, e_iE, 1) \Bigr)^2 + 2 + \sum_{j=1}^p m_j \Bigl( \Bigl\langle v - \sum_{i=1}^j m_i(1, e_iE, 1), (1, e_jE, 1) \Bigr\rangle - m_j \Bigr),
\]
where $\langle -, - \rangle$ denotes the Mukai pairing.
\end{Thm}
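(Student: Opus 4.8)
The plan is to construct $\cM_{\sigma_w}(v,\overline e)$ by induction on $p$, peeling off one destabilizing layer at a time, with each layer contributing a Grassmann bundle. First I would set up the case $p=0$: by definition $\cM_{\sigma_w}(v,\overline e)$ with $\overline e=\emptyset$ is the locus of $F\in\cM_{\sigma_w}(v)$ with $\Hom(\cO_X,F)=0$, which is open in the moduli space $\cM_{\sigma_w}(v)$; smoothness and irreducibility of dimension $v^2+2$ then follow from the general theory of moduli of Bridgeland-stable objects on a $K3$ surface (the formula $v^2+2$ matching the leading term). For the inductive step, fix the first pair $(e_1,m_1)$ and consider the map that sends $F$ (of stability type $\overline e$) to its destabilizing quotient $F_1$ along the first wall $w_1$: by Theorem~\ref{thm.intro1} and the definition of stability type, $F_1$ is $\sigma_{w_1}$-stable of Mukai vector $v':=v-m_1(1,e_1E,1)$, with $\Hom(\cO_X(e_1E),F_1)$ having a controlled rank, and $F$ is recovered from $F_1$ as the kernel of a surjection $F\to F_1$ determined, via the evaluation triangle, by a choice of $m_1$-dimensional subspace of $\Hom(\cO_X(e_1E),F_1)^\vee$ (equivalently, an extension class). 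The fibre over $F_1$ is thus a Grassmannian $\Gr(m_1, \langle v', (1,e_1E,1)\rangle)$ — here I would use that $\hom(\cO_X(e_1E),F_1)=\langle v',(1,e_1E,1)\rangle$ once the relevant $\Ext^1$ and $\Hom$ in the other degrees vanish, which should come from $\sigma_{w_1}$-stability of both $F_1$ and $\cO_X(e_1E)$ and a slope comparison along the wall.

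Next I would globalize: on the moduli space $\cM_{\sigma_{w_1}}(v')$ (or rather on the stratum where $F_1$ has stability type $\{(e_2,m_2),\dots,(e_p,m_p)\}$, which by induction is smooth, quasi-projective and irreducible of the asserted dimension) the spaces $\Hom(\cO_X(e_1E),F_1)$ fit together into a vector bundle $\cV$ of rank $\langle v',(1,e_1E,1)\rangle$ — semicontinuity plus constancy of the rank over the stratum give local freeness — and $\cM_{\sigma_w}(v,\overline e)$ is the relative Grassmann bundle $\Gr(m_1,\cV)$ over it. A relative Grassmann bundle over a smooth irreducible quasi-projective base is again smooth, irreducible and quasi-projective, and its dimension is $\dim(\text{base})+m_1(\mathrm{rk}\,\cV-m_1)$. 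Substituting the inductive dimension formula for the base (with $v'$ in place of $v$) and $\mathrm{rk}\,\cV-m_1=\langle v',(1,e_1E,1)\rangle-m_1=\langle v-m_1(1,e_1E,1),(1,e_1E,1)\rangle-m_1$ produces exactly the claimed expression, since $(v')^2+2$ telescopes correctly against $v^2+2$ once one tracks the Mukai-pairing cross terms. I should also verify that this Grassmann-bundle construction really is the scheme structure $\cM_{\sigma_w}(v,\overline e)$ inherits as a subscheme of $\cM_{\sigma_w}(v)$, i.e. that the stratification is locally closed and the bundle map is an isomorphism onto the stratum — this is where one invokes openness of $\sigma_w$-stability in families and the fact that "having stability type $\overline e$" is cut out by the rank conditions $\hom(\cO_X(e_iE),F_{i-1})$ being as prescribed.

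The main obstacle I anticipate is the bookkeeping needed to prove that the relevant $\Hom$ and $\Ext$ groups governing the fibre are exactly $\Hom(\cO_X(e_1E),F_1)$ with all correction terms ($\Ext^1(\cO_X(e_1E),F_1)$, $\Hom$ into other shifts, $\Hom(\cO_X(e_1E)^{\oplus m_1},F)$ in degrees that would obstruct freeness) vanishing, uniformly along the wall and in families — this requires a careful slope/phase analysis in $\Coh^0(X)$ using the specific geometry of $H_\epsilon = E+\epsilon H$ for small $\epsilon$, and is the place where the hypotheses on $v=(r_0,H-a_0E,s_0+r_0)$ and the smallness of $\epsilon$ are really used. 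A secondary subtlety is ensuring the inductive base — the stratum of $\cM_{\sigma_{w_1}}(v')$ of stability type $\{(e_2,m_2),\dots\}$ — is non-empty exactly when the whole stratum $\cM_{\sigma_w}(v,\overline e)$ is, and that $w_1$ is generic for $v'$ so that Theorem~\ref{thm.intro1} applies to $F_1$; both should follow from the wall-structure finiteness recalled in the introduction, but need to be stated cleanly to make the induction self-contained.
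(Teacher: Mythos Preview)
Your inductive Grassmann-bundle strategy is exactly the paper's approach, but there is a genuine confusion in your identification of the fibre, and you underestimate where the real work lies.

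First, the fibre over $F_1$ is \emph{not} a Grassmannian in $\Hom(\cO_X(e_1E),F_1)^\vee$. Since $F_1$ and $\cO_X(e_1E)$ are both stable of the same slope along the wall $\cW_1$, one has $\Hom(\cO_X(e_1E),F_1)=0=\Hom(F_1,\cO_X(e_1E))$; your vector bundle $\cV$ as written would be zero. The extension $V^\vee\otimes\cO_X(e_1E)\to F\to F_1$ is classified by an $m_1$-dimensional subspace $V\subset\Ext^1(F_1,\cO_X(e_1E))$, and the fibre is $\Gr\bigl(m_1,\Ext^1(F_1,\cO_X(e_1E))\bigr)$. The rank formula $\langle v',(1,e_1E,1)\rangle$ is correct, but it computes $\ext^1(F_1,\cO_X(e_1E))=-\chi(F_1,\cO_X(e_1E))$, not $\hom$; the vanishing of $\Hom$ and $\Ext^2$ here is immediate from stability along the wall plus Serre duality, and does not need the smallness of $\epsilon$ you flag as the ``main obstacle''.

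Second, and more seriously, you gloss over the step that actually requires an argument: showing that the extension map $\varphi\colon\cG\to\cM_{\sigma_1^+}(v)$ is a locally closed immersion. Openness of stability and the determinantal description of rank conditions give you that the image is locally closed as a \emph{set}, but to transport the smooth scheme structure of the Grassmann bundle to $\cM_{\sigma_w}(v,\overline e)$ you need $\varphi$ to be an immersion. The paper proves this by checking that $d\varphi$ is injective at every point: one identifies $T_{[F]}\cM_{\sigma_1^+}(v)=\Ext^1(F,F)$, $T_{[F_1]}=\Ext^1(F_1,F_1)$, and $T_{[V]}\Gr=V^\vee\otimes\Ext^1(F,\cO_X(e_1E))$, and then uses the long exact sequences obtained by applying $\Hom(F,-)$ and $\Hom(-,F_1)$ to the extension triangle to exhibit $T_{(F_1,V)}\cG$ inside $\Ext^1(F,F)$. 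This diagram chase, not the vanishing bookkeeping, is the technical core.
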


\vskip 4pt

Theorem~\ref{thm.intro2} does not guarantee the non-emptiness of the moduli space $\cM_{\sigma}(v,\overline{e})$. However, it follows from the definition of Bridgeland stability type that the inequalities
\begin{equation}\label{nec.condition-intro1}
\Bigl(v - \sum_{i=1}^p m_i(1, e_iE, 1)\Bigr)^2 + 2 \geq 0, \qquad
\nu_w\bigl(\cO_X(e_1E)\bigr) < \nu_w(v) < 0
\end{equation}
are necessary conditions for $\cM_{\sigma}(v, \overline{e})$ to be non-empty.

These conditions also turn out to be sufficient for a distinguished class of stability types that we call \emph{balanced stability types}\footnote{The term “balanced” is used in alignment with H.~Larson’s terminology in~\cite{larson:inv}.}. These are stability types of the form
\[
\overline{e} = \bigl((e+1, m_1), (e, m_2)\bigr), \quad \text{with } \  e, m_1, m_2 \geq 0.
\]

\begin{Thm}\label{thm.intro3}
Let $\overline{e}$ be a balanced stability type as above, and assume $m_1 + m_2 \leq k + r_0$. If $w > 0$ satisfies the inequalities~\eqref{nec.condition-intro1}, then the moduli space 
$
\mathcal{M}_{\sigma_w}(v, \overline{e})
$
is non-empty.
\end{Thm}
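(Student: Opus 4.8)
The plan is to prove non-emptiness by explicit construction: given a balanced stability type $\overline{e} = \{(e_1, m_1), (e_2 = e_1-1, m_2)\}$ satisfying the numerical hypotheses, I will build an object $F \in \Coh^0(X)$ of Mukai vector $v$ that is $\sigma_w$-stable and has exactly the prescribed stability type. The skeleton of $F$ is dictated by the two destabilizing sequences in the definition: there should be a short exact sequence $0 \to \cO_X(e_1E)^{\oplus m_1} \to F \to F_1 \to 0$ with $F_1$ stable on the first wall, and then $0 \to \cO_X(e_2E)^{\oplus m_2} \to F_1 \to F_2 \to 0$ with $F_2$ satisfying $\Hom(\cO_X, F_2) = 0$ and stable on the second wall. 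So I would work backwards: first produce the ``core'' object $F_2$ of Mukai vector $v - m_1(1, e_1E, 1) - m_2(1, e_2E, 1)$, then glue on the copies of $\cO_X(e_2E)$ and $\cO_X(e_1E)$ in turn.

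The core step is to realize $F_2$. Its Mukai vector has square $\bigl(v - \sum m_i(1,e_iE,1)\bigr)^2 \geq -2$ by the first inequality in~\eqref{nec.condition-intro1}; in the case of square $\geq 0$ one expects a positive-dimensional moduli space of stable objects, and in the edge case of square $-2$ a unique spherical object. I would invoke the existence theory for stable objects on $K3$ surfaces (in the form available on the relevant slice of stability conditions, à la Bayer–Macrì / Toda) to get a $\sigma_w$-stable $F_2$ of that class with $\Hom(\cO_X, F_2)=0$; the vanishing should follow from a slope computation since $F_2$ lies below the wall where $\cO_X$ could map in, together with the sign conditions $r_0 \leq 0$, $a_0 \geq 0$, $s_0 < 0$. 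Next I extend by $\cO_X(e_2E)^{\oplus m_2}$: I need $\Ext^1(F_2, \cO_X(e_2E))$ to be large enough—of dimension $\geq m_2$—to choose an extension class giving a bundle $F_1$ for which $\Hom(\cO_X(e_2E), F_1)$ is exactly $m_2$-dimensional and the extension is the Harder–Narasimhan quotient at the second wall. This is a Riemann–Roch plus vanishing computation: $\ext^1(F_2, \cO_X(e_2E)) = -\langle v(F_2), (1,e_2E,1)\rangle + \hom + \hom^2$, and the hypothesis $m_1 + m_2 \leq k + r_0$ is exactly what should force the relevant Hom and Ext$^2$ terms to vanish and the Ext$^1$ to be big enough. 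Then repeat the extension argument with $\cO_X(e_1E)^{\oplus m_1}$ to obtain $F$.

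Finally I must check that the $F$ so constructed is genuinely $\sigma_w$-stable of class $v$ and has stability type precisely $\overline{e}$—not a coarser or finer one. Stability of $F$ at $w$ follows because the two extension steps only add $\cO_X(e_iE)$ summands, whose $\nu_w$-slopes are all $< \nu_w(v)$ by the second condition in~\eqref{nec.condition-intro1}, so no quotient of $F$ can have smaller slope than a potential destabilizer already controlled by Theorem~\ref{thm.intro1}; the balancedness ($e_2 = e_1-1$) guarantees no intermediate wall between $w_1$ and $w_2$ can appear. That the stability type is exactly $\overline{e}$ is then read off from the construction: $\Hom(\cO_X(e_1E),F)$ has dimension $m_1$ by the vanishing arranged above, the quotient is $F_1$, and inductively $\Hom(\cO_X, F_2)=0$ terminates the process. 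Equivalently, the constructed $F$ is a point of the iterated Grassmann bundle $\cM_{\sigma_w}(v,\overline{e})$ of Theorem~\ref{thm.intro2}, so exhibiting one such $F$ shows that bundle is non-empty.

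The main obstacle I anticipate is the middle step: controlling the dimensions of the $\Ext^1$ groups and the vanishing of the $\Hom$ and $\Ext^2$ groups along the chain of extensions, uniformly, so that each extension can be chosen ``general'' enough to have the exact splitting behavior demanded by the stability type. The condition $m_1 + m_2 \leq k + r_0$ is presumably tight precisely here, and the delicate point will be to verify that a sufficiently general extension class avoids all the loci where the $\Hom$ jumps—this is where the balancedness assumption, restricting to only two consecutive values $e_1, e_1-1$, does the essential work, since it reduces the combinatorics of possible sub-destabilizers to a manageable case.
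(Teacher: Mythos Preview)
Your backward-construction strategy—produce a general $F_2$ of class $v_2 = v - m_1(1,e_1E,1) - m_2(1,e_2E,1)$, then extend twice—is exactly the paper's approach (Theorem~\ref{thm-nonempty}). But your stability argument for $F$ has a genuine gap. The danger is not that the subobjects $\cO_X(e_iE)^{\oplus m_i}$ destabilize $F$ at $\sigma_w$; as you note, their $\nu_w$-slopes are smaller. The problem lies \emph{above} the wall $\cW(\cO_X(e_1E),v)$: the extension $F$, though stable just above that wall by Lemma~\ref{lem-stability above the wall}, could acquire a nonzero map $F \to \cO_X(uE)$ for some $u > e_1$, making $\cO_X(uE)$ a destabilizing \emph{quotient} along the higher wall $\cW(\cO_X(uE),v)$. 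Then $F$ fails to remain stable up to $\sigma_w$, and your sketch does not address this.

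The paper's key input is Proposition~\ref{prop.general}: if $Q$ satisfies mild vanishings and $m \leq k - \hom(\cO_X((q-1)E),Q)$, then a \emph{general} $V \in \Gr(m,\Ext^1(Q,\cO_X(qE)))$ yields an extension $Q_1$ with $\Hom(Q_1,\cO_X(uE))=0$ for all $u > q$. This is proved as an injectivity statement for the multiplication map $V \otimes \Hom(\cO_X(qE),\cO_X(uE)) \to \Ext^1(Q,\cO_X(uE))$, exploiting the explicit kernel and the pencil $|E|$. The hypothesis $m_1 + m_2 \leq k + r_0$ is what verifies the bound in that proposition: for the second extension it reads $m_1 \leq k - m_2$ directly, but for the first one needs $m_2 \leq k - \hom(\cO_X((e_2-1)E),F_2)$, which is automatic for $e_2 \geq 1$ yet requires, when $e_2 = 0$, a bound on $h^0(F_2(E))$. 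The paper obtains this via a separate wall-crossing analysis for the twisted object $F_2(E)$ (the case split at the end of the proof of Theorem~\ref{thm-nonempty})—this is the hard step, and it is where $m_1 + m_2 \leq k + r_0$ enters most subtly, not in bounding $\Ext^1$ from below as you suggest.
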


Theorem \ref{thm.intro3}, which is Theorem \ref{thm-nonempty} in the paper, is our main existence result in the Brill-Noether theory of elliptic $K3$ surfaces.

\vskip 4pt

\subsection{Hurwitz-Brill-Noether theory via Bridgeland stability types}
We now apply our results on the elliptic $K3$ surface $X$ to the study of the Brill–Noether loci $W^r_d(C)$ for pairs $[C, A] \in \mathcal{H}_{g,k}$, where $C \in |H|$ and $A := \mathcal{O}_C(E)$. Our first result in this direction shows that a general curve $C \in |H|$ satisfies the Hurwitz–Brill–Noether theorem. This can be viewed as the Hurwitz-theoretic analogue of Lazarsfeld’s classical result \cite{lazarsfeld-BNP} for Brill–Noether generality of curves on $K3$ surfaces with Picard number one.

\vspace{1mm}

\begin{Thm}\label{thm.intro4}  
For any \( d \leq g - 1 \) and \( r \geq 0 \), the following hold:  
\begin{enumerate}  
    \item\label{IntroThm4 a} If \( C \in |H| \) is a general curve, then \( \dim W^r_d(C) = \rho_k(g,r,d) \).  
    \item\label{IntroThm4 b} If \( \rho_k(g,r,d) < 0 \), then \( W^r_d(C) = \emptyset \) for every integral curve $C\in |H|$.

\end{enumerate}  
\end{Thm}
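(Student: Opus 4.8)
The plan is to translate both statements into questions about Bridgeland-stable objects on the elliptic $K3$ surface $X$ and then invoke Theorems~\ref{thm.intro1}, \ref{thm.intro2}, and \ref{thm.intro3}. Fix an integral curve $C\in|H|$ and a line bundle $L\in W^r_d(C)$, so $h^0(C,L)=r+1$ (we may assume $d\le g-1$ by Serre duality, replacing $L$ by $K_C\otimes L^\vee$ if needed). Pushing forward to $X$, the sheaf $\iota_*L$ has Mukai vector $(0,H,1+d-g)$; the Lazarsfeld--Mukai construction, i.e.\ the evaluation triangle $\mathcal{O}_X^{\oplus(r+1)}\to \iota_*L\to \mathcal{E}$, produces an object whose shifted dual $F$ lies in $\Coh^0(X)$ with Mukai vector $v=(r_0,H-a_0E,s_0+r_0)$ for suitable $r_0\le 0$, $a_0\ge 0$, $s_0<0$ and with $\hom(\mathcal{O}_X,F)=r+1$. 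One checks that for $\epsilon>0$ small and $w_0>0$ with $\nu_{w_0}(v)<0$, the object $F$ is $\sigma_{w_0}$-stable — this is where the Picard-rank-one input of Bayer/Li gets replaced by a direct argument that a general LM-type object on our elliptic $K3$ is stable in the relevant chamber. Then Theorem~\ref{thm.intro1} endows $F$ with a stability type $\overline e=\{(e_i,m_i)\}_{i=1}^p$ satisfying $\sum m_i\le r+1\le\sum m_i(e_i+1)$ and $m_1(e_1+1)\le r+1$.

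For part~(\ref{IntroThm4 b}): I would argue by contradiction. If $W^r_d(C)\ne\emptyset$ for some integral $C\in|H|$, pick $L$ as above and form $F$ with its stability type $\overline e$. The first necessary inequality in \eqref{nec.condition-intro1}, namely $\bigl(v-\sum m_i(1,e_iE,1)\bigr)^2+2\ge0$, combined with the numerical constraints on $\overline e$ coming from Theorem~\ref{thm.intro1}, forces a numerical inequality on $(g,r,d,k)$. The key computation is to expand $\bigl(v-\sum m_i(1,e_iE,1)\bigr)^2$ using the Mukai pairing and the relations $H^2=2g-2$, $H\cdot E=k$, $E^2=0$, and to recognize — after optimizing over the allowed $(e_i,m_i)$ under $\sum m_i\le r+1\le \sum m_i(e_i+1)$ — that the resulting bound is exactly equivalent to $\rho_k(g,r,d)\ge 0$, with the maximizing index $\ell$ in \eqref{eq:Hurwitz-BN} corresponding to a choice of how the $r+1$ global sections are distributed between the $\mathcal{O}_X(e_iE)$-summands and the residual object $F_p$. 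Thus $\rho_k(g,r,d)<0$ is incompatible with $W^r_d(C)\ne\emptyset$, giving (\ref{IntroThm4 b}). The cleanest route is probably to reduce first to balanced stability types $\overline e=\{(e+1,m_1),(e,m_2)\}$, since a general $L$ should have balanced splitting type à la H.~Larson, and the arithmetic is most transparent there.

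For part~(\ref{IntroThm4 a}): the upper bound $\dim W^r_d(C)\le\rho_k(g,r,d)$ for general $C\in|H|$ follows by the same dictionary: the locus of pairs $(C,L)$ with $L\in W^r_d(C)$ maps to $\coprod_{\overline e}\mathcal{M}_{\sigma_w}(v,\overline e)$ (a disjoint union over the finitely many stability types compatible with the numerical data), and by Theorem~\ref{thm.intro2} each such moduli space is irreducible of the stated dimension; subtracting $\dim|H|$ (the fibre dimension of $\mathcal M_{\sigma_w}(v,\overline e)\to|H|$ being the relevant correction, since recovering $C$ from $F$ costs the choice of curve) and maximizing the dimension formula over $\overline e$ should yield exactly $\rho_k(g,r,d)$. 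For the lower bound one needs \emph{existence}: take the balanced stability type $\overline e$ realizing the maximum $\ell$ in \eqref{eq:Hurwitz-BN}, verify the hypothesis $m_1+m_2\le k+r_0$ (this is where the condition $d\le g-1$, equivalently the Serre-duality normalization, is used to keep the multiplicities small enough), and apply Theorem~\ref{thm.intro3} to conclude $\mathcal M_{\sigma_w}(v,\overline e)\ne\emptyset$; a general member of this moduli space restricts to a line bundle on a general $C\in|H|$ lying in $W^r_d(C)$, and the dimension count shows $\dim W^r_d(C)\ge\rho_k(g,r,d)$ for such $C$. Combined with (\ref{IntroThm4 b}) applied to all relevant $r'\ge r$, one upgrades this to $\dim W^r_d(C)=\rho_k(g,r,d)$.

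The main obstacle I anticipate is twofold: first, proving that the Lazarsfeld--Mukai-type object $F$ attached to a general $L\in W^r_d(C)$ is genuinely $\sigma_{w_0}$-stable (not merely semistable) in the chamber we need — on higher Picard rank $K3$s this requires ruling out destabilization by the extra classes $\mathcal{O}_X(eE)$ at $w_0$ itself, i.e.\ checking we start above all the walls that Theorem~\ref{thm.intro1} then organizes; and second, the bookkeeping identifying the optimization of the dimension formula in Theorem~\ref{thm.intro2} (respectively the non-emptiness inequality in Theorem~\ref{thm.intro3}) with the piecewise-linear maximum defining $\rho_k(g,r,d)$ — matching the index $\ell$ to the stability type is conceptually clear but the algebra linking $\bigl(v-\sum m_i(1,e_iE,1)\bigr)^2$ to $\rho(g,r-\ell,d)-\ell k$ must be done carefully, keeping track of which inequalities in Theorem~\ref{thm.intro1} are tight at the optimum.
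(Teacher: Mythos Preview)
Your proposal has the right overall architecture but contains two genuine gaps, one conceptual and one in the existence argument.

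\textbf{First gap: the object to which Theorem~\ref{thm.intro1} is applied.} You pass from $\iota_*L$ to a Lazarsfeld--Mukai--type object $F$ via the evaluation triangle and its dual, and then worry about proving $\sigma_{w_0}$-stability of $F$. This detour is unnecessary and in fact misses the point. The paper applies Theorem~\ref{thm.intro1} directly to $\iota_*L$ itself, which already has Mukai vector $(0,H,1+d-g)$ of the required form $(r_0,H-a_0E,s_0+r_0)$ with $r_0=0$, $a_0=0$, $s_0=1+d-g<0$. Since $C$ is integral and $L$ is torsion-free of rank one, $\iota_*L$ is $H_\epsilon$-Gieseker stable, hence $\sigma_{w_0}$-stable for $w_0$ in the Gieseker chamber --- no further verification is needed. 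The stability type then records how $\iota_*L$ gets destabilized by the various $\cO_X(e_iE)$ as one moves down; your LM triangle is essentially the special case $e_1=0$, $m_1=r+1$, which is only one of many possible types. By pre-evaluating against $\cO_X$ you forfeit the entire wall-crossing analysis that distinguishes the elliptic case from Picard rank one.

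\textbf{Second gap: dominance of the support map.} For the lower bound in part~(\ref{IntroThm4 a}) you say ``a general member of this moduli space restricts to a line bundle on a general $C\in|H|$''. This is the hard part and does not follow from Theorem~\ref{thm.intro3}. Non-emptiness of $\cM_{\sigma_w}(v,\overline e)$ gives a sheaf supported on \emph{some} curve in $|H|$, but you need the support morphism $\cM_{H_\epsilon}(v,\overline e)\to|H|$ to be \emph{dominant}. The paper devotes an entire section to this: Proposition~\ref{reduction:l=0} reduces the parameter $\ell$ by one via reducible curves $C+J\in|H|$ with $C\in|H-E|$, $J\in|E|$, exhibiting an explicit point in the fibre over $[C+J]$ of the correct dimension; the base case $\ell=0$ (in the range needed for the maximal $\rho_k$) is handled separately in Corollaries~\ref{cor:dominance_restricted}--\ref{cor_dominance_maximal}, and in full generality requires a degeneration to Halphen surfaces (Theorem~\ref{thm:existence_limit}). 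Without this inductive dominance argument, only the upper bound and part~(\ref{IntroThm4 b}) would go through.
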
  

\vspace{1mm}

Note that for a \emph{general $k$-gonal curve} Theorem \ref{thm.intro4} has been established as a combination of results from \cite{pflueger, jensen-ranganathan, larson:inv}. However, not a single example of a smooth $k$-gonal curve verifying the Hurwitz-Brill-Noether Theorem has been known before and our Theorem \ref{thm.intro4} shows that general curves on an elliptic $K3$ surface enjoy this property. 

\vskip 3pt

To prove Theorem~\ref{thm.intro4}, for an integral curve $C \in |H|$, we regard (the pushforward to $X$ of) any line bundle $L \in \overline{\Pic}^d(C)$ as an $H_\epsilon$-Gieseker stable sheaf on $X$ with Mukai vector $v = (0, H, 1 + d - g)$. For $\epsilon>0$ small enough, Theorem \ref{thm.intro1} asserts that $L$ has an associated Bridgeland stability type. Then one deduces from Theorem \ref{thm.intro2} that in the Gieseker moduli space $\cM_{H_\epsilon}(v)$, the subset
\[
\bigl\{ 
F\in\cM_{H_\epsilon}(v): \;h^0(X, F)\geq r+1
\bigr\}
\]
has dimension at most $g+\rho_k(g,r,d)$, by considering all the stability types compatible with the condition $h^0(X, F)\geq r+1$. Furthermore, under the assumption $\rho_k(g,r,d)<0$, all the allowed stability types violate inequalities \eqref{nec.condition-intro1}. This proves part \eqref{IntroThm4 b} and the upper bound of part \eqref{IntroThm4 a} in Theorem \ref{thm.intro4}. On the other hand, we prove the existence statement $\dim W^r_d(C) \geq \rho_k(g, r, d)$ by addressing the more general problem of describing irreducible components of $W^r_d(C)$, as outlined below.

\vspace{3mm}

Fix a point $[C, A] \in \mathcal{H}_{g,k}$ and any integer $r \geq 0$. For any integer $\ell$ satisfying the inequalities $\max\{0, r+2-k\} \leq \ell \leq r$, define $e := \left\lfloor \frac{\ell}{r+1 - \ell} \right\rfloor$, 
and write
\begin{equation}\label{ell}
\ell = e(r+1-\ell) + m_1, \quad m_2 := r+1 - \ell - m_1,
\end{equation}
so that $0 \leq m_1 \leq r - \ell$. We then define the following locus (see also~\eqref{eq:defl}):
\begin{align*}
V^{r}_{d, \ell}(C, A) := \bigl\{ L \in \Pic^d(C) :\;&
h^0(C, L \otimes A^{-e-2}) = 0,\quad h^0(C, L \otimes A^{-e-1}) = m_1, \\
& h^0(C, L \otimes A^{-e}) = 2m_1 + m_2,\quad h^0(C, L) = r+1 \bigr\}\,\footnotemark. %
\end{align*}
\footnotetext{In the language of~\cite{larson:inv}, $V^r_{d,\ell}(C)$ is the locus of degree $d$ line bundles whose splitting type has non-negative part $(\underbrace{e,\dots,e}_{m_2}, \underbrace{e+1,\dots,e+1}_{m_1})$.}
The conditions $h^0\bigl(C, L \otimes A^{-e-1}\bigr) = m_1$ and $h^0\bigl(C, L \otimes A^{-e}\bigr) = 2m_1 + m_2$ imply, as explained in Proposition~\ref{prop:degloc_2}, that $h^0(C, L) \geq r+1$. This shows that $V^r_{d,\ell}(C,A)$ can be realized as a degeneracy locus over (an open subset of) the Brill–Noether variety $W_{d - (e+1)k}^{m_1 - 1}(C)$, associated to the vector bundle morphism globalizing the multiplication maps
\[
H^0(C, A) \otimes H^0\bigl(C, \omega_C \otimes A^{e+1} \otimes L^{\vee} \bigr)
\longrightarrow 
H^0\bigl(C, \omega_C \otimes A^{e+2} \otimes L^{\vee} \bigr).
\]
The expected dimension of this degeneracy locus is precisely $\rho(g, r - \ell, d) - \ell k$.

\vspace{3mm}

Returning to the setup of elliptic $K3$ surfaces, our next result describes the geometry of the varieties $V^r_{d,\ell}(C,A)$ for curves in elliptic $K3$ surfaces, which in particular completes the proof of Theorem~\ref{thm.intro4}.

\begin{Thm}\label{thm.intro5}  
Let $X$ be a general $K3$ surface with $\mathrm{Pic}(X)\cong \mathbb Z\cdot H\oplus \mathbb Z\cdot E$ as above. Fix $d \leq g-1$, $r \geq 0$ and $\ell \in \bZ$ such that
$\max\{0, r+2-k\} \leq \ell \leq r$. Then for a general curve $C\in|H|$, the variety $$V^r_{d,\ell}\bigl(C,\cO_C(E)\bigr)$$
is smooth of the expected dimension $\rho(g, r-\ell, d)-\ell k$. In particular, it is empty if $\rho(g, r-\ell, d)-\ell k<0$.
\end{Thm}  

In order to prove Theorem \ref{thm.intro5} we apply Theorem~\ref{thm.intro3} to the balanced stability type $\overline{e} = \bigl((e+1, m_1), (e, m_2)\bigr)$ as defined in~\eqref{ell}. It follows that the moduli space $\cM_{H_\epsilon}(v, \overline{e})$ of $H_\epsilon$-Gieseker stable sheaves with Mukai vector $v = (0, H, 1 + d - g)$ and stability type $\overline{e}$ is non-empty. By Theorem~\ref{thm.intro2}, this moduli space is smooth, quasi-projective, and irreducible of the expected dimension; notably, this dimension equals $g + \rho(g, r - \ell, d) - \ell k$. In the second part of the proof of Theorem \ref{thm.intro5}, we consider the natural support map
\[
\cM_{H_\epsilon}(v, \overline{e}) \longrightarrow |H|.
\]
%The variety $V^r_{d,\ell}\bigl(C, \cO_C(E)\bigr)$ arises as an open subset of the fiber over a point $C \in |H|$.%
In Section \ref{sec:dominance}, we prove that this map is dominant by exhibiting reducible curves of the form $C + J$ in the linear system $|H|$, where $C \in |H - E|$ and $J \in |E|$ are chosen generically, such that the fiber over the point $[C + J] \in |H|$ contains a component of dimension $\rho(g, r - \ell, d) - \ell k$. Note that our inductive step alters the parameters according to the rule
\[
(g - k,\, d - k,\, r - 1,\, \ell - 1) \mapsto (g,\, d,\, r,\, \ell),
\]
and that under this change  the Brill–Noether number in question remains constant, that is, 
$\rho(g - k, r - \ell, d - k) - (\ell - 1)k = \rho(g, r - \ell, d) - \ell k$.
Applying Theorems~\ref{thm.intro1} and~\ref{thm.intro2} to curves in the linear system $|H - E|$, we reduce to the base case $\ell = 0$, which must be treated separately; see Theorem~\ref{thm:existence_limit}. Dominance of the support map, when coupled with a close relation between balanced Bridgeland stability types and the non-negative part of the splitting type (see Subsection \ref{subsec:comparison}), is enough to derive Theorem \ref{thm.intro5}.

\vspace{2mm}

As a consequence of Theorem \ref{thm.intro5}, we have a precise description of the loci $V^r_{d, \ell}(C, A)$ for a general point of the Hurwitz space.

\vspace{1mm}
\begin{Cor}
Fix $d \leq g-1$, $r \geq 0$ and $\ell \in \bZ$ such that
$\max\{0, r+2-k\} \leq \ell \leq r$. Then for a general point $[C, A]\in \cH_{g,k}$, the variety $V^r_{d,\ell}\bigl(C, A\bigr)$
is smooth of the expected dimension $\rho(g, r-\ell, d)-\ell k$. 
\end{Cor}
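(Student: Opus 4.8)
The plan is to bootstrap the corollary from Theorem~\ref{thm.intro5} --- which already establishes the statement for the distinguished pairs $\bigl(C,\cO_C(E)\bigr)$, with $C$ a general curve on a general degree $k$ elliptic $K3$ surface --- using two soft inputs: the (classical) irreducibility of $\cH_{g,k}$, and the \emph{openness} in $\cH_{g,k}$ of the locus
\[
U:=\bigl\{[C,A]\in\cH_{g,k}\ :\ V^r_{d,\ell}(C,A)\text{ is smooth of dimension }\rho(g,r-\ell,d)-\ell k\bigr\},
\]
where, when $\rho(g,r-\ell,d)-\ell k<0$, the condition is read as $V^r_{d,\ell}(C,A)=\emptyset$. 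Granting openness, it suffices to produce one point of $U$; and Theorem~\ref{thm.intro5} provides one, because for general $X$ with $\Pic(X)\cong\bZ\cdot H\oplus\bZ\cdot E$ and general $C\in|H|$, the pencil $A:=\cO_C(E)$ is base-point-free and complete of degree $E\cdot H=k$ (a consequence of the Picard lattice, via the restriction sequence for $\cO_X(E)\to\cO_C(E)$), so $[C,A]$ is a genuine point of $\cH_{g,k}$ lying in $U$. Then $U$ is a non-empty open, hence dense, subset, which is the assertion.

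To obtain openness of $U$, I would globalize over $\cH_{g,k}$ the degeneracy-locus description of $V^r_{d,\ell}(C,A)$ recalled just before Theorem~\ref{thm.intro5}. After replacing $\cH_{g,k}$ by a suitable \'etale cover to rigidify (harmless), let $\cC\to\cH_{g,k}$ be the universal curve carrying the universal degree $k$ pencil, and let $\cP:=\Pic^d_{\cC/\cH_{g,k}}$, which is smooth and proper over $\cH_{g,k}$. The multiplication maps
\[
H^0(C,A)\otimes H^0\!\bigl(C,\omega_C\otimes A^{e+1}\otimes L^{\vee}\bigr)\longrightarrow H^0\!\bigl(C,\omega_C\otimes A^{e+2}\otimes L^{\vee}\bigr)
\]
globalize --- after a twist by a line bundle pulled back from $\cP$, irrelevant for degeneracy loci --- to a morphism of vector bundles (or, on all of $\cP$, to a complex of bundles) whose associated determinantal locus, cut by the remaining conditions on $h^0(L\otimes A^{-e-2})$, $h^0(L\otimes A^{-e-1})$ and $h^0(L\otimes A^{-e})$, yields a locally closed subscheme $\cV\subseteq\cP$ with $\cV_{[C,A]}=V^r_{d,\ell}(C,A)$ for every $[C,A]$. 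Let $\overline{\cV}\subseteq\cP$ be its closure (equivalently, the associated closed determinantal locus); it is proper over $\cH_{g,k}$, and the expected-dimension count given before Theorem~\ref{thm.intro5} shows each of its components has dimension $\dim\cH_{g,k}+\bigl(\rho(g,r-\ell,d)-\ell k\bigr)$. Upper semicontinuity of fibre dimension then makes $U':=\bigl\{[C,A]\ :\ \dim\overline{\cV}_{[C,A]}\leq\rho(g,r-\ell,d)-\ell k\bigr\}$ open, and on $U'$ the inclusion $\cV_{[C,A]}\subseteq\overline{\cV}_{[C,A]}$ together with the classical Kempf-type lower bound for the non-empty degeneracy locus $V^r_{d,\ell}(C,A)$ over an open subset of $W^{m_1-1}_{d-(e+1)k}(C)$ forces $\dim V^r_{d,\ell}(C,A)=\rho(g,r-\ell,d)-\ell k$ whenever it is non-empty. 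Finally, generic smoothness along the fibres of $\cV\to\cH_{g,k}$ confines the non-smooth locus to a proper closed subset; intersecting with $U'$ gives $U$. By Theorem~\ref{thm.intro5} the $K3$ point lies in $U$, completing the argument.

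The main obstacle is precisely this openness statement, and for a subtle reason: the defining conditions of $V^r_{d,\ell}(C,A)$ are only \emph{locally closed}, so a naive relative argument produces merely a \emph{constructible} good locus --- which would not upgrade ``one good point'' to ``a general point'' over the irreducible base. The cure is to pass to the proper closed model $\overline{\cV}$ and to control it precisely (via its determinantal structure inside the smooth scheme $\cP$, so that all of its components --- and hence its fibres over an open set --- have the expected dimension $\rho(g,r-\ell,d)-\ell k$); this is exactly the kind of relative analysis carried out in the refined Brill--Noether framework over Hurwitz spaces (cf.\ \cite{larson:inv, larson-larson-vogt}), and here the non-trivial geometric input at the single $K3$ point is supplied by Theorem~\ref{thm.intro5}. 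A secondary, routine point to verify is that $\bigl[C,\cO_C(E)\bigr]$ is a smooth point of $\cH_{g,k}$ with $\cO_C(E)$ a complete, base-point-free $\mathfrak g^1_k$, which follows from $\Pic(X)=\bZ H\oplus\bZ E$ together with $E^2=0$ and $E\cdot H=k$.
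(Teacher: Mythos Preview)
Your approach is correct and coincides with what the paper intends: the corollary is stated without proof as an immediate consequence of Theorem~\ref{thm.intro5}, relying implicitly on the irreducibility of $\cH_{g,k}$ together with the openness over $\cH_{g,k}$ of the locus where $V^r_{d,\ell}(C,A)$ is smooth of the expected dimension---precisely the semicontinuity argument you spell out (cf.\ also Proposition~\ref{prop:semicont} for the non-emptiness direction). The paper's own contribution lies entirely in establishing Theorem~\ref{thm.intro5} via the $K3$ machinery; the passage from there to a general point of the Hurwitz space is the standard step you describe.
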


It is an interesting question to understand in full generality the relation between the Bridgeland stability type and (the non-negative part of) the splitting type of a line bundle on an integral curve $C \in |H|$. There are indications that these two sets invariants might not always coincide. A more conservative  guess would be that for any stability type $\overline{e} = \bigl((e_1, m_1), \ldots, (e_p, m_p)\bigr)$ and any integral curve $C\in|H|$, the two loci of sheaves $L \in \overline{\mathrm{Pic}}^d(C)$ such that $i_*L$ has stability type $\overline{e}$, respectively those such the non-negative part of the splitting type of $L$ equals
$(\underbrace{e_p,\ldots,e_p}_{m_p},\ldots,\underbrace{e_1,\ldots,e_1}_{m_1})$,
to have the same closure in $\overline{\mbox{Pic}}^d(C)$. We prove this property for balanced stability types in Proposition \ref{comparisontypes}.

\vspace{3mm}

\subsection{Hurwitz-Brill-Noether theory over number fields}
It is now natural to ask whether one can write down a smooth Hurwitz-Brill-Noether general $k$-gonal curve of genus $g$ defined over a number field, or even over $\mathbb Q$. In Section \ref{sec:numberfield} we explain how the results of this paper can be used to solve this problem.

\vskip 3pt

A Halphen surface of degree $k\geq 2$ is obtained by blowing-up $\mathbf{P}^2$ at points $p_1, \ldots, p_9$ lying on a unique plane cubic $J$, such that  $p_1+\cdots+p_9\in J$ is a torsion point of order $k$  (with respect to the group law of $J$). Set $X:=\mbox{Bl}_{\{p_1, \ldots, p_9\}}(\mathbf{P^2})$ and let $h$ be the hyperplane class on $X$ and $E_1, \ldots, E_9$ the exceptional divisors. Following \cite{arbarello-explicit}, 
$$\Lambda_g:=\bigl|3gh-gE_1-\cdots-gE_8-(g-1)E_9\bigr|$$ is then a linear system of curves of genus $g$ on $X$. Note that $h^0\bigl(C, \cO_C(kJ)\bigr)\geq 2$, that is, every curve $C\in \Lambda_g$ is endowed with a degree $k$ pencil. The following result combines Theorems \ref{thm:duval} and \ref{prop:hbn_rationals}:

\begin{Thm}\label{thm.intro7}
A pair $\bigl[C, \cO_C(kJ)\bigr]\in \cH_{g,k}$, where $C\in \Lambda_g$ is a general curve, verifies the Hurwitz-Brill-Noether theorem, that is, $$\mathrm{dim }\  W^r_d(C)=\rho_k(g,r,d).$$
In particular, for every prime $k$, there exist Hurwitz-Brill-Noether general curves of genus $g$ defined over a number field $K$ with $[K:\mathbb Q]\leq k^2-1$.
\end{Thm}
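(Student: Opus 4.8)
\emph{Overview.} Theorem~\ref{thm.intro7} combines two statements: that a general curve $C\in\Lambda_g$ on a degree $k$ Halphen surface $X$ satisfies $\dim W^r_d(C)=\rho_k(g,r,d)$ for all $d\le g-1$, $r\ge0$ (Theorem~\ref{thm:duval}), and that such a curve can be written down over a number field $K$ with $[K:\bQ]\le k^2-1$ (Theorem~\ref{prop:hbn_rationals}). The plan is to deduce the first part from the elliptic-$K3$ theory developed above by means of the degeneration of \cite{arbarello-bruno-sernesi,arbarello-explicit}, and to prove the second by a field-of-definition analysis of the explicit Halphen surface.

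\emph{Part 1: from elliptic $K3$ surfaces to Halphen surfaces.} By \cite{arbarello-bruno-sernesi} (see also \cite{arbarello-explicit}), a degree $k$ Halphen surface $X$ is a limit of polarised elliptic $K3$ surfaces of degree $k$ in such a way that curves in $\Lambda_g$ together with the Halphen pencil $\cO_C(kJ)$ arise as flat limits of general smooth curves $C_t\in|H_t|$ together with $\cO_{C_t}(E_t)$, on $K3$ surfaces $X_t$ with $\Pic(X_t)\cong\bZ H_t\oplus\bZ E_t$ of the kind studied above; concretely one has a family over a disc $\Delta$ carrying a relative linear system with general fibre $|H_t|$ and containing $\Lambda_g$ in its special fibre, over which the Gieseker moduli spaces of the relevant Mukai vectors are proper. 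For the existence inequality $\dim W^r_d(C)\ge\rho_k(g,r,d)$ I would, for each $\ell$ with $\max\{0,r+2-k\}\le\ell\le r$ and the associated balanced stability type $\overline e=\{(e+1,m_1),(e,m_2)\}$ of~\eqref{ell}, apply Theorems~\ref{thm.intro2} and~\ref{thm.intro3} on the $K3$ fibres to obtain nonempty irreducible moduli spaces $\cM_{H_\epsilon}(v,\overline e)$ of Mukai vector $v=(0,H_t,1+d-g)$, of dimension $g+\rho(g,r-\ell,d)-\ell k$ and dominating $|H_t|$ (as in the proof of Theorem~\ref{thm.intro5}); taking the flat limit of these inside the proper family of Gieseker moduli spaces, the central fibre of the closure dominates $\Lambda_g$ with general fibre of dimension at least $\rho(g,r-\ell,d)-\ell k$, and by the comparison between balanced stability types and non-negative parts of splitting types (Proposition~\ref{comparisontypes}) this fibre is carried by $V^r_{d,\ell}(C,\cO_C(kJ))$. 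Maximising over $\ell$ yields $\dim W^r_d(C)\ge\rho_k(g,r,d)$ for general $C\in\Lambda_g$.

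\emph{The main obstacle: the upper bound.} The hard direction is $\dim W^r_d(C)\le\rho_k(g,r,d)$, together with emptiness when $\rho_k<0$; semicontinuity of fibre dimension goes the wrong way, and since the $k$-gonal curves coming from $K3$ or Halphen surfaces are not dense in $\cH_{g,k}$ one cannot simply invoke Pflueger's bound. I would instead argue directly on $X$, imitating the Bridgeland-theoretic proof of Theorems~\ref{thm.intro4}--\ref{thm.intro5}: stratify $W^r_d(C)=\bigcup_\ell\overline{V^r_{d,\ell}(C,\cO_C(kJ))}$ and bound $\dim V^r_{d,\ell}(C,\cO_C(kJ))\le\rho(g,r-\ell,d)-\ell k$ by realising $V^r_{d,\ell}$ as the degeneracy locus of the multiplication maps $H^0(C,A)\otimes H^0(C,\omega_C\otimes A^{e+1}\otimes L^\vee)\to H^0(C,\omega_C\otimes A^{e+2}\otimes L^\vee)$, and noting that, by adjunction on $X$, $\omega_C=\cO_C(C-J)$ and $A=\cO_C(kJ)$ are restrictions of line bundles on $X$, so the relevant cohomology is governed by sheaves on $X$. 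The new feature compared with the $K3$ case is that the anticanonical class $-K_X=J$ is a nonzero effective curve of arithmetic genus one rather than $\cO_X$; I expect to compensate by controlling restrictions to $J$ and by exploiting the identity $\Lambda_g=\Lambda_{g-1}+J$, which supplies reducible members $C'+J$ (with $C'\in\Lambda_{g-1}$) and an induction on $g$ whose base case is a low-genus computation. Equivalently, one shows that the closed ``bad locus'' $\{\,C:\dim W^r_d(C)>\rho_k(g,r,d)\,\}$ in the irreducible total family of curves cannot contain the central fibre $\Lambda_g$, which reduces to producing one curve of $\Lambda_g$ attaining equality; the reducible curves $C'+J$ above, or the limit construction of Theorem~\ref{thm:existence_limit}, should do this. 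This is the step I expect to require the most work.

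\emph{Part 2: realisation over a number field of bounded degree.} Fix a prime $k\ge11$ and an elliptic curve $J_0/\bQ$ of Mordell--Weil rank at least one, written in Weierstrass form and viewed as a plane cubic $J_0\subset\mathbf{P}^2$ over $\bQ$ with the point at infinity as origin of the group law. Since $k$ is prime, every nonzero point of $J_0[k]\cong(\bZ/k)^2$ has exact order $k$; choose such a point $t$ and set $K:=\bQ(t)$. Then $[K:\bQ]$ equals the size of the Galois orbit of $t$, which is contained in $J_0[k]\setminus\{0\}$, so $[K:\bQ]\le k^2-1$; moreover $[K:\bQ]\ge2$ since, by Mazur's theorem, no elliptic curve over $\bQ$ has a rational point of order $k$ for $k\ge11$. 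Pick eight points $p_1,\dots,p_8\in J_0(\bQ)$ and let $p_9$ be the unique point of $J_0$ with $p_1+\cdots+p_9=t$ in the group law; then $p_9\in J_0(K)$, the sum has exact order $k$, and since it is nonzero $J_0$ is the unique plane cubic through $p_1,\dots,p_9$. As $J_0(\bQ)$ is infinite, hence Zariski dense in $J_0$, a generic choice of $p_1,\dots,p_8$ makes the nine points distinct and in general position, so $X:=\mathrm{Bl}_{p_1,\dots,p_9}\mathbf{P}^2_K$ is a degree $k$ Halphen surface over $K$ with $[K:\bQ]\le k^2-1$, and $\Lambda_g$ is a linear system over $K$. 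By Part 1 the locus in $\Lambda_g\cong\mathbf{P}^g_K$ of curves with $\dim W^r_d(C)=\rho_k(g,r,d)$ for all $d\le g-1$, $r\ge0$ is a nonempty open subvariety defined over $K$; since $K$ is infinite it has a $K$-point $C$, which is a smooth genus $g$ curve carrying the degree $k$ pencil $\cO_C(kJ)$ (equal to the gonality once $g$ is large relative to $k$) and satisfying the Hurwitz--Brill--Noether theorem. This completes the proof of Theorem~\ref{thm.intro7}.
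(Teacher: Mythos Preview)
Your Part~2 argument and the lower bound in Part~1 are essentially correct and match the paper's approach: the paper also constructs the Halphen surface from an elliptic curve over~$\bQ$, eight rational points, and a ninth point making the sum $k$-torsion (invoking \cite{lozano-robledo} for the degree bound), and for the lower bound simply uses semicontinuity together with Corollary~\ref{cor_dominance_maximal} on the $K3$ fibres.

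The genuine gap is the upper bound $\dim W^r_d(C)\le\rho_k(g,r,d)$. Your first suggestion---stratify by $V^r_{d,\ell}$ and bound each piece as a degeneracy locus---does not work: degeneracy loci supply \emph{lower} bounds on dimension, and moreover the balanced loci $V^r_{d,\ell}$ do not exhaust $W^r_d(C)$. Your third suggestion (``produce one curve of $\Lambda_g$ attaining equality'') is logically sound, but circular as stated: to verify that a specific curve $C_0$ satisfies $\dim W^r_d(C_0)\le\rho_k$, you must prove an upper bound for that curve, which is the original problem. The inductive idea via $C'+J$ points in the right direction but is not developed, and even its base case is left open.

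The paper's route is different and concrete. One degenerates a general $C\in\Lambda_g$ successively through $J\cup C_{g-1}$, $J\cup J\cup C_{g-2}$, \ldots, all the way to the chain of elliptic curves $Y=J_1\cup\cdots\cup J_g$ of Theorem~\ref{thm:existence_limit}, where each $J_i\cong J$ and the difference of consecutive attaching points is $k$-torsion in $\Pic^0(J)$. On this chain, any limit $g^r_d$ is governed by its ramification sequences at the nodes; the $k$-torsion condition forces the associated labelling to be a $k$-uniform displacement tableau in Pflueger's sense, and his combinatorial result \cite[\S3]{pflueger} then bounds $\dim\overline{G}^r_d(Y)\le\rho_k(g,r,d)$. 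This is precisely the ``one curve where you can compute directly'' that your argument was missing.
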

It remains a very interesting open question whether one can write down Hurwitz-Brill-Noether general $k$-curves of genus $g$ defined over the rationals.

\vspace{3mm}

\subsection*{Acknowledgements} We thank Marian Aprodu, Arend Bayer, Samir Canning, Bert van Geemen, Andreas Leopold Knutsen, Eric Larson, Hannah Larson, Margherita Lelli-Chiesa, Bjorn Poonen, Richard Thomas and Xiaolei Zhao for many helpful discussions.

Farkas was supported by the Berlin Mathematics Research Center MATH+ and the ERC Advanced Grant \textsc{SYZYGY} (No.~834172). Feyzbakhsh was supported by the Royal Society URF/R1/231191. Rojas was supported by the ERC Advanced Grant \textsc{SYZYGY} (No.~834172).

\section{Hurwitz-Brill-Noether loci}\label{HBNloci}

In this  introductory section, we associate to every smooth curve $C$ endowed with a degree $k$ pencil $(A,V)\in G^1_k(C)$ the Hurwitz-Brill-Noether loci $W^r_{d, \ell}(C, A)$,  then we explain a novel derivation of their expected dimension as degeneracy loci over classical Brill-Noether loci associated to $C$.   

\vskip 4pt

We fix integers $g, k\geq 2$ and denote by $\widetilde{\mathcal{H}}_{g,k}$ the Hurwitz stack classifying triples consisting of a stable curve $C$ of genus $g$, a line bundle $A\in \mbox{Pic}^k(C)$ and a $2$-dimensional space of sections $V\subseteq H^0(C, A)$. When $V=H^0(C,A)$, we refer to points in $\widetilde{\cH}_{g,k}$ as pairs $[C, A]$. It is well known that $\widetilde{\mathcal{H}}_{g,k}$ is irreducible of dimension $2g+2k-5=3g-3+\rho(g,1,k)$. Let $\mathcal{H}_{g,k}$ be the open substack of $\widetilde{\cH}_{g,k}$ corresponding to triples $[C,A,V]$ as above, where $C$ is a smooth curve. We shall use several times that as long as $g\geq a(k-1)$, we have $\mbox{Sym}^a H^0(C,A)\cong H^0(C, A^{a})$ for a general $[C, A]\in \mathcal{H}_{g,k}$.

\vskip 4pt

We fix  a non-negative integer $\ell$ with $\mbox{max}\{0, r-k+2\} \leq \ell  \leq r$ and such that 
\begin{equation}\label{eq:ineq11}
\rho(g, r-\ell,d)-\ell k\geq 0.
\end{equation}

We aim to study the loci $W^r_d(C)$ for a triple $[C, A, V]\in \mathcal{H}_{g,k}$. Without loss of generality, by Riemann-Roch, we may assume $d\leq g-1$. Set $e:=\bigl\lfloor \frac{\ell}{r+1-\ell}\bigr\rfloor$ and write 

\begin{equation}\label{eq:a1}
\ell=e(r+1-\ell)+m_1,
\end{equation}
where $0\leq m_1\leq r-\ell$. Setting $m_2:=r+1-\ell-m_1\geq 1$, we then also have 
$$r+1=m_1(e+2)+m_2(e+1).$$

\vskip 4pt

\begin{Prop}\label{ineqBN}
Assume inequality (\ref{eq:ineq11}) holds. Then for every smooth curve $C$ of genus $g$, the locus $W_{d-(e+1)k}^{m_1-1}(C)$ is non-empty. Furthermore, $d-(e+1)k\geq 0$.
\end{Prop}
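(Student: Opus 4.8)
The plan is to reduce the statement to the classical existence theorem of Kempf and Kleiman--Laksov, which guarantees that on \emph{every} smooth curve of genus $g$ one has $W^{s}_{n}(C)\neq\emptyset$ as soon as the Brill--Noether number $\rho(g,s,n)=g-(s+1)(g-n+s)$ is non-negative. For $s\ge 0$ this hypothesis already forces $n\ge 0$, since a line bundle of negative degree has no sections and would hence make $\rho(g,s,n)$ negative. Consequently, once we have shown $\rho\bigl(g,m_1-1,d-(e+1)k\bigr)\ge 0$, in the case $m_1\ge 1$ both assertions of the Proposition follow simultaneously; and in the case $m_1=0$ the locus in question is simply $\operatorname{Pic}^{d-(e+1)k}(C)$, which is non-empty in all cases, so there it only remains to check the numerical inequality $d-(e+1)k\ge 0$.

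The technical core is a single numerical identity, which I would establish by a direct expansion using only the two defining relations $r+1=m_1(e+2)+m_2(e+1)$ and $\ell=e(r+1-\ell)+m_1$ from \eqref{eq:a1}:
\[
\rho\bigl(g,\,m_1-1,\,d-(e+1)k\bigr)=\Bigl(\rho(g,r-\ell,d)-\ell k\Bigr)+m_2\Bigl((g-d+r)+e\bigl(k-1-r+\ell\bigr)\Bigr).
\]
Every factor of the correction term is non-negative: $m_2\ge 1$ by construction; $g-d+r\ge 1$ because $d\le g-1$ and $r\ge 0$; $e\ge 0$ by definition; and $k-1-r+\ell\ge 1$ is precisely the assumed bound $\ell\ge r-k+2$. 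Together with the hypothesis $\rho(g,r-\ell,d)-\ell k\ge 0$ of \eqref{eq:ineq11} this yields $\rho\bigl(g,m_1-1,d-(e+1)k\bigr)\ge 1>0$.

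With the identity in place the argument closes quickly. If $m_1\ge 1$, then $\rho\bigl(g,m_1-1,d-(e+1)k\bigr)>0$ together with $m_1-1\ge 0$ forces $d-(e+1)k\ge 0$, and the existence theorem gives $W^{m_1-1}_{d-(e+1)k}(C)\neq\emptyset$ for every smooth curve $C$ of genus $g$. If $m_1=0$, then $W^{m_1-1}_{d-(e+1)k}(C)=\operatorname{Pic}^{d-(e+1)k}(C)$ is non-empty automatically, and the inequality $d-(e+1)k\ge 0$ follows from a short direct estimate obtained by rewriting \eqref{eq:ineq11} (with $m_2=r+1-\ell$) as a lower bound for $d$ and combining it with $d\le g-1$, equivalently $g\ge (r+1-\ell)^2+\ell k$.

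I expect the only genuine labor to be in the bookkeeping behind the displayed identity --- carefully tracking how $e$, $m_1$, $m_2$ and $r-\ell$ constrain one another through $\ell=e(r+1-\ell)+m_1$ --- together with, as a minor secondary point, the borderline $m_1=0$ situations, where the target Brill--Noether variety collapses to a Picard variety and the degree bound has to be extracted directly from \eqref{eq:ineq11} and $d\le g-1$ rather than from positivity of $\rho$. Everything else is an immediate application of the classical existence theorem.
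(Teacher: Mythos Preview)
Your approach is essentially the same as the paper's. The identity you write,
\[
\rho\bigl(g,m_1-1,d-(e+1)k\bigr)=\bigl(\rho(g,r-\ell,d)-\ell k\bigr)+m_2\bigl((g-d+r)+e(k-1-r+\ell)\bigr),
\]
coincides with the paper's formula
\[
\rho\bigl(g,m_1-1,d-(e+1)k\bigr)=\bigl(\rho(g,r-\ell,d)-\ell k\bigr)+(r-\ell-m_1+1)(g+ek-d+r-\ell+m_1),
\]
since $r-\ell-m_1+1=m_2$ and $g+ek-d+r-\ell+m_1=(g-d+r)+e(k-1-r+\ell)$ via the relation $\ell-m_1=e(r+1-\ell)$. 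Both proofs then invoke the classical existence theorem.

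You are right to flag the case $m_1=0$ separately, something the paper does not do. For the non-emptiness assertion your treatment is fine: $W^{-1}_n(C)=\Pic^n(C)$ is non-empty regardless. However, your ``short direct estimate'' for $d-(e+1)k\geq 0$ in this case is not actually carried out, and in fact cannot be: take $g=10$, $k=3$, $r=\ell=2$, $d=7$. All hypotheses are satisfied ($d\le g-1$, $\max\{0,r-k+2\}=1\le\ell\le r$, and $\rho(10,0,7)-6=1\ge 0$), yet $e=2$, $m_1=0$, and $d-(e+1)k=-2<0$. The paper's argument has the same gap, since for $m_1=0$ one has $\rho(g,-1,n)=g$ independently of $n$, so the implication ``$\rho\ge 0\Rightarrow n\ge 0$'' is vacuous. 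This affects only the ``Furthermore'' clause, not the main non-emptiness statement or its subsequent uses (where in practice $m_1\ge 1$, or only the weaker bound $d\ge k$ is needed, which does follow whenever $\ell\ge 1$).
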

\begin{proof}
We apply the main existence results of Brill-Noether theory, see e.g. \cite{fulton-lazarsfeld} and to that end we need to show that the expected dimension $\rho\bigl(g, m_1-1, d-(e+1)k\bigr)$ of the determinantal variety $W^{m_1-1}_{d-(e+1)k}(C)$ is non-negative. Indeed, one has 
$$\rho\bigl(g, m_1-1, d-(e+1)k\bigr)=\rho(g, r-\ell,d)-\ell k+(r-\ell-m_1+1)(g+ek-d+r-\ell+m_1).$$
Note that $r-\ell-m_1\geq 0$ by (\ref{eq:a1}), whereas 
$$g+ek-d+r-\ell+m_1=g-d+r+ek-e(r+1-\ell)\geq g-d+r-\ell\geq 1,$$
since we assumed that $d\leq g-1$. Since by assumption $\rho(g, r-\ell, d)-\ell k\geq 0$, it follows from \cite{fulton-lazarsfeld} that $W^{m_1-1}_{d-(e+1)k}(C)$ is non-empty and of dimension at least $\rho\bigl(g, m_1-1, d-(e+1)k\bigr)$. Clearly, since $\rho\bigl(g, m_1-1, d-(e+1)k\bigr)\geq 0$, it also follows that $d-(e+1)k\geq 0$.
\end{proof}

\begin{Prop}\label{prop:bpf_trick}
Assume $(A,V)\in G^1_k(C)$ is a base point free pencil and let $M$ be a line bundle on $C$. Then for any $a>0$ we have the inequality
$$h^0\bigl(C, M\otimes A^{a+1}\bigr) \geq (a+1)\cdot h^0(C, M\otimes A)-a\cdot h^0(C,M).$$    
\end{Prop}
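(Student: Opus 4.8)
The plan is to exploit the base-point-free pencil $(A,V)$ to produce, for each $a$, an exact sequence relating global sections of $M\otimes A^{a+1}$, $M\otimes A^a$, and $M\otimes A$, and then to induct on $a$. Concretely, choose a basis $\{s,t\}$ of $V\subseteq H^0(C,A)$; since the pencil is base-point free, $s$ and $t$ have no common zero, so multiplication by the two-dimensional space $V$ gives a surjective Koszul-type map of sheaves
\[
M\otimes A^{a-1}\otimes \mathcal{O}_C^{\oplus 2}\xrightarrow{\ (t,-s)\ } M\otimes A^a,
\]
whose kernel is $M\otimes A^{a-1}$ (embedded via $u\mapsto (s\cdot u, t\cdot u)$). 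More useful for us is the ``base-point-free pencil trick'' sequence: for any line bundle $N$ on $C$,
\[
0\longrightarrow N\otimes A^{-1}\longrightarrow N\otimes V \xrightarrow{\ \mathrm{mult}\ } N\otimes A\longrightarrow 0,
\]
i.e. $0\to N\otimes A^{-1}\to N^{\oplus 2}\to N\otimes A\to 0$, exact because $s,t$ generate $A$. Twisting appropriately and taking cohomology will yield the inequality.

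The key step is the right choice of $N$ and an induction on $a$. First I would handle the base case $a=1$: apply the pencil-trick sequence with $N=M\otimes A$, giving
\[
0\longrightarrow M \longrightarrow (M\otimes A)^{\oplus 2}\longrightarrow M\otimes A^2\longrightarrow 0,
\]
whence the long exact sequence in cohomology gives $h^0(C,M\otimes A^2)\geq 2\,h^0(C,M\otimes A)-h^0(C,M)$, which is exactly the claim for $a=1$ (the map $H^0(M\otimes A)^{\oplus 2}\to H^0(M\otimes A^2)$ has image of dimension at least $2h^0(M\otimes A)-h^0(M)$, since its kernel is contained in $H^0(M)$). For the inductive step, assume the inequality holds for $a-1$ with all line bundles in place of $M$; applying the pencil-trick sequence with $N=M\otimes A^a$ gives
\[
h^0(C,M\otimes A^{a+1})\ \geq\ 2\,h^0(C,M\otimes A^a)-h^0\bigl(C,M\otimes A^{a-1}\bigr).
\]
Now I would try to combine this with the inductive hypotheses $h^0(M\otimes A^a)\geq a\,h^0(M\otimes A)-(a-1)h^0(M)$ and an upper bound on $h^0(M\otimes A^{a-1})$. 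The natural upper bound $h^0(M\otimes A^{a-1})\leq (a-1)h^0(M\otimes A)-(a-2)h^0(M)$ does \emph{not} hold in general, so a naive substitution fails; this is the main obstacle.

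To circumvent it, I would instead set up the induction more carefully by tracking the multiplication maps directly rather than just dimensions. Let $\mu_a:\mathrm{Sym}^a V\otimes H^0(C,M)\to H^0(C,M\otimes A^a)$, or better, consider $\phi_a: H^0(C,M\otimes A)\otimes_{\text{(something)}}\to H^0(C,M\otimes A^{a+1})$ built by iterating multiplication by $V$; the point is that the image $W_a:=\mathrm{Im}\bigl(\mathrm{Sym}^{a-1}V\otimes H^0(C,M\otimes A)\to H^0(C,M\otimes A^a)\bigr)$ satisfies $\dim W_1=h^0(M\otimes A)$ and, by the pencil trick applied fibrewise, $\dim W_{a}\geq 2\dim W_{a-1}-\dim W_{a-1}'$ where $W'_{a-1}\subseteq H^0(M\otimes A^{a-1})$ is the analogous image one step down — and crucially the kernel of $s\cdot(-)-t\cdot(-)$ restricted to $W_{a-1}^{\oplus 2}$ maps into $W_{a-1}'$, which by a further downward induction is controlled by $h^0(M)$. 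Telescoping the resulting chain of inequalities $\dim W_a - \dim W_{a-1}\geq \dim W_{a-1}-\dim W_{a-2}\geq\cdots\geq \dim W_1-\dim W_0 = h^0(M\otimes A)-h^0(M)$ gives $\dim W_a\geq h^0(M)+a\bigl(h^0(M\otimes A)-h^0(M)\bigr)=(a+1)h^0(M\otimes A)-a\,h^0(M)$ once shifted by one, and since $W_{a+1}\subseteq H^0(C,M\otimes A^{a+1})$ this is the desired bound. The technical heart is verifying that the kernel appearing at each stage genuinely lands in the previous image $W'$, which is where base-point-freeness of $(A,V)$ is used essentially (it guarantees the sheaf sequence $0\to N\otimes A^{-1}\to N^{\oplus 2}\to N\otimes A\to 0$ is exact on the left, so no spurious sections appear).
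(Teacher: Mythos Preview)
Your key inequality
\[
h^0(C,M\otimes A^{m+1})\ \geq\ 2\,h^0(C,M\otimes A^{m})-h^0\bigl(C,M\otimes A^{m-1}\bigr),
\]
obtained from the base-point-free pencil trick with $N=M\otimes A^m$, is exactly what the paper uses, and it is already enough---you do not need to feed it back into an induction on the statement, nor to introduce the auxiliary images $W_a$. Simply rearrange it as
\[
h^0(C,M\otimes A^{m+1})-h^0(C,M\otimes A^{m})\ \geq\ h^0(C,M\otimes A^{m})-h^0(C,M\otimes A^{m-1}),
\]
which says the successive differences are non-decreasing in $m$. Summing over $m=1,\ldots,a$ gives
\[
h^0(C,M\otimes A^{a+1})-h^0(C,M\otimes A)\ \geq\ a\bigl(h^0(C,M\otimes A)-h^0(C,M)\bigr),
\]
which is precisely the claim. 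This is the paper's argument; your ``main obstacle'' disappears once you stop trying to substitute a single inductive hypothesis and instead use all $a$ instances of the pencil-trick inequality at once.

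Regarding your $W_a$ detour: the inequality $\dim W_a \geq 2\dim W_{a-1} - \dim W_{a-2}$ that you need requires that the kernel of $V\otimes W_{a-1}\to H^0(M\otimes A^{a})$ be \emph{contained in} (the image of) $W_{a-2}$. The reverse containment $W_{a-2}\hookrightarrow \ker$ is immediate, but that gives $\dim(\ker)\geq\dim W_{a-2}$ and hence the inequality in the wrong direction. The containment you actually need is not obvious, and the indexing in your sketch is off (you place $W'_{a-1}$ inside $H^0(M\otimes A^{a-1})$, whereas the kernel lives in $H^0(M\otimes A^{a-2})$). That route may be salvageable, but it is strictly harder than the two-line telescoping above.
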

\begin{proof}
We prove this via the base point free pencil trick \cite[p. 126]{arbarello:geometry-of-algebraic-curves}. Since $V$ is a base point free pencil on $C$, for $m\leq a$ the kernel of the multiplication map $V\otimes H^0(C, M\otimes A^{m})\longrightarrow H^0(C, M\otimes A^{m+1})$ can be identified with $H^0(C, M\otimes A^{m-1})$. Therefore
\[
h^0(C, M \otimes A^{m+1}) - h^0(C, M \otimes A^m) \geq h^0(C, M \otimes A^m) - h^0(C, M \otimes A^{m-1}),
\]
so summing over \( m = 1, \ldots, a \) implies that  
\[
h^0(C, M \otimes A^{a+1}) - h^0(C, M \otimes A) \geq a \big(h^0(C, M \otimes A) - h^0(C, M)\big),
\]  
as claimed.
\end{proof}

\vskip 4pt

Applying Proposition \ref{prop:bpf_trick} for $a=e+1$, if $M\in W^{m_1-1}_{d-(e+1)k}(C)$ is such that 
    \begin{equation}\label{eq:conditions}
h^0(C,M)=m_1  \ \ \mbox{ and  } \ \ h^0(C, M\otimes A)\geq r-\ell+m_1+1=2m_1+m_2, 
\end{equation}
where we recall that $m_1$ is given by (\ref{eq:a1}), after
setting $L:=M\otimes A^{e+1}\in \mbox{Pic}^d(C)$,
we have $$h^0(C,L)\geq (e+1)(r-\ell+m_1+1)-e m_1=e(r-\ell+1)+m_1+(r-\ell+1)=r+1.$$

\vskip 4pt

\begin{Def}\label{def:wtilde}
Let $\widetilde{W}^{m_1-1}_{d-(e+1)k}(C)$ be the union of all components of $W^{m_1-1}_{d-(e+1)k}(C)$ having the expected dimension $\rho\bigl(g, m_1-1, d-(e+1)k\bigr)$ and corresponding to a general point $M$ satisfying $H^0(C, M\otimes A^{\vee})=0$
\end{Def}

Applying \cite[Lemma 3.5]{arbarello:geometry-of-algebraic-curves}, a general point $M$ of each component of $W^{m_1-1}_{d-(e+1)k}(C)$ satisfies $h^0(C, M)=m_1$. Furthermore, from (\ref{eq:a1}) we have $m_1-1\leq r-\ell-1\leq k-3$, hence by applying \cite[Proposition 2.3.1]{coppens-martens} (see also Proposition \ref{prop:coppens-martens}), we obtain that 
$\widetilde{W}^{m_1-1}_{d-(e+1)k}(C)\neq \emptyset$ for a general element $[C,A]\in \mathcal{H}_{g,k}$.

\vskip 3pt

\begin{Rem}
For a general $(C, A)\in \mathcal{H}_{g,k}$, the equality $W^{m_1-1}_{d-(e+1)k}(C)=\widetilde{W}_{d-(e+1)k}^{m_1-1}(C)$ does not hold whenever $m_1\geq 2$. In fact, using the identity
$$d-(e+m_1)k=\rho\bigl(g, m_1-1, d-(e+1)k\bigr)+(m_1-1)(g+ek+m_1-d)$$
we obtain via (\ref{eq:a1}) that $d-(e+m_1)k\geq 0$. In particular line bundles of the form $M=A^{m_1-1}(D)$, with $D$ being an effective divisor on $C$ of degree $d-(e+m_1)k$, satisfy $h^0(C, M)\geq m_1$. Such line bundles depend on 
$d-(e+m_1)k>\rho \bigl(g, m_1-1, d-(e+1)k\bigr)$ parameters and they lie in $W^{m_1-1}_{d-(e+1)k}(C)\setminus \widetilde{W}^{m_1-1}_{d-(e+1)k}(C)$.
\end{Rem}

\vskip 4pt

The condition (\ref{eq:conditions}) defines a determinantal subvariety of (an open subvariety of) the locus $\widetilde{W}^{m_1-1}_{d-(e+1)k}(C)$, as we shall explain. 

\begin{Def}\label{def:Vm}
For a smooth curve $C$, let $V_{d-(e+1)k}^{m_1-1}(C)$ be the subset of $\mbox{Pic}^{d-(e+1)k}(C)$ parametrizing bundles $M$ such that $h^0(C,M)=m_1$ and $h^0(C, M\otimes A^{\vee})=0$.
\end{Def}

For  $M\in V^{m_1-1}_{d-(e+1)k}(C)$, the Riemann-Roch theorem gives:
\[
h^0(C, \omega_C \otimes M^{\vee}) = g + m_1 - 1 + (e+1)k - d, \quad  
h^0(C, \omega_C \otimes M^{\vee} \otimes A) = g + (e+2)k - d - 1.
\]

\vskip 4pt

We can now take a global version of Definition \ref{def:Vm} and we denote by 
\begin{equation}\label{eq:nu}
\nu\colon \cV_{d-(e+1)k}^{m_1-1}\longrightarrow \cH_{g,k}
\end{equation} the stack of elements $[C, A, V, M]$, where $[C, A, V]\in \cH_{g,k}$ and $M\in V_{d-(e+1)k}^{m_1-1}(C)$.
Over $\cV_{d-(e+1)k}^{m_1-1}$ we have two tautological vector bundles $\cE$ and $\cF$ with fibres over a point $[C, A, V, M]$ given by
$$\cE_{|[C, A, M]}=H^0(C, \omega_C\otimes M^{\vee})\  \mbox{ and } \ \ \cF_{|[C, A, M]}=H^0(C, \omega_C\otimes M^{\vee}\otimes A).$$ The local freeness of $\cE$ and $\cF$ follows from Grauert's theorem. 
As explained above, $\mbox{rk}(\cE)=g+m_1-1+(e+1)k-d$ and $\mbox{rk}(\cF)=g+(e+2)k-d-1$. Let $\mathbb E$ be the tautological rank $2$ vector bundle over $\cH_{g,k}$ with fibres $\mathbb E_{|[C, A, V]}=V$, for a point $[C, A, V]\in \cH_{g,k}$.

\vskip 3pt

There is a morphism of vector bundles 
\begin{equation}\label{eq:deglocus_BNH}
\phi \colon \nu^*\mathbb E\otimes \cE\longrightarrow \cF
\end{equation}
whose fibre over a point  $[C, A, V, M]$ is the multiplication map 
$$\phi_{C,A,V,M}\colon V\otimes H^0(C, \omega_C\otimes M^{\vee})\longrightarrow H^0(C, \omega_C\otimes M^{\vee}\otimes A).$$
We set $s:=g-d+r+e(k-r-1+\ell)$  and denote by $\mathfrak{Deg}(\phi)$ the degeneracy locus of the morphism $\phi$ consisting of those  $[C, A, V, M]$ such that $\mbox{dim } \mbox{Ker}(\phi_{C,A,M})\geq s$. For a point $[C, A, V]\in \cH_{g,k}$, we set $\mathfrak{Deg}_{(C,A, V)}(\phi):=\mathfrak{Deg}(\phi)\cap \nu^{-1}\bigl([C,A, V]\bigr)\subseteq V_{d-(e+1)k}^{m_1-1}(C)$.

\begin{Prop}\label{prop:degloc_2}
For each $[C,A, V]\in \cH_{g,k}$ one has an injective  map 
$$\mathfrak{Deg}_{(C,A,V)}(\phi)\longrightarrow W^r_d(C) \ \ \mbox{  given by} \  \  \ M\mapsto M\otimes A^{e+1}.$$ Moreover, if $\mathfrak{Deg}_{(C,A, V)}(\phi)$ is non-empty, then all of its components have dimension at least $\rho(g, r-\ell, d)-\ell k$.    
\end{Prop}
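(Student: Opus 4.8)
The plan is to prove the two assertions separately: first the injectivity of the map $M \mapsto M \otimes A^{e+1}$ landing in $W^r_d(C)$, and then the dimension bound for the degeneracy locus, which will follow from the general lower bound on dimensions of degeneracy loci of vector bundle morphisms once we compute the expected dimension and match it with $\rho(g,r-\ell,d)-\ell k$.

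For the first part, suppose $M \in \mathfrak{Deg}_{(C,A,V)}(\phi)$. By definition $M \in V^{m_1-1}_{d-(e+1)k}(C)$, so $h^0(C,M)=m_1$ and $h^0(C,M\otimes A^{\vee})=0$; moreover $\dim \mathrm{Ker}(\phi_{C,A,V,M}) \geq s = g-d+r+e(k-r-1+\ell)$. The key is to translate the kernel dimension condition into a statement about $h^0(C, M \otimes A)$ via Serre duality: $\phi_{C,A,V,M}$ is (dual to) the multiplication map $V \otimes H^0(C,\omega_C \otimes M^{\vee}) \to H^0(C,\omega_C\otimes M^{\vee}\otimes A)$, and an elementary count — using $\dim(\mathrm{Im}) = 2\,\mathrm{rk}(\cE) - \dim\mathrm{Ker}$ and then bounding $h^0(C,\omega_C\otimes M^{\vee}\otimes A) - \dim\mathrm{Im}$ from below — shows that the condition $\dim\mathrm{Ker} \geq s$ forces $h^0(C, M \otimes A) \geq 2m_1 + m_2$ (after applying Riemann–Roch to convert back from $\omega_C\otimes M^\vee \otimes A$; here one uses the numerical identity $r+1 = m_1(e+2)+m_2(e+1)$ and the formulas for $\mathrm{rk}(\cE), \mathrm{rk}(\cF)$ recorded just before the statement). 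This is exactly condition~\eqref{eq:conditions}, so by the base point free pencil trick computation following Proposition~\ref{prop:bpf_trick}, the line bundle $L := M \otimes A^{e+1}$ satisfies $h^0(C,L) \geq r+1$, i.e. $L \in W^r_d(C)$. Injectivity is immediate: tensoring by the fixed line bundle $A^{e+1}$ is a bijection $\mathrm{Pic}^{d-(e+1)k}(C) \to \mathrm{Pic}^d(C)$.

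For the dimension bound, I would invoke the standard result (see e.g. \cite{fulton-lazarsfeld} or \cite[Ch.~II]{arbarello:geometry-of-algebraic-curves}) that for a morphism of vector bundles $\phi\colon \mathcal{A} \to \mathcal{B}$ over an irreducible variety $S$ of ranks $a$ and $b$, the locus where $\mathrm{rk}(\phi) \leq t$, if non-empty, has every component of codimension at most $(a-t)(b-t)$ in $S$. Here we work over the base $V^{m_1-1}_{d-(e+1)k}(C)$ (or rather over $\widetilde{W}^{m_1-1}_{d-(e+1)k}(C)$, which by Proposition~\ref{ineqBN} and the discussion after Definition~\ref{def:wtilde} is non-empty of dimension $\rho(g,m_1-1,d-(e+1)k)$), with $\mathcal{A} = \mathbb{E}\otimes\cE$ of rank $2\,\mathrm{rk}(\cE)$, $\mathcal{B} = \cF$, and the rank bound $t = 2\,\mathrm{rk}(\cE) - s$ coming from $\dim\mathrm{Ker}\geq s$. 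The expected codimension is then $(2\,\mathrm{rk}(\cE)-t)(\mathrm{rk}(\cF)-t) = s\cdot(\mathrm{rk}(\cF) - 2\,\mathrm{rk}(\cE)+s)$, and subtracting this from $\rho(g,m_1-1,d-(e+1)k)$ should give exactly $\rho(g,r-\ell,d)-\ell k$ after a direct but somewhat involved substitution using $s = g-d+r+e(k-r-1+\ell)$, $\mathrm{rk}(\cE)=g+m_1-1+(e+1)k-d$, $\mathrm{rk}(\cF)=g+m_1+(e+2)k-d-1$, and the relations \eqref{eq:a1} and $r+1 = m_1(e+2)+m_2(e+1)$.

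The main obstacle I anticipate is bookkeeping rather than conceptual: getting the Serre-duality translation of the kernel-dimension condition into exactly the inequality $h^0(C,M\otimes A)\geq 2m_1+m_2$ requires care about which cokernel/kernel one measures and about the possibility that $\phi_{C,A,V,M}$ is not injective for trivial reasons — one should check that the base point free pencil trick kernel (isomorphic to $H^0(C,\omega_C\otimes M^\vee\otimes A^{\vee})$) is accounted for correctly, so that the "expected" corank matches. Once the translation is pinned down, both the membership in $W^r_d(C)$ and the arithmetic identifying the expected dimension with $\rho(g,r-\ell,d)-\ell k$ are forced.
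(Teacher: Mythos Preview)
Your proposal is correct and follows essentially the same route as the paper: identify the kernel of $\phi_{C,A,V,M}$ via the base point free pencil trick as $H^0(C,\omega_C\otimes M^\vee\otimes A^{\vee})$, convert the bound $\dim\mathrm{Ker}\geq s$ into $h^0(C,M\otimes A)\geq 2m_1+m_2$ by Riemann--Roch, and then invoke the standard codimension estimate for degeneracy loci over a base of dimension at least $\rho(g,m_1-1,d-(e+1)k)$. The only difference is cosmetic: the paper uses the kernel identification directly as the main step rather than going through $\dim\mathrm{Im}$ and cokernel, which is the cleaner route you yourself flag in your last paragraph.
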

\begin{proof}
Observe that $M\in V_{d-(e+1)k}^{m_1-1}(C)$ satisfies (\ref{eq:conditions}) if and only if $[C,A,V,M]\in \mathfrak{Deg}(\phi)$. Indeed, via the Base Point Free Pencil Trick $\mbox{Ker}(\phi_{C, A, V,M})\cong H^0(C, \omega_C\otimes M^{\vee}\otimes A^{\vee})$ and by Riemann-Roch we obtain that 
$$h^0(C, A\otimes M)=\mbox{dim } \mbox{Ker}(\phi_{C, A, V, M})+d-ek+1-g,$$ from which the conclusion follows. From the general theory of degeneracy loci, see e.g. \cite{arbarello:geometry-of-algebraic-curves}, every component of 
$\mathfrak{Deg}_{(C,A,V)}(\phi)$ has dimension at least 
\begin{align*}
\mbox{dim } \widetilde{W}^{m_1-1}_{d-(e+1)k}(C) - s \cdot \bigl(\text{rk}(\cF) - 2 \cdot \text{rk}(\cE) + s \bigr) 
=g - m_1(g - d + (e + 1)k + m_1 - 1) \\
- \left( g - d + r + e(k - r - 1 + \ell) \right)(r - \ell - m_1 +1) = \rho(g, r - \ell, d) - \ell k.
\end{align*}
\end{proof}

\begin{Def}\label{HBNdegloci}
 For a point $[C, A, V]\in \cH_{g,k}$, let $W^r_{d, \ell}(C, A)$ be the closure of the image of $\mathfrak{Deg}_{(C,A)}(\phi)$.
Inside $W^r_{d, \ell}(C, A)$ we identify the following open subvariety  
\begin{equation}\label{eq:defl}
\begin{aligned}
V_{d, \ell}^r(C, A):=\Bigl\{L\in\Pic^d(C): h^0(C, L\otimes A^{-e-1})= m_1,\;\   h^0(C, L\otimes A^{-e-2})=0,\\ \ h^0(C, L\otimes A^{-e})=  2m_1+m_2, \;\ h^0(C,L)=r+1 \Bigr\}.
\end{aligned}
\end{equation}
\end{Def}
Although each component of $W^r_{d, \ell}(C, A)$ is of dimension at least $\rho(g, r-\ell, d)-\ell k$, Proposition \ref{prop:degloc_2} does not establish the nonemptiness of $W^r_{d, \ell}(C, A)$ when inequality (\ref{eq:ineq11}) is satisfied. However, by semicontinuity, we have the following:

\begin{Prop}\label{prop:semicont}
Assume $(\ref{eq:ineq11})$ is satisfied. If there exists a point $[C_0,A_0, V_0]\in \widetilde{\cH}_{g,k}$ such that $W^r_{d, \ell}(C_0, A_0, V_0)$ has a component of dimension $\rho(g, r-\ell,d)-\ell k$, then $W^r_{d, \ell}(C,A, V)\neq \emptyset$ for a general element $[C, A, V]\in \widetilde{\cH}_{g, k}$.    
\end{Prop}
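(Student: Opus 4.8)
The plan is to run a standard dimension count on a global, relative version of the loci $W^r_{d,\ell}(C,A,V)$ over the Hurwitz stack.

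First, I would globalize the determinantal construction of Section \ref{HBNloci} over all of $\widetilde{\cH}_{g,k}$, not just over $\cH_{g,k}$: over the universal curve over $\widetilde{\cH}_{g,k}$, using the relative dualizing sheaf and the relative compactified Jacobian of degree $d-(e+1)k$ (flat and proper of pure relative dimension $g$), one obtains inside its smooth open locus the relative analogue of $\cV^{m_1-1}_{d-(e+1)k}$ from (\ref{eq:nu}), parametrising line bundles $M$ with $h^0(M)=m_1$ and $h^0(M\otimes A^\vee)=0$ as in Definition \ref{def:Vm}; on it the sheaves $\cE$ and $\cF$ are locally free (their fibrewise cohomology is constant by Riemann--Roch and the defining vanishings), the tautological pencil $\mathbb E$ with fibre $V$ is available over $\widetilde{\cH}_{g,k}$, and the morphism $\phi$ of (\ref{eq:deglocus_BNH}) is defined. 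Write $\nu\colon\mathfrak D\to\widetilde{\cH}_{g,k}$ for the degeneracy locus $\mathfrak D:=\mathfrak{Deg}(\phi)$ with its projection. By construction $\nu^{-1}([C,A,V])=\mathfrak{Deg}_{(C,A,V)}(\phi)$ for each $[C,A,V]$, and the isomorphism $M\mapsto M\otimes A^{e+1}$ identifies this fibre with a dense locally closed subscheme of $W^r_{d,\ell}(C,A,V)$, matching irreducible components and their dimensions.

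Second, I would record the uniform lower bound: every irreducible component of $\mathfrak D$ has dimension at least $\dim\widetilde{\cH}_{g,k}+\rho(g,r-\ell,d)-\ell k$. This is the relative form of Proposition \ref{prop:degloc_2}, with the same proof. The relative $\cV^{m_1-1}_{d-(e+1)k}$ is open in the relative Brill--Noether locus $\cW^{m_1-1}_{d-(e+1)k}$, cut out determinantally inside the relative compactified Jacobian; as the latter is pure of dimension $\dim\widetilde{\cH}_{g,k}+g$, Krull's height theorem forces every component of $\cV^{m_1-1}_{d-(e+1)k}$ to have dimension at least $\dim\widetilde{\cH}_{g,k}+\rho\bigl(g,m_1-1,d-(e+1)k\bigr)$, with Proposition \ref{ineqBN} ensuring $d-(e+1)k\geq 0$. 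Imposing the condition $\dim\Ker\phi\geq s$ drops the dimension by at most $s\bigl(\mathrm{rk}\,\cF-2\,\mathrm{rk}\,\cE+s\bigr)$, and the numerical identity in the proof of Proposition \ref{prop:degloc_2} converts this into the asserted bound.

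Third is the semicontinuity step. Let $W_0\subseteq\mathfrak{Deg}_{(C_0,A_0,V_0)}(\phi)=\nu^{-1}([C_0,A_0,V_0])$ be the component corresponding to the distinguished component of $W^r_{d,\ell}(C_0,A_0,V_0)$, so $\dim W_0=\rho(g,r-\ell,d)-\ell k$, and choose an irreducible component $\mathfrak Z$ of $\mathfrak D$ containing $W_0$. Since $\mathfrak Z$ is closed in $\mathfrak D$ and $W_0$ is a component of the full fibre $\nu^{-1}([C_0,A_0,V_0])$, it is also a component of $\mathfrak Z\cap\nu^{-1}([C_0,A_0,V_0])$, and the fibre-dimension inequality applied to the irreducible $\mathfrak Z$ gives
\[
\rho(g,r-\ell,d)-\ell k=\dim W_0\;\geq\;\dim\mathfrak Z-\dim\overline{\nu(\mathfrak Z)}\;\geq\;\dim\mathfrak Z-\dim\widetilde{\cH}_{g,k}\;\geq\;\rho(g,r-\ell,d)-\ell k ,
\]
the last inequality being the bound from the second step. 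Hence equality holds throughout; in particular $\dim\overline{\nu(\mathfrak Z)}=\dim\widetilde{\cH}_{g,k}$, so $\nu|_{\mathfrak Z}$ dominates the irreducible stack $\widetilde{\cH}_{g,k}$ and its image contains a dense open substack $U$. For $[C,A,V]\in U$ we then have $\mathfrak{Deg}_{(C,A,V)}(\phi)\supseteq\mathfrak Z\cap\nu^{-1}([C,A,V])\neq\emptyset$, whence $W^r_{d,\ell}(C,A,V)\neq\emptyset$, as claimed.

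The main obstacle lies in the first two steps: one must carry the whole determinantal package across the boundary of $\widetilde{\cH}_{g,k}$, where the curves are nodal and possibly reducible, checking that cohomology and base change still hold on $\cV^{m_1-1}_{d-(e+1)k}$, that the relative compactified Jacobian is flat of pure relative dimension $g$ so that the determinantal dimension estimates survive, and that the fibre of $\mathfrak D$ over $[C_0,A_0,V_0]$ really is $\mathfrak{Deg}_{(C_0,A_0,V_0)}(\phi)$. Granting this, the semicontinuity argument in the third step is routine.
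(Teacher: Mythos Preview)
The paper does not give a proof of this proposition: it is stated immediately after the sentence ``However, by semicontinuity, we have the following'', with no further argument. Your proposal is the standard way to unpack that phrase, and it is correct: globalize the degeneracy locus $\mathfrak{Deg}(\phi)$ of \eqref{eq:deglocus_BNH} over $\widetilde{\cH}_{g,k}$, invoke the determinantal lower bound $\dim\mathfrak{Z}\geq\dim\widetilde{\cH}_{g,k}+\rho(g,r-\ell,d)-\ell k$ on each component $\mathfrak{Z}$, and then use the fibre-dimension inequality to force $\nu|_{\mathfrak{Z}}$ to be dominant once a single fibre of $\mathfrak{Z}$ has a component of the expected dimension. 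The chain of inequalities in your third step is exactly right.

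Your identification of the genuine obstacle is also on target: to make the argument work literally as stated one needs the relative package (compactified Jacobian flat of pure relative dimension $g$, base-change for $\cE$ and $\cF$, determinantal estimates) over the boundary of $\widetilde{\cH}_{g,k}$, where curves may be nodal. In the actual applications of the paper, the special point $[C_0,A_0,V_0]$ is taken with $C_0$ smooth (a curve on an elliptic $K3$ surface), so one could in practice replace $\widetilde{\cH}_{g,k}$ by any irreducible smooth neighbourhood of $[C_0,A_0,V_0]$ inside $\cH_{g,k}$ and sidestep the boundary issues entirely; this is presumably why the authors feel comfortable omitting the details.
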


%\sol{\textbf{Question:} Take a line bundle \( L \) with at least $r+1$ section whose splitting type \( \overline{e'} \) satisfies \( \overline{e'} \leq \overline{e} \), where the latter represents the maximal (balanced) splitting type considered above. Do we have \( L \in W^r_{d, \ell}(C) \)?  
%What about the converse? Does any line bundle \( L \in W^r_{d, \ell}(C) \) necessarily have a splitting type that is less than or equal to our fixed balanced splitting type?
%}

\vskip 3pt

\begin{Rem}
By applying Proposition \ref{prop:bpf_trick}, clearly $W^r_{d,\ell}(C,A)\subseteq W^r_d(C)$. However it is not the case that the inclusion $W^{r-1}_{d,\ell}(C,A)\subseteq W^r_{d,\ell}(C,A)$ necessarily holds. This is due to the fact that the description (\ref{eq:ineq11}) involves the parameter $m_1$ and it is possible that $m_1$ decreases as $r$ increases.
\end{Rem}

\begin{Rem}\label{rem:l=r}
Some of the loci in Definition \ref{HBNdegloci} have a transparent description, others less so. For instance, when $\ell=r$, then $W^r_{d, r}(C,A)$ can be identified with the translate $A^r+W_{d-rk}(C)\subseteq \mbox{Pic}^d(C)$. Clearly, both $W_{d,r}^r(C,A)$ and $V_{d,r}^r(C,A)$ have the expected dimension $d-rk$ predicted by Proposition \ref{prop:degloc_2} for a general $[C,A]\in \cH_{g,k}$, whenever $rk\leq d\leq g-1$.  
\end{Rem}

\subsection{Brill-Noether theory via splitting types}
We recall the definition of the splitting type of a linear system following \cite{cook-powell-jensen}  and \cite{larson:inv}.
Later we shall establish some connections between this invariant and the stability type of a linear system on a curve on an elliptic $K3$ surface, which is defined in terms of Bridgeland stability. We fix an integral curve $C$ and a finite map
\[
\pi\colon C \longrightarrow \mathbf{P}^1
\]
of degree $k$. If $L\in W^r_d(C)$, then $\pi_*L$ is a rank $k$ vector bundle on $\mathbf{P}^1$ and it splits as a direct sum of line bundles
\begin{equation}\label{eq:splitting}
\pi_*L\cong\cO_{\mathbf{P}^1}(f_1)^{\oplus n_1}\oplus \cdots \oplus\cO_{\mathbf{P}^1}(f_q)^{\oplus n_q}
\end{equation}
where $f_1>\cdots >f_q$, $n_i>0$ and $n_1+\cdots+n_q=k$. The collection $\overline{f}_L:=\bigl((f_i,n_i)\bigr)_{i=1}^q$ is called the \emph{splitting type} of $L$ (with respect to $\pi$).
Note that $f_1 n_1+\cdots+n_q f_q=d+1-g-k$, in particular $f_q<0$. With this notation, if $L\in V^r_{d, \ell}(C)$ then the non-negative part $\bigl(\pi_*L\bigr)^{\geq 0}$ of $\pi_*L$ in the splitting (\ref{eq:splitting}) is given by 
$$\bigl(\pi_*L\bigr)^{\geq 0}\cong \cO_{\mathbf{P}^1}(e+1)^{\oplus m_1}\oplus\cO_{\mathbf{P}^1}(e)^{\oplus m_2}.$$
In particular, $m_1+m_2=r+1-\ell$ is the rank of $\bigl(\pi_*L\bigr)^{\geq 0}$. Since $\pi_*(L)$ must also have at least one negative summand, we obtain that $r+1-\ell\leq \mbox{rk}(\pi_*L)-1=k-1$, that is, $\mbox{max}\{0, r+2-k\}\leq \ell$, which is precisely our original assumption on $\ell.$

\section{Stability conditions on elliptic $K3$ surfaces}

\subsection{Degree $k$ elliptic $K3$ surfaces} The first step in our analysis is the following observation, which using the surjectivity of the period map for $K3$ surfaces, produces smooth curves endowed with a pencil of prescribed degree.

\begin{Prop}\label{thm-k3 surface}
Fix integers $g \geq 3$ and  $k\geq 2$. Then there exists a smooth $K3$ surface $X$ with $\Pic(X)\cong  \bZ \cdot H \oplus \bZ\cdot E$ such that 
\begin{enumerate}
\item $H^2 = 2g-2$, $E\cdot H = k$ and $E^2 =0$, 
\item $E$ is a smooth, irreducible elliptic curve,
\item $H$ is ample and base point free. 
\end{enumerate} 
\end{Prop}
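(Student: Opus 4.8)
The plan is to construct $X$ abstractly by lattice theory together with the surjectivity of the period map, and then to extract properties (2) and (3) from the numerical shape of $\Pic(X)$, with the hypothesis $g\ge 3$ entering at two decisive points. Concretely, consider the abstract rank-two even lattice $N_0=\bZ\mathbf{h}\oplus\bZ\mathbf{e}$ with Gram matrix $\left(\begin{smallmatrix}2g-2&k\\ k&0\end{smallmatrix}\right)$. Its discriminant is $-k^2<0$ and it contains a class of positive square, so it has signature $(1,1)$; by Nikulin's theory of primitive embeddings of lattices it embeds primitively into the $K3$ lattice $\Lambda_{\mathrm{K3}}=U^{\oplus 3}\oplus E_8(-1)^{\oplus 2}$ (the rank and length conditions being trivial since $\mathrm{rk}\,N_0=2$). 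By the surjectivity of the period map, a very general period in the period domain attached to this primitive sublattice produces a $K3$ surface $X$ together with an isometry $N_0\xrightarrow{\ \sim\ }\Pic(X)$ under which $\Pic(X)$ has no classes of square $-2$ or $0$ beyond the images of those in $N_0$. This already yields (1) and reduces everything to a careful choice of this isometry and to the geometric identification of $\mathbf{e}$.

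\emph{Choosing the marking so that $H$ is ample.} Here $g\ge 3$ is used first. A $(-2)$-class of $N_0$ has the form $a\mathbf{h}+b\mathbf{e}$ with $a\bigl(a(2g-2)+2bk\bigr)=-2$, and a one-line computation shows that such a class can be orthogonal to $\mathbf{h}$ only when $a^2(g-1)=1$, i.e. $g=2$. Hence for $g\ge 3$ the image of $\mathbf{h}$ in $\Pic(X)$ has positive square and lies off every $(-2)$-wall, so it sits in the interior of a chamber of one of the two components of the positive cone. Since the group generated by reflections in $(-2)$-classes acts simply transitively on these chambers, with the ample cone as one of them, we may post-compose the isometry $N_0\cong\Pic(X)$ with $\pm\mathrm{id}$ and a suitable product of $(-2)$-reflections so that $\mathbf{h}$ is carried to an ample class $H$; let $E$ be the image of $\mathbf{e}$. (Equivalently, one cites the standard statement, in the style of Morrison or Huybrechts, realizing a chamber-interior class of positive square as a polarization.) Then $H^2=2g-2$, $E^2=0$ and $E\cdot H=k$, with $H$ ample.

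\emph{The elliptic pencil and base-point freeness.} I claim every irreducible curve $C\equiv aH+bE$ satisfies $a\ge 0$: if $a\le-1$, ampleness of $H$ forces $bk>|a|(2g-2)$, while $C^2\ge-2$ forces $2|a|bk\le a^2(2g-2)+2$, and eliminating $b$ gives $a^2(g-1)<1$, impossible for $g\ge 2$. Therefore $E\cdot C=ak\ge 0$ for every curve $C$, so $E$ is nef; being moreover isotropic and primitive, Saint-Donat's analysis (equivalently, Pjateckii-Shapiro--Shafarevich) shows that $|E|$ is a base-point-free pencil whose general member is a smooth irreducible curve of arithmetic genus one by adjunction — replacing $E$ by such a member (in the same class) yields (2). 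Finally $H$ is ample with $H^2=2g-2\ge 4$, so $|H|$ has no fixed component, and since by Saint-Donat an ample complete linear system on a $K3$ surface can acquire base points only when its self-intersection equals $2$, the system $|H|$ is base-point free; this gives (3).

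\emph{Where the difficulty lies.} All of the above rests on standard $K3$ lattice theory; the only genuinely delicate and $g\ge 3$-dependent ingredients are the two elementary inequalities: that $H$ meets no $(-2)$-wall (needed to realize it as a polarization) and that every effective curve class has non-negative $H$-coefficient (needed for the nefness of $E$, hence for the elliptic pencil). I would expect the actual write-up to devote most of its effort to these two points, and to pinning down the precise form of the period-map/Weyl-chamber statement invoked to make $H$ ample.
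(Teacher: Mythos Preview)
Your overall route—surjectivity of the period map for the lattice, then the Weyl group to make $H$ ample, then Saint-Donat for $E$ and for $|H|$—is sound, and is a more self-contained variant of what the paper does. The paper instead cites a lemma of Knutsen for everything except ampleness, and then checks $H\cdot R=g-2>0$ for the unique $(-2)$-curve class $R=H-\tfrac{g}{k}E$, which exists only when $k\mid g$.

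There is, however, a genuine gap in your base-point-freeness step. The assertion ``by Saint-Donat an ample complete linear system on a $K3$ surface can acquire base points only when its self-intersection equals $2$'' is false on both ends: a polarization with $L^2=2$ is always base-point free (it induces a double cover of $\mathbf{P}^2$), while an ample $L$ with $L^2$ arbitrarily large can have base points. Saint-Donat's actual criterion is that a nef and big $L$ on a $K3$ has a base point if and only if there is an irreducible curve $F$ with $F^2=0$ and $L\cdot F=1$; equivalently $L\sim aF+\Gamma$ with $\Gamma$ a $(-2)$-curve and $F\cdot\Gamma=1$, and for $L$ ample this forces $a\ge3$, hence $L^2=2a-2\ge4$. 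Your preceding line ``$H^2\ge4$, so $|H|$ has no fixed component'' is the same error.

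The repair is immediate in your lattice. A primitive isotropic class is either $E$, with $H\cdot E=k\ge2$, or of the form $aH+bE$ with $a\neq0$ and $a(g-1)+bk=0$; effectivity (your own argument) forces $a\ge1$, whence $H\cdot(aH+bE)=a(g-1)\ge g-1\ge2$. Thus no curve $F$ with $F^2=0$ satisfies $H\cdot F=1$, and the correct Saint-Donat criterion yields base-point-freeness of $|H|$.
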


\begin{proof} This is well known to the experts. By \cite[Lemma~2.2]{knutsen-gonality-k3curves} one only needs to show ampleness of $H$. Since $H$ is nef and lies in the positive cone, it suffices to check $H\cdot R >0$ for all $(-2)$-curves $R$ on $X$. Such curves exist only when $k|g$, and are of class $H- \frac{g}{k}·E$ (to discard effectiveness of $\frac{g}{k}·E-H$, one uses that any divisor in the linear system $\bigl|\frac{g}{k}·E\bigr|$ is a union of $\frac{g}{k}$ curves in the elliptic pencil $|E|$, cf. \cite[Proposition 2.6]{saint:projective-models-of-k3-surfaces}). Hence $H\cdot R=g-2>0$ as required.
\end{proof}

In the sequel, we fix integers $g\geq 3$ and $k\geq 2$, and a $K3$ surface $X$ as given in Proposition \ref{thm-k3 surface}.
Throughout the paper we refer to such an $X$ as a \emph{degree $k$ elliptic $K3$ surface}. We have already used \cite[Proposition 2.6]{saint:projective-models-of-k3-surfaces} guaranteeing the reducibility of any curve in a linear system $|qE|$ with $q\geq2$. For later use, we summarize this statement:

\begin{Lem}\label{multpencil} For any $q\geq 1$, the following hold:
    \begin{enumerate}
        \item\label{symisom} The natural map $\mathrm{Sym}^q H^0(\cO_X(E))\longrightarrow H^0(\cO_X(qE))$ is an isomorphism. In particular, $h^0(X, \cO_X(qE))=q+1$ and $h^1(X, \cO_X(qE))=q-1$.
        \item\label{evmatrix} Under this isomorphism, there is a short exact sequence
        \begin{equation}\label{evalutationseq}
            0\longrightarrow \cO_X(-E)^{\oplus q}\overset{i}{\longrightarrow}\mathrm{Sym}^q H^0(\cO_X(E))\otimes\cO_X\cong \cO_X^{\oplus q+1}\overset{\mathrm{ev}}{\longrightarrow} \cO_X(qE)\longrightarrow 0
        \end{equation}
        where $\mathrm{ev}$ is the evaluation of global sections. More precisely, given a basis $(s,t)$ of $H^0(X, \cO_X(E))$, the maps $\mathrm{ev}$ and $i$ are respectively given by the matrices
        \[
        \begin{pmatrix}
s^q & s^{q-1}t & ... & st^{q-1} & t^q
\end{pmatrix},\;\;
\begin{pmatrix}
t & 0 & 0 &  ... & 0 & 0 \\
-s & t & 0 &  ... & 0 & 0 \\
0 & -s & t &   ... & 0 & 0 \\
0 & 0 & -s &   ... & 0 & 0 \\
\vdots &  \vdots & \vdots &  ... & \vdots & \vdots \\
0 & 0 & 0 &   ... & -s & t \\
0 & 0 & 0 &   ... & 0 & -s
\end{pmatrix}.
        \]
    \end{enumerate}
\end{Lem}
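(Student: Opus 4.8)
The plan is to establish part \eqref{symisom} first and then build the explicit exact sequence in part \eqref{evmatrix} from it. For \eqref{symisom}, I would use that $|E|$ is a base-point-free pencil (since $E$ is a smooth irreducible elliptic curve with $E^2=0$), so $H^0(\cO_X(E))$ is $2$-dimensional; fixing a basis $(s,t)$ identifies $\mathrm{Sym}^q H^0(\cO_X(E))$ with the space of homogeneous degree $q$ polynomials in $s,t$, hence $h^0(\mathrm{Sym}^q)=q+1$. The natural multiplication map to $H^0(\cO_X(qE))$ is injective because $s,t$ are algebraically independent in the function field of $X$ (they cut out distinct members of the pencil). For surjectivity I would invoke \cite[Proposition 2.6]{saint:projective-models-of-k3-surfaces}, already cited in the excerpt: every divisor in $|qE|$ is a sum of $q$ members of $|E|$, so $h^0(\cO_X(qE))\le q+1$; combined with injectivity this forces equality and the map is an isomorphism. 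Finally $h^1(X,\cO_X(qE))=q-1$ follows from Riemann--Roch on the $K3$ surface: $\chi(\cO_X(qE))=\tfrac{(qE)^2}{2}+2=2$ since $E^2=0$, and $h^2(\cO_X(qE))=h^0(\cO_X(-qE))=0$ as $-qE$ is not effective; thus $h^1 = h^0-\chi = (q+1)-2 = q-1$.

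For part \eqref{evmatrix}, I would simply \emph{write down} the Koszul-type complex attached to the pencil $(s,t)$ and check exactness. Tensoring the standard short exact sequence $0\to\cO_X(-E)\xrightarrow{(-t,s)}\cO_X^{\oplus 2}\xrightarrow{(s,t)}\cO_X(E)\to 0$ with $\mathrm{Sym}^{q-1}$ of the pencil produces, after twisting, the claimed sequence; more concretely, the displayed matrix for $i$ is exactly the map $\cO_X(-E)^{\oplus q}\to\cO_X^{\oplus q+1}$ whose $j$-th column is $t\cdot e_j - s\cdot e_{j+1}$, and the composite $\mathrm{ev}\circ i$ is the row vector $(s^q\,\ldots\,t^q)$ applied to these columns, which telescopes to zero term by term. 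Injectivity of $i$ is clear (the matrix has generically full rank, and a kernel element would give a nonzero section of $\cO_X(-E)$, impossible). Exactness in the middle then follows from a rank count: the ranks add up ($q + (q+1-q)\cdot$... ) — precisely, $\mathrm{rk}\,i = q$ generically and $\mathrm{rk}\,\mathrm{ev}=1$ as a map of sheaves is not the right count, so instead I would argue exactness of global sections via part \eqref{symisom} and exactness of the sheaf sequence via the local Koszul structure: at each point where $(s,t)$ does not both vanish — which is everywhere, by base-point-freeness — the two-term sequence $0\to\cO(-E)\xrightarrow{(-t,s)}\cO^{\oplus2}\xrightarrow{(s,t)}\cO(E)\to0$ is exact, and $\mathrm{Sym}^{q-1}$ of an exact sequence of vector bundles is exact.

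The main obstacle I anticipate is not any single hard step but rather bookkeeping: getting the signs and indices in the matrices to match the telescoping identity $\mathrm{ev}\circ i = 0$, and making sure the identification $\mathrm{Sym}^q H^0(\cO_X(E))\otimes\cO_X\cong\cO_X^{\oplus q+1}$ is compatible with the monomial basis $(s^q, s^{q-1}t,\ldots,t^q)$ used to write $\mathrm{ev}$. Once the sheaf sequence is shown exact, taking cohomology and comparing with part \eqref{symisom} (using $h^1(\cO_X(-E))=0$, which again follows from Riemann--Roch since $\chi(\cO_X(-E))=2$ and $h^0=h^2=0$ would give $h^1 = -2<0$, a contradiction, so in fact $h^0(\cO_X(-E))=0$ and one must instead note $\cO_X(-E)$ has no cohomology in degrees $0$ — here I would simply verify $H^0(\cO_X(-E))=H^2(\cO_X(-E))=0$ directly and read off $H^1$) confirms everything is consistent.
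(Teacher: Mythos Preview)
The paper does not prove this lemma; it merely states it as a summary, citing \cite[Proposition~2.6]{saint:projective-models-of-k3-surfaces} for the key input to part~\eqref{symisom}. Your argument for part~\eqref{symisom} is correct and matches that reference.

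For part~\eqref{evmatrix} your instincts are right but the execution is muddled in two places. First, the sentence ``$\mathrm{Sym}^{q-1}$ of an exact sequence of vector bundles is exact'' is false as stated: symmetric powers are not exact functors. What you actually need is the standard fact that for a short exact sequence $0\to L\to V\to M\to 0$ of vector bundles with $L$ a line bundle, the induced sequence
\[
0\longrightarrow L\otimes\mathrm{Sym}^{q-1}V\longrightarrow\mathrm{Sym}^{q}V\longrightarrow\mathrm{Sym}^{q}M\longrightarrow 0
\]
is exact; applied to the Koszul sequence of the base-point-free pencil this yields exactly~\eqref{evalutationseq}. Even cleaner is the observation you nearly make: the pencil $|E|$ defines a morphism $\pi\colon X\to\mathbf{P}^1$ with $\cO_X(qE)=\pi^*\cO_{\mathbf{P}^1}(q)$, and~\eqref{evalutationseq} is simply the flat pullback of the standard resolution $0\to\cO_{\mathbf{P}^1}(-1)^{\oplus q}\to\cO_{\mathbf{P}^1}^{\oplus(q+1)}\to\cO_{\mathbf{P}^1}(q)\to 0$ on $\mathbf{P}^1$, whose kernel is identified as $\cO(-1)^{\oplus q}$ because it is a rank-$q$ bundle of degree $-q$ with no global sections.

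Second, your closing computation contains an error: by Serre duality on a $K3$ surface one has $h^2(\cO_X(-E))=h^0(\cO_X(E))=2$, not $0$. Combined with $\chi(\cO_X(-E))=2$ and $h^0(\cO_X(-E))=0$ this gives $h^1(\cO_X(-E))=0$ with no contradiction. This computation is not actually needed for the lemma, but you should correct it.
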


\subsection{Multiples of pencils on curves on $K3$ surfaces}
It is known that for a general element $[C, A]\in \mathcal{H}_{g,k}$,  the dimensions $h^0(C, A^a)$ are as small as possible, see \cite{ballico} or \cite[Proposition 2.1.1]{coppens-martens}. Those (deformation-theoretic) proofs cannot be easily extended to cover the case of curves lying on a $K3$ surface. The following statement is of independent interest and will turn out to be of importance in showing the non-emptiness of the variety $W^r_{d, r+2-k}(C,A)$, if $C$ lies on an elliptic $K3$ surface.

\begin{Thm}\label{thm:ballico}
Let $X$ be a degree $k$ elliptic $K3$ surface with $\mathrm{Pic}(X)\cong \mathbb Z\cdot H\oplus \mathbb Z\cdot E$. Denoting by $E_C$ the restriction $\cO_C(E)$, then for a general curve $C\in |H|$ we have that 
$$h^0\bigl(C, E_C^a)=\mathrm{max}\bigl\{ak+1-g, a+1\bigr\}.$$
\end{Thm}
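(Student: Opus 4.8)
The plan is to compute $h^0(C, E_C^a)$ for a general $C \in |H|$ by relating sections on the curve to sections on the $K3$ surface $X$ via restriction, and then invoking Lemma \ref{multpencil}. First I would note that by Riemann--Roch on $C$, since $\deg E_C^a = ak$, one always has $h^0(C, E_C^a) \geq ak + 1 - g$, and the inequality $h^0(C, E_C^a) \geq a+1$ holds because the sections $H^0(X, \cO_X(aE))$, which by Lemma \ref{multpencil}\eqref{symisom} form a space of dimension $a+1$, restrict to linearly independent sections on a general curve $C$ (the restriction $H^0(X,\cO_X(aE)) \to H^0(C, E_C^a)$ is injective since $H^0(X, \cO_X(aE - H)) = 0$, as $aE - H$ is not effective on $X$). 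So both lower bounds are automatic; the content is the upper bound.

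For the upper bound I would argue by degeneration inside the linear system $|H|$, or directly on a general $C$. Consider the two ranges separately. When $ak + 1 - g \leq a + 1$, i.e. $ak \leq g$, I claim a general $C$ has $h^0(C, E_C^a) = a+1$; equivalently the restriction map $H^0(X, \cO_X(aE)) \to H^0(C, E_C^a)$ is an isomorphism. From the exact sequence $0 \to \cO_X(aE - H) \to \cO_X(aE) \to E_C^a \to 0$ and $H^0(X, \cO_X(aE-H)) = 0$, surjectivity of restriction follows once $H^1(X, \cO_X(aE - H)) = 0$. By Serre duality this is $H^1(X, \cO_X(H - aE))^\vee$; since $H - aE$ has positive self-intersection $2g - 2 - 2ak \geq -2$ is not quite enough, I would instead use that $H - aE$ is big and nef (or effective with connected generic member) for $ak \leq g - 1$ via Proposition \ref{thm-k3 surface}(3) and Lemma \ref{multpencil}, so Kawamata--Viehweg or the standard $K3$ vanishing $H^1 = 0$ for such divisors applies; the boundary case $ak = g$ can be handled by a direct check using that $|H - aE|$ then consists of $(-2)$-curves or is empty. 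When $ak \geq g$, so $ak + 1 - g \geq a+1$, I would show the restriction map $H^0(X, \cO_X(aE)) \to H^0(C, E_C^a)$ is injective but the target is strictly bigger, and compute $h^0(C, E_C^a) = ak + 1 - g$ by showing $h^1(C, E_C^a) = 0$ for general $C$; by Serre duality $h^1(C, E_C^a) = h^0(C, \omega_C \otimes E_C^{-a}) = h^0(C, \cO_C(H - aE)|_C)$, and since $H - aE$ has negative intersection with $E$ and negative self-intersection (when $ak > g-1$), one shows $\cO_C(H - aE)$ has no sections on a general $C$ by a dimension count / specialization argument, e.g. degenerating $C$ to a nodal union of a curve in $|H-E|$ and members of $|E|$ and using Lemma \ref{multpencil} inductively.

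The cleanest uniform approach is probably induction on $a$ using the reducibility of pencils: degenerate $C \in |H|$ to $C' + J$ with $C' \in |H - E|$ and $J \in |E|$, and relate $h^0(C, E_C^a)$ to $h^0(C', E_{C'}^a)$ plus a boundary contribution, iterating this together with the exact sequence \eqref{evalutationseq} from Lemma \ref{multpencil} which pins down exactly how sections of $\cO_X(aE)$ behave. I expect \textbf{the main obstacle} to be the boundary/transition case $ak = g$ (and nearby), where the two candidate values $ak + 1 - g$ and $a+1$ coincide or nearly coincide and where $H - aE$ may be represented by $(-2)$-curves, forcing a careful analysis of which curves $C \in |H|$ are ``general'' enough; here one must rule out that the restriction map drops rank on the whole linear system, which is where the precise Picard lattice of $X$ and the irreducibility of $|H|$ (Proposition \ref{thm-k3 surface}) must be used, rather than a soft vanishing argument. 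A secondary subtlety is ensuring semicontinuity is applied in the right direction: one wants the \emph{generic} value, so exhibiting a single curve (possibly reducible, via the degeneration above) achieving the claimed $h^0$ suffices for the upper bound by upper-semicontinuity of $h^0$, and combined with the unconditional lower bounds this forces equality on a dense open subset of $|H|$.
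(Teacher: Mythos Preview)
Your overall plan—degenerate $C \in |H|$ to a reducible curve $C' + J$ with $C' \in |H-E|$, $J \in |E|$ and use semicontinuity—is exactly the paper's approach, and you are right that the delicate point is the transition $a(k-1) \approx g$. But there is a genuine gap in how you set up the limit. You propose to relate $h^0(C, E_C^a)$ to $h^0(C', E_{C'}^a)$ via the naive restriction of $\cO_X(aE)$ to $C' + J$. This does not give a useful bound: since $\cO_J(aE) \cong \cO_J$, Mayer--Vietoris yields $h^0(C'+J, \cO(aE)) \leq h^0(C', E_{C'}^a) + 1$, and in the range $a(k-1) \geq g-k$ induction gives $h^0(C', E_{C'}^a) = ak + 1 - g + k$, which is far too large to force a contradiction. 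The paper's key move is to \emph{twist} the limit line bundle by $\cO(-J')$ on the total space of the pencil (trivial on smooth fibres, so semicontinuity still applies). Restricted to $C'$ this becomes $E_{C'}^{a-1}$, and to $J'$ it becomes $\cO_J(x_1 + \cdots + x_k)$; the resulting identification
\[
H^0\bigl(Y_1, \epsilon^*(E^a)(-J)\bigr) \;\cong\; \ker\Bigl\{H^0(C', E_{C'}^{a-1}) \oplus H^0\bigl(J, \cO_J(\textstyle\sum x_i)\bigr) \to \mathbb{C}_{x_1,\ldots,x_k}\Bigr\}
\]
is what makes the induction go through. So the descent is on the pair $(g, a) \mapsto (g-k, a-1)$, not ``induction on $a$'' with $a$ fixed on both sides.

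At the transition case you flag, the twisted bound still falls one short, because the evaluation $H^0(C', E_{C'}^{a-1}) \to \mathbb{C}_{x_1,\ldots,x_k}$ has too small an image. The paper's fix is not a soft vanishing but a \emph{second} specialization: one chooses the pencil in $|H|$ so that the auxiliary smooth member $Y_0$ passes through $k-2$ of the nodes $x_3, \ldots, x_k$; after blowing up, the special fibre acquires exceptional $\mathbf{P}^1$'s and only two nodes remain between the strict transforms of $C'$ and $J$. Surjectivity of evaluation at just two points then follows from the jump $h^0(C', E_{C'}^{a-1}) - h^0(C', E_{C'}^{a-2}) \geq 2$ supplied by the inductive hypothesis. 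Your direct vanishing approach does cover the ranges $ak \leq g-1$ (via $H^1(X, \cO_X(H-aE)) = 0$) and $ak > 2g-2$ (via $\deg(\omega_C \otimes E_C^{-a}) < 0$), but the intermediate range genuinely requires this refined degeneration. Two minor slips: $(H-aE) \cdot E = k > 0$, not negative; and Lemma \ref{multpencil}/sequence \eqref{evalutationseq} are not used in the paper's proof of this theorem.
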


Theorem \ref{thm:ballico} states that $h^0(C, E_C^a)=a+1$ for $g\geq a(k-1)$, whereas for $g<a(k-1)$ one has $H^1(C, E_C^a)=0$, therefore by Riemann-Roch 
$h^0\bigl(C, E_C^a\bigr)=ak+1-g$.

\begin{proof}
Let us write $g=n(k-1)+b$, where $b\leq k-2$. We prove by induction on $n$ that for a general element $C\in |H|$ one has 
\begin{equation}\label{eq:dim_multiples}
h^0\bigl(C, E_C^a\bigr)=\mbox{max}\{ak+1-g, a+1\}.
\end{equation}

Assume first $g\leq k-1$. We pick any smooth curve $C\in |H|$ and consider the twist of the short exact sequence $0\longrightarrow \cO_X(-C)\longrightarrow \cO_X\longrightarrow \cO_C\longrightarrow 0$ by $\cO_X(E)$. Taking cohomology we obtain the exact sequence
$$ H^1(\cO_X(E))\longrightarrow H^1(C, E_C)\longrightarrow H^2(\cO_X(E-C)).$$
Since $(C-E)^2=2g-2-2k\leq -4$, it follows that $0=h^0(\cO_X(C-E))=h^2(\cO_X(E-C))$. Together with the vanishing $h^1(\cO_X(E))=0$, this yields $h^1(E_C)=0$ which proves the case $a=1$. For $a\geq2$, observe that $h^1(E_C^a)=0$ since
\[
H^1(E_C^a)^\vee\cong H^0(\omega_C\otimes E_C^{-a}))\subset H^0(\omega_C\otimes E_C^{-1})\cong H^1(E_C)^\vee=0,
\]
which proves (\ref{eq:dim_multiples}) for $g\leq k-1$.

\vskip 3pt

Assume now $g\geq k$, in which case $|H-E|\neq \emptyset$. We aim to establish (\ref{eq:dim_multiples}) for a general curve $Y\in |H|$. We pick general curves $C\in |H-E|$ and $J\in |E|$, and write $C\cdot J=x_1+\cdots+x_k$. We choose a smooth curve $Y_0\in |H|$ and consider the pencil in $|H|$ spanned by $Y_0$ and $C+J$. Write 
$$Y_0\cdot J=y_1+\cdots+y_k \ \mbox{ and } \ Y_0\cdot C=z_1+\cdots+z_{2g-2-k}.$$ 

Let $\epsilon \colon \widetilde{X}\longrightarrow X$ be the blow-up of $X$ at the $2g-2$ points $y_1, \ldots, y_k, z_1, \ldots, z_{2g-2-k}$, and denote by $E_{y_1}, \ldots, E_{y_k}$ and $E_{z_1}, \ldots, E_{z_{2g-2-k}}$ the corresponding exceptional divisors on $\widetilde{X}$. We write $C'$ (resp.~$J'$) for the strict transform of $C$ (resp.~$J$). We then have a fibration $f\colon \widetilde{X}\rightarrow \mathbf{P}^1$ induced by the linear system $$\bigl|\epsilon^*(H)-E_{y_1}-\cdots-E_{y_k}-E_{z_1}-\cdots-E_{z_{2g-2-k}}\bigr|$$ and having the curve $Y_1:=C'+J'$ as a fibre. To simplify notation, we identify $C$ with $C'$ and $J$ with $J'$ on $\widetilde{X}$.

\vskip 4pt

Assume (\ref{eq:dim_multiples}) fails for a given $a\in \mathbb N$, for every curve $Y\in |H|$. Then by semicontinuity
\begin{equation}\label{semicont}
h^0\bigl(Y_1, \epsilon^*(E^a)(-J)\bigr)>\mathrm{max}\{a+1, ak+1-g\}. 
\end{equation}
Note that one has the canonical identification
\begin{equation}\label{eq:twist_j}
H^0\bigl(Y_1,\epsilon^*(E^a)(-J)\bigr)\cong \mbox{Ker}\Bigl\{ H^0\bigl(C, E_C^{a-1}\bigr)\oplus H^0\bigl(J, \cO_J(x_1+\cdots+x_k)\bigr)\stackrel{\mathrm{ev}}\longrightarrow \mathbb C_{x_1, \ldots, x_k}\Bigr\}.
\end{equation}

\vskip 3pt

If $b=0$, thus $g=n(k-1)$, we write $g(C)=g-k=(n-2)(k-1)+k-2$. If $a\geq n+1$, then $a-2\geq n-1$ and by induction we observe that 
$h^0\bigl(C, E_C^{a-1}\bigr)-h^0\bigl(C, E_C^{a-2}\bigr)=k$, hence the evaluation $\mathrm{ev}\colon H^0\bigl(C, E_C^{a-1}\bigr)\longrightarrow \mathbb C_{x_1, \ldots, x_k}$ is surjective. Using (\ref{eq:twist_j}) we obtain 
\[
h^0\bigl(Y_1, \epsilon^*(E^a)(-J)\bigr)=h^0\bigl(C, E_C^{a-1}\bigr)+h^0\bigl(J, \cO_J(x_1+\cdots+x_k)\bigr)-k=ak+1-g,
\]
which contradicts \eqref{semicont}.  If on the other hand $a\leq n-1$, then $a-1\leq n-2$ and from (\ref{eq:twist_j}) we obtain $h^0\bigl(Y_1, \epsilon^*(E^a)(-J)\bigr)\leq h^0\bigl(C, E_C^{a-1}\bigr)+1=a+1$, contradicting \eqref{semicont} again.

\vskip 3pt

If $b\geq 1$, we have $g'=g(C)=g-k=(n-1)(k-1)+b-1$, and by induction (\ref{eq:dim_multiples}) holds for $C$. If $a\leq n$, then $a-1\leq n-1$ and using (\ref{eq:twist_j}) we obtain 
$$h^0\bigl(Y_1, \epsilon^*(E^a)(-J)\bigr)\leq h^0\bigl(C, E_C^{a-1})+1=a+1,$$ which violates (\ref{semicont}). If $a\geq n+2$, then by induction $h^0\bigl(C, E_C^{a-1}\bigr)=(a-1)k+1-g+k=ak+1-g$
and the map $\mathrm{ev}\colon H^0(C, E_C^{a-1})\rightarrow \mathbb C_{x_1, \ldots, x_k}$ is surjective, which implies that 
the map $\mathrm{ev}\colon H^0(C, E_C^{a-1})\rightarrow \mathbb C_{x_1, \ldots, x_k}$ is surjective as well. It follows from \eqref{eq:twist_j} that $h^0\bigl(Y_1, \epsilon^*(E^a)(-J)\bigr)=h^0(C, E_C^{a-1})=ak+1-g$, contradiction.

\vskip 3pt

This reasoning leaves the cases (i) $b=0, a=n$, respectively, (ii) $b\geq 1, a=n+1$ uncovered. Since they are handled in an essentially identical way, we describe in details only case (i). Thus $g=n(k-1)$ and by the induction hypotheses $h^0\bigl(C, E_C^{n-1}\bigr)=n+1$, while $h^0\bigl(C, E_C^{n-2}\bigr)=n-1$. In order to conclude,  it suffices to show that the map $\mathrm{ev}$ in (\ref{eq:twist_j}) is surjective.

\vskip 3pt

We specialize further and choose $Y_0\in |H|$ to be a smooth curve general with respect to the property that it passes through the points $x_3, \ldots, x_k$. In this case, keeping the notation above, $y_j=z_j=x_j$ for $j=3, \ldots, k$ and the fibration $f\colon \widetilde{X}\rightarrow \mathbf{P}^1$ has the curve $Y_1:=C'+J'+E_{x_3}+\cdots+E_{x_k}$ as one of its fibres. Note $C'\cdot J'=x_1+x_2$ and $E_{x_i}\cdot C'=z_i$ and $E_{x_i}\cdot J'=y_i$ respectively, where $\epsilon(y_i)=\epsilon(z_i)=x_i$ for $i=3, \ldots, k$. Assuming (\ref{eq:dim_multiples}) fails for every curve in $|H|$, we have
$h^0\bigl(Y_1, \epsilon^*(E^n)(-J'-E_{x_3}-\cdots -E_{x_k})\bigr)\geq n+2$.

After identifying the curves $C'$ with $C$ and $J'$ with $J$ respectively, the restrictions of the twist $\beta:=\cO_{\widetilde{X}}\bigl(-J'-E_{x_3}-\cdots-E_{x_k}\bigr)$ to the components of $Y_1$ are
given by $$\beta_{| C} \cong \cO_C(-x_1-\cdots-x_k), \ \beta|_{J}\cong \cO_{J}(x_1+x_2) \  \mbox{ and } \beta_{|E_{x_i}}\cong \cO_{E_{x_i}}(1) \  \mbox{ for } i=3, \ldots, k.$$
Since each map $\mathrm{ev}_i\colon H^0\bigl(E_{x_i}, \cO_{E_{x_i}}(1)\bigr)\rightarrow \mathbb C_{z_i, y_i}$ is an isomorphism, by writing down the Mayer-Vietoris sequence on $Y_1$ one has the canonical identification 
$$H^0\bigl(Y_1, \epsilon^*(E^n)(-\beta)\bigr)=\mbox{Ker}\Bigl\{H^0\bigl(C, E_C^{n-1}\bigr)\oplus H^0\bigl(J, \cO_J(x_1+x_2)\bigr)\longrightarrow \mathbb C_{x_1, x_2}\Bigr\}.$$
Now observe that by varying $J\in |E|$ while fixing $C$, the points $x_1, x_2\in C$ can be chosen such that the evaluation map $H^0\bigl(C, E_C^{n-1})\rightarrow \mathbb C_{x_1, x_2}$ is surjective ($n\geq 2$), for else, we would obtain that $H^0\bigl(C, E_C^{n-2}\bigr)$ has codimension $1$ in $H^0\bigl(C, E_C^{n-1}\bigr)$, which is not the case. Therefore  $h^0\bigl(Y_1, \epsilon^*(E_C^{n-1})(-\beta)\bigr)=h^0(C, E_C^{n-1})=n+1$. This finishes the proof.
\end{proof}

As observed in \cite{coppens-martens} for general $k$-gonal curves, Theorem \ref{thm:ballico} implies the following:

\begin{Prop}\label{prop:coppens-martens}
Let $X$ be an elliptic $K3$ surface with $\mathrm{Pic}(X)\cong \mathbb Z\cdot H\oplus \mathbb Z\cdot E$ such that $H\cdot E=r+2$. Then for every $d\leq g-1$ such that $\rho(g,r,d)\geq 0$, there exists a component $Z$ of $W^r_d(C)$  having dimension $\rho(g,r,d)$, whose general point corresponds to a line bundle $L$ with $H^0(C, L\otimes E_C^{\vee})=0$.
\end{Prop}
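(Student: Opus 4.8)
The plan is to realize $W^r_d(C)$ for a general curve $C\in|H|$ as a specialization of the loci $V^r_{d,\ell}(C,A)$ studied in Section \ref{HBNloci}, in the extremal case $\ell = r+2-k$, and then use Theorem \ref{thm:ballico} to control the multiplication maps involved. Since $H\cdot E = r+2 = k$, the constraint $\max\{0,r+2-k\}\le \ell\le r$ forces us to look at $\ell = 0$; but the interesting component is the one where the non-negative part of the splitting type of $\pi_*L$ is exactly $(e,\dots,e)$ with $\ell = r+2-k$, i.e. where $r+1-\ell = k-1$. First I would set $\ell := r$ is \emph{not} the right choice here — rather, the relevant parameter is $\ell$ with $r+1-\ell = k-1$, giving $\ell = r+2-k$; one checks $\ell \ge 0$ precisely when $r\ge k-2$, while for $r \le k-2$ the whole bundle $\pi_*L$ fits into non-negative part of rank $r+1 \le k-1$ and the claim is essentially the statement that a general $L\in W^r_d(C)$ on a general $k$-gonal curve satisfies $h^0(C,L\otimes E_C^\vee)=0$. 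In either regime, the key point is to produce a component $Z$ of $W^r_d(C)$ of dimension exactly $\rho(g,r,d)$ whose general member $L$ has $H^0(C, L\otimes E_C^\vee)=0$.

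The concrete strategy I would follow: use the determinantal presentation of Proposition \ref{prop:degloc_2}. Take $e = \lfloor \ell/(r+1-\ell)\rfloor$ for the appropriate $\ell$, write $L = M\otimes A^{e+1}$ with $M\in V^{m_1-1}_{d-(e+1)k}(C)$, and build the degeneracy locus $\mathfrak{Deg}_{(C,A)}(\phi)$ associated to the multiplication map
\[
\phi_{C,A,M}\colon H^0(C,A)\otimes H^0(C,\omega_C\otimes M^\vee)\longrightarrow H^0(C,\omega_C\otimes M^\vee\otimes A).
\]
By Proposition \ref{prop:degloc_2} every component of $\mathfrak{Deg}_{(C,A)}(\phi)$ has dimension $\ge \rho(g,r-\ell,d)-\ell k = \rho(g,r,d)$ when $\ell = r+2-k$ (the Brill–Noether number is unchanged because, as noted in the introduction, $\rho(g,r-\ell,d)-\ell k$ coincides with $\rho(g,r,d)$ exactly in this extremal case — more precisely $\rho_k(g,r,d)$ attains its $\ell=0$ value, and one verifies $\rho(g, r-\ell,d)-\ell k = \rho(g,r,d)$ directly). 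To get the \emph{upper} bound on $\dim Z$ and the genericity of the condition $H^0(C,L\otimes E_C^\vee)=0$, I would feed in Theorem \ref{thm:ballico}: for a general $C\in|H|$ the spaces $H^0(C, E_C^a)$ have minimal dimension, which by the base-point-free pencil trick (Proposition \ref{prop:bpf_trick}) forces the multiplication map $\phi_{C,A,M}$ to have maximal rank for a general $M$, hence the degeneracy locus has the expected dimension $\rho(g,r,d)$ and its general point $M$ satisfies $h^0(C,M)=m_1$ and $H^0(C,M\otimes A^\vee)=0$; translating back via $L = M\otimes A^{e+1}$ gives $H^0(C, L\otimes E_C^\vee)=0$.

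The main obstacle, I expect, is the transversality/maximal-rank statement for $\phi_{C,A,M}$: Theorem \ref{thm:ballico} controls $h^0(C,E_C^a)$ but not directly the rank of the multiplication map twisted by an arbitrary line bundle $M$ varying in a Brill–Noether locus. The way around this is the degeneration already set up in the proof of Theorem \ref{thm:ballico} — degenerate $C\in|H|$ to $C_0 + J$ with $C_0\in|H-E|$ and $J\in|E|$, use the Mayer–Vietoris identification \eqref{eq:twist_j} to reduce the rank computation for $\phi$ to the analogous (lower genus) statement on $C_0$ together with the elementary surjectivity of evaluation maps on $J\cong$ elliptic curve and on the exceptional curves, and close the loop by induction on $n$ where $g = n(k-1)+b$. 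Once maximal rank is established on the special fiber, semicontinuity propagates it to the general $C\in|H|$, and since the degeneracy locus then has exactly the expected dimension $\rho(g,r,d)$ it has a component $Z$ of that dimension, whose general point by construction avoids the Brill–Noether locus cutting out $H^0(C,L\otimes E_C^\vee)\ne 0$. I would finish by invoking \cite[Proposition 2.3.1]{coppens-martens} (equivalently the discussion around Definition \ref{def:wtilde}) to guarantee $\widetilde{W}^{m_1-1}_{d-(e+1)k}(C)\ne\emptyset$, so that $Z$ is genuinely non-empty.
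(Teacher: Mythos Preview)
Your proposal takes an entirely different and far more elaborate route than the paper's, and has a genuine gap.

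The paper's proof is a direct ten-line construction that never touches degeneracy loci, degeneration, or induction. Set $a:=g-d+r-1$ and $A:=E_C$. Since $\rho(g,r,d)\ge 0$ means $g\ge (a+1)(r+1)$, Theorem~\ref{thm:ballico} gives $h^0(C,A^a)=a+1$ and $h^0(C,A^{a+1})=a+2$; by Serre duality and Riemann--Roch (using $k=r+2$) this yields
\[
h^0(C,\omega_C\otimes A^{-a})=g-a(r+1)=\rho(g,r,d)+(r+1),\qquad
h^0(C,\omega_C\otimes A^{-a-1})=\rho(g,r,d).
\]
Now for a general effective divisor $D$ of degree $\rho(g,r,d)$, the line bundle $L:=\omega_C\otimes A^{-a}(-D)$ has degree $d$ and $h^0(C,L)=r+1$, and these $L$ fill up a component $Z$ of $W^r_d(C)$ of dimension $\rho(g,r,d)$ (cf.\ \cite[Proposition~2.3.1]{coppens-martens}). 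For the vanishing, $L\otimes A^{\vee}=\omega_C\otimes A^{-a-1}(-D)$, and since $h^0(C,\omega_C\otimes A^{-a-1})=\rho(g,r,d)=\deg D$, a general $D$ kills all sections. That is the entire argument: Theorem~\ref{thm:ballico} is used only to compute two numbers, and the Serre-dual construction of $L$ does the rest.

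Your approach instead tries to realise the component via the degeneracy-locus machinery of Section~\ref{HBNloci} at $\ell=0$, and then to bound its dimension from above by proving a maximal-rank statement for the multiplication map $\phi_{C,A,M}$ via degeneration to $C_0+J$. The logical gap is in the sentence ``forces the multiplication map $\phi_{C,A,M}$ to have maximal rank for a general $M$, hence the degeneracy locus has the expected dimension $\rho(g,r,d)$'': maximal rank at a \emph{general} $M$ only shows the degeneracy locus is a proper closed subvariety of $V^{-1}_{d-k}(C)$, not that every component has dimension $\le\rho(g,r,d)$. For the latter you would need control of $\phi$ along the degeneracy locus itself, which your degeneration sketch does not provide and which is a substantially harder statement than anything in Theorem~\ref{thm:ballico}. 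You are essentially trying to reprove the dimension bound of Corollary~\ref{HBNupperbound} (which in the paper uses the full Bridgeland machinery of Section~\ref{stratification}) by ad hoc means, when the direct Serre-dual construction bypasses the need for any upper bound on $\dim W^r_d(C)$ altogether.
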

\begin{proof}
We set $a:=g-d+r-1$. Note that $g\geq (a+1)(r+1)$, hence by using Theorem \ref{thm:ballico}, we have that $h^0(C, A^a)=a+1$, therefore $h^0(C, \omega_C\otimes A^{-a})=g-a(r+1)$. It follows that for a general effective divisor $D$ on $C$ of degree $\rho(g,r,d)=g-(a+1)(r+1)$, one has  $h^0\bigl(C,\omega_C\otimes A^{-a}(-D)\bigr)=r+1$.  Line bundles $L:=\omega_C\otimes A^{-a}(-D)$ of this type fill-up a component $Z$ of $W^r_d(C)$ of dimension 
$\rho(g,r,d)$, see also \cite[Proposition 2.3.1]{coppens-martens}. Since Theorem \ref{thm:ballico} also guarantees that $h^0(C, A^{a+1})=a+2$, we also have $h^0\bigl(C, \omega_C\otimes A^{-a-1}\bigr)=g-(a+1)(r+1)=\rho(g,r,d)$, therefore $$H^0\bigl(C, L\otimes A^{\vee}\bigr)=H^0(C, \omega_C\otimes A^{-a-1}(-D))=0,$$ for a general point $L\in Z$, corresponding to a general divisor $D$ of degree ${\rho(g,r,d)}$.
\end{proof}

\subsection{Derived categories of $K3$ surfaces} %Basics on Mukai vectors and stability conditions.} 
Let $\cD(X)$ denote the bounded derived category of coherent sheaves on $X$. For an object $F\in\cD(X)$, we will denote its Chern character, respectively, its Mukai vector by
\begin{gather*}
\ch(F):=\bigl(\ch_0(F),\ch_1(F),\ch_2(F)\bigr),\\
v(F):=
\ch(F)\cdot\sqrt{\mathrm{td}(X)}=\bigl(\ch_0(F),\ch_1(F),\ch_2(F)+\ch_0(F)\bigr).
\end{gather*}
Under the canonical identifications of $H^0(X,\bZ)$ and $H^4(X,\bZ)$ with $\bZ$, this rule defines surjective maps $\ch, v \colon K_0(\cD(X)) \rightarrow \Lambda := \bZ\oplus\Pic(X)\oplus\bZ$. We consider the following symmetric bilinear form on $\Lambda$:
\begin{equation}\label{def:quadraticform}
\begin{split}
\bigl\langle(r_1,x_1H+y_1E,s_1),(r_2,x_2H+y_2E,s_2)\bigr\rangle := \\
(x_1H+y_1E)\cdot (x_2H+y_2E)-r_1s_2-r_2s_1.
\end{split}
\end{equation}
By Riemann-Roch, for given $F,F'\in\cD(X)$ we have
\[
-\bigl\langle v(F),v(F')\bigr\rangle = \chi(F,F'):=\sum_i (-1)^i\cdot \ext^i(F,F'),
\]
where $\ext^i(F,F'):=\dim_\bC \Ext^i(F,F')$ (we will write $\hom(F,F'):=\dim_\bC \Hom(F,F')$ in the case $i=0$). We also recall that the quantity 
\[\Delta(F):=\ch_1(F)^2-2\ch_0(F)\ch_2(F)=v(F)^2+2\ch_0(F)^2
\]
is called the \emph{discriminant} of an object $F\in\cD(X)$. 

\vspace{1mm}

Let us fix $\epsilon \in \mathbb{Q}_{>0}$ %such that 
%\begin{equation}\label{epsilon}
%0 < \epsilon < \frac{H.E}{H^2},
%\end{equation}
and set 
\begin{equation}\label{polarisation}
H_{\epsilon} := E + \epsilon H\in\Pic(X)_\bQ.    
\end{equation}
As a result of Proposition \ref{thm-k3 surface}, the class $H_{\epsilon}$ lies in the ample cone. Consider the projection
\begin{align}\label{pro}
    \Pi_{\epsilon}: K_0(\cD(X))\setminus \{F:\ch_0(F)=0\} & \longrightarrow\bR^2\nonumber\\
    F &\longmapsto \left(\frac{H_{\epsilon}\cdot\ch_1(F)}{H_{\epsilon}^2\cdot\ch_0(F)},\frac{\ch_2(F)}{H_{\epsilon}^2\cdot \ch_0(F)}\right).
\end{align}
The following technical lemma plays a crucial role in the next subsection (as it will guarantee that \eqref{U} defines a set of Bridgeland stability conditions). It establishes that there is no sequence of spherical classes whose projections accumulate towards the origin. Recall that a class $F\in K_0(\cD(X))$ is called \emph{spherical} if $v(F)^2=-2$. 

\begin{Lem}\label{lem-no spherical}
There is no sequence of vectors $v_n=(r_n, \, t_nH+u_nE,\, s_n)\in\Lambda$ such that the following conditions hold:
\begin{enumerate}
    \item\label{lem-no spherical:a} $r_n \neq 0$ and $v_n \neq \pm(1, 0,1)$, % and $s_n\geq r_n>0$, 
    \item\label{lem-no spherical:b} $v_n$ is the Mukai vector of a spherical class, i.e. $v_n^2=(t_nH+u_nE)^2-2r_ns_n =-2$,
    \item\label{lem-no spherical:c} $\frac{H_{\epsilon}\cdot (t_nH+u_nE)}{r_n}\underset{n}{\longrightarrow}0$ and $\frac{s_n}{r_n} \underset{n}{\longrightarrow}1$. 
\end{enumerate}
\end{Lem}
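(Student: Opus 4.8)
The plan is to suppose that such a sequence $(v_n)$ exists and derive a contradiction, using only the Hodge index theorem on the rank-two lattice $\Pic(X)$ and the fact that $\epsilon\in\bQ_{>0}$. First I would reduce to the case $r_n>0$ for all $n$: passing to a subsequence and, if necessary, replacing each $v_n$ by $-v_n$ preserves conditions \eqref{lem-no spherical:a}--\eqref{lem-no spherical:c}, so we may assume $r_n\geq 1$. Writing $\epsilon=p/q$ with $p,q\in\bZ_{>0}$ and $D_n:=t_nH+u_nE\in\Pic(X)$, I would record two elementary facts about $H_{\epsilon}=E+\epsilon H$. Since $H_{\epsilon}^2=2\epsilon k+\epsilon^2(2g-2)>0$ and the intersection form on $\Pic(X)$ has signature $(1,1)$ (Hodge index theorem), the line $H_{\epsilon}^{\perp}\subset\Pic(X)_{\bR}$ is negative definite, and the Hodge index inequality $(H_{\epsilon}\cdot D)^2\geq H_{\epsilon}^2\,D^2$ holds for every $D\in\Pic(X)_{\bR}$. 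Moreover $H_{\epsilon}\cdot D\in\tfrac1q\bZ$ for all $D\in\Pic(X)$, because $H_{\epsilon}\cdot H=k+\epsilon(2g-2)$ and $H_{\epsilon}\cdot E=\epsilon k$ both lie in $\tfrac1q\bZ$.

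Next I would unpack conditions \eqref{lem-no spherical:b}--\eqref{lem-no spherical:c}. The sphericality relation \eqref{lem-no spherical:b} reads $D_n^2=2r_ns_n-2$, and from $s_n/r_n\to 1$ we get $s_n\geq 1$ for $n\gg 0$, hence $D_n^2\geq 0$ eventually and $D_n^2/r_n^2=2(s_n/r_n)-2/r_n^2\to 2$. Now the argument splits. If $D_n=0$ for infinitely many $n$, then $2r_ns_n-2=D_n^2=0$ for such $n$, forcing $r_n=s_n=1$ and hence $v_n=(1,0,1)$, which contradicts \eqref{lem-no spherical:a}. Otherwise $D_n\neq 0$ for all large $n$; since $D_n^2\geq 0$ and $H_{\epsilon}^{\perp}$ is negative definite we have $H_{\epsilon}\cdot D_n\neq 0$, so $|H_{\epsilon}\cdot D_n|\geq 1/q$ by the second fact above. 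Together with $H_{\epsilon}\cdot D_n/r_n\to 0$ this forces $r_n\to\infty$ (a bounded subsequence of the $r_n$ would make $H_{\epsilon}\cdot D_n\to 0$). But then, dividing the Hodge index inequality by $r_n^2$ and letting $n\to\infty$,
\[
\Bigl(\frac{H_{\epsilon}\cdot D_n}{r_n}\Bigr)^{2}\ \geq\ H_{\epsilon}^2\cdot\frac{D_n^2}{r_n^2}\ \longrightarrow\ 2H_{\epsilon}^2\ >\ 0,
\]
contradicting $H_{\epsilon}\cdot D_n/r_n\to 0$. In both cases we reach a contradiction, so no such sequence exists.

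I expect the only genuinely delicate point to be the borderline regime in which $r_n$ stays bounded and $D_n^2=0$ (equivalently $r_n=1$): there the Hodge index inequality degenerates to $0\geq 0$ and carries no information. This is precisely where the two auxiliary facts do the work — that a nonzero isotropic class cannot be orthogonal to $H_{\epsilon}$, and that the values $H_{\epsilon}\cdot D$ form a discrete subset of $\bR$ because $\epsilon$ is rational — which together pin down $D_n=0$ and hence $v_n=(1,0,1)$, the one case excluded by hypothesis. Everything else is routine bookkeeping of the limit conditions in \eqref{lem-no spherical:c}.
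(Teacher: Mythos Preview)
Your proof is correct and takes a genuinely different, more conceptual route than the paper's. The paper argues by explicit computation: it solves the spherical equation for $u_nk$, substitutes into $b\cdot H_\epsilon\cdot(t_nH+u_nE)/r_n$, splits the result into two terms, and shows by case analysis (including a separate treatment of the regime $r_n=s_n=\pm1$) that these terms cannot both tend to zero. By contrast, you package the geometry into two clean facts --- the Hodge index inequality $(H_\epsilon\cdot D)^2\geq H_\epsilon^2\, D^2$ and the discreteness $H_\epsilon\cdot\Pic(X)\subset\tfrac1q\bZ$ coming from $\epsilon\in\bQ$ --- and let them do the work. Your argument is shorter, basis-free, and visibly generalizes to any Picard lattice of signature $(1,\rho-1)$ with a rational polarization; the paper's computation, while elementary, is tailored to the rank-two lattice with the explicit basis $H,E$.

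One small expository point: the claim $D_n^2/r_n^2\to 2$ in your setup paragraph is only valid once $r_n\to\infty$ (if $r_n\equiv 1$ the limit is $0$). You only invoke it in Case~2 after establishing $r_n\to\infty$, and your final paragraph correctly flags this borderline regime, so the logic is sound; just move that limit computation into Case~2 to avoid any ambiguity.
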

\begin{proof}
    Write $\epsilon=\frac{a}{b}$ ($a,b\in\bZ_{>0}$). %We may assume $H_{\epsilon}\cdot (t_nH+u_nE) < 0$. 
    If $t_n=0$, then $s_n= r_n = \pm 1$ by \eqref{lem-no spherical:b} and thus $u_n=0$ for all $n\gg0$ by \eqref{lem-no spherical:c}. Hence we may assume $t_n\neq 0$ for all $n$, so that \eqref{lem-no spherical:b} is equivalent to
    \begin{equation}\label{u-n}
    u_nk=\frac{r_ns_n}{t_n}-\frac{1}{t_n}-t_n(g-1).    
    \end{equation}
    
    We obtain
    \begin{align}\label{limit}
         b\cdot\frac{H_{\epsilon}\cdot (t_nH+u_nE)}{r_n}  =\frac{(aH+bE)(t_nH+u_nE)}{r_n}=\frac{at_n(2g-2)+au_nk+bt_nk}{r_n}=\nonumber\\
          \overset{\eqref{u-n}}{=}\frac{at_n(g-1)+\frac{ar_ns_n}{t_n}-\frac{a}{t_n}+bt_nk}{r_n}=\frac{t_n}{r_n}\left(a(g-1)+bk\right)+\frac{ar_n}{t_n}\Bigl(\frac{s_n}{r_n}-\frac{1}{r_n^2}\Bigr).
    \end{align}

Observe that $\frac{s_n}{r_n} - \frac{1}{r_n^2}>\frac{1}{2}$ for all $n\gg0$. Otherwise, using condition \eqref{lem-no spherical:c} we would have that $M:=\{n\;:\;r_n=s_n=\pm 1\}$ is an infinite set. But then for all $n\in M$
\begin{equation}\label{relation:t_n u_n}
    0=(t_nH+u_nE)^2=t_n(t_n(2g-2)+2u_nk)\;\Longrightarrow \;u_nk=-(g-1)t_n,
\end{equation}
and this is a contradiction as it implies
\[
        0=\lim_{n\in M}H_{\epsilon}\cdot(t_nH+u_nE)=\lim_{n\in M}\Bigl(t_nk+\epsilon(t_n(2g-2)+u_nk)\Bigr)\overset{\eqref{relation:t_n u_n}}{=}\lim_{n\in M} t_n(k+\epsilon(g-1)).
\]

Now since $a(g-1)+bk>0$ and the left-hand side in \eqref{limit} goes to zero, both terms on the right-hand side must also go to zero as $n \to \infty$. The first term implies that $\frac{t_n}{r_n} \to 0$, but this causes the second term to diverge to infinity which is a contradiction. 
%Observe that $a(g-1)+bk$ is positive and $r_ns_n-1$ is non-negative. Thus condition (c) implies $t_n<0$ and hence $\frac{t_n}{r_n}\left(a(g-1)+bk\right)$ is strictly negative and $\frac{a}{t_nr_n}(r_ns_n-1)$ is non-positive.
%    Since their sum tends to 0, we obtain
 %   \[
  %  \frac{t_n}{r_n}\to 0^-
   % \]
    %by considering the first summand. But this implies that the second summand
    %\[
    %\frac{a}{t_nr_n}(r_ns_n-1)=a\frac{r_n}{t_n}\frac{s_n}{r_n}-\frac{a}{t_nr_n}\leq a\frac{r_n}{t_n}+\frac{a}{r_n}\longrightarrow-\infty,
    %\]
    %which is a contradiction.
\end{proof}

\subsection{A two-dimensional slice of Bridgeland stability conditions}
Given $\epsilon\in\bQ_{>0}$, we will work with a two-dimensional slice of stability conditions on $\cD(X)$ associated with the polarization $H_{\epsilon}$, for which we give a brief account. Further details can be found in Bridgeland's original work \cite{bridgeland:stability-condition-on-triangulated-category, bridgeland:K3-surfaces}.

Given a coherent sheaf $F$, we define its \emph{$H_\epsilon$-slope} by 
$$\mu_{H_{\epsilon}}(F):=\frac{H_{\epsilon}\cdot \ch_1(F)}{H_{\epsilon}^2\cdot \ch_0(F)},$$ with the convention $\mu_{H_{\epsilon}}(F)=+\infty$ if $\ch_0(F)=0$.
This leads to the usual notion of $\mu_{H_{\epsilon}}$-stability on the category $\Coh(X)$ of coherent sheaves. The key idea of Bridgeland was to replace $\Coh(X)$ by other abelian subcategories of $\cD(X)$, equipped with a suitable slope.

\vskip 4pt

To that end, for any $b\in\bR$ we consider the full subcategories of $\Coh(X)$
\begin{align*}
    \cT_{b}:=\Bigl\{F\in\Coh(X):\mu_{H_{\epsilon}}(Q)>b\text{ for all quotients $F\twoheadrightarrow Q$}\Bigr\}  \ \mbox{ and }\\
    \cF_{b}:=\Bigl\{F\in\Coh(X): \mu_{H_{\epsilon}}(G)\leq b\text{ for all subsheaves $G\hookrightarrow F$}\Bigr\},
\end{align*}
which form a torsion pair in $\Coh(X)$, see 
\cite[Definition 3.2]{bridgeland:K3-surfaces} or \cite[Proposition~2.2]{bayer:brill-noether} for further details. Their tilt gives a \emph{bounded t-structure} on $\cD(X)$ with heart
\[
\Coh^{b}(X):=\Bigl\{F\in\cD(X):\cH^{-1}(F)\in\cF_{b}, \; \cH^{0}(F)\in\cT_{b},\; \cH^i(F)=0 \text{ for }i\neq0,-1\Bigr\}.
\]
Alternatively, objects of $\Coh^{b}(X)$ are those isomorphic (in $\cD(X)$) to $2$-term complexes $G^{-1}\stackrel{d}\longrightarrow G^0$ with $\mbox{ker}(d)\in \cF_b$ and $\mbox{coker}(d)\in \cT_b$. In particular $\Coh^b(X)$ is an abelian subcategory of $\cD(X)$: short exact sequences $0\longrightarrow F'\longrightarrow F\longrightarrow Q\longrightarrow 0$ correspond to those exact triangles $F'\longrightarrow F\longrightarrow Q\longrightarrow F'[1]$ in $\cD(X)$ so that $F, F', Q\in \Coh^b(X)$. 

\vskip 3pt

For $(b,w)\in\bR^2$ we let  $Z_{b,w}\colon K_0(\cD(X))\to\bC$ be the group homomorphism defined as
\begin{equation*}
    Z_{b,w}(F) := -\ch_2(F) +w \ch_0(F)H_{\epsilon}^{2}  +i \bigl(\ch_1(F)\cdot H_{\epsilon} - b \ch_0(F) H_{\epsilon}^2 \bigr). 
\end{equation*}
It is clear that $Z_{b,w}$ factors through $\ch \colon K_0(\cD(X)) \to \Lambda$. Sometimes we will also denote by $Z_{b,w}$ the induced map $\Lambda\to\bC$.

\vskip 4pt

Let us first state Bridgeland's result describing stability conditions on $\cD(X)$; then we expand upon the statements.
By Lemma \ref{lem-no spherical}, there exists $\delta_{\epsilon}>0$ (depending on $\epsilon$) such that no projection of a spherical class is at distance less than $\delta_\epsilon$ of the origin. Accordingly, if we define 
\begin{align}\label{U}
    U_{\epsilon} := \Bigl\{(b,w) \in \mathbb{R}^2 \colon 2w > b^2\Bigr\} \cup \Bigl\{(b,w) \in \mathbb{R}^2 \colon \text{$b \neq 0$ and $b^2+w^2 < \delta_{\epsilon}^2$}\Bigr\},
\end{align}
then Bridgeland's results \cite[Lemma 6.2]{bridgeland:K3-surfaces} imply the following:

\begin{Thm}\label{thm-space}
For any $(b,w) \in U_{\epsilon}$, the pair $\sigma_{b,w}:=\bigl(\Coh^b(X),Z_{b,w}\bigr)$ is a stability condition on $\cD(X)$. Moreover, this assignment defines a continuous map from $U_\epsilon$ to the space of stability conditions on $\cD(X)$.
\end{Thm}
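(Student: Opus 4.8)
The plan is to verify that for each $(b,w) \in U_\epsilon$ the pair $\sigma_{b,w} = (\Coh^b(X), Z_{b,w})$ satisfies the three requirements of a Bridgeland stability condition: (i) $Z_{b,w}$ is a stability function on the heart $\Coh^b(X)$, i.e. it maps nonzero objects into the upper half-plane union the negative real axis; (ii) Harder--Narasimhan filtrations exist; and (iii) the support property holds with respect to the quadratic form given by \eqref{def:quadraticform}. All three are established in Bridgeland's \cite[Section 6]{bridgeland:K3-surfaces}, so the task is really to check that our $U_\epsilon$ lies inside the region where his arguments apply, and the only genuinely new input is the spherical-class estimate from Lemma \ref{lem-no spherical}. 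First I would recall that the "geometric chamber" $\{(b,w) : 2w > b^2\}$ is exactly the locus considered in \cite[Lemma 6.2]{bridgeland:K3-surfaces}: there the positivity of $Z_{b,w}$ on the heart follows from the Bogomolov--Gieseker inequality for $\mu_{H_\epsilon}$-semistable sheaves, and the support property follows because the kernel of $\Im Z_{b,w}$ meets the positive cone $\{v^2 \geq -2\}$ in a negative-definite region. For these $(b,w)$ the statement is immediate from Bridgeland's theorem, and continuity on this open subset is also part of his result.

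Next I would treat the extra points $\{(b,w) : b \neq 0,\ b^2 + w^2 < \delta_\epsilon^2\}$ that $U_\epsilon$ adds near the origin (below the parabola). Here the point is that $\sigma_{b,w}$ is obtained from a stability condition in the geometric chamber by the continuous deformation argument of \cite[Section 7]{bridgeland:K3-surfaces}: one moves along a path in $\Stab(X)$, and the only way stability can fail to extend is if a wall associated with a spherical class $v$ is crossed, i.e. if the image $Z_{b,w}(v) = 0$ for some $v$ with $v^2 = -2$ and $\ch_0 \neq 0$ (the case $\ch_0 = 0$ being handled separately since then $v$ lies over a curve class and the relevant walls are the lines $\mu_{H_\epsilon} = b$, which are excluded by $b \neq 0$ once $\delta_\epsilon$ is small, as $H_\epsilon \cdot \ch_1$ is bounded below for effective classes). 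The condition $Z_{b,w}(v) = 0$ forces both $\Re$ and $\Im$ to vanish, which after dividing by $\ch_0(v) H_\epsilon^2$ says precisely that $\frac{H_\epsilon \cdot \ch_1(v)}{H_\epsilon^2 \ch_0(v)} = b$ and $\frac{\ch_2(v)}{H_\epsilon^2 \ch_0(v)} = w$; that is, $(b,w) = \Pi_\epsilon(v)$ up to the identification, and in particular $b^2 + w^2$ equals the squared distance from the origin of the projection of a spherical class. By Lemma \ref{lem-no spherical} (more precisely, by the definition of $\delta_\epsilon$ extracted from it), this distance is at least $\delta_\epsilon$, so no such wall meets $\{b^2 + w^2 < \delta_\epsilon^2\}$, and the stability conditions extend continuously across the parabola into this punctured disk. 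Combining the two regions gives a well-defined continuous assignment $U_\epsilon \to \Stab(X)$.

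The main obstacle, and the reason Lemma \ref{lem-no spherical} is needed in exactly the stated form, is controlling the spherical classes \emph{uniformly}: a priori there could be infinitely many $(-2)$-classes whose $\Pi_\epsilon$-projections accumulate at the origin, which would make $\delta_\epsilon$ equal to zero and leave no room below the parabola. Lemma \ref{lem-no spherical} rules this out, and its proof (the computation \eqref{limit}) is where the specific arithmetic of the elliptic $K3$ lattice $\bZ H \oplus \bZ E$ with $H^2 = 2g-2$, $E^2 = 0$, $H \cdot E = k$ enters. So the real content of the present theorem is just the bookkeeping that transports Bridgeland's general machinery across this lattice-specific threshold; once Lemma \ref{lem-no spherical} is granted, the verification is a direct citation of \cite[Lemmas 6.2 and 7.1]{bridgeland:K3-surfaces} together with the deformation argument of \emph{loc.\ cit.} I would also double-check the one sheaf-theoretic point that is genuinely local to our setting, namely that the torsion pair $(\cT_b, \cF_b)$ is well-defined for all real $b$ (not just rational), which follows from the fact that $\mu_{H_\epsilon}$-semistable sheaves of bounded discriminant have $\mu_{H_\epsilon}$-slopes in a discrete subset of $\bR$ once $\epsilon$ is fixed rational, so that the definitions of $\cT_b$ and $\cF_b$ only jump at countably many values of $b$ and the heart $\Coh^b(X)$ is unambiguous.
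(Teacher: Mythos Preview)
Your proposal is correct in approach and, since the paper offers no argument beyond the citation to \cite[Lemma 6.2]{bridgeland:K3-surfaces}, is considerably more detailed than what appears there. The identification of Lemma~\ref{lem-no spherical} as the sole genuinely new input is exactly right, and your remark on the well-definedness of the torsion pair for irrational $b$ is a useful addition.

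One imprecision in your treatment of the disk region deserves comment. Your deformation argument checks only that $(b,w) \neq \Pi_\epsilon(v)$ for spherical $v$, which via Bridgeland's covering map produces \emph{some} stability condition with central charge $Z_{b,w}$. But the theorem asserts that the heart is $\Coh^b(X)$, and for this one needs the actual hypothesis of Lemma~6.2, namely $Z_{b,w}(E) \notin \mathbb{R}_{\leq 0}$ for every spherical sheaf $E$ --- equivalently $w > \Pi_\epsilon(E)_2$ whenever $\mu_{H_\epsilon}(E) = b$, not merely $(b,w) \neq \Pi_\epsilon(E)$. This stronger statement does follow from Lemma~\ref{lem-no spherical}, but via a slightly finer observation: the Hodge-index bound gives $\Pi_\epsilon(E)_2 \leq \tfrac{b^2}{2}$ for any spherical sheaf $E$, so if there were spherical $E$ with $|\mu_{H_\epsilon}(E)|$ arbitrarily small and $\Pi_\epsilon(E)_2$ not bounded above by a negative constant, a subsequence would satisfy $\Pi_\epsilon(E) \to (0,0)$, contradicting the lemma. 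Hence, after possibly shrinking $\delta_\epsilon$, every spherical $E \neq \cO_X$ with $|\mu_{H_\epsilon}(E)| < \delta_\epsilon$ has $\Pi_\epsilon(E)_2 \leq -\delta_\epsilon$, and the punctured disk is safe. In your deformation picture this is precisely the check that no phase-$1$ object in $\Coh^b(X)$ has its real part cross zero along the vertical path, so the heart does not tilt.
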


We first explain the notion of $\sigma_{b,w}$-stability and the associated Harder--Narasimhan filtration. Observe that $\mathrm{Im}(Z_{b,w}(F))\geq0$ for any nonzero $F\in\Coh^b(X)$, and also $\mathrm{Re}(Z_{b,w}(F))<0$ whenever $\mathrm{Im}(Z_{b,w}(F))=0$. Intuitively, $\mathrm{Im}(Z_{b,w})$ can be regarded as a notion of ``rank" on $\Coh^b(X)$; then the ``degree" $-\mathrm{Re}(Z_{b,w})$ is positive for rank zero objects in $\Coh^b(X)$. By considering the \emph{slope function}
\begin{equation}\label{eq:slope_bw}
 \nu_{b,w} \colon \Coh^b(X) \to \mathbb{R} \cup \{+\infty\}, \quad \nu_{b,w}(F) := \begin{cases}
-\frac{\mathrm{Re}(Z_{b,w}(F))}{\mathrm{Im}(Z_{b,w}(F))} & \text{if $\mathrm{Im}(Z_{b,w}(F)) > 0$} \\
+\infty & \text{if $\mathrm{Im}(Z_{b,w}(F)) = 0$}
\end{cases}
\end{equation}
we obtain a notion of stability in $\Coh^b(X)$: an object $F\in \Coh^b(X)$ is $\sigma_{b,w}$-\emph{(semi)stable} if and only if for any proper subobject $F' \subset F$ in $\Coh^b(X)$ we have 
$$\nu_{b,w}(F') < (\leq)\ \nu_{b,w}(F/F').$$ 
Every object $F \in \Coh^b(X)$ admits a unique \textit{Harder--Narasimhan} (\textit{HN} for short) \textit{filtration}, namely a finite sequence of objects in $\Coh^b(X)$
\[0=F_0\subset F_1\subset F_2\subset\dots\subset F_m=F\]
whose factors $F_i/F_{i-1}$ are $\sigma_{b,w}$-semistable of decreasing $\nu_{b,w}$-slope.

Furthermore, every $\sigma_{b,w}$-semistable object $F\in\Coh^b(X)$ has a (not necessarily unique) finite \emph{Jordan-Hölder filtration} in $\Coh^b(X)$
\[0=G_0\subset G_1\subset G_2\subset\dots\subset G_n=F\]
whose factors $G_i/G_{i-1}$ are $\sigma_{b,w}$-stable of the same $\nu_{b,w}$-slope as $F$. These factors are unique up to relabelling, and are called the \emph{stable factors} of $F$.

\begin{Rem}
We will use the same plane for the image of the projection $\Pi_{\epsilon}$ defined in \eqref{pro}, and the $(b,w)$-plane containing $U_\epsilon$. In this way, if $F\in\Coh^b(X)$ and $\ch_0(F)\neq0$, then $\nu_{b,w}(F)$ is the slope of the line joining the points $(b,w)$ and $\Pi_{\epsilon}(F)$. If $F$ is moreover $\sigma_{b,w}$-semistable ($2w>b^2$), then $\Pi_{\epsilon}(F)$ lies outside the region $\left\{(b,w) \in \mathbb{R}^2 \colon 2w > b^2\right\}$. 
\end{Rem}
%\begin{Rem}
%    At the point $(b, w) = (0, 0)$, we have a ``weak'' stability condition $\sigma_{0, 0}$ where the central charge vanishes ...
%\end{Rem}

The fact that stability conditions can be deformed continuously is ensured by a technical requirement called the \emph{support property}. While we refer to \cite{bayer:deformation} for details, here we only consider its main application, namely that stability of objects is governed by a locally finite wall and chamber structure.

\begin{Def}\label{def:num_wall}
A \emph{numerical wall} for an object $F\in\cD(X)$ is a line segment $\ell\subset U_\epsilon$ determined by an equation of the form $\nu_{b,w}(F)=\nu_{b,w}(F')$, where $F'\in\cD(X)$ is such that $\left(\ch_0(F),H_\epsilon\cdot \ch_1(F),\ch_2(F)\right)$ and $\left(\ch_0(F'),H_\epsilon\cdot \ch_1(F'),\ch_2(F')\right)$ are non-proportional.
If $F$ is $\sigma_{b,w}$-semistable for $(b,w)\in \ell$ and 
unstable just above or below $\ell$, then $\ell$ is an \emph{actual wall} for $F$ along which $F$ gets destabilized\footnote{Here, we define numerical (or actual) walls only on our two-dimensional slice \( U_{\epsilon} \), but the intersections of the walls in the full stability manifold with our chosen slice may coincide with the entire slice \( U_{\epsilon} \).}.
\end{Def}

\begin{Prop}[Wall and chamber structure] \label{structurewalls}
Let $v=(v_0,v_1,v_2)\in\Lambda$ be any Mukai vector. Then there exists a locally finite set $\{\cW_i^v\}_{i\in I_v}$ of actual walls for objects of Mukai vector $v$, inducing a chamber decomposition of $U_\epsilon$ such that:
\begin{enumerate}
    \item The extension of every actual wall passes through $\Pi_{\epsilon}(v)$ if $v_0\neq0$, or has fixed slope $\frac{v_2}{H_\epsilon\cdot  v_1}$ if $v_0=0$.

    \item For any object $F\in\cD(X)$ with $v(F)=v$, the $\sigma_{b,w}$-(semi)stability of $F$ remains unchanged along a chamber.
\end{enumerate}

	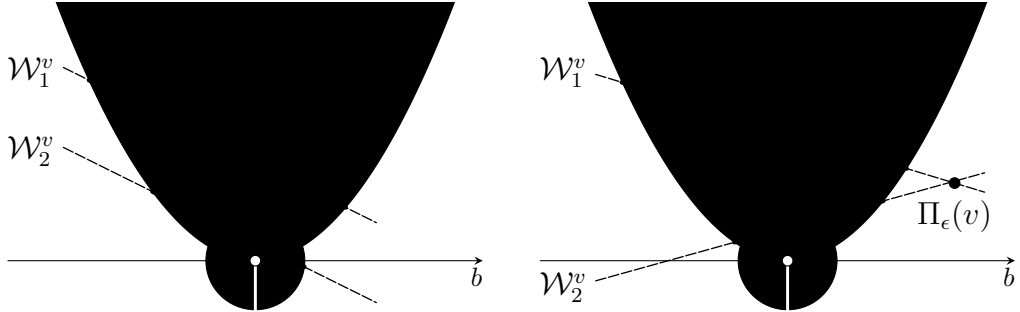
\begin{figure} [h]
	\begin{centering}
		
		\begin{tikzpicture}[line cap=round,line join=round,>=triangle 45,x=1.0cm,y=1.0cm]

    \fill[fill=gray!30!white] (-8,0) circle (.4cm);
    \fill[fill=gray!30!white] (-.5,0) circle (.4cm);
		
        \draw[->,color=black] (-10.5,0) -- (-5.5,0);
		\draw[->,color=black] (-3,0) -- (2,0);
		
		\fill [fill=gray!30!white] (-0.5,0) parabola (1.47, 3.03) parabola [bend at end] (-2.47,3.03) parabola [bend at end] (-0.5,0);

		\fill [fill=gray!30!white] (-8,0) parabola (-6.03, 3.03) parabola [bend at end] (-9.97,3.03) parabola [bend at end] (-8,0);

		\draw[->,color=black] (-8,-1) -- (-8,3.5);
		\draw[->,color=black] (-0.5,-1) -- (-0.5,3.5);

		\draw [] (-0.5,0) parabola (1.5,3.12); 
		\draw [] (-0.5,0) parabola (-2.5,3.12); 
		\draw [] (-8,0) parabola (-10,3.12); 
		\draw [] (-8,0) parabola (-6,3.12);
       \draw (-8,0) circle (.4cm);
		\draw (-.5,0) circle (.4cm);

		\draw[color=black, dashed] (-10.5,2.8) -- (-6,1);
		\draw[color=black, dashed] (-10.5,1.8) -- (-6.5,0.2);
		%\draw[color=black, dashed] (-10,3.6) -- (-6,2);

       %\draw[color=black] (-8.78,3.12) -- (-6.32,2.13);
       \draw[color=black,semithick] (-9.8,2.52) -- (-6.7,1.28);
       \draw[color=black,semithick] (-9.3,1.32) -- (-7.2,.48);
       
		\draw (-10.5,1.8) node [left] {$\mathcal{W}^v_2$};
		\draw (-10.5,2.8) node [left] {$\mathcal{W}^v_1$};
		%\draw (-10,3.6) node [left] {$\ell_2$};

        \draw [color=white, line width=1.1pt] (-0.5,0)-- (-0.5,-1);

        \draw [color=white, line width=1.1pt] (-8,0)-- (-8,-1);
		
		\draw (.8,3.5) node [right] {\large{$v_0 \neq 0$}};
		\draw (-6.8,3.5) node [right] {\large{$v_0 = 0$}};
		\draw (-5.5,0) node [below] {$b, \frac{H_{\epsilon}\cdot \ch_1}{H_{\epsilon}^2\cdot \ch_0}$};
		\draw (-8,3.5) node [above] {$w, \frac{\ch_2}{H^2_{\epsilon}\cdot \ch_0}$};
		
		\draw (2,0) node [below] {$b, \frac{H_{\epsilon}\cdot \ch_1}{H_{\epsilon}^2\cdot \ch_0}$};
		\draw (-0.5,3.5) node [above] {$w, \frac{\ch_2}{H_{\epsilon}^2\cdot \ch_0}$};
	
		\draw (-7.3,2.5) node [right] {\Large{$U_{\epsilon}$}};
		\draw (0.2,2.5) node [right] {\Large{$U_{\epsilon}$}};

		\draw (1.5, 1.2) node [right] {$\Pi_{\epsilon}(v)$};
		
		%\draw[color=black] (1, 1.2) -- (-3.4,2.8);
		\draw[color=black, dashed] (1.5, 1.2) -- (-2.9,2.8);
		%\draw[color=black, dashed] (1, 1.2) -- (-3,1.6);
		\draw[color=black, dashed] (1.5, 1.2) -- (-2.2, .7);
		
		\draw (-2.9,2.8) node [left] {$\mathcal{W}^v_1$};
		%\draw (-3,1.6) node [left] {$\ell_2$};
		\draw (-2.2, .7) node [left] {$\mathcal{W}^v_2$};

		\draw[color=black, semithick] (-2.3 ,2.58) -- (.88,1.423);
		%\draw[color=black] (-2.4,1.53) -- (.28,1.26);
		\draw[color=black, semithick] (-1.5,.795) -- (0.7,1.092);

		\begin{scriptsize}

		\fill [color=black] (1.5,1.2) circle (2pt);
        \fill [color=white] (-.5,0) circle (2pt);
        \fill [color=white] (-8,0) circle (2pt);

		\end{scriptsize}
		
		\end{tikzpicture}
		
		\caption{Actual walls $\mathcal{W}^v_i$ for $v$ when $v_0=0$ and $v_0\neq0$ }
		
		\label{wall.figure}
		
	\end{centering}
	
\end{figure}

\end{Prop}

It follows that if \(\sigma\) varies within a chamber, the set of \(\sigma\)-semistable objects with Mukai vector \(v\) remains unchanged. Indeed, for any stability condition \(\sigma:=\sigma_{b,w}\), the moduli stack \(\mathfrak{M}_{\sigma}(v)\) of flat families of \(\sigma\)-semistable objects in $\Coh^b(X)$ of Mukai vector \(v\) is an Artin stack of finite type over \(\mathbb{C}\) \cite[Theorem 1.4 and Section 3]{toda-k3-surface}. In cases where a coarse moduli space parameterizing the semistable objects exists, we denote it by \(\mathcal{M}_{\sigma}(v)\). As discussed in detail in \cite{bayer:projectivity}, such a proper coarse moduli space exists, for instance, when \(v\) is primitive and \(\sigma\) is generic in the stability manifold with respect to \(v\).

\begin{comment}
    
It follows that, if $\sigma$ varies along a chamber, then the set of $\sigma$-semistable objects with Mukai vector $v$ remains unchanged. For $v$ primitive, such objects form a moduli space with nice geometric features; the following result was established by Bayer-Macrì \cite{bayer:projectivity}.

\begin{Thm}\label{modulispace:vgen}
Let $v=(v_0,v_1,v_2)\in\Lambda$ be a primitive Mukai vector with $v_0\neq0$ or $v_2\neq0$, and let $\epsilon\in\bQ_{>0}$ be generic\footnote{This means that $\epsilon$ lies away from a discrete subset of $\bQ_{>0}$ (depending on $v$)} with respect to $v$. If $\sigma=\sigma_{b,w}$ lies in no actual wall for $v$, then any $\sigma$-semistable object of Mukai vector $v$ is $\sigma$-stable, and there is a coarse moduli space $\cM_{\sigma}(v)$ parameterizing $\sigma$-stable objects in $\Coh^b(X)$ of Mukai vector $v$, which is a smooth projective hyperk\"ahler variety of dimension $v^2+2$ (in particular, it is nonempty if and only if $v^2\geq -2$).
\end{Thm}

\begin{Rem}
    If $\sigma$ lies in an actuall wall $\cW_i^v$, we will denote by $\cM_\sigma^{\mathrm{st}}(v)\subset \cM_{\sigma_+}(v)$ the (possibly empty) open subset of objects that are $\sigma$-stable, where $\sigma_+\in U_\epsilon$ lies in any of the chambers adjacent to $\cW_i^v$.
\end{Rem}

\end{comment}

Once a Mukai vector $v\in\Lambda$ is fixed, actual walls for $v$ in $U_\epsilon$ are bounded from above (see e.g.~\cite[Section 6.4]{macri-schmidt-lectures}); in particular, there is a ``largest actual wall". Above it there is the so-called \emph{Gieseker chamber}, where stability agrees with Gieseker stability for sheaves. This gives a fundamental example of Bridgeland stable objects. More precisely:

\begin{Thm}\label{Giesekerchamber}
Let $v=(v_0,v_1,v_2)\in\Lambda$. There exists $w_0\in \mathbb R$ such that:
\begin{enumerate}
    \item\label{GiesekerchamberA} If $v_0>0$, or $v_0=0$ and $v_1$ is effective, then for all $w\geq w_0$ and $b<\frac{H_\epsilon\cdot v_1}{H_\epsilon^2\cdot v_0}$ an object $F\in\Coh^b(X)$ with $v(F)=v$ is $\sigma_{b,w}$-(semi)stable if and only if $F$ is an $H_{\epsilon}$-Gieseker (semi)stable sheaf.

    \item\label{GiesekerchamberB} If $v_0<0$, then for all $w\geq w_0$ and $b>\frac{H_\epsilon\cdot v_1}{H_\epsilon^2\cdot v_0}$ an object $F\in\Coh^b(X)$ with $v(F)=v$ is $\sigma_{b,w}$-(semi)stable if and only if $F=R\mathcal{H}om(G,\cO_X)[1]$ for an $H_{\epsilon}$-Gieseker (semi)stable sheaf $G$.
\end{enumerate}
\end{Thm}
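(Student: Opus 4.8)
The plan is to handle the two cases by a single device: deduce part~\eqref{GiesekerchamberB} from part~\eqref{GiesekerchamberA} by means of the derived dualizing functor, and prove part~\eqref{GiesekerchamberA} as an instance of Bridgeland's large volume limit.

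\textbf{Reduction of the negative-rank case.} Set $\mathbb{D}:=R\mathcal{H}om(-,\cO_X)[1]$. Since $\omega_X\cong\cO_X$ and $X$ is smooth, $\mathbb{D}$ is an exact anti-autoequivalence of $\cD(X)$ with $\mathbb{D}\circ\mathbb{D}\cong\mathrm{id}$. On Chern characters it sends $(\ch_0,\ch_1,\ch_2)$ to $(-\ch_0,\ch_1,-\ch_2)$, hence on Mukai vectors $v=(v_0,v_1,v_2)\mapsto(-v_0,v_1,-v_2)$; in particular it preserves the discriminant and sends $\mu_{H_\epsilon}(v)$ to $-\mu_{H_\epsilon}(v)$. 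A short computation gives $Z_{b,w}(\mathbb{D}F)=-\overline{Z_{-b,w}(F)}$, and I would invoke the standard consequence (essentially \cite[Section~14]{bridgeland:K3-surfaces}) that $\mathbb{D}$ carries $\Coh^{-b}(X)$ to $\Coh^{b}(X)$, reverses inclusions, and matches $\sigma_{-b,w}$-(semi)stable objects with $\sigma_{b,w}$-(semi)stable ones. Given $v$ with $v_0<0$, put $v':=(-v_0,v_1,-v_2)$, which has positive rank and $\mu_{H_\epsilon}(v')=-\mu_{H_\epsilon}(v)$, so that the hypothesis $b>\mu_{H_\epsilon}(v)$ becomes $-b<\mu_{H_\epsilon}(v')$. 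Then for $w\gg0$ an object $F$ of class $v$ is $\sigma_{b,w}$-(semi)stable iff $\mathbb{D}F$ (of class $v'$) is $\sigma_{-b,w}$-(semi)stable, iff — by part~\eqref{GiesekerchamberA} applied to $v'$ — $\mathbb{D}F$ is an $H_\epsilon$-Gieseker (semi)stable sheaf, iff $F=R\mathcal{H}om(G,\cO_X)[1]$ with $G:=\mathbb{D}F$ Gieseker (semi)stable. This is exactly \eqref{GiesekerchamberB}.

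\textbf{The positive-rank case.} Recall that the actual walls for $v$ in $U_\epsilon$ are bounded above, i.e. there is $w_0$ such that no wall meets $\{(b,w)\in U_\epsilon : w\geq w_0\}$ (\cite[Section~6.4]{macri-schmidt-lectures}). Fix any $b<\mu_{H_\epsilon}(v)$, read as no constraint when $v_0=0$; then for $w\geq w_0$ the $\sigma_{b,w}$-(semi)stability of objects of class $v$ is independent of $w$, and it remains to identify this chamber with Gieseker stability. The easy inclusion is that any $H_\epsilon$-Gieseker semistable $F$ with $v(F)=v$ lies in $\Coh^b(X)$: if $v_0=0$ and $v_1$ is effective then $F$ is pure of dimension one, hence in $\cT_b$; if $v_0>0$ then $F$ is $\mu_{H_\epsilon}$-semistable of slope $\mu_{H_\epsilon}(v)>b$, so every quotient of $F$ has $\mu_{H_\epsilon}$-slope $>b$ and again $F\in\cT_b$.

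The substantive step, which I expect to be the main obstacle, is the asymptotic slope comparison. Given a destabilizing sequence $0\to A\to F\to Q\to0$ in $\Coh^b(X)$ holding along a sequence $w\to\infty$, one first argues — using $\mathcal{H}^{-1}(F)=0$ and boundedness of the $\mathrm{Im}\,Z_{b,w}$-``rank'' along the relevant Harder--Narasimhan factors — that $A$ and $Q$ are honest sheaves once $w$ is large, so that $A$ is in fact a subsheaf of $F$. Writing $\mathrm{Im}\,Z_{b,w}(-)=H_\epsilon^2\,\ch_0(\mu_{H_\epsilon}-b)$ in positive rank and $H_\epsilon\cdot\ch_1$ in rank zero, one then expands the inequality $\nu_{b,w}(A)\ge\nu_{b,w}(F)$ in decreasing powers of $w$: the leading coefficient forces $\mu_{H_\epsilon}(A)\ge\mu_{H_\epsilon}(F)$, and in case of equality the next coefficient contradicts the inequality on reduced $\ch_2$ imposed by Gieseker semistability of $F$ on its subsheaves of equal slope (torsion subsheaves being excluded by purity of $F$). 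The converse direction is symmetric: for $w\gg0$ a $\sigma_{b,w}$-semistable object of class $v$ has vanishing $\mathcal{H}^{-1}$, is a pure sheaf, and testing it against subsheaves recovers Gieseker semistability; running the same inequalities strictly yields the $\sigma_{b,w}$-stable versus Gieseker-stable statement. The delicate points are (a) ruling out, uniformly for $w\gg0$, both a nonzero $\mathcal{H}^{-1}$ and zero-dimensional summands in a would-be destabilizer, and (b) matching the $w$-expansion of $\nu_{b,w}$ with the reduced Hilbert polynomial ordering governing Gieseker stability, in particular in the torsion case $v_0=0$.
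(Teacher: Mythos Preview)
The paper does not supply its own proof of this theorem. The statement is presented as a standard fact about the ``Gieseker chamber'', with the boundedness of walls attributed to \cite[Section~6.4]{macri-schmidt-lectures} and the identification of the large-volume chamber with Gieseker stability left implicit as part of the general theory (going back to \cite{bridgeland:K3-surfaces}). Your proposal is therefore not competing with a proof in the paper but rather filling in the sketch the paper omits, and your outline follows exactly the standard route one finds in the references: bounded walls, asymptotic comparison of $\nu_{b,w}$ with the reduced Hilbert polynomial for part~\eqref{GiesekerchamberA}, and derived duality to reduce part~\eqref{GiesekerchamberB} to part~\eqref{GiesekerchamberA}.

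One point to tighten: your assertion that $\mathbb{D}$ carries $\Coh^{-b}(X)$ to $\Coh^{b}(X)$ is not literally true at the boundary --- objects supported in dimension zero, and $\mu_{H_\epsilon}$-semistable sheaves of slope exactly $\pm b$, behave badly under $\mathbb{D}$ relative to the tilt. What is true (and sufficient) is that $\mathbb{D}$ intertwines the stability conditions $\sigma_{b,w}$ and $\sigma_{-b,w}$ up to the action of $\widetilde{\mathrm{GL}}{}^{+}_2(\mathbb{R})$, so that (semi)stable objects correspond; this is the content of the relevant part of \cite{bridgeland:K3-surfaces} and is all you need. Your ``delicate points'' (a) and (b) are correctly identified as the places where care is required in the asymptotic argument; both are handled in the literature you cite, so there is no gap so much as an honest acknowledgment of where the work lies.
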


%Let us also state the following elementary lemma for later use:

%\begin{Lem}\label{Giesekertwist}
    %A pure sheaf $F$ is $H_\epsilon$-Gieseker (semi)stable if and only if $F\otimes\cO_X(E)$ is $H_\epsilon$-Gieseker (semi)stable.
%\end{Lem}

Let us recall that for any Mukai vector $v \in \Lambda$, there exists a projective variety $\cM_{H_\epsilon}(v)$ which is a coarse moduli space parameterizing S-equivalence classes of $H_{\epsilon}$-Gieseker semistable sheaves, see \cite[Section 4]{huybrechts:geometry-of-moduli-space-of-sheaves} for details.

\vskip 2pt

\subsection{Stability of special objects}
This subsection gathers useful lemmas concerning the stability of some distinguished objects along $U_\epsilon$.
Noting that the quadratic form $\langle\cdot,\cdot\rangle$ (defined in \eqref{def:quadraticform}) has signature $(2, 2)$ on $\Lambda_\bR$, we begin with a remark on the discriminant of semistable objects:

\begin{Lem}\label{lem-delta} Let $(b,w)\in U_\epsilon$. Then:
\begin{enumerate}
    \item\label{lem-delta:a} If $F$ is a $\sigma_{b,w}$-stable object, then either (i) $\Delta(F) \geq 0$, or (ii) $k|g$, $\ch_0(F) = 0$ and $\ch_1(F)=H- \frac{g}{k}E$. 
    \item\label{lem-delta:b} Assume $2w > b^2$ and let $F$ be strictly $\sigma_{b,w}$-semistable, with stable factors $\{F_i\}_{i \in I}$. If every factor satisfies $\Delta(F_i) \geq 0$, then $\Delta(F_i) \leq \Delta(F)$ for all $i$ with equality if and only if $\Delta(F_i) =\Delta(F) = 0$ and $\ch(F_i)$ is a multiple of $\ch(F)$. 
\end{enumerate}
\end{Lem}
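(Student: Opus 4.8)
### Proof plan for Lemma~\ref{lem-delta}

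The plan is to treat the two parts in order, using the signature $(2,2)$ of the Mukai pairing together with the basic principle that collinearity of three points on a numerical wall forces a discriminant inequality.

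\textbf{Part (a).} First I would observe that for a $\sigma_{b,w}$-stable object $F$ the vector $v(F)$ is either spherical ($v(F)^2=-2$, so $\Delta(F)=v(F)^2+2\ch_0(F)^2=2\ch_0(F)^2\geq 0$, with the only problematic case being $\ch_0(F)=0$), or else $v(F)^2\geq -1$ when $F$ is rigid, or $v(F)^2\geq 0$ in general; in all of these $\Delta(F)\geq 0$ unless $\ch_0(F)=0$ and $v(F)^2=-2$. So the content is: \emph{if $\ch_0(F)=0$ and $v(F)^2=-2$, then $\ch_1(F)=H-\tfrac{g}{k}E$ (and in particular $k\mid g$).} Here I would use that $\ch_1(F)$ is an effective curve class: write $\ch_1(F)=xH+yE$ with $x\geq 0$, and if $x=0$ then $\ch_1(F)=yE$ with $y\geq 1$, which has square $0\neq -2$, impossible; so $x\geq 1$. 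Since $F$ is a pure $1$-dimensional sheaf (rank zero, $\sigma_{b,w}$-stable hence $\mu_{H_\epsilon}$-semistable as a sheaf up to shift) with $\sigma_{b,w}$-stability, $\ch_1(F)$ should be a \emph{primitive} effective class that is moreover the class of an \emph{irreducible} curve — here one invokes Lemma~\ref{lem-delta}'s companion facts from Proposition~\ref{thm-k3 surface}/Lemma~\ref{multpencil}: the only irreducible classes with negative self-intersection are $(-2)$-curves, which exist only when $k\mid g$ and then have class $H-\tfrac{g}{k}E$. The self-intersection computation $v(F)^2 = \ch_1(F)^2 = (xH+yE)^2 = x^2(2g-2)+2xyk$ must equal $-2$; combined with $x\geq 1$, $y\in\bZ$, this forces $x=1$ and $2g-2+2yk=-2$, i.e. $y=-g/k$, so $k\mid g$ and $\ch_1(F)=H-\tfrac{g}{k}E$, as claimed. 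I would double-check that no larger $x$ works: for $x\geq 1$, $x^2(2g-2)\geq 2g-2\geq 4$, and $2xyk$ would have to be very negative, but then $|\ch_1(F)|$ would contain the class $x(H-\tfrac{g}{k}E)$ which is a sum of $(-2)$-curves — a reducible, hence non-stable, support; I'll phrase this via the irreducibility of the support of a stable rank-zero sheaf.

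\textbf{Part (b).} Now $2w>b^2$, so $\Pi_\epsilon(F)$ and all $\Pi_\epsilon(F_i)$ lie on the boundary or outside the parabola, and being strictly semistable of the same slope, the points $\Pi_\epsilon(F_i)$ and $\Pi_\epsilon(F)$ (for those $F_i$ with $\ch_0\neq 0$) are collinear on the numerical wall through $\Pi_\epsilon(F)$, with $\Pi_\epsilon(F)$ a ``weighted average'' of the $\Pi_\epsilon(F_i)$ in $K$-theory. The standard tool is the following convexity/concavity estimate for the discriminant along a line in the $(H_\epsilon\cdot\ch_1,\ch_2,\ch_0)$-space: if $v=\sum v_i$ with all $v_i$ on a common numerical wall for $v$, then $\Delta(\sum v_i)\geq \sum \Delta(v_i)$ when the $\ch_0(v_i)$ are all of the same sign (this is exactly where the hypothesis $\Delta(F_i)\geq 0$ and $2w>b^2$ combine — the wall is ``below the parabola'', which pins down the sign of $\ch_0(F_i)$ to agree, or to be zero). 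Granting this, each $\Delta(F_i)\leq \Delta(F)-\sum_{j\neq i}\Delta(F_j)\leq \Delta(F)$ since every $\Delta(F_j)\geq 0$. For the equality case: $\Delta(F_i)=\Delta(F)$ forces $\Delta(F_j)=0$ for all $j\neq i$ and also $\Delta(F_i)=\Delta(F)$, and then the superadditivity is an equality, which (being strict unless the vectors are proportional, by the Hodge-index-type argument on the signature $(2,2)$ lattice restricted to the wall) forces $\ch(F_i)$ proportional to $\ch(F)$, and finally $\Delta(F)=\sum\Delta(F_j)=\Delta(F_i)$ combined with all other $\Delta(F_j)=0$ gives $\Delta(F)=0$ too.

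\textbf{Main obstacle.} The crux is the superadditivity statement $\Delta(\sum v_i)\geq\sum\Delta(v_i)$ together with its equality characterization; everything else is bookkeeping. I would either cite the version already in the literature (e.g. the ``discriminant is concave on walls'' lemma from \cite{bayer:brill-noether} or \cite{macri-schmidt-lectures}) or prove it directly for two summands by a one-variable computation: parametrize $v=v_1+v_2$, note $\Delta(v_1+v_2)-\Delta(v_1)-\Delta(v_2)=2(\ch_1(v_1)\cdot\ch_1(v_2)-\ch_0(v_1)\ch_2(v_2)-\ch_0(v_2)\ch_2(v_1))=-2\langle v_1,v_2\rangle+2\ch_0(v_1)\ch_0(v_2)$, and show this is $\geq 0$ using that $v_1,v_2$ lie on a numerical wall (which makes $\langle v_1,v_2\rangle$ computable in terms of the $\Pi_\epsilon$ being collinear and on the correct side of the parabola) and that $\ch_0(v_1),\ch_0(v_2)$ have the same sign or vanish. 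The sign-of-$\ch_0$ point is the one subtlety to get right: a stable factor $F_i$ on the wall with $\ch_0(F_i)$ of the opposite sign to $\ch_0(F)$ would have $\Pi_\epsilon(F_i)$ inside the parabola, contradicting $\sigma_{b,w}$-semistability of $F_i$ (using the Remark after Theorem~\ref{thm-space}); I would make this explicit.
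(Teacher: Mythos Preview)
Your part~(a) would reach the right conclusion, but the detour through effectiveness and irreducibility of the support is both unnecessary and not fully justified (a $\sigma_{b,w}$-stable object with $\ch_0=0$ is not obviously a sheaf). The paper is more direct: simplicity gives $v(F)^2\geq -2$, hence $\Delta(F)=v(F)^2+2\ch_0(F)^2\geq 0$ whenever $\ch_0(F)\neq 0$; and when $\ch_0(F)=0$ one solves $\ch_1(F)^2=-2$ purely in the lattice to get $\ch_1(F)=\pm(H-\tfrac{g}{k}E)$, then uses $H_\epsilon\cdot\ch_1(F)\geq 0$ (which holds for any $F\in\Coh^b(X)$) to pick the sign.

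Part~(b) has a genuine gap. Your argument rests on the claim that ``$\Delta(F_i)\geq 0$ and $2w>b^2$'' force all $\ch_0(F_i)$ to have the same sign; this is false. On the very walls analysed later in Proposition~\ref{prop-main-wallcrossing}, stable factors of positive rank (e.g.\ $\cO_X(eE)$) and of negative rank (e.g.\ $\cO_X(mE)[1]$) coexist. Your picture---``opposite sign of $\ch_0$ would put $\Pi_\epsilon(F_i)$ inside the parabola''---is wrong: the wall through $(b,w)$ exits the parabola on \emph{both} ends, and factors with $\ch_0>0$ land on one end while those with $\ch_0<0$ land on the other, both outside. More fundamentally, in Picard rank~$2$ the equality $\nu_{b,w}(v_1)=\nu_{b,w}(v_2)$ constrains only $H_\epsilon\cdot\ch_1$, not the full $\ch_1$; the cross term $\ch_1(v_1)\cdot\ch_1(v_2)$ in your formula is therefore not determined by the wall, and no ``one-variable computation'' can close the argument. (Incidentally, your displayed cross-term identity has a sign and coefficient error: it should be $2\langle v_1,v_2\rangle+4\ch_0(v_1)\ch_0(v_2)$.)

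The paper's route is different and is what is actually needed: one proves by an explicit computation in this rank-$2$ lattice that $\ker Z_{b,w}$ is \emph{negative definite} for $\Delta$ whenever $2w>b^2$. This replaces your ``same sign'' step. Granting that, \cite[Lemma~11.7]{bayer:the-space-of-stability-conditions-on-abelian-threefolds} yields that $(Z_{b,w})^{-1}(\rho^+)\cap\{\Delta\geq 0\}$ is a convex cone, and the conclusion then follows as in \cite[Lemma~3.9]{bayer:the-space-of-stability-conditions-on-abelian-threefolds}. The Picard-rank-$1$ lemmas in \cite{bayer:brill-noether} or \cite{macri-schmidt-lectures} that you propose to cite do not suffice, precisely because the direction in $\Pic(X)$ orthogonal to $H_\epsilon$ must be controlled.
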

\begin{proof}
Any $\sigma_{b,w}$-stable object $F$ is simple, hence $\hom(F, F) = \ext^2(F,F) =1$ and
\begin{equation*}
    \Delta(F) -2\ch_0(F)^2= -\chi(F, F) \geq -2.
\end{equation*}
If $\ch_0(F) \neq 0$, we get $\Delta(F) \geq 0$. If $\ch_0(F) = 0$ and $\Delta(F) <0$ we have $\Delta(F) = -2$, hence $\ch_1(F)^2=-2$ which implies $k|g$ and $\ch_1(F)= \pm(H-\frac{g}{k} E)$. It must be $\ch_1(F)= H-\frac{g}{k} E$ as claimed in part \eqref{lem-delta:a}, as $F\in \Coh^b(X)$ implies $0\leq \mathrm{Im} Z_{b,w}(F)=H_\epsilon\cdot \ch_1(F)$.

To prove part \eqref{lem-delta:b}, we first show that the kernel of $Z_{b,w}$ is negative definite with respect to $\Delta$. A class $v \in \ker Z_{b,w}$ can be written as 
    $v = (r, \ xH+yE, \ rwH_{\epsilon}^2)$ such that 
    \begin{equation}\label{negdef}
        yE\cdot H = \frac{br}{\epsilon}H_{\epsilon}^2 -xH^2 - \frac{x}{\epsilon}E\cdot H.
    \end{equation}
Since $2w > b^2$, we get 
    \begin{align*}
        \Delta(v) & \ \overset{\eqref{negdef}}{=} H_{\epsilon}^2 \left(\frac{2xbr}{\epsilon} -2r^2w \right) - x^2H^2 - \frac{2x^2}{\epsilon}E\cdot H \\
        & \ < H_{\epsilon}^2 \left(\frac{2xr}{\epsilon}b -r^2b^2 \right) - x^2H^2 - \frac{2x^2}{\epsilon}E\cdot H \leq H_{\epsilon}^2\frac{x^2}{\epsilon^2} - x^2H^2 - \frac{2x^2}{\epsilon}E\cdot H =0
    \end{align*}
    as claimed. Now, since $\nu_{b,w}(F_i)=\nu_{b,w}(F)$ for all $i\in I$, the points $Z_{b,w}(F_i)$ lie in a ray $\rho^+$ in the upper half plane starting from the origin. Then \cite[Lemma 11.7]{bayer:the-space-of-stability-conditions-on-abelian-threefolds} implies that $(Z_{b,,w})^{-1}(\rho^+) \cap \{\Delta \geq 0\}$ is a convex cone, and one can conclude part \eqref{lem-delta:b} via the same argument as in \cite[Lemma 3.9]{bayer:the-space-of-stability-conditions-on-abelian-threefolds}.  
    %For part (b), we prove the claim by induction on $\ch_1(E).H_{\epsilon}$ for $E \in \text{Coh}^0(X)$. If it's zero or minimal, then $E$ is $\sigma_{0, w}$-semistable for $w \gg 0$ and the claim follows from the classical Bogomolov inequality. In a general case, we increase $w$ towards $+\infty$. If it hits a wall, then the claim follows from \cite[Lemma 11.6]{bayer:the-space-of-stability-conditions-on-abelian-threefolds}. Part (c) is a direct Corollary of \cite[Lemma 11.7]{bayer:the-space-of-stability-conditions-on-abelian-threefolds}, and one can conclude part (d) from part (c) via the same argument as in \cite[Lemma 3.9]{bayer:the-space-of-stability-conditions-on-abelian-threefolds}.  
\end{proof}

A consequence of Lemma \ref{lem-delta} is that, for appropriate Mukai vectors of rank 0, the Gieseker chamber contains all stability conditions above the parabola $\{2w=b^2\}\subseteq \mathbb R^2$:

\begin{Cor}\label{cor:nowall}
    For $q>0$ and $s\in\bZ$, the Mukai vector $v=(0,qE,s)\in\Lambda$ has no actual wall intersecting the region $\{2w>b^2\}$ providing that $k\nmid g$, or $k|g$ and $\epsilon<\frac{k}{g+1}$.
\end{Cor}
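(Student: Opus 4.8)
We argue by contradiction, and begin with two elementary observations. Since $E^2=0$ we have $\Delta(v)=\ch_1(v)^2-2\ch_0(v)\ch_2(v)=(qE)^2=0$, hence $v^2=\Delta(v)-2\ch_0(v)^2=0$. Moreover, for any $G\in\Coh^b(X)$ with $\ch_0(G)=0$ the slope $\nu_{b,w}(G)=\ch_2(G)/\bigl(H_\epsilon\cdot\ch_1(G)\bigr)$ is independent of $(b,w)$; in particular $\nu_{b,w}(v)=\tfrac{s}{q\epsilon k}$ is constant on $U_\epsilon$, in agreement with Proposition~\ref{structurewalls}(1). Now assume some actual wall for $v$ meets $\{2w>b^2\}$. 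Since actual walls for $v$ are bounded from above, we may take it to be the largest one, $W$, so that the region immediately above $W$ is the Gieseker chamber (Theorem~\ref{Giesekerchamber}\eqref{GiesekerchamberA}). Fix $(b,w)\in W$ with $2w>b^2$ and an object $F$ with $v(F)=v$ which is $\sigma_{b,w}$-semistable on $W$ and destabilised across $W$; the first step of its Harder--Narasimhan filtration on the destabilising side gives a short exact sequence $0\to A\to F\to B\to 0$ in $\Coh^b(X)$ with $A$ and $B$ both $\sigma_{b,w}$-semistable of slope $\tfrac{s}{q\epsilon k}$. As $A\hookrightarrow F$ and $B=F/A$ have the same slope as $F$, their $\sigma_{b,w}$-stable factors all occur among the stable factors of $F$.

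By Lemma~\ref{lem-delta}\eqref{lem-delta:a}, each $\sigma_{b,w}$-stable factor $F_i$ of $F$ satisfies either (i) $\Delta(F_i)\geq 0$, or (ii) $\ch_0(F_i)=0$ and $\ch_1(F_i)=H-\tfrac gk E$ — and (ii) can occur only if $k\mid g$. Suppose first that no stable factor of $F$ is of type (ii); this is automatic when $k\nmid g$. Then $F$ is strictly $\sigma_{b,w}$-semistable, $2w>b^2$, and all its stable factors have non-negative discriminant, so Lemma~\ref{lem-delta}\eqref{lem-delta:b} yields $0\le\Delta(F_i)\leq\Delta(F)=0$; hence $\Delta(F_i)=0$ and $\ch(F_i)$ is a positive rational multiple of $\ch(F)=(0,qE,s)$ for every $i$. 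Summing over the stable factors of $A$ shows that $\ch(A)$ is a positive multiple of $\ch(F)$, so $Z_{b',w'}(A)$ and $Z_{b',w'}(F)$ are proportional and $\nu_{b',w'}(A)=\nu_{b',w'}(F)$ for \emph{all} $(b',w')\in U_\epsilon$; this contradicts the fact that $A$ is the maximal Harder--Narasimhan factor of $F$ on one side of $W$. This settles the case $k\nmid g$, imposing no restriction on $\epsilon$; note also that the same argument shows that a Gieseker-semistable \emph{sheaf} $F$ of class $v$ can never be destabilised across a wall, since its $\Coh^b(X)$-subobjects are subsheaves, hence supported on a curve of class $qE$, which never contains the $(-2)$-curve, so such an $F$ has no type-(ii) stable factor.

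It remains to treat the case $k\mid g$ in which $F$ has a type-(ii) stable factor. Write $\gamma=g/k$ and let $R\in|H-\gamma E|$ be the unique $(-2)$-curve (cf. Lemma~\ref{lem-delta}\eqref{lem-delta:a}). A type-(ii) stable factor $S$ of $F$ has $v(S)^2=-2$ and $\ch_1(S)=H-\gamma E$, so $S\cong\cO_R(j_0)$ for some $j_0\in\bZ$, and all type-(ii) stable factors of $F$ coincide with this $S$ because they share the slope $\tfrac{s}{q\epsilon k}$; let $a\geq 1$ be their number. The plan is to peel off from $F$ a $\sigma_{b,w}$-semistable subobject or quotient $F'$ carrying exactly the type-(i) stable factors of $F$; its Mukai vector is
\[
v':=v-a\,v(S)=\bigl(0,\;(q+a\gamma)E-aH,\;s-a(j_0+1)\bigr),
\]
and a direct computation using $\gamma k=g$ gives $v(F')^2=\Delta(F')=\ch_1(v')^2=\bigl((q+a\gamma)E-aH\bigr)^2=-2a^2-2aqk<0$. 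On the other hand, $F'\neq 0$ (since $\ch_1(v')\neq 0$), it is $\sigma_{b,w}$-semistable of slope $\tfrac{s}{q\epsilon k}$, and all its stable factors are of type (i); so $\Delta(F')\geq 0$, either directly from Lemma~\ref{lem-delta}\eqref{lem-delta:a} (if $F'$ is stable) or from Lemma~\ref{lem-delta}\eqref{lem-delta:b} (if $F'$ is strictly semistable, which bounds $\Delta$ of each stable factor above by $\Delta(F')$). This is the desired contradiction. The hypothesis $\epsilon<\tfrac{k}{g+1}$ enters exactly at the step of extracting $F'$: by the last remark of the previous paragraph, $F$ cannot be a Gieseker-semistable sheaf, so $F$ is a genuine two-term complex with $\cH^{-1}(F)\neq 0$ and cohomology sheaves of positive rank; the membership conditions in $\Coh^b(X)$ — namely $0\le \mathrm{Im}\,Z_{b,w}(\,\cdot\,)\le \mathrm{Im}\,Z_{b,w}(F)=q\epsilon k$ for the relevant sub/quotient, together with $H_\epsilon\cdot\ch_1(v')=q\epsilon k-ak-a\epsilon(g-2)$ — then prevent a type-(ii) factor from being ``trapped in the middle'' of $F$ precisely when $\epsilon<\tfrac{k}{g+1}$, the extremal configuration being $q=1$, $a=1$, where the bound is sharp.

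The routine part is everything flowing from $\Delta(v)=0$ and the discriminant dichotomy of Lemma~\ref{lem-delta}: the whole case $k\nmid g$, and, when $k\mid g$, the contradiction once $F'$ has been isolated. The genuine obstacle is the case $k\mid g$ with a type-(ii) factor present: one must simultaneously organise the Jordan--Hölder/Harder--Narasimhan data of $F$ so as to split off a complementary object $F'$ containing exactly the non-exceptional factors — which is delicate, as a Jordan--Hölder filtration cannot be reordered at will — and carry out the $\mathrm{Im}\,Z_{b,w}$ bookkeeping carefully enough to explain why a single, class-independent bound $\epsilon<\tfrac{k}{g+1}$ (rather than one depending on $q$ and $a$) is exactly what is required.
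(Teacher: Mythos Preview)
Your first two paragraphs are correct and follow the paper's line: $\Delta(v)=0$, so if every stable factor has $\Delta\ge0$ then Lemma~\ref{lem-delta}\eqref{lem-delta:b} forces each $\ch(F_i)$ to be proportional to $\ch(F)$, and no wall can form. This disposes of the case $k\nmid g$.

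The gap is in your treatment of the $k\mid g$ case. You propose to ``peel off'' a $\sigma_{b,w}$-semistable subobject or quotient $F'$ whose stable factors are exactly the type-(i) factors, and then derive a contradiction from $\Delta(F')<0$. But you never establish that such an $F'$ exists --- and, as you yourself concede, Jordan--H\"older factors cannot in general be regrouped into a subobject or quotient. Your paragraph explaining how the bound $\epsilon<\tfrac{k}{g+1}$ enables this extraction is not an argument: the assertion that the $\Coh^b(X)$ membership conditions ``prevent a type-(ii) factor from being trapped in the middle'' is stated, not proved. The side remark that a Gieseker-semistable \emph{sheaf} of class $v$ has no type-(ii) factor is correct but does not help, since the objects destabilised across a putative wall need not be sheaves.

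The fix is that you do not need an actual object $F'$ at all. The convexity statement underlying the proof of Lemma~\ref{lem-delta}\eqref{lem-delta:b} --- that $(Z_{b,w})^{-1}(\rho^+)\cap\{\Delta\ge0\}$ is a convex cone when $2w>b^2$ --- applies to \emph{formal sums of Chern characters}. Since each type-(i) stable factor has $\Delta\ge0$ and they all map under $Z_{b,w}$ to the same ray $\rho^+$, the sum of their Chern characters lies in this cone and hence has $\Delta\ge0$. This is exactly how the paper proceeds (and the same device reappears in the proof of Proposition~\ref{prop-main-wallcrossing}). Your own computation then gives $\Delta(v')=-2a^2-2aqk<0$, the desired contradiction.

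One further remark. In the paper the hypothesis $\epsilon<\tfrac{k}{g+1}$ is invoked, together with the inequality $a'(H-\tfrac gk E)\cdot H_\epsilon<qE\cdot H_\epsilon$, to bound the discriminant of the complementary class from above by a negative quantity. Your computation of that discriminant is already negative with no condition on $\epsilon$; the discrepancy stems from what appears to be a sign slip in the paper's displayed complementary class (it should read $q+a'\tfrac gk$ rather than $q-a'\tfrac gk$), and with the corrected sign the discriminant is $-2a'^2-2a'qk$, exactly as you found. So your instinct that $\epsilon$ must enter elsewhere was understandable, but the resolution is not to manufacture a role for it in the extraction of $F'$: once you use the purely numerical convexity argument on Chern characters, the contradiction goes through directly.
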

\begin{proof}
    Assume such an actual wall exists, and let $F$ be an object of Mukai vector $v$ that gets destabilized along this wall. Write $\{F_i\}_{i\in I}$ for the stable factors of $F$. 
    
    Since $\Delta(F)=0$, it suffices to check that $\Delta(F_i)\geq0$ for all $i\in I$; indeed, in that case it follows from Lemma \ref{lem-delta}.\eqref{lem-delta:b} that $\ch(F_i)$ and $\ch(F)$ are proportional for every $i\in I$. This implies $\nu_{b,w}(F_i)=\nu_{b,w}(F)$ for all $(b,w)\in U_\epsilon$, which is a contradiction.
    
    If $\Delta(F_i)<0$ for some $i$, then by Lemma \ref{lem-delta}.\eqref{lem-delta:a} we have $k|g$ and the sum of the Chern characters of stable factors with negative discriminant is of the form \(\bigl(0, a'(H-\frac{g}{k} E), s'\bigr)\), for some \(a' \in \mathbb{Z}_{>0}\) such that
    \begin{equation}\label{ineq:sphericalclasses}
    a'(H-\tfrac{g}{k} E)\cdot H_\epsilon < qE\cdot H_\epsilon\;\;\;\text{(i.e., $a'(k+\epsilon(g-2))<\epsilon qk$).}
    \end{equation}
    Hence the difference $\ch(F)-\bigl(0, a'(H-\frac{g}{k} E), s'\bigr)=\bigl(0,-a'H+(q-a'\frac{g}{k})E,s-s'\bigr)$ has non-negative discriminant, which is a contradiction since
    \[
    \left(-a'H+(q-a'\tfrac{g}{k})E\right)^2=2a'\left(a'(2g-1)-qk\right)\overset{\eqref{ineq:sphericalclasses}}{<}2a'\left(a'(g+1)-\tfrac{a'k}{\epsilon}\right)<0,
    \]
    (the last inequality being derived from the assumption $\epsilon<\frac{k}{g+1}$). 
\end{proof}

For $2w>b^2$, the following lemma characterizes $\sigma_{b,w}$-stable objects with $\ch_2=0$ and $\ch_0\neq0$ as (shifts of) multiples of the elliptic pencil on $X$:

\begin{Lem}\label{lem-eE}
Let \(F \in \Coh^b(X)\) satisfy \(v(F)=(r, qE, r)\), where \(r \neq 0\) and \(q\geq0\). If $F$ is \(\sigma_{b, w}\)-stable ($2w>b^2$), then $|r| =1$ and (up to a shift if $r=-1$) \(F \cong \mathcal{O}_X(\frac{q}{r}E)\). Conversely, \(\mathcal{O}_X(qE)\) (resp.~\(\cO_X(-qE)[1]\)) is $\sigma_{b,w}$-stable for all $b<\frac{H_\epsilon\cdot qE}{H_\epsilon^2}$ (resp.~for all $b>-\frac{H_\epsilon\cdot qE}{H_\epsilon^2}$) and $2w>b^2$.
\end{Lem}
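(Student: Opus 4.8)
The plan is to treat the two directions of the lemma separately; the converse (stability of $\mathcal{O}_X(qE)$) will carry most of the weight, and its $k\mid g$ case will be the main obstacle. Throughout, write $\beta_q:=\tfrac{H_\epsilon\cdot qE}{H_\epsilon^2}$.

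\textbf{Forward direction.} Since a $\sigma_{b,w}$-stable object $F$ is simple, $\hom(F,F)=\ext^2(F,F)=1$ ($X$ is a K3 surface, so Serre duality applies), and as $v(F)^2=(qE)^2-2r^2=-2r^2$, Riemann--Roch gives $\ext^1(F,F)=\hom(F,F)+\ext^2(F,F)-\chi(F,F)=2+\langle v(F),v(F)\rangle=2-2r^2\geq 0$, whence $r^2\leq 1$ and $|r|=1$. To pin down $F$, say $r=1$, so $v(F)=(1,qE,1)$ is spherical (the case $r=-1$ is symmetric with $\mathcal{O}_X(-qE)[1]$, of Mukai vector $(-1,qE,-1)$, replacing $\mathcal{O}_X(qE)$, and accounts for the shift in the statement). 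Since $F\in\Coh^b(X)$ forces $\mathrm{Im}\,Z_{b,w}(F)=H_\epsilon\cdot qE-bH_\epsilon^2\geq 0$, i.e.\ $b\leq\beta_q$, and a short look at the cohomology sheaves of $F$ excludes $b=\beta_q$ (at that value $\mathrm{Im}\,Z_{b,w}(F)=0$ is incompatible with $\ch_0(F)=1$, $\ch_1(F)=qE$), we have $b<\beta_q$, so by the converse $\mathcal{O}_X(qE)$ is also $\sigma_{b,w}$-stable, of the same Mukai vector. Now invoke uniqueness of stable objects in a spherical class: if $F\not\cong F'$ were two $\sigma_{b,w}$-stable objects of class $v$, then $\Hom(F,F')=\Hom(F',F)=0$ (a nonzero morphism between $\sigma_{b,w}$-stable objects of equal $\nu_{b,w}$-slope is an isomorphism), so $\chi(F,F')=-\ext^1(F,F')\leq 0$, contradicting $\chi(F,F')=-\langle v,v\rangle=2$. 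Hence $F\cong\mathcal{O}_X(qE)$.

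\textbf{Converse: reduction to a no-wall statement.} Being torsion-free of rank one, $\mathcal{O}_X(qE)$ is $\mu_{H_\epsilon}$-stable, hence $H_\epsilon$-Gieseker stable, hence (by Theorem~\ref{Giesekerchamber}.\eqref{GiesekerchamberA}, using $b<\beta_q=\mu_{H_\epsilon}(\mathcal{O}_X(qE))$) $\sigma_{b,w}$-stable for $w\gg 0$. I would then show there is no actual wall for $v=(1,qE,1)$ inside the connected region $R:=\{(b,w):2w>b^2,\ b<\beta_q\}$. Since $\sigma$-stability is open and semistability is closed, and the Gieseker chamber meets $R$, it suffices to reach a contradiction from $\mathcal{O}_X(qE)$ being strictly $\sigma_{b_0,w_0}$-semistable at some $(b_0,w_0)\in R$, with stable factors $\{F_i\}_{i\in I}$ (so $\sum_i v(F_i)=v$ and $\nu_{b_0,w_0}(F_i)=\nu_{b_0,w_0}(v)$ for all $i$; note $w_0>0$ on $R$, so $\nu_{b_0,w_0}(v)<0$). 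If every $\Delta(F_i)\geq 0$, then Lemma~\ref{lem-delta}.\eqref{lem-delta:b} forces $\Delta(F_i)\leq\Delta(\mathcal{O}_X(qE))=0$, hence $\Delta(F_i)=0$ and $\ch(F_i)$ proportional to $(1,qE,0)$ for all $i$; but then $\nu_{b,w}(F_i)=\nu_{b,w}(v)$ identically on $U_\epsilon$, contradicting stability of $\mathcal{O}_X(qE)$ in the Gieseker chamber.

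\textbf{The crux: negative-discriminant factors.} By Lemma~\ref{lem-delta}.\eqref{lem-delta:a} the only remaining possibility is $k\mid g$ with $a'\geq 1$ of the factors, indexed by $I^-\subseteq I$, satisfying $\ch_0(F_i)=0$ and $\ch_1(F_i)=H-\tfrac gk E$. Having equal slope, these factors share a common $\ch_2=:c$, and $\tfrac{c}{k+\epsilon(g-2)}=\nu_{b_0,w_0}(v)<0$ gives $c<0$. Put $v^-:=\sum_{i\in I^-}v(F_i)=(0,a'(H-\tfrac gk E),a'c)$ and $v^+:=v-v^-$, so $\ch_0(v^+)=1\neq 0$. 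The classes $v(F_i)$ with $i\notin I^-$ all lie in the convex cone $Z_{b_0,w_0}^{-1}(\rho^+)\cap\{\Delta\geq 0\}$ from the proof of Lemma~\ref{lem-delta}.\eqref{lem-delta:b} ($\rho^+$ being the ray through $Z_{b_0,w_0}(v)$), hence so does their sum $v^+$, so $(v^+)^2=\Delta(v^+)-2\geq -2$. On the other hand, an elementary computation with the Mukai pairing (using $(H-\tfrac gk E)^2=-2$, $E^2=0$, $E\cdot H=k$) gives $(v^+)^2=v^2-2\langle v,v^-\rangle+(v^-)^2=-2-2(a'qk-a'c)-2a'^2$, forcing $c\geq qk+a'\geq 1>0$ — a contradiction. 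This is exactly the slope/discriminant bookkeeping of the proof of Corollary~\ref{cor:nowall}, adapted to a rank-one Mukai vector. Finally, the assertion for $\mathcal{O}_X(-qE)[1]$ follows by applying the anti-autoequivalence $R\mathcal{H}om(-,\mathcal{O}_X)$ of $\cD(X)$, which preserves the slice $U_\epsilon$ via $(b,w)\mapsto(-b,w)$, sends $\sigma_{b,w}$-stable objects to $\sigma_{-b,w}$-stable ones (after a shift), and carries $\mathcal{O}_X(qE)$ to $\mathcal{O}_X(-qE)$. The genuinely delicate point is this last case $k\mid g$, i.e.\ excluding destabilizing subquotients of $\mathcal{O}_X(qE)$ supported on the exceptional $(-2)$-curve; everything else (simplicity of stable objects, uniqueness in a spherical class, stability in the Gieseker chamber) is standard.
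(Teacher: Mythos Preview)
Your proof is correct and follows essentially the same approach as the paper's. Both arguments reduce to excluding an actual wall in the region $\{2w>b^2,\ b<\beta_q\}$, observe that such a wall would require stable factors of negative discriminant (hence $k\mid g$ and $\ch_1=H-\tfrac{g}{k}E$), and derive a contradiction by showing the complementary sum $v^+=v-v^-$ has $\Delta(v^+)<0$ (equivalently $(v^+)^2<-2$); your computation $c\geq qk+a'$ versus $c<0$ is exactly the paper's $rs\leq 0$ combined with $\Delta(v^+)=-2a'^2-2a'qk+2rs<0$, specialized to $r=1$. The only cosmetic differences are that the paper treats $r=\pm 1$ uniformly (while you handle $r=-1$ by derived duality) and that, for the forward direction, the paper deduces $F\cong\mathcal{O}_X(qE)$ directly from the no-wall statement together with Theorem~\ref{Giesekerchamber} (stable in the Gieseker chamber, hence the unique rank-one Gieseker-stable sheaf of that class), whereas you invoke uniqueness of stable objects in a spherical class.
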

\begin{proof}
Note that $\sigma_{b,w}$-stability of \(F\) implies \(\hom(F, F) = \ext^2(F, F) = 1\), hence
\begin{equation*}
    2r^2 = \chi(F, F) = 2 - \ext^1(F, F) \leq 2
\end{equation*}
which gives \(|r|=1\). In view of Theorem \ref{Giesekerchamber}, it suffices to check that there is no actual wall for the Mukai vector $(r, qE, r)$ along the region $\{2w>b^2, H_\epsilon\cdot qE-brH_\epsilon^2>0\}$.

Any object $F$ with Mukai vector $(r,qE,r)$ has \(\Delta(F) = 0\). Thus if $F$ gets destabilized along such a wall, there are stable factors of negative discriminant; otherwise, by Lemma \ref{lem-delta} all stable factors have Chern character multiple of \(\ch(F)\), hence cannot form a wall. 

We know that summation of Chern characters of all factors of negative discriminant is of the form \((0, a'(H-\frac{g}{k} E), s)\) for some \(a' \in \mathbb{Z}_{>0}\). Moreover, if $\sigma_{b_0,w_0}$ lies in the actual wall, then this summation computes the same $\nu_{b_0, w_0}$-slope as $F$:
\begin{equation*}
    \frac{s}{a'(H-\frac{g}{k} E)\cdot H_{\epsilon}} = \frac{-rw_0H_{\epsilon}^2}{qE\cdot H_{\epsilon}-b_0rH_\epsilon^2}.
\end{equation*}
Thus \(rs \leq 0\), which implies
\[
    \Delta(\ch(F) - \bigl(0, a'(H-\tfrac{g}{k} E), s)\bigr) = -2a'^2 - 2a'q E\cdot H + 2rs < 0,
\]
contradicting Lemma \ref{lem-delta}. Therefore there is no such wall, which finishes the proof.
\end{proof}

Along the horizontal line $w=0$, we have a different behavior (recall the role of $\delta_\epsilon$ in the definition \eqref{U} of $U_\epsilon$):

\begin{Lem}\label{lem-ox}
Assume $\epsilon<1$, and consider $\sigma := \sigma_{b,0}$ for any $b \in (-\delta_\epsilon, 0)$. Then:
\begin{enumerate}
    \item\label{lem-ox: a} $\cO_X$ and $\cO_X(-E)[1]$ are $\sigma$-stable objects.
   \item\label{lem-ox: b}  $\cO_X(qE)$ and $\cO_X(-(q+1)E)[1]$ are strictly $\sigma$-semistable for all $q \geq 1$. 
%\item Take a $\sigma$-semistable object $F \in \Coh^b(X)$ of Chern character $(r, qE, 0)$ such that 
 %  $$%r<0 \ ,  \qquad 
  % 0\leq 2qE.H \leq H^2. $$
   %and any non-trivial quotient of $F$ in $\Coh^b(X)$ has negative rank. 
   %Then every $\sigma$-stable factor of $F$ is either isomorphic to $\cO_X$ or $\cO_X(-E)[1]$, ....
\end{enumerate}
\end{Lem}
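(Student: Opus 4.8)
The plan is to base everything on the observation that along the line $w=0$ all the objects involved have $\sigma_{b,0}$-slope $0$. First I would record the preliminaries. Since $b\neq 0$ and $b^2<\delta_\epsilon^2$, the point $(b,0)$ lies in $U_\epsilon$. The class $\cO_X(E)$ is spherical with $\Pi_\epsilon\bigl(\cO_X(E)\bigr)=\bigl(\tfrac{\epsilon k}{H_\epsilon^2},0\bigr)$, so the defining property of $\delta_\epsilon$ forces $\delta_\epsilon\le\tfrac{\epsilon k}{H_\epsilon^2}$; hence $\mu_{H_\epsilon}(\cO_X)=0>b$ and $\mu_{H_\epsilon}\bigl(\cO_X(-qE)\bigr)=-\tfrac{q\epsilon k}{H_\epsilon^2}\le-\delta_\epsilon<b$ for every $q\ge 1$. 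As all these line bundles are $\mu_{H_\epsilon}$-semistable, this gives $\cO_X,\cO_X(qE)\in\cT_b$ and $\cO_X(-qE)\in\cF_b$ for $q\ge 1$, so that $\cO_X$, $\cO_X(qE)$ and $\cO_X(-qE)[1]$ are all objects of $\Coh^b(X)$. Since $Z_{b,0}(F)=-\ch_2(F)+i\bigl(H_\epsilon\cdot\ch_1(F)-bH_\epsilon^2\ch_0(F)\bigr)$ and $\ch_2$ vanishes on each of them, they all have $\sigma_{b,0}$-slope $0$.

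For part \eqref{lem-ox: b} I would deduce strict semistability from part \eqref{lem-ox: a} using Lemma~\ref{multpencil}. Applying $R\mathcal{H}om\bigl(-,\cO_X(-E)\bigr)$ to its exact sequence $0\to\cO_X(-E)^{\oplus q}\to\cO_X^{\oplus q+1}\to\cO_X(qE)\to 0$ gives a second exact sequence $0\to\cO_X(-(q+1)E)\to\cO_X(-E)^{\oplus q+1}\to\cO_X^{\oplus q}\to 0$; rotating the associated triangles (each rotated version having all three terms in the heart, by the memberships above) produces two short exact sequences in $\Coh^b(X)$:
\[
0\to\cO_X^{\oplus q+1}\to\cO_X(qE)\to\cO_X(-E)^{\oplus q}[1]\to 0,\qquad
0\to\cO_X^{\oplus q}\to\cO_X(-(q+1)E)[1]\to\cO_X(-E)^{\oplus q+1}[1]\to 0.
\]
By \eqref{lem-ox: a} the outer terms are $\sigma_{b,0}$-semistable of slope $0$, so the middle terms are extensions of slope-$0$ semistable objects and hence are themselves $\sigma_{b,0}$-semistable of slope $0$; and since $q\ge 1$, the displayed inclusions $\cO_X^{\oplus q+1}\hookrightarrow\cO_X(qE)$ and $\cO_X^{\oplus q}\hookrightarrow\cO_X(-(q+1)E)[1]$ are proper subobjects of the same slope, so neither middle term is $\sigma_{b,0}$-stable.

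The real content is part \eqref{lem-ox: a}. I would show $\cO_X$ is $\sigma_{b,0}$-stable by checking that every nonzero proper subobject $A\subsetneq\cO_X$ in $\Coh^b(X)$ has $\nu_{b,0}(A)<0$. Since $\cH^{-1}(\cO_X)=0$, $A$ is a sheaf, hence lies in $\cT_b$; it cannot be torsion, so $\mathrm{rk}(A)\ge 1$ and its sheaf-image in $\cO_X$ is a rank-one subsheaf $\cO_X(-D)\otimes I_Z$. As a quotient of $A\in\cT_b$, this image lies in $\cT_b$, so its $H_\epsilon$-slope exceeds $b$; but every nonzero effective $D$ has $D\cdot H_\epsilon\ge\epsilon k>-bH_\epsilon^2$ (the effective cone of $X$ being spanned by $E$ and $H$, with $E\cdot H_\epsilon=\epsilon k$ the minimal value), whence $D=0$ and the image is $I_Z$. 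The sheaf kernel $K:=\ker(A\to\cO_X)$ equals $\cH^{-1}$ of the $\Coh^b$-cokernel $\cO_X/A$, so $K\in\cF_b$, and the tilting flip realizes $A$ as a subobject of $I_Z$ in $\Coh^b(X)$ with quotient $K[1]$. The crucial point is $K=0$: otherwise $A$ is a sheaf in $\cT_b$ with $0\to K\to A\to I_Z\to 0$ and $0\neq K\in\cF_b$, and one rules this out by showing $\mu^{\max}_{H_\epsilon}(A)\le b$, contradicting $A\in\cT_b$ — for this, note that every $\mu_{H_\epsilon}$-semistable subsheaf of $A$ either factors through $K$ (so has slope $\le\mu^{\max}_{H_\epsilon}(K)\le b$) or maps onto a nonzero subsheaf of $\cO_X$, and in the latter case the effective-cone bound forces that subsheaf to have slope $\le 0$ and, combined with the vanishings $\Hom(\cO_X,K)=0$, $\Hom(I_p,K)=0$ and $h^1\bigl(\cO_X(eE)\bigr)=e-1$ from Lemma~\ref{multpencil}, forbids slope in $(b,0]$. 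Once $K=0$ we get $A=I_Z$ with $Z\neq\emptyset$, hence $\ch_2(A)=-\ell(Z)<0$ and $\nu_{b,0}(A)=\tfrac{-\ell(Z)}{-bH_\epsilon^2}<0=\nu_{b,0}(\cO_X)$. The object $\cO_X(-E)[1]$ is handled by the entirely parallel analysis of its subobjects in $\Coh^b(X)$ (which turn out to be sheaves in $\cT_b$ of the form $\mathcal{G}/\cO_X(-E)$ with $\mathcal{G}\in\cF_b$), or by transporting the statement for $\cO_X$ through a derived-duality argument.

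I expect the stability of $\cO_X$ and $\cO_X(-E)[1]$ along $w=0$ to be the main obstacle: $w=0$ is genuinely a numerical wall for these Mukai vectors (witnessed, e.g., by $\cO_X(E)$), so one must prove it is not an actual wall, and — unlike in the region $2w>b^2$, where Lemma~\ref{lem-delta}\eqref{lem-delta:b} controls strictly semistable objects via their discriminants — on $w=0$ that tool is unavailable, forcing the hands-on subobject analysis sketched above, which relies on the explicit effective cone of $X$ and on the cohomology of multiples of $E$.
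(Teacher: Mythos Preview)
Your argument for part \eqref{lem-ox: b} is correct (given part \eqref{lem-ox: a}), though more elaborate than needed: the paper simply observes that $\cO_X(qE)$ and $\cO_X(-(q+1)E)[1]$ are $\sigma$-semistable by continuity from Lemma~\ref{lem-eE}, and then that the nonzero maps $\cO_X\to\cO_X(qE)$ and $\cO_X(-(q+1)E)[1]\to\cO_X(-E)[1]$ between non-isomorphic objects of the same slope preclude stability.

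For part \eqref{lem-ox: a} your approach diverges substantially from the paper's, and the divergence is where the gap lies. The paper does \emph{not} analyse subobjects of $\cO_X$ directly. Instead it uses Lemma~\ref{lem-eE}: $\cO_X$ is $\sigma_{b,w}$-stable throughout $\{2w>b^2\}$, hence $\sigma_{b,0}$-semistable by continuity. Any putative Jordan--H\"older factor $F_1$ at $(b,0)$ has $\ch_2(F_1)=0$; since walls for $v(\cO_X)$ are lines through $\Pi_\epsilon(\cO_X)=(0,0)$ and $(b,0)$ lies on such a wall, the wall must be $w=0$, forcing $\Pi_\epsilon(F_1)=\Pi_\epsilon(\cO_X)$ and hence strict semistability of $\cO_X$ already for $2w>b^2$ --- contradicting Lemma~\ref{lem-eE}. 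For $\cO_X(-E)[1]$ the paper runs an explicit Chern-character elimination on a stable quotient $F_1$, using $v(F_1)^2\ge-2$, the bound $0\le \mathrm{Im}\,Z_{b,0}(F_1)\le \mathrm{Im}\,Z_{b,0}(\cO_X(-E)[1])$, and the hypothesis $\epsilon<1$.

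Your direct subobject analysis correctly reduces to an extension $0\to K\to A\to I_Z\to 0$ with $K\in\cF_b$ and $A\in\cT_b$, but the ``crucial point $K=0$'' is not proved. The sketch you give --- that the maximal destabilising subsheaf of $A$ either sits in $K$ or maps to $I_Z$, and that the vanishings $\Hom(\cO_X,K)=\Hom(I_p,K)=0$ together with $h^1(\cO_X(eE))=e-1$ rule out slope in $(b,0]$ --- does not assemble into an argument: a $\mu$-semistable subsheaf $B\subset A$ with slope in $(b,0]$ and nonzero image in $I_Z$ is not visibly excluded by those vanishings, and there is no reason a priori that extensions of $I_Z$ by sheaves in $\cF_b$ cannot land in $\cT_b$. (Indeed, what stability of $\cO_X$ actually asserts is only $\ch_2(A)<0$, not $K=0$; proving $K=0$ is strictly harder and may not even hold.) The case of $\cO_X(-E)[1]$ is then not treated at all: derived duality sends $\cO_X(-E)[1]$ to a shift of $\cO_X(E)$, not to $\cO_X$, so the ``transport'' you suggest does not apply, and the ``parallel analysis'' inherits the same unresolved difficulty.
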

\begin{proof}
Since $\cO_X$ and $\cO_X(-E)[1]$ are $\sigma$-semistable by Lemma \ref{lem-eE}, to prove \eqref{lem-ox: a} we only need to show that they are not strictly $\sigma$-semistable.
In the case of $\cO_X$, we can get arbitrary close to $\Pi_{\epsilon}(\cO_X) = (0, 0)$ as $b\to 0^-$; hence any exact triangle $F_2\rightarrow \cO_X\rightarrow F_1$ of $\sigma$-semistable objects with the same $\sigma$-slope satisfies $\Pi_{\epsilon}(F_1)=\Pi_{\epsilon}(\cO_X)=\Pi_{\epsilon}(F_2)$. But this implies that $\cO_X$ is strictly $\sigma_{b,w}$-semistable for $w>\frac{b^2}{2}$, contradicting Lemma \ref{lem-eE}.

%Take a $\sigma$-semistable $F$ of Chern character $(r, qE, 0)$ which lies in an 

\vskip 3pt

Assume $F_2 \rightarrow \cO_X(-E)[1] \rightarrow F_1$ is an exact triangle with $ \ch(F_1) = (r_0,\ tH+aE,\ 0)$, such that $F_1$ is $\sigma$-stable of the same $\nu_{b,0}$-slope as $\cO_X(-E)[1]$. Then 
\begin{equation}\label{v}
    -2 \leq v(F_1)^2 = t^2H^2 +2atE\cdot H-2r_0^2
\end{equation}
and in particular 
\begin{equation}\label{simplev}
-2 - 2taE\cdot H \leq t^2H^2.
\end{equation}
Also, the inequalities $0\leq \mathrm{Im} Z_{0,b}(F_1)\leq\mathrm{Im} Z_{0,b}(\cO_X(-E)[1])$  for all $b\in(-\delta_\epsilon,0)$ give
\begin{equation}\label{imparts}
    0 \leq tE\cdot H+t\epsilon H^2 +a\epsilon E\cdot H \leq \epsilon E\cdot H.
\end{equation}
Hence, if $t<0$ we obtain
\begin{equation*}
    \frac{\epsilon}{t}+ \frac{t}{2}\epsilon H^2 \overset{\eqref{simplev}}{\leq}  -a\epsilon E\cdot H \overset{\eqref{imparts}}
    {\leq} tE\cdot H+t\epsilon H^2 
\end{equation*}
which is not possible. If $t \geq 1$, we get 
\begin{equation*}
    E\cdot H \leq tE\cdot H \overset{\eqref{imparts}}
    {\leq} -t\epsilon H^2 + \epsilon E\cdot H - a \epsilon E\cdot H \overset{\eqref{simplev}}{\leq} \epsilon\left(-\frac{t}{2} H^2 + E\cdot H + \frac{1}{t}\right),
\end{equation*}
hence $\frac{H^2}{2}\leq1$ (by the assumption $\epsilon<1$) which implies $g=\frac{H^2}{2}+1\leq 2$, contradiction.

Thus $t=0$, and so \eqref{v} gives that either $r_0=0$ or $r_0= \pm 1$. The quotient $F_1$ must have a bigger slope than $\cO_X(-E)[1]$ above the horizontal line $w=0$, as $\cO_X(-E)[1]$ is $\sigma_{b,w}$-stable for $w> \frac{b^2}{2}$ by Lemma \ref{lem-eE}; therefore, $r_0=-1$. But this implies that $F$ has the same Chern character as $\cO_X(-E)[1]$, which is not possible. This proves that $\cO_X(-E)[1]$ is $\sigma$-stable and completes part \eqref{lem-ox: a}.  

For part \eqref{lem-ox: b}, note that $\cO_X(qE)$ and $\cO_X(-(q+1)E)[1]$ are $\sigma$-semistable if $q\geq1$, again by Lemma \ref{lem-eE}. They are strictly $\sigma$-semistable, as we have nonzero maps $\cO_X \to \cO_X(qE)$ and $\cO_X(-(q+1)E)[1]\to\cO_X(-E)[1]$ between objects of the same $\nu_{b,0}$-slope.
\end{proof}

We end with an observation (see  \cite[Lemma~6.5]{bayer:brill-noether} for a proof), used in Section \ref{stratification}: 

\begin{Lem}\label{lem-stability above the wall}
    Let $F_1,F_2\in\Coh^b(X)$ be $\sigma_{b,w_0}$-stable objects with $\nu_{b,w_0}(F_1)=\nu_{b,w_0}(F_2)$ and $\nu_{b,w}(F_1) < \nu_{b,w}(F_2)$ for $w>w_0$. Then for any extension 
    \begin{equation*}
        V^\vee\otimes F_1=F_1^{\oplus n} \longrightarrow F \longrightarrow F_2
    \end{equation*}
    induced by an $n$-dimensional subspace $V\subseteq \Ext^1(F_2,F_1)$, the object $F$ is $\sigma_{b,w}$-stable for all sufficiently small $w>w_0$.
\end{Lem}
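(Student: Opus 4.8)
The plan is to show that $F$, which is already $\sigma_{b,w_0}$-semistable, cannot be destabilized as $w$ increases slightly past $w_0$, by combining the wall-and-chamber structure with a linear-algebra analysis of the extension class defining $F$.

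First I would set up the situation at $w=w_0$. Since $\nu_{b,w}(F_1)<\nu_{b,w}(F_2)$ for $w>w_0$, the objects $F_1,F_2$ are non-isomorphic; moreover $\mathrm{Im}\,Z_{b,w}(F_i)=\ch_1(F_i)\cdot H_\epsilon-b\,\ch_0(F_i)H_\epsilon^2$ does not depend on $w$, so it cannot vanish without contradicting the strict inequality, whence $\nu_{b,w_0}(F_1)=\nu_{b,w_0}(F_2)=\nu_{b,w_0}(F)$ and, for $w>w_0$, $\nu_{b,w}(F_1)<\nu_{b,w}(F)<\nu_{b,w}(F_2)$ strictly. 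As an extension of two $\sigma_{b,w_0}$-semistable objects of equal slope, $F$ is $\sigma_{b,w_0}$-semistable, with Jordan--H\"older factors exactly $F_1$ (with multiplicity $n$) and $F_2$. By Proposition~\ref{structurewalls} applied to $v(F)=n\,v(F_1)+v(F_2)$, there is a wall-free interval $(w_0,w_0+\delta)$ for this class, so $\sigma_{b,w}$-stability of objects of class $v(F)$ is constant there, and it suffices to produce a single $w\in(w_0,w_0+\delta)$ for which $F$ is $\sigma_{b,w}$-stable.

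Next, assuming $F$ fails to be stable on this interval, I would take $A\subsetneq F$ to be the maximal destabilizing subobject at some $\sigma_{b,w_1}$, so $A$ is $\sigma_{b,w_1}$-semistable with $\nu_{b,w_1}(A)\geq\nu_{b,w_1}(F)$. Following the standard analysis of destabilization across a wall (as in \cite[\S 6]{bayer:brill-noether}), after shrinking $\delta$ one shows $A$ is $\sigma_{b,w_0}$-semistable with $\nu_{b,w_0}(A)=\nu_{b,w_0}(F)$: if its slope at $w_0$ were strictly smaller, then, slopes being affine in $w$, $A$ could not destabilize just above $w_0$; and a subobject of the $\sigma_{b,w_0}$-semistable $F$ of the same slope is automatically semistable. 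Consequently the Jordan--H\"older factors of $A$ form a sub-multiset of $\{F_1^{\oplus n},F_2\}$, so $v(A)=j\,v(F_1)+\delta_2\,v(F_2)$ with $0\leq j\leq n$ and $\delta_2\in\{0,1\}$. The case $\delta_2=0$ is impossible because then $\nu_{b,w}(A)=\nu_{b,w}(F_1)<\nu_{b,w}(F)$ for $w>w_0$; and $(j,\delta_2)=(n,1)$ would force $A=F$. So I am reduced to $\delta_2=1$, $0\leq j<n$.

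The crux is to exclude this remaining possibility. Set $K:=A\cap F_1^{\oplus n}$ and $A':=A/K$, so $A'\hookrightarrow F_2$. Stability of $F_2$ and semistability of $A$ at $w_0$ force $A'=F_2$ (a proper nonzero subobject of the stable $F_2$ of the same slope cannot exist, and $A'=0$ would make $v(A)=v(K)$ a multiple of $v(F_1)$, impossible); then $v(K)=j\,v(F_1)$, and since $F_1$ is a simple object of the abelian category of $\sigma_{b,w_0}$-semistable objects of its slope, $K$ is the subobject of $F_1^{\oplus n}=V^\vee\otimes F_1$ attached to a $j$-dimensional subspace $W\subseteq V^\vee$, i.e. $K\cong W\otimes F_1$. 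Now $0\to K\to A\to F_2\to 0$ is a sub-extension of $0\to F_1^{\oplus n}\to F\to F_2\to 0$, so by functoriality of $\mathrm{Ext}^1(F_2,-)$ the class $[F]$ lies in the image of $\mathrm{Ext}^1(F_2,K)=W\otimes\mathrm{Ext}^1(F_2,F_1)$ inside $V^\vee\otimes\mathrm{Ext}^1(F_2,F_1)$. But under $V^\vee\otimes\mathrm{Ext}^1(F_2,F_1)=\mathrm{Hom}\bigl(V,\mathrm{Ext}^1(F_2,F_1)\bigr)$ the class $[F]$ is, by construction, the inclusion $V\hookrightarrow\mathrm{Ext}^1(F_2,F_1)$, while $W\otimes\mathrm{Ext}^1(F_2,F_1)$ consists of the homomorphisms vanishing on the annihilator $W^{\circ}\subseteq V$; since the inclusion is injective this forces $W^{\circ}=0$, i.e. $j=n$, a contradiction. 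The main obstacle is the middle step --- making rigorous that a subobject destabilizing $F$ immediately above the wall must be $\sigma_{b,w_0}$-semistable with class supported on the Jordan--H\"older factors of $F$; this is the only genuinely stability-theoretic input (it relies on local finiteness of walls and the affine dependence of $\nu_{b,w}$ on $w$), after which the argument is pure linear algebra of extension classes.
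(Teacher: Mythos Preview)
Your argument is correct and is essentially the standard one the paper defers to (it gives no proof of its own, only the citation to \cite[Lemma~6.5]{bayer:brill-noether}): reduce via local finiteness of walls to showing that any proper subobject $A\subset F$ weakly destabilizing $F$ just above $w_0$ lies in the finite-length abelian category of $\sigma_{b,w_0}$-semistables of slope $\nu_{b,w_0}(F)$, hence has Jordan--H\"older factors among $\{F_1,F_2\}$; then the linear-algebra analysis of the extension class (using that subobjects of $V^\vee\otimes F_1$ with $F_1$ simple and $\End(F_1)=\mathbb{C}$ are of the form $W\otimes F_1$) rules out every such $A$.
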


\section{Wall-crossing for special Mukai vectors}\label{wallcrossing-special}
In this section we show how, by choosing a suitable polarization $H_{\epsilon}$, wall-crossing for Mukai vectors of the form $(r_0,H-a_0E,s_0)$ ($a_0\in\bZ_{\geq0}$) becomes significantly simpler. We show in Proposition \ref{prop-main-wallcrossing} that actual walls for $v$ intersecting the vertical axis $b=0$ admit an explicit description and can be classified into two types, according to the shape of destabilizing short exact sequences. The following key lemma will be applied repeatedly: 

\begin{Lem}\label{lem-key}
    Fix $m\geq0$. Then there is $\epsilon_m >0$ so that, if $\epsilon<\epsilon_m$, then any Chern character $(r, tH+qE, s)\in\Lambda$ with
    \begin{equation*}
        -rs \leq m \ , \qquad 0 \leq (tH+qE)\cdot H_{\epsilon} \leq H\cdot H_{\epsilon} \qquad \text{and} \qquad \Delta(r, tH+qE, s) \geq -2 
    \end{equation*}
    satisfies $t=0$ or $t=1$.  
\end{Lem}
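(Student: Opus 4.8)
The plan is to rule out the possibilities $t\leq -1$ and $t\geq 2$; in each case I will show that the middle hypothesis forces $|q|$ to grow like $1/\epsilon$, and then the discriminant hypothesis forces $-rs$ to grow like $1/\epsilon$ as well, contradicting $-rs\leq m$ once $\epsilon$ is chosen small enough in terms of $m$ (recall that $g$ and $k$ are fixed throughout).

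First I would record the two elementary identities coming from $H^2=2g-2$, $H\cdot E=k$, $E^2=0$: namely $(tH+qE)\cdot H_\epsilon = tk+\epsilon\bigl(t(2g-2)+qk\bigr)$ and $\Delta(r,tH+qE,s)=t^2(2g-2)+2tqk-2rs$, where $2g-2>0$ since $g\geq 3$.

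Next, suppose $t\leq -1$. Rearranging $0\leq (tH+qE)\cdot H_\epsilon$ gives $q\geq -t\bigl(\tfrac1\epsilon+\tfrac{2g-2}{k}\bigr)$; as $-t\geq 1$ this already forces $q>0$, and multiplying by $t<0$ yields $2tqk\leq -2t^2\bigl(\tfrac k\epsilon+(2g-2)\bigr)$. Substituting, $t^2(2g-2)+2tqk\leq -t^2(2g-2)-2t^2\tfrac k\epsilon\leq -\tfrac{2k}{\epsilon}$ (using $t^2\geq 1$), and then $\Delta\geq -2$ gives $2rs\leq 2-\tfrac{2k}{\epsilon}$, i.e.\ $-rs\geq \tfrac{k}{\epsilon}-1$. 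Symmetrically, if $t\geq 2$, rewriting $(tH+qE)\cdot H_\epsilon\leq H\cdot H_\epsilon$ as $(t-1)k+\epsilon\bigl((t-1)(2g-2)+qk\bigr)\leq 0$ gives $q\leq -(t-1)\bigl(\tfrac1\epsilon+\tfrac{2g-2}{k}\bigr)<0$; multiplying by $t>0$ and using $t(t-1)\geq 2$ together with $t^2-2t(t-1)=-t(t-2)\leq 0$ yields $t^2(2g-2)+2tqk\leq -\tfrac{4k}{\epsilon}$, whence $-rs\geq \tfrac{2k}{\epsilon}-1$.

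Finally I would set $\epsilon_m:=\tfrac{k}{m+1}$, which depends only on $m$ (and on the fixed $g,k$). For $\epsilon<\epsilon_m$ we have $\tfrac k\epsilon>m+1$, so both estimates above give $-rs>m$, contradicting the hypothesis $-rs\leq m$; hence $t\in\{0,1\}$, as claimed. I do not anticipate a genuine obstacle: the argument is a short direct computation. The only point that needs care is keeping the sign of $t$ straight when multiplying inequalities, and checking that the coefficient of $1/\epsilon$ in the final lower bound for $-rs$ stays bounded below uniformly in $|t|$ (here by $k$, respectively $2k$), which is exactly what allows a single threshold $\epsilon_m$ to work simultaneously for every $t\leq -1$ and every $t\geq 2$.
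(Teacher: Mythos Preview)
Your proof is correct and follows essentially the same approach as the paper: both are direct computations combining the $H_\epsilon$-inequality with the discriminant bound, differing only in the order in which the hypotheses are invoked (you use $-rs\le m$ last to derive a contradiction, while the paper uses it first to bound $q$). Your threshold $\epsilon_m = k/(m+1)$ is in fact slightly sharper than the paper's $\epsilon_m = k/(H^2+m+1)$, since you exploit rather than discard the negative $-t^2(2g-2)$ (resp.\ $-t(t-2)(2g-2)$) contribution.
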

\begin{proof}
    We know
\begin{equation}\label{eq-1}
    -2 \leq \Delta(r, tH+qE, s) = (tH+qE)^2 - 2rs \leq t^2H^2 + 2tqE\cdot H + 2m,
\end{equation}
and our second condition gives
\begin{equation}\label{eq-2}
    0 \leq tE\cdot H + t \epsilon H^2 + q \epsilon E\cdot H \leq E\cdot H + \epsilon H^2.
\end{equation}

If \( t \geq 2 \), then for \( \epsilon < \epsilon_m := \frac{E \cdot H}{H^2 + m + 1} \),
\begin{align*}
    (t-1)E\cdot H &\overset{\eqref{eq-2}}{<} -q\epsilon E\cdot H 
    \overset{\eqref{eq-1}}{\leq} \epsilon \Bigl(\frac{t}{2}H^2 + \frac{m+1}{t} \Bigr) < (t-1)E\cdot H,
\end{align*}
which is a contradiction. If \( t \leq -1 \) the first inequality in \eqref{eq-2} gives \( q \geq 0 \), and so 
\begin{equation*}
    -tE\cdot H \overset{\eqref{eq-2}}{<} q \epsilon E\cdot H 
    \overset{\eqref{eq-1}}{\leq} \epsilon \Bigl(-\frac{t}{2}H^2 - \frac{m+1}{t}\Bigr) < -tE\cdot H
\end{equation*}
for  \( \epsilon < \epsilon_m \), which is again a contradiction.
\end{proof}

Via Lemma \ref{lem-key} we can control all actual walls for the Mukai vectors of interest in later sections:

\begin{Prop}\label{prop-main-wallcrossing}
Fix a Mukai vector \(v = (r_0, H - a_0E, s_0+r_0)\) for some \(a_0 \in \mathbb{Z}_{\geq 0}\). 
There exists \(\epsilon_{v} > 0\)  such that, if \(\epsilon < \epsilon_{v}\) and \(F \in \Coh^0(X)\) is  \(\sigma_{0, w_0}\)-strictly semistable of Mukai vector \(v\) for some \(w_0 \geq 0\), then \(F\) sits in an exact triangle
\[
Q_1 \longrightarrow F \longrightarrow Q_2
\]
where \(\nu_{0, w_0}(F) = \nu_{0, w_0}(Q_i)\), and either $\ch_1(Q_1)$ or $\ch_1(Q_2)$ is a multiple of $E$. 

\vskip 1mm

\noindent If moreover $(s_0, w_0) \neq (0, 0)$, then up to relabeling the factors:
\begin{enumerate}
    \item \(Q_1\) is \(\sigma_{0, w_0}\)-stable with \(\ch_1(Q_1) = H - aE\) for some \(a \geq a_0\), and 
    \item\label{prop-main-wallcrossing:B} \(Q_2\) is \(\sigma_{0, w_0}\)-semistable and either:
    \begin{enumerate}
        \item[$(b_1)$] it is isomorphic, up to a shift, to \(\cO_X(mE)^{\oplus \frac{a-a_0}{m}}\) for some \(m \in \mathbb{Z}\), or 
        \item[$(b_2)$] every \(\sigma_{0, w_0}\)-stable factor of \(Q_2\) has $\ch_0=0$ (in particular, $\ch_0(Q_2)=0$) and $\ch_1$ is a multiple of $E$.
    \end{enumerate} 
\end{enumerate}
\end{Prop}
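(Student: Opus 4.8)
The plan is to pass to the $\sigma_{0,w_0}$-stable Jordan–Hölder factors of $F$ and use Lemma~\ref{lem-key} to pin down the shape of their first Chern classes. Choose $\epsilon_v>0$ small enough that Lemma~\ref{lem-key} applies with $m:=\lceil\tfrac12\Delta(v)\rceil$, that Corollary~\ref{cor:nowall} and Lemmas~\ref{lem-eE},~\ref{lem-ox} apply, and that the (finitely many, bounded above) actual walls for $v$ obey the quantitative estimate used below; assume $\epsilon<\epsilon_v$. Let $F$ be $\sigma_{0,w_0}$-strictly semistable with $v(F)=v$, put $\mu:=\nu_{0,w_0}(v)$, and fix a Jordan–Hölder filtration with $\sigma_{0,w_0}$-stable factors $F_1,\dots,F_n$, all of $\nu_{0,w_0}$-slope $\mu$. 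Since every $F_i$ is a subquotient of $F$ in $\Coh^0(X)$, we have $0\le\ch_1(F_i)\cdot H_\epsilon\le\ch_1(F)\cdot H_\epsilon=(H-a_0E)\cdot H_\epsilon\le H\cdot H_\epsilon$, using $a_0\ge0$.

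The key step is to show that the $H$-coefficient of $\ch_1(F_i)$ lies in $\{0,1\}$ for every $i$. By Lemma~\ref{lem-delta}.\eqref{lem-delta:a}, each $F_i$ has $\Delta(F_i)\ge0$ unless $k\mid g$ and $\ch(F_i)=(0,\,H-\tfrac gkE,\,s_i)$ (the exceptional case, where the $H$-coefficient is already $1$). If $\ch_0(F_i)=0$, the claim is Lemma~\ref{lem-key} with $m=0$ (using $\Delta(F_i)\ge-2$ for a stable object). If $\ch_0(F_i)\ne0$, then simplicity of $F_i$ gives $v(F_i)^2=\Delta(F_i)-2\ch_0(F_i)^2\ge-2$, so $|\ch_0(F_i)|\le\sqrt{(\Delta(F_i)+2)/2}$; combining this with the discriminant inequality of Lemma~\ref{lem-delta}.\eqref{lem-delta:b} applied to $F$ (when no factor is exceptional), or with the parallel discriminant estimate in the spirit of Corollary~\ref{cor:nowall} (when $k\mid g$ and exceptional factors occur), bounds $\Delta(F_i)$ by $\Delta(v)$, hence $-\ch_0(F_i)\ch_2(F_i)\le m$ whenever $\ch_1(F_i)^2\ge0$, and Lemma~\ref{lem-key} applies. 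The remaining possibility is $\ch_1(F_i)=tH+qE$ with $\ch_1(F_i)^2<0$ and $t\notin\{0,1\}$: then $0\le\ch_1(F_i)\cdot H_\epsilon\le H\cdot H_\epsilon$ forces $q$ to be of order $-t/\epsilon$, so $\ch_1(F_i)^2$ is negative of order $t^2/\epsilon$ and, via $\Delta(F_i)\ge0$, so is $|\ch_2(F_i)|$; but the slope identity $\ch_2(F_i)=w_0\ch_0(F_i)H_\epsilon^2+\mu\,\ch_1(F_i)\cdot H_\epsilon$, together with the bound on $\ch_1(F_i)\cdot H_\epsilon$, the bound on $w_0$ coming from boundedness of the wall locus, and the resulting bound on $|\mu|$, forces $|\ch_2(F_i)|$ to stay bounded as $\epsilon\to0$ — a contradiction. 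This proves the claim; since the $H$-coefficients sum to $1$, exactly one factor, say $F_0$, has $\ch_1(F_0)=H-aE$, and the others satisfy $\ch_1(F_j)=q_jE$ with $q_j\ge1$ (their $\ch_1\cdot H_\epsilon>0$), so $a=a_0+\sum_jq_j\ge a_0$.

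To produce the triangle, note that the $\sigma_{0,w_0}$-semistable objects of slope $\mu$ with $\ch_1\in\bZ\cdot E$ form a Serre subcategory $\cA_E$ of the finite-length abelian category of $\sigma_{0,w_0}$-semistable objects of slope $\mu$, and by the previous step $F$ has exactly one stable factor outside $\cA_E$. If $F$ has a nonzero subobject in $\cA_E$, take $Q_1$ to be one such and $Q_2:=F/Q_1$; otherwise the socle of $F$ contains the simple object $F_0$, so $F_0\hookrightarrow F$, and we take $Q_1:=F_0$, $Q_2:=F/F_0$, with $\ch_1(Q_2)=(a-a_0)E$. In both cases $\nu_{0,w_0}(Q_i)=\mu$ and one of $\ch_1(Q_1),\ch_1(Q_2)$ is a multiple of $E$, which is the first assertion. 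When $(s_0,w_0)\ne(0,0)$, one shows the first alternative can be avoided — i.e.\ $F_0$ can be taken as a subobject — by comparing, as $w$ varies, the slope of a hypothetical $E$-type destabilizing subobject with $\nu_{0,w}(F)$ and using the classification of $E$-type stable objects in Lemmas~\ref{lem-eE} and~\ref{lem-ox} (the assumption $s_0\ne0$ when $w_0=0$ excluding the degenerate $\cO_X$-wall). Then $Q_1:=F_0$ is $\sigma_{0,w_0}$-stable with $\ch_1(Q_1)=H-aE$, $a\ge a_0$; the stable factors of $Q_2$ are $E$-type of slope $\mu$, hence by Lemmas~\ref{lem-eE}/\ref{lem-ox} each is either a shift of some $\cO_X(qE)$ or a rank-$0$ sheaf with $\ch_1\in\bZ\cdot E$. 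Matching slopes forces all line-bundle factors to share the same $q=:m$, and a short argument (that $Q_2$ admits no further $E$-type subquotient of the other type, or comparison with the Gieseker chamber of Theorem~\ref{Giesekerchamber}) rules out coexistence of line-bundle and rank-$0$ factors, giving $(b_1)$ in the former case and $(b_2)$ in the latter.

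The main obstacle is exactly the control of the stable factors with $\ch_0\ne0$ whose $\ch_1$ has $H$-coefficient of absolute value $\ge2$ (or equal to $-1$): for these $\Delta$ and $\ch_0$ are bounded in terms of $v$ alone, but $-\ch_0\ch_2$ is not, so one must instead combine the slope equation with the boundedness of the wall locus for $v$ to reach a contradiction with $\epsilon<\epsilon_v$; making this quantitative, and handling in parallel the exceptional spherical classes when $k\mid g$, is where the bulk of the work lies.
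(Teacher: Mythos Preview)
Your overall strategy—pass to the Jordan--H\"older factors and force their $H$-coefficient into $\{0,1\}$ via Lemma~\ref{lem-key}—is the right one, and it is also what the paper does. The gap is in how you feed Lemma~\ref{lem-key}. You try to bound $-\ch_0(F_i)\ch_2(F_i)$ by first bounding $\Delta(F_i)\le\Delta(v)$ through Lemma~\ref{lem-delta}\eqref{lem-delta:b}; this handles factors with $\ch_1(F_i)^2\ge0$, but for the residual case $\ch_1(F_i)^2<0$, $t_i\notin\{0,1\}$ you appeal to the slope identity $\ch_2(F_i)=w_0\ch_0(F_i)H_\epsilon^2+\mu\,\ch_1(F_i)\cdot H_\epsilon$ together with ``boundedness of the wall locus''. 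This is where the argument breaks. The bound on the wall locus depends on $\epsilon$: as $\epsilon\to0$ the point $\Pi_\epsilon(v)$ moves off to infinity (its coordinates are of order $1/\epsilon$), and there is no uniform control on $w_0$ or on $w_0H_\epsilon^2$ that is independent of $\epsilon$. Concretely, for $t_i\ge2$ the constraint $0\le\ch_1(F_i)\cdot H_\epsilon\le H\cdot H_\epsilon$ forces $q_i$ of order $-t_i/\epsilon$, hence $\ch_1(F_i)^2$ of order $-1/\epsilon$, hence (via $\Delta(F_i)\ge0$ and bounded $\ch_0$) $|\ch_2(F_i)|$ of order $1/\epsilon$; nothing in your slope identity contradicts this, since $w_0$ of order $1/\epsilon^2$ is not excluded. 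You essentially flag this yourself in your last paragraph.

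The paper avoids this problem by never invoking Lemma~\ref{lem-delta}\eqref{lem-delta:b} and never needing to bound $w_0$. Instead it partitions the stable factors by the \emph{sign pattern} of $(r_i,s_i)$. For factors with $r_is_i>0$ one has $-r_is_i\le0$, so Lemma~\ref{lem-key} with $m=0$ applies immediately and forces $t_i=1$; moreover $|I'|,|I''|\le1$ and the Chern characters of these factors lie in a finite list depending only on $v$ (bounded by $\tfrac12(H-a_0E)^2$). Then, after splitting by the sign of $\nu_{0,w_0}(F)$, the \emph{remaining} factors have $r_i,s_i$ of a fixed sign, and summing $\sum r_i=r_0$, $\sum s_i=s_0$ against the already-bounded $I',I''$-contributions gives $|r_i|,|s_i|\le M$ for an explicit $M=M(v)$ independent of $\epsilon$ and $w_0$. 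Now Lemma~\ref{lem-key} with this $M$ yields $t_i\in\{0,1\}$ for all $i$. This sign-partition is the missing idea.

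Two further points. First, your use of Lemma~\ref{lem-delta}\eqref{lem-delta:b} needs all $\Delta(F_i)\ge0$, which fails exactly in the spherical rank-$0$ case $k\mid g$, $\ch_1=H-\tfrac gkE$; your gesture toward Corollary~\ref{cor:nowall} does not obviously adapt. Second, your endgame for the $(b_1)/(b_2)$ dichotomy is too quick: ``a short argument rules out coexistence of line-bundle and rank-$0$ factors'' is precisely case~(c.2) in the paper, where one must compute the slope $\eta(\epsilon)$ of the wall $\cW(F,F_{i_1})$ created by a rank $-1$ factor, show it stays bounded as $\epsilon\to0$, and then observe that any rank-$0$ factor on the same wall would force $s_i<1$, a contradiction. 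This step genuinely uses a further shrinking of $\epsilon_v$.
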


\vskip 2mm

\begin{Rem}\label{rem-00}
Let \(\ell\) be the line through the point $(0,w_0)$
that passes through \(\Pi_{\epsilon}(F)\) (if \(r_0 \neq 0\)) or has slope \(\frac{s_0}{(H - a_0E) \cdot H_{\epsilon}}\) (if $r_0=0$). We are interested in the upper part 
\[
\ell^+ := \ell \cap \bigl\{(b, w) \in U_{\epsilon} : w\geq 0\bigr\}
\]
of the line $\ell$ in \(U_{\epsilon}\).
In particular, if \(w_0 = 0\), in Proposition~\ref{prop-main-wallcrossing} \(\sigma_{0, w_0}\)-(semi)stability and \(\nu_{0, w_0}\)-slope refer to \(\sigma_{b, w}\)-(semi)stability and \(\nu_{b, w}\)-slope for \((b, w)\) lying on \(\ell^{+}\).
\end{Rem}
\begin{comment}
\begin{Rem}
    \An{Add Remark saying that, in case ($b_2$), $Q_2$ is a semistable vector bundle on an elliptic curve... (use Corollary \ref{cor:nowall}).}
\end{Rem}
\end{comment}
\vskip 3mm

\begin{proof}[Proof of Proposition \ref{prop-main-wallcrossing}]
Consider the line segment $\ell^+$ as described in Remark \ref{rem-00}. %Since \( F \) has a primitive Chern character, it is
If $F$ is \(\sigma_{b',w'}\)-unstable for \((b', w')\) lying just above or just below the line \(\ell^+\), we consider the Harder-Narasimhan filtration with respect to $\sigma_{b',w'}$ 
\[
Q_0 \subset Q_1 \subset \cdots \subset Q_n = F.
\] 
If \( Q_i/Q_{i-1} \) is strictly \(\sigma_{b', w'}\)-semistable for some \( i \), then we further refine the filtration to account for the \(\sigma_{b', w'}\)-stable factors. In case \( F \) is \(\sigma_{b',w'}\)-semistable for \((b', w')\) values just above and below the line \(\ell^+\), we consider a JH filtration of \( F \) with respect to one side.

We denote by \( I := \{1,..., n'\} \) the index set of this refined filtration, for which all quotients \( F_i := Q_i/Q_{i-1} \) are \(\sigma_{b', w'}\)-stable with 
\begin{equation}\label{order}
\nu_{b',w'}(F_1) \geq \nu_{b',w'}(F_2) \geq \dots \geq \nu_{b',w'}(F_{n'}).     
\end{equation}
%Consider the line segment $\ell^-$ as in Remark \ref{rem-00}. Since $F$ has a primitive Chern character, it is $\sigma_{b,w}$-unstable for $(b',w')$ just above or under the line $\ell^-$. Write $Q_0 \subset Q_2 \subset .... \subset Q_n=F$ to be its HN filtration in that side. If $Q_i/Q_{i-1}$  for some $i$ is strictly $\sigma_{b', w'}$-semistable, then Chern character of its stable factors are multiple of its chen charcater. In this case, we further refine the filtratiob by the $\sigma_{b', w'}$-stable factors. Let finally $I = [0, n']$ to be the final index set of filtration so that all quotients $F_i = Q_i/Q_{i-1}$ are $\sigma_{b',w'}$-stable and they all have the same $\nu_{b,w}$-slope. 
%when it is unstable. (*do it in terms of wall*). Then this is a JH filtration along $\sigma_{b=0, w}$, and we set $I = [1, n]$, and $F_i = Q_i/Q_{i-1}$. 
%Then contunty of walls  for the stable factors of a Jordan-Hölder filtration of $F$ with respect to $\sigma_{b=0, w}$, and
Write $\alpha_i :=\ch(F_i)=(r_i,t_iH+q_iE,s_i)$, with $i=1, \ldots, n$. We classify these stable factors into several different types.

\vskip 1mm

Consider $I':=\bigl\{i\in I:\;r_i>0,\ s_i>0\bigr\}$. Take $\epsilon<\epsilon_{m=0}$ (in the notations of Lemma \ref{lem-key}). Since $r_is_i >0$ for all $i \in I'$, Lemma \ref{lem-key} implies $t_i \in\{0, 1\}$ as $ \Delta(F_i) \geq v(F_i)^2 \geq -2$. But if $t_i=0$, then $v(F_i)^2 = -2r_i(r_i+s_i) <-2$ which is not possible, therefore $t_i=1$ for all $i\in I'$. 

\vskip 3pt

We claim that $|I'| \leq 1$, that is, there is at most one stable factor with $r_i>0$ and $s_i>0$. Indeed, consider $\alpha_{I'}:=\sum_{i \in I'} \alpha_i$. For any $i\in I'$, we have $\Delta(F_i)=v(F_i)^2+2r_i^2\geq 0$ as $r_i>0$. Since $\Ker Z_{0, w}$ is negative semi-definite for $w\geq 0$ with respect to the quadratic form $\Delta$, \cite[Lemma 11.6]{bayer:the-space-of-stability-conditions-on-abelian-threefolds} implies that $\Delta(\alpha_{I'}) \geq 0$. Therefore, Lemma \ref{lem-key} applied to the class $\alpha_{I'}$ yields $\sum_{i \in I'} t_i \in\{0, 1\}$, and so the set $I'$ consists of at most one element. Moreover, there are only finitely many possibilities (depending only on the class $v$, and not on $w_0$) for the Chern character $(r_i, H+q_iE, s_i)$ of this factor, as:
\begin{itemize}
    \item The inequality $H_{\epsilon}\cdot (H+q_iE)\leq H_{\epsilon}\cdot (H-a_0E)$ implies $q_i\leq -a_0$. 
    \item Since $r_is_i \neq 0$ and $v(F_i)^2 \geq -2$, we have $(H+q_iE)^2 \geq -2+2r_i(r_i+s_i) \geq 2r_is_i$, which implies 
    \begin{equation}\label{bound}
        1\leq|r_i|,|s_i|\leq \frac{(H-a_0E)^2}{2} \qquad \text{and} \qquad \frac{-H^2}{2E\cdot H} \leq q_i. 
    \end{equation}
\end{itemize}

Similarly, one can consider $I'':=\bigl\{i\in I:\;r_i<0,\ s_i <0\bigr\}$.
The same argument yields: $t_i=1$ for all $i \in I''$, $|I'| \leq 1$, and there are finitely many possibilities for the class of a factor in $I''$ (satisfying inequalities in \eqref{bound}).

\vskip 1mm

\textbf{Step 1}. We first assume that the wall has positive slope, that is, $\nu_{0, w_0}(F) > 0$. Then
$$I = I' \cup I'' \cup J' \cup J''$$
where $J':=\bigl\{i\in I:\;r_i < 0, \ s_i \geq 0\bigr\}$ and $J'' = \bigl\{i \in I: r_i =0, \ s_i >0 \bigr\}$.
If $i\in J'\cup J''$, then we have
\begin{align*}
0\geq r_{i} \geq \sum_{j\notin I'} r_j=r_0-\sum_{j\in I'} r_j \overset{\eqref{bound}}{\geq} r_0-\max\Bigl\{0,\frac{(H-a_0E)^2}{2}\Bigr\},\\
0\leq s_{i} \leq  \sum_{j\notin I''} s_j= s_0-\sum_{j\in I''} s_j \overset{\eqref{bound}}{\leq} s_0+\max\Bigl\{0,\frac{(H-a_0E)^2}{2}\Bigr\},
\end{align*}
which implies that there are finitely many possible values of $r_i$ and $s_i$ for all $i\in I$. We can thus find $M>0$ (depending only on $v$, and not on the slope of the wall) such that $-r_is_i< M$ for every $i\in I$. It follows from Lemma \ref{lem-key} that, if $\epsilon < \epsilon_M$, then we have $t_i\in\{0,1\}$ for all $i\in I$. 

Since $\sum_{i \in I} (t_iH+q_iE)=\ch_1(F)=H-a_0E$, we have $\sum_{i \in I} t_i =1$, hence there is a unique $i_0\in I$ with $t_{i_0}=1$. 
The corresponding stable factor $F_{i_0}$ satisfies the inequalities  $-2 \leq v(F_{i_0})^2 < (H+q_{i_0}E)^2+2M$, therefore 
\begin{equation}\label{q}
   q:= \frac{-2-H^2-2M}{2E\cdot H} < q_{i_0} \leq -a_0. 
\end{equation}

Any other factor satisfies $t_i=0$ (in particular $i\in J'\cup J''$) and $q_i\geq0$. Furthermore,
    \begin{equation}\label{qi}
    0\leq \sum_{i \neq i_0} q_i=-a_0-q_{i_0} \overset{\eqref{q}}{<} -a_0-q
    \end{equation}
which implies $q_i< -a_0-q$ for all $i \neq i_0$. Then either 
    \begin{enumerate}
        \item [(c.1)] $r_i =0$ for all $i \neq i_0$, or 
        \item [(c.2)] $r_{i_1} \neq 0$ for some $ i_1 \neq i_0 \in I$. 
    \end{enumerate}
    
In case (c.1), all the slopes $\nu_{b',w'}(F_i)$ for $i \neq i_0$ are equal (as $\nu_{b,w}(F_i)$ is independent of $b,w$). Thus the order in \eqref{order} implies that $F_{i_0}$ is either a subobject or quotient of $F$, as claimed in  case ($b_2$) of the main statement.    
    %$r_i =0$ for all $i \neq i_0$ which gives the case ($b_2$) in the main statement, or there is some $ i_1 \neq i_0 \in I$ so that $r_{i_1} \neq 0$. Then 
\vskip 3pt

In case (c.2) we have $F_{i_1} \in J'$ (that is,  $r_{i_1}<0$, $s_{i_1}\geq0$), and the slope $\eta(\epsilon)$ of the wall $\cW(F,F_{i_1})$, depending on $\epsilon$, satisfies
\[
\eta(\epsilon) = \frac{s_{i_1}r_0-s_0r_{i_1}}{r_0q_{i_1}\epsilon E\cdot H-r_{i_1}H_{\epsilon}\cdot (H-a_0E)}\underset{\epsilon\to 0^+}{\longrightarrow}\frac{s_{i_1}r_0-s_0r_{i_1}}{-r_{i_1}E\cdot H}.%\in (0,+\infty),
\]
Since there are finitely many possibilities for $\ch(F_{i_1})$, we may choose $\epsilon$ small enough (depending only on $v$) so that the inequality $\eta(\epsilon)\cdot\epsilon(-a_0-q)E\cdot H < 1$ holds. On the other hand, we know $\eta(\epsilon) = \nu_{0, w_0}(F_i)$ for all $i \in I$. If $i \neq i_0$ satisfies $i\in J''$, then 
\[
\nu_{0, w_0}(F_i) = \frac{s_i}{q_iE\cdot H_{\epsilon}} = \eta (\epsilon)< \frac{1}{\epsilon (-a_0-q)E\cdot H}
\]
and hence
\[s_i <\frac{q_iE\cdot H_{\epsilon}}{\epsilon (-a_0-q)E.H}=\frac{q_i}{-a_0 -q}\overset{\eqref{qi}}{\leq} 1,
\]
contradiction. Therefore, in case (c.2) we have $i\in J'$ for all $i\neq i_0$. It follows that \(\ch(F_i)\) is proportional to \(\ch(F_{i_1}) = (r_{i_1}, q_{i_1}E, 0)\) for all \(i \neq i_0\); by Lemma \ref{lem-eE}, we have \(r_i = -1\) and \(F_i = \mathcal{O}_X(-q_{i_1}E)[1]\) for all \(i \neq i_0\). Thus, the main statement in case ($b_1$) follows from the vanishing
\begin{equation*}
	\hom\left(\mathcal{O}_X(-q_{i_1}E)[1], \mathcal{O}_X(-q_{i_1}E)[2]\right) = \hom(\mathcal{O}_X, \mathcal{O}_X[1]) = 0. 
\end{equation*}

\vskip 2mm

\textbf{Step 2.} Now we consider walls of non-positive slope. 
\begin{itemize}
    \item[(I)] If $\nu_{0, w_0}(F) <0$, then  $$I = I' \cup I'' \cup \tilde{J}' \cup \tilde{J}''$$  
where $\tilde{J}':=\{i\in I:  \;r_i > 0, \ s_i \leq 0\}$ and $\tilde{J}'' = \{i \in I:  r_i =0, \ s_i <0 \}$.
\item[(II)] If $\nu_{0, w_0}(F) = 0$ and $w_0>0$, then 
$$I = I' \cup I'' \cup R$$  
where $R:=\{i\in I:\;r_i =0, \ s_i = 0\}$.
\item[(III)]  If $\nu_{0, w_0}(F) = 0$ and $w_0=0$, then $s_i=0$ for all $i \in I$.  
\end{itemize}
Then one can easily apply the same argument as in Step 1 to get the final claim.    
\end{proof}

As a result of the proof of Proposition \ref{prop-main-wallcrossing}, we get the well-known fact (see e.g. \cite[Lemma 6.24]{macri-schmidt-lectures}) that there are only finitely many actual walls for the class \(v\) intersecting the vertical line \(b = 0\) at points with \(w > 0\).

\vskip 3pt

The following criterion further restricts the possibilities described in Proposition \ref{prop-main-wallcrossing}:

\begin{Lem}\label{lem-wall-crossing-negative}
    In Proposition \ref{prop-main-wallcrossing}, we may choose \(\epsilon_{v}\) suitably such that if \(0 < \epsilon < \epsilon_{v}\), then the following hold:
    \begin{enumerate}
        \item[(1)]\label{lem-wall-crossing-negative: 1} If \(r_0 \leq 0\) and \(\nu_{0, w_0}(F) < 0\), or \(r_0 \geq 0\) and \(\nu_{0, w_0}(F) > 0\), then case $(b_2)$ cannot occur.
        \item[(2)] If \(r_0 \leq 0\), \(s_0 < 0\), and \(\nu_{0, w_0}(F) \geq 0\), or \(r_0 \geq 0\), \(s_0 > 0\), and \(\nu_{0, w_0}(F) \leq 0\), then case $(b_1)$ cannot occur.
    \end{enumerate}
\end{Lem}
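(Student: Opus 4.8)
\emph{Setup.} The plan is to reopen the proof of Proposition~\ref{prop-main-wallcrossing} and sharpen it. We keep the refined filtration of $F$ just off the line $\ell^{+}$, with $\sigma$-stable factors $F_i$ ($i\in I$) of Chern characters $\alpha_i=(r_i,t_iH+q_iE,s_i)$, the unique factor $F_{i_0}$ with $t_{i_0}=1$ (so that $Q_1=F_{i_0}$ and $\ch_1(Q_1)=H-aE$ for some $a\geq a_0$), and the fact, also established there, that $\ch(Q_1)$ takes only finitely many values depending on $v$; in particular, after shrinking $\epsilon_v$ we may assume $\ch_1(Q_1)\cdot H_\epsilon>0$, so that the $\nu_{0,w}$-slopes of $F$, $Q_1$, $Q_2$ are all finite affine functions of $w$ with positive denominators. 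We use throughout that, writing $\nu_{0,w}(G)=\frac{\ch_2(G)-w\,\ch_0(G)\,H_\epsilon^2}{\ch_1(G)\cdot H_\epsilon}$, this slope is non-decreasing in $w$ when $\ch_0(G)\leq 0$, non-increasing when $\ch_0(G)\geq 0$, and equal to the constant $\frac{\ch_2(G)}{\ch_1(G)\cdot H_\epsilon}$ when $\ch_0(G)=0$; moreover $F$, $Q_1$, $Q_2$ share the common value $\nu_0:=\nu_{0,w_0}(F)$ at $w=w_0$. It suffices to treat the two cases with $r_0\leq 0$; the cases with $r_0\geq 0$ follow by the symmetric argument, interchanging the index sets $I'$ and $I''$ in the proof of Proposition~\ref{prop-main-wallcrossing} (equivalently, applying $R\mathcal{H}om(-,\cO_X)[1]$ and reflecting $b$).

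\emph{Part (2).} Assume $r_0\leq 0$, $s_0<0$ and $\nu_0\geq 0$. Since $\ch_2(F)=s_0$ and $\ch_0(F)=r_0\leq 0$, the function $w\mapsto\nu_{0,w}(F)$ is non-decreasing with value $\frac{s_0}{\ch_1(F)\cdot H_\epsilon}<0$ at $w=0$; hence $r_0\neq 0$ (else this slope is constant and negative, contradicting $\nu_0\geq 0$), so $r_0<0$. Suppose case $(b_1)$ holds: $Q_2\cong\cO_X(mE)^{\oplus n}$ up to a shift, $n\geq 1$. In every sub-case $v(Q_2)=\pm n\cdot(1,mE,1)$, hence $\ch_2(Q_2)=0$ and $\ch_2(Q_1)=\ch_2(F)=s_0<0$. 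If $m=0$ then $\ch_1(Q_2)=0$ and $\nu_{0,w_0}(Q_2)=+\infty\neq\nu_0$, absurd. If $Q_2$ is unshifted with $m>0$, then $\nu_{0,w}(Q_2)=-\frac{wH_\epsilon^2}{mE\cdot H_\epsilon}\leq 0$ with equality only at $w=0$, so matching $\nu_{0,w_0}(Q_2)=\nu_0\geq 0$ forces $w_0=0$ and $\nu_0=0$, contradicting $\nu_{0,0}(F)<0$. The remaining (and delicate) sub-case is $Q_2\cong\cO_X(mE)^{\oplus n}[1]$ with $m<0$, where $\nu_{0,w}(Q_2)\geq 0$ and slope signs alone give nothing. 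Here $\ch_0(Q_1)=r_0+n$ and $\ch_2(Q_1)=s_0<0$, so $\nu_{0,w}(Q_1)$ has value $\frac{s_0}{\ch_1(Q_1)\cdot H_\epsilon}<0$ at $w=0$; if $r_0+n\geq 0$ it is non-increasing and stays negative, contradicting $\nu_{0,w_0}(Q_1)=\nu_0\geq 0$, so $r_0+n<0$, i.e. $0<n<-r_0$, and $Q_1$ is a $\sigma_{0,w_0}$-stable object of negative rank with $\ch_1(Q_1)=H-(a_0+n|m|)E$. To finish I would feed the bound $v(Q_1)^2\geq -2$ (from simplicity of $Q_1$), the finitely many allowed values of $\ch_1(Q_1)$, the equality $\ch_2(Q_1)=s_0$, and the concurrence of the three slope-lines at $w=w_0$, into one numerical inequality that is impossible once $\epsilon_v$ is small enough.

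\emph{Part (1).} Assume $r_0\leq 0$, $\nu_0<0$, and suppose case $(b_2)$ holds: every $\sigma_{0,w_0}$-stable factor $F_i$ of $Q_2$ has $\ch_0(F_i)=0$ and $\ch_1(F_i)=q_iE$ with $q_i\geq 1$. Then $\ch_0(Q_2)=0$, $\ch_1(Q_2)=qE$ with $q=\sum_i q_i\geq 1$ (so $a=a_0+q>a_0$), and since $\ch_0(F_i)=0$ we have $\nu_{0,w}(F_i)=\frac{\ch_2(F_i)}{q_iE\cdot H_\epsilon}$, constant in $w$ and equal to $\nu_0<0$, whence $\ch_2(F_i)<0$ for all $i$ and $\ch_2(Q_2)<0$; therefore $\ch_2(Q_1)=s_0-\ch_2(Q_2)>s_0$. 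As $\ch_0(Q_1)=r_0\leq 0$, the slope $\nu_{0,w}(Q_1)$ is non-decreasing, so $\frac{\ch_2(Q_1)}{\ch_1(Q_1)\cdot H_\epsilon}=\nu_{0,0}(Q_1)\leq\nu_0<0$ and thus $s_0<\ch_2(Q_1)<0$. I would then again combine $v(Q_1)^2\geq -2$, the finitely many possibilities for $\ch_1(Q_1)=H-aE$ with $a>a_0$, the constraint $\ch_2(Q_1)<0$, and the fact that the wall passes through $\Pi_\epsilon(v)$ and through $(0,w_0)$ (so its slope equals $\nu_0$), to rule out such a factorisation; the subcases $r_0<0$ and $r_0=0$ are separated, the latter being quicker since $\nu_{0,w}(F)$ is then constant. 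The case $r_0\geq 0$, $\nu_0>0$ is symmetric.

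\emph{Main obstacle.} The genuine difficulty is precisely the ``remaining sub-case'' in each part — $(b_1)$ with a shifted $\cO_X(mE)$-summand of negative twist in part~(2), and $(b_2)$ in part~(1) — where the slope-sign bookkeeping is exhausted and one must extract a contradiction from the discriminant/simplicity bound for the stable factor $Q_1$ together with the (crucial) finiteness of its Chern character, uniformly over all walls. Arranging the $\epsilon_v$-dependence so that a single threshold works regardless of the slope of the wall is the technical heart of the lemma.
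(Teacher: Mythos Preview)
Your proposal is incomplete: in both parts you explicitly stop at what you call the ``remaining sub-case'' and say you would feed discriminant bounds into ``one numerical inequality'', but you never write that inequality down. This is a genuine gap, not a routine omission, and the discriminant route you sketch is harder than necessary. The paper's argument avoids $v(Q_1)^2\geq -2$ entirely and instead exploits the $\epsilon$-dependence directly.

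For Part~(1), the missing observation is that in case~$(b_2)$ one has $\ch_1(Q_2)=(a-a_0)E$, hence $H_\epsilon\cdot\ch_1(Q_2)=(a-a_0)\,\epsilon\,E\cdot H\to 0$ as $\epsilon\to 0^+$. Since $\ch_2(Q_2)\in\bZ$, the constant slope $\nu_{b,w}(Q_2)=\frac{\ch_2(Q_2)}{(a-a_0)\epsilon E\cdot H}$ tends to $-\infty$. On the other hand, any actual wall of negative slope intersecting the axis $b=0$ lies above the numerical wall $\cW(\cO_X,v)$, whose slope tends to the finite value $s_0/(E\cdot H)$; so the wall slopes are bounded below independently of~$\epsilon$. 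This yields an immediate contradiction for small~$\epsilon$, without any analysis of~$Q_1$.

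For Part~(2), the missing observation is a direct comparison of projections. In the shifted case $Q_2\cong\cO_X(-q_iE)^{\oplus n}[1]$ with $q_i>0$, one has $\Pi_\epsilon(\cO_X(-q_iE)[1])=\bigl(-\tfrac{q_iE\cdot H_\epsilon}{H_\epsilon^2},0\bigr)$, whose first coordinate tends to the finite value $-q_i/2$; meanwhile the first coordinate of $\Pi_\epsilon(F)$ is $\tfrac{(H-a_0E)\cdot H_\epsilon}{r_0 H_\epsilon^2}\sim\tfrac{1}{2r_0\epsilon}\to -\infty$ (you have already established $r_0<0$). Since there are only finitely many possible~$q_i$ (from the proof of Proposition~\ref{prop-main-wallcrossing}), one may shrink $\epsilon_v$ so that $\Pi_\epsilon(F)$ lies strictly to the left of every $\Pi_\epsilon(\cO_X(-q_iE)[1])$. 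The wall is the line through these two projections; with $\Pi_\epsilon(F)$ above and to the left of a point on the $b$-axis, this line has negative slope, contradicting $\nu_{0,w_0}(F)\geq 0$.

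In short, your slope bookkeeping is correct as far as it goes, but the contradiction does not come from simplicity of~$Q_1$; it comes from letting $\epsilon\to 0^+$ and watching either the wall slope (Part~1) or the relative position of the projections (Part~2) degenerate.
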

\begin{proof}
We first assume \( r_0 \leq 0 \) and \( \nu_{0, w_0}(F) < 0 \). Then \(\Pi_{\epsilon}(F)\) lies on the left-hand side of the \((b, w)\)-plane, and the slope of the wall is greater than or equal to the slope of the numerical wall \(\mathcal{W}(F, \mathcal{O}_X)\), which tends to \(\frac{s_0}{E \cdot H}\) as \(\epsilon \to 0^+\).  
Hence, there is a lower bound for the slope of walls for $F$ with negative slope. On the other hand, in case ($b_2$) the slope $\nu_{b,w}(Q_2)$ (independent of $b,w$ since $\ch_0(Q_2)=0$) tends to \(-\infty\) as \(\epsilon \to 0^+\), and so this case cannot occur. A similar argument shows that case $(b_2)$ cannot occur when \( r_0 \geq 0 \) and \(\nu_{0, w_0}(F) > 0 \).  

Now suppose \( r_0 \leq 0 \), \( s_0 < 0 \), and \(\nu_{0, w_0}(F) > 0 \). As described in the proof of Proposition \ref{prop-main-wallcrossing}, there are finitely many possible values of \( q_i \) for stable factors \( F_i = \mathcal{O}_X(-q_i E)[1] \); for each of them, we have  
\[
\frac{H_{\epsilon} \cdot \mathrm{ch}_1(F)}{(H_{\epsilon})^2 \cdot \mathrm{ch}_0(F)} < \frac{H_{\epsilon} \cdot \mathrm{ch}_1(F_i)}{(H_{\epsilon})^2 \cdot \mathrm{ch}_0(F_i)}
\]  
as \(\epsilon \to 0^+\), which implies that the wall \(\mathcal{W}(F, F_i)\) has negative slope, and so the claim follows. Similarly $(b_1)$ cannot occur when \( r_0 \geq 0 \), \( s_0 > 0 \), and \(\nu_{0, w_0}(F) \leq 0 \).
\end{proof}

%let \( \sigma_{b=0, w^+} \) (\( \sigma_{b=0, w^-} \)) denote the stability conditions just above (below) \( \sigma_{b=0, w} \). A general \( \sigma_{b=0, w^+} \)-stable (\( \sigma_{b=0, w^-} \)-stable) object of class \( \ch \) is also \( \sigma_{b=0, w^-} \)-stable (\( \sigma_{b=0, w^+} \)-stable) 

Let us point out that, via a similar argument as in Proposition \ref{prop-main-wallcrossing}, we can show that Gieseker stability and slope-stability coincide for our particular Mukai vectors.

\begin{Lem}\label{lem-slope-stability}
    Fix \(v = (r_0, H - a_0E, s_0+r_0)\) for some \(a_0 \in \mathbb{Z}_{\geq 0}\) and $r_0 >0$. We may choose \(\epsilon_{v}\) suitably such that if \(0 < \epsilon < \epsilon_{v}\), then a sheaf $F$ of Mukai vector $v$ is $H_{\epsilon}$-Gieseker stable if and only if it $\mu_{H_{\epsilon}}$-stable.
\end{Lem}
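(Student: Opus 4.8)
The plan is to prove the two implications separately. One of them is standard and holds for every polarization: a $\mu_{H_\epsilon}$-stable sheaf is automatically $H_\epsilon$-Gieseker stable (see \cite{huybrechts:geometry-of-moduli-space-of-sheaves}), so the real content is the converse. Since $r_0>0$, any $H_\epsilon$-Gieseker stable sheaf $F$ with $v(F)=v$ is torsion-free and $\mu_{H_\epsilon}$-semistable; I want to show that, after shrinking $\epsilon_v$, such an $F$ is in fact $\mu_{H_\epsilon}$-stable.

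Suppose instead that $F$ failed to be $\mu_{H_\epsilon}$-stable. Then it would have a saturated subsheaf $F'$ with $\mu_{H_\epsilon}(F')=\mu_{H_\epsilon}(F)$ and $0<r':=\ch_0(F')<r_0$; set $F'':=F/F'$, which is torsion-free of rank $r_0-r'$. Both $F'$ and $F''$ are $\mu_{H_\epsilon}$-semistable of the same slope as $F$, so the Bogomolov inequality gives $\Delta(F')\geq0$ and $\Delta(F'')\geq0$. Dividing these by $r'$ and by $r_0-r'$, adding, and using $\ch_2(F')+\ch_2(F'')=\ch_2(F)=s_0$ to cancel the $\ch_2$-contributions, I obtain the single inequality
\[
\frac{\ch_1(F')^2}{r'}+\frac{\ch_1(F/F')^2}{r_0-r'}\ \geq\ 2s_0 .
\]
The crux of the proof, in the same spirit as Proposition \ref{prop-main-wallcrossing}, is that this inequality cannot hold once $\epsilon$ is small enough.

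To see this, I would write $\ch_1(F')=tH+qE$ with $t,q\in\bZ$. The equality $\mu_{H_\epsilon}(F')=\mu_{H_\epsilon}(F)$ determines $q$ explicitly as $q=\tfrac{r'-t r_0}{r_0\,\epsilon}+c$, where $c$ depends only on $t$, $r'$ and the fixed data $g,k,a_0,r_0$ and is affine in $t$. Substituting this into $\ch_1(F')^2=t^2(2g-2)+2tqk$ and into the analogous formula for $\ch_1(F/F')^2$, a direct computation shows that the coefficient of $\tfrac1\epsilon$ on the left-hand side of the displayed inequality equals
\[
\frac{-2k\,(t r_0-r')^2}{\epsilon\, r_0\, r'(r_0-r')}\ \leq\ \frac{-8k}{\epsilon\, r_0^{3}}\,,
\]
where the estimate uses $(t r_0-r')^2\geq1$ (it cannot vanish since $r'\in\{1,\dots,r_0-1\}$) and $r'(r_0-r')\leq r_0^{2}/4$. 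The remaining, $\epsilon$-independent part of the left-hand side is a quadratic polynomial in $t$ whose leading coefficient is $-(2g-2)\bigl(\tfrac1{r'}+\tfrac1{r_0-r'}\bigr)<0$ (here one uses $g\geq3$), hence it is bounded above by a constant $B_v$ depending only on $v$. Therefore the left-hand side is at most $-\tfrac{8k}{\epsilon r_0^{3}}+B_v$, which is $<2s_0$ as soon as $\epsilon$ is smaller than an explicit bound depending only on $v$. Taking $\epsilon_v$ at most this bound yields a contradiction, so $F$ is $\mu_{H_\epsilon}$-stable, and we are done.

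The one point requiring care is the uniformity in $t$: a priori $t$ is not bounded, so the usual strategy of reducing to finitely many destabilizing classes does not literally apply. The estimate above is designed precisely to sidestep this, since both the $\tfrac1\epsilon$-term and the $\epsilon$-independent term only become more negative as $|t|$ grows; this is the same mechanism underlying Lemma \ref{lem-key}, where for $\epsilon\to0^+$ a destabilizing class with $t\neq0$ is forced to have $\ch_1$ of very negative self-intersection, incompatible with the Bogomolov bounds on both $F'$ and $F/F'$. I expect this last estimate to be the main (though essentially routine) obstacle; everything else is standard.
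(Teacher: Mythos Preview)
Your argument is correct and takes a genuinely different route from the paper's. The paper invokes the classification of $\mu_{H_\epsilon}$-stable factors from Proposition~\ref{prop-main-wallcrossing}/Lemma~\ref{lem-key}: it argues that any Jordan--H\"older filtration of a strictly $\mu_{H_\epsilon}$-semistable $F$ has a unique factor $Q_1$ with $\ch_1(Q_1)=H-aE$ (the others having $\ch_1$ a multiple of $E$), then bounds $a$ via $v(Q_1)^2\ge -2$ together with a lower bound on $\ch_0\cdot\ch_2$; since for each of the finitely many pairs $(r,a)$ the equality $\mu_{H_\epsilon}(Q_1)=\mu_{H_\epsilon}(F)$ holds for at most one value of $\epsilon$, shrinking $\epsilon_v$ suffices.

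Your argument bypasses this classification entirely: you work with an arbitrary saturated equal-slope subsheaf $F'$ and use only the Bogomolov inequality on $F'$ and $F/F'$. The key computation---that the $\tfrac1\epsilon$-coefficient of $\tfrac{\ch_1(F')^2}{r'}+\tfrac{\ch_1(F/F')^2}{r_0-r'}$ equals $\tfrac{-2k(tr_0-r')^2}{r_0\,r'(r_0-r')}$, together with $tr_0-r'\neq0$ and the negativity of the $t^2$-coefficient in the residual term---is correct and yields an explicit bound on $\epsilon_v$. Your approach is more elementary and uniform (no reduction to finitely many factor classes, no appeal to the structure of walls), while the paper's proof has the advantage of fitting seamlessly into the wall-crossing framework it has already set up.
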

\begin{proof}
\begin{comment}
    \begin{proof}
   We only need to show that any $H_{\epsilon}$-Gieseker stable sheaf $F$ of class $v$ is indeed $\mu_{H_{\epsilon}}$-stable. Assume otherwise; then by the same argument as in Proposition~\ref{prop-main-wallcrossing}, there are only two types of factors: 
\begin{enumerate}
    \item[(a)] $\ch(Q_1) = (r, H - aE, s_0)$,
    \item[(b)] $(1, mE, 0)$.
\end{enumerate}
We know $0 < r < r_0$, so the condition $v(Q_1)^2 \geq -2$ provides an upper bound for $a$ depending on the class $v$ (independent of $\epsilon$). Consequently, there are only finitely many possible values for $m$. Thus, for $\epsilon_v$ sufficiently small, there can be no non-trivial slope-destabilizing factor, and hence $F$ must be $\mu_{H_{\epsilon}}$-stable.
\end{proof}
\end{comment}
Assume there is a $H_{\epsilon}$-Gieseker stable sheaf $F$ of class $v$ which is not $\mu_{H_{\epsilon}}$-stable. Arguing as in Proposition~\ref{prop-main-wallcrossing}, one finds a lower bound $M$ for the quantity $\ch_0\cdot\ch_2$ of $\mu_{H_\epsilon}$-stable factors of an appropriate filtration of $F$. This gives a distinguished $\mu_{H_\epsilon}$-stable factor $Q_1$ with $\ch(Q_1) = (r, H - aE, s)$; any other stable factor has $\ch_1$ multiple of $E$. 
   
We know $0 < r < r_0$ and $a\geq a_0$, and then the condition $v(Q_1)^2 \geq -2$ (together with the inequality $rs\geq M$) provides an upper bound for $a$ depending on the class $v$ (independently of $\epsilon$). As for given $a$ and $r$ the equality
\[
\frac{{H_\epsilon}\cdot(H-aE)}{r}=\frac{{H_\epsilon}\cdot(H-a_0E)}{r_0}
\]
happens for (at most) one value of $\epsilon$, it follows that for $\epsilon_v$ sufficiently small there can be no non-trivial $\mu_{H_\epsilon}$-destabilizing factor, and hence $F$ must be $\mu_{H_{\epsilon}}$-stable.
\end{proof}

\subsection{Moduli spaces of stable objects} We conclude this section with a brief discussion on moduli spaces of \( \sigma_{b,w} \)-stable objects for our special Mukai vectors $v$. Keeping the convention of Remark \ref{rem-00} (that is, by $\sigma_{0,0}$ we mean a stability condition on the upper part of the numerical wall $\cW(\cO_X,v)$), we have the following result.

\begin{Prop}\label{moduli-stable}
    Fix a Mukai vector \(v = (r_0, H - a_0E, s_0+r_0)\) for some \(a_0 \in \mathbb{Z}_{\geq 0}\) such that $(r_0, s_0) \neq (0, 0)$. Pick $0< \epsilon < \epsilon_v$ (such that $\epsilon$ is generic if $r_0s_0 =0$) and let $\sigma := \sigma_{0, w}$ for $w \geq 0$ lie in no actual wall for $v$ in $U_\epsilon$. Then:
    \begin{enumerate}
        \item\label{moduli-stable:a} Any $\sigma$-semistable object of class $v$ is $\sigma$-stable.

        \item\label{moduli-stable:b} There is a coarse moduli space $\cM_{\sigma}(v)$ parameterizing $\sigma$-stable objects of class $v$ in $\Coh^b(X)$, which is a smooth projective hyperk\"ahler variety of dimension $v^2+2$ (in particular, it is nonempty if and only if $v^2\geq -2$).
    \end{enumerate}
\end{Prop}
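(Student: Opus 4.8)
The plan is to derive part~\eqref{moduli-stable:a} from the wall-crossing description in Proposition~\ref{prop-main-wallcrossing}, and then to obtain part~\eqref{moduli-stable:b} by feeding it into the standard theory of moduli of Bridgeland-stable objects on $K3$ surfaces \cite{bayer:projectivity, toda-k3-surface, bridgeland:K3-surfaces}.

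For part~\eqref{moduli-stable:a} I would argue by contradiction. Suppose $F\in\Coh^0(X)$ of class $v$ is $\sigma$-semistable but not $\sigma$-stable. Since $0<\epsilon<\epsilon_v$ (taken generic when $r_0s_0=0$), Proposition~\ref{prop-main-wallcrossing} produces a short exact sequence $0\to Q_1\to F\to Q_2\to 0$ in $\Coh^0(X)$ with $\nu_\sigma(Q_1)=\nu_\sigma(Q_2)=\nu_\sigma(F)$, in which one of the two nonzero proper factors --- call it $Q$ --- has $\ch_1(Q)=cE$ for some $c\in\bZ$ (which is bounded, as by the proof of Proposition~\ref{prop-main-wallcrossing} there are only finitely many actual walls for $v$ meeting $b=0$). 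The point is that $\Pi_\epsilon(Q)\neq\Pi_\epsilon(v)$, equivalently the reduced Chern character $(\ch_0(Q),\,H_\epsilon\cdot\ch_1(Q),\,\ch_2(Q))$ is not proportional to that of $v$. If $r_0\neq 0$ this is immediate: $H_\epsilon\cdot(H-a_0E)=k+\epsilon(2g-2-a_0k)$ has a nonzero $\epsilon$-independent term while $H_\epsilon\cdot cE=\epsilon ck$ does not, so proportionality would force $c=0$ and $\ch_0(Q)=\ch_2(Q)=0$, i.e. $Q=0$, for $\epsilon<\epsilon_v$ (shrinking $\epsilon_v$ to avoid the finitely many exceptional values if needed). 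If $r_0=0$, then $s_0\neq 0$, hence $(s_0,w_0)\neq(0,0)$ and the refined part of Proposition~\ref{prop-main-wallcrossing} applies with $Q=Q_2$; moreover $\ch_2(F)=s_0\neq 0$ forces $\nu_\sigma(F)=s_0/(H_\epsilon\cdot(H-a_0E))\neq 0$, so part~(1) of Lemma~\ref{lem-wall-crossing-negative} rules out case $(b_2)$, leaving $Q_2\cong\cO_X(mE)^{\oplus(a-a_0)/m}$ up to shift with $a>a_0$; then $\ch_0(Q_2)\neq 0=\ch_0(v)$, and again the reduced characters are non-proportional. Consequently $\{\nu_{b,w}(Q)=\nu_{b,w}(v)\}$ is a numerical wall for $v$ passing through $\sigma$ (a line segment in $U_\epsilon$ by Proposition~\ref{structurewalls} and Definition~\ref{def:num_wall}); since $\nu_{b,w}(Q_1)-\nu_{b,w}(Q_2)$ changes sign across it, on one side the subobject $Q_1\subset F$ has strictly larger $\nu_{b,w}$-slope than $F$, so $F$ is $\sigma_{b,w}$-unstable there. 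Hence $\sigma$ lies on an actual wall for $v$ in $U_\epsilon$, contradicting our hypothesis. Therefore $F$ is $\sigma$-stable.

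For part~\eqref{moduli-stable:b}, first note that $v$ is primitive: in $\Lambda=\bZ\oplus(\bZ H\oplus\bZ E)\oplus\bZ$ its $H$-coordinate is $1$. By part~\eqref{moduli-stable:a} there is no strictly $\sigma$-semistable object of class $v$; a fortiori $\sigma$ lies on no wall for $v$ in the full stability manifold $\mathrm{Stab}(X)$, because any point on such a wall supports an object of class $v$ whose Harder--Narasimhan factors on one side all acquire the same $\sigma$-phase, making that object strictly $\sigma$-semistable. Thus $\sigma$ is $v$-generic, and the results of \cite{bayer:projectivity} (together with \cite{toda-k3-surface} for the moduli stack) provide a proper coarse moduli space $\cM_\sigma(v)$ of $\sigma$-stable objects of class $v$ in $\Coh^0(X)$ which, whenever $v^2\geq -2$, is a smooth projective hyperk\"ahler variety of dimension $v^2+2$. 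If $v^2<-2$ the space is empty, since any $\sigma$-stable $F$ is simple and, as $\omega_X\cong\cO_X$, satisfies $\ext^2(F,F)=1$, so that $-v^2=\chi(F,F)=2-\ext^1(F,F)\leq 2$.

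The main obstacle is the rank-zero case $(b_2)$ of Proposition~\ref{prop-main-wallcrossing}: there $\ch_0(Q_2)=0$, the slope $\nu_{b,w}(Q_2)$ is independent of $(b,w)$, and a priori it could equal $\nu_{b,w}(v)$ throughout $U_\epsilon$ without producing any wall --- precisely the danger when $r_0=0$, where in addition $\ch_0(Q_2)=\ch_0(v)$. The input that neutralizes this is part~(1) of Lemma~\ref{lem-wall-crossing-negative} together with the elementary observation that $\nu_\sigma(F)\neq 0$ as soon as $r_0=0$ (because then $\ch_2(F)=s_0\neq 0$): this excludes $(b_2)$ in exactly the delicate regime and leaves only $(b_1)$, where $Q_2$ has positive rank and a genuine wall appears. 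A secondary subtlety is that the hypothesis only gives genericity of $\sigma$ within the slice $U_\epsilon$, whereas \cite{bayer:projectivity} needs genericity in $\mathrm{Stab}(X)$; this upgrade is exactly what part~\eqref{moduli-stable:a} supplies, since any wall in $\mathrm{Stab}(X)$ through $\sigma$ would manifest as a strictly $\sigma$-semistable object of class $v$.
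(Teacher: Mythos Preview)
Your proof of part~\eqref{moduli-stable:a} is correct, and the argument is organized essentially as in the paper: reduce via Proposition~\ref{prop-main-wallcrossing} to a factor with $\ch_1$ a multiple of $E$, show its reduced Chern character cannot be proportional to that of $v$, and conclude that $\sigma$ sits on an actual wall. Part~\eqref{moduli-stable:b} is handled the same way in both, by feeding \eqref{moduli-stable:a} into \cite{bayer:projectivity}.

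The one genuine difference is in the case $r_0=0$. The paper allows case~$(b_2)$ (a rank-zero factor $(0,(a-a_0)E,s)$) and kills it by observing that the slope equality $\frac{s_0}{H_\epsilon\cdot(H-a_0E)}=\frac{s}{(a-a_0)E\cdot H_\epsilon}$ holds only for discretely many $\epsilon$, whence the genericity hypothesis. You instead invoke Lemma~\ref{lem-wall-crossing-negative}(1), noting that $r_0=0$ satisfies both $r_0\le 0$ and $r_0\ge 0$, so whichever sign $\nu_\sigma(F)=s_0/(H_\epsilon\cdot(H-a_0E))\neq 0$ has, case~$(b_2)$ is excluded outright; this leaves only $(b_1)$, where $\ch_0(Q_2)\neq 0=\ch_0(v)$ and the wall is genuine. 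Your route thus does not use the generic-$\epsilon$ hypothesis when $r_0=0$, at the cost of possibly shrinking $\epsilon_v$ to the constant required by Lemma~\ref{lem-wall-crossing-negative}. Both approaches are valid; yours is a touch more uniform, while the paper's explains why the statement carries the genericity clause.
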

\begin{proof}
We only prove \eqref{moduli-stable:a}. Then \eqref{moduli-stable:b} follows from a series of fundamental works, explained in detail in \cite[Section 6]{bayer:projectivity}. First, assume \( r_0 s_0 \neq 0 \). Suppose there exists an object \( F \in \Coh^0(X) \) that is strictly \( \sigma \)-semistable inside a chamber. Then $F$ admits a Jordan-Hölder filtration that is fixed within the chamber. However, $F$ has stable factors as described in Proposition \ref{prop-main-wallcrossing}.\eqref{prop-main-wallcrossing:B}, which do not have the same slope $\nu_{0,w+\delta}$ as \( F \) for \( \delta>0 \), leading to a contradiction.

Now suppose \( r_0 = 0 \), \( s_0 \neq 0 \). If an object \( F \in \Coh^0(X) \) of class \( v \) is strictly \( \sigma \)-semistable, then by \ref{prop-main-wallcrossing} there exists a Chern character \((0, (a - a_0)E, s) \) such  
\[
\frac{s_0}{H_\epsilon\cdot(H-a_0)E}=\frac{s}{(a-a_0)E\cdot H_\epsilon},\;\;\;0\leq (a - a_0)E \cdot H_{\epsilon} < (H - a_0 E) \cdot H_{\epsilon}.
\]
However, one can easily check that this happens only for discrete values of \( \epsilon \), so the claim holds for a generic choice of \( \epsilon \).
A similar argument applies to the case \( r_0 \neq 0 \), \( s_0 = 0 \). %In the case \( r_0 = s_0 = 0 \), there is no wall crossing the vertical line \( b = 0 \) at any \( w > 0 \), and the \( H_{\epsilon} \)-Gieseker stability of such sheaves is independent of the choice of polarization.
\end{proof}

The following provides sufficient conditions to guarantee that, along the  walls described in Proposition \ref{prop-main-wallcrossing}, a general stable object does not get destabilized:

\begin{Lem}\label{lem-dim}
Adopt the notations of Proposition \ref{prop-main-wallcrossing}, and assume $(s_0,w_0)\neq(0,0)$. Let $\sigma_0^+$ (resp.~$\sigma_0^-$) be a stability condition just above (resp.~just below) $\ell^+$. Then the locus in $\cM_{\sigma_0^+}(v)$ and $\cM_{\sigma_0^-}(v)$ of \(\sigma_{0, w_0}\)-strictly semistable objects has dimension strictly less than \(v^2 + 2=\dim\cM_{\sigma_0^\pm}(v)\), providing that we are in one of the following situations:  
\begin{enumerate}
    \item[(1)] Condition $(b_1)$ holds and: $\chi(\mathcal{O}_X(mE), v) \leq 0$ for $m>0$ (or $m=0$ and $\ell^+$ of negative slope), $\chi(\mathcal{O}_X(mE), v) \geq 0$ for $m<0$ (or $m=0$ and $\ell^+$ of positive slope).
    
    \item[(2)] Condition $(b_2)$ holds and $\chi(Q_2, v) + 2(a-a_0)< 0$. 
\end{enumerate}
\end{Lem}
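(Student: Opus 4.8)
The plan is to use Proposition~\ref{prop-main-wallcrossing} to put every destabilized object into a standard extension form, and then to bound the dimension of the resulting strata by a relative Grassmann/extension count over the moduli spaces of the pieces. First I would recall from Proposition~\ref{moduli-stable} that $\cM_{\sigma_0^+}(v)$ and $\cM_{\sigma_0^-}(v)$ are smooth, irreducible of dimension $v^2+2$, and that an object $F$ of either one is $\sigma_{0,w_0}$-strictly semistable exactly when it lies on $\ell^+$. For such $F$, Proposition~\ref{prop-main-wallcrossing} provides an exact triangle $Q_1\to F\to Q_2$ with $\nu_{0,w_0}(Q_1)=\nu_{0,w_0}(F)=\nu_{0,w_0}(Q_2)$, $Q_1$ being $\sigma_{0,w_0}$-stable with $\ch_1(Q_1)=H-aE$ ($a\ge a_0$) and $Q_2$ of type $(b_1)$ or $(b_2)$; only finitely many numerical types occur along the fixed wall $\ell^+$ (using $v(Q_1)^2\ge-2$, since $Q_1$ is simple, and that $\ell^+$ is fixed), so it suffices to bound, for each type, the dimension of the image in $\cM_{\sigma_0^\pm}(v)$ of the family of extensions realizing it. Write $v_1:=v(Q_1)$ and $v_2:=v(Q_2)$, so $v=v_1+v_2$.

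Next I would record the vanishings that feed the count. Since $Q_1$ is $\sigma_{0,w_0}$-stable and every $\sigma_{0,w_0}$-stable factor of $Q_2$ has $\ch_1$ a multiple of $E$ (by hypothesis in case $(b_2)$; by Lemmas~\ref{lem-eE} and~\ref{lem-ox} in case $(b_1)$, where $Q_2$ is a direct sum of copies of $\cO_X(mE)$ or its shift), while $\ch_1(Q_1)=H-aE\notin\bZ E$, no nonzero map $Q_1\to Q_2$ or $Q_2\to Q_1$ exists; by Serre duality the corresponding $\Ext^2$ groups also vanish, so $\ext^1(Q_1,Q_2)=\ext^1(Q_2,Q_1)=-\chi(Q_1,Q_2)=\langle v_1,v_2\rangle$ (and likewise with $v_2$ replaced by the class of the spherical summand in case $(b_1)$).

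In case $(b_1)$, where $Q_2\cong A^{\oplus n}$ ($n\ge1$) for a $\sigma_{0,w_0}$-stable spherical object $A\in\{\cO_X(mE),\cO_X(mE)[1]\}$ (so $v(A)^2=-2$, $v_1=v-n\,v(A)$), I would argue that over the moduli space of $Q_1$ (dimension $v_1^2+2$) the object $F$ is determined by an $n$-plane in $\Ext^1(Q_1,A)$ or $\Ext^1(A,Q_1)$ — a point of $\mathrm{Gr}\bigl(n,\langle v_1,v(A)\rangle\bigr)$ — so the stratum has dimension at most $(v_1^2+2)+n\bigl(\langle v_1,v(A)\rangle-n\bigr)$; using $v=v_1+n\,v(A)$ and $v(A)^2=-2$ this yields
\[
(v^2+2)-\bigl[(v_1^2+2)+n(\langle v_1,v(A)\rangle-n)\bigr]=n\bigl(n-\chi(A,v)\bigr),
\]
and in each sub-case of hypothesis~(1) one checks $\chi(A,v)\le0$ (for $m>0$, $A=\cO_X(mE)$; for $m<0$, $A=\cO_X(mE)[1]$, so $\chi(A,v)=-\chi(\cO_X(mE),v)\le0$; for $m=0$ the sign of the slope of $\ell^+$ decides whether $A=\cO_X$ or $\cO_X[1]$, and either way $\chi(A,v)\le0$), so the quantity is $\ge n^2\ge1$. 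In case $(b_2)$, where $\ch_0(Q_2)=0$ and $\ch_1(Q_2)=(a-a_0)E$ with $a-a_0\ge1$ (a $0$-dimensional stable factor would have infinite $\nu_{0,w_0}$-slope), one has $v_2^2=\bigl((a-a_0)E\bigr)^2=0$, so $Q_2$ is a pure sheaf on a curve in $|(a-a_0)E|\cong\mathbf{P}^{a-a_0}$ and its isomorphism classes form a family of dimension $2(a-a_0)$ (generically a line bundle on a union of $a-a_0$ distinct fibres: $a-a_0$ moduli for the fibres, $a-a_0$ for the line bundles). Counting the choices of $Q_1$, of $Q_2$, and of the extension class in the $\langle v_1,v_2\rangle$-dimensional $\Ext^1(Q_2,Q_1)$ (with a $-1$ since the scaling action does not change the isomorphism class and the split extension, being decomposable, is not stable), the stratum has dimension at most $(v_1^2+2)+2(a-a_0)+\langle v_1,v_2\rangle-1$; since $v_2^2=0$ gives $v^2+2=v_1^2+2\langle v_1,v_2\rangle+2$,
\[
(v^2+2)-\bigl[(v_1^2+2)+2(a-a_0)+\langle v_1,v_2\rangle-1\bigr]=\langle v_1,v_2\rangle-2(a-a_0)+1,
\]
and as $\chi(Q_2,v)=-\langle v_2,v\rangle=-\langle v_1,v_2\rangle$, hypothesis~(2) reads $\langle v_1,v_2\rangle>2(a-a_0)$, so this is $\ge2$. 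In both cases the stratum has dimension $<v^2+2$, and maximizing over the finitely many types finishes the proof.

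The step I expect to be the main obstacle is the dimension bookkeeping in case $(b_2)$: one must correctly pin down the dimension $2(a-a_0)$ of the family of isomorphism classes of semistable $Q_2$'s — this exceeds the naive value $v_2^2+2$ precisely when $v_2$ is non-primitive, and the term $2(a-a_0)$ built into hypothesis~(2) is exactly what absorbs that excess — and then verify that the $\Hom$-vanishings together with the scaling and $\mathrm{GL}_n$-quotients genuinely turn the raw extension counts into valid upper bounds for the dimension of the destabilized locus inside $\cM_{\sigma_0^\pm}(v)$, rather than merely for a parameter space mapping onto it. By contrast, the case $(b_1)$ estimate is the familiar spherical-twist computation and should be routine once the vanishings are established.
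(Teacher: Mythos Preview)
Your proposal is correct and follows essentially the same route as the paper: stratify the strictly semistable locus by the numerical type of the destabilizing triangle from Proposition~\ref{prop-main-wallcrossing}, and bound each stratum by (moduli of $Q_1$) $+$ (moduli of $Q_2$) $+$ (extension classes), using the $\Hom$-vanishings to compute $\ext^1$ as a Mukai pairing. Your case~$(b_1)$ Grassmann count matches the paper's verbatim.

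The only place you are looser than the paper is the dimension of the family of $Q_2$'s in case~$(b_2)$. Your heuristic ``line bundle on $a-a_0$ distinct fibres'' gives $2(a-a_0)$, and that is all you need as an upper bound; but the precise dimension of the moduli of semistable objects of class $(0,(a-a_0)E,s)$ is $2\alpha$ with $\alpha=\gcd(a-a_0,s)$, by \cite[Lemma~7.2]{bayer:projectivity}, which the paper invokes directly (and then bounds by $2(a-a_0)$). So your anticipated ``main obstacle'' dissolves into a citation. Your extra $-1$ for projectivization is harmless but unnecessary, since either way the hypothesis $\chi(Q_2,v)+2(a-a_0)<0$ already gives strict inequality.
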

\begin{proof}
    First assume that condition ($b_1$) holds for $m>0$, or for $m=0$ and $\ell^+$ of negative slope. When $\nu_{\sigma_0^+}(F)>\nu_{\sigma_0^+}(\cO_X(mE))$ (resp.~$\nu_{\sigma_0^+}(F)<\nu_{\sigma_0^+}(\cO_X(mE))$), the locally closed subset in $\cM_{\sigma_0^+}(v)$ of $\sigma_{0, w_0}$-semistable objects with $\hom(\cO_X(mE),F)=h\geq 1$ (resp.~$\hom(F,\cO_X(mE))=h\geq 1$) is either empty or of dimension 
    \begin{align*}
    (v-h(1,mE,1))^2+2+h\left(\langle v-h(1,mE,1), (1,mE,1)\rangle -h\right)=\\
    =v^2+2-h\langle v,(1,mE,1)\rangle-h^2<v^2+2,
    \end{align*}
    so the claim follows. The same argument works for $\cM_{\sigma_0^-}(v)$. Similarly, considering the shift $\cO_X(mE)[1]$, one can cover the remaining cases of the first assertion. 
    
    Now suppose condition ($b_2$) holds, so that $v(Q_2) = (0, (a-a_0)E, s)$. Such objects $Q_2$ vary in a $2\alpha$-dimensional moduli space, where $\alpha=\mathrm{gcd}(a-a_0,s)$, by \cite[Lemma~7.2]{bayer:projectivity}. Since $\hom(Q_1,Q_2)=0=\hom(Q_2,Q_1)$ then, if non-empty, the locus of $\sigma_{0, w_0}$-strictly semistable objects in $\cM_{\sigma_0^\pm}(v)$ has dimension less than or equal to
    \begin{align*}
        & (v-v(Q_2))^2+2+2\alpha +\langle v-v(Q_2), v(Q_2) \rangle = \\
        = \ & v^2+2  -\langle v, v(Q_2) \rangle + 2\alpha\leq v^2+2  -\langle v, v(Q_2) \rangle + 2(a-a_0),
    \end{align*}
    which is strictly less than $v^2+2$ under our assumption.
\end{proof}

\section{Stratification of the moduli of Bridgeland stable objects}\label{stratification}

%\subsection{The notion of stability type} 
Fix an elliptic $K3$ surface $X$ of degree $k > 2$ as in Proposition \ref{thm-k3 surface}, equipped with a two-dimensional slice of Bridgeland stability conditions \( U_{\epsilon} \) associated to the polarization \( H_\epsilon := E + \epsilon H \) for a fixed \( \epsilon \in \mathbb{Q}_{>0} \). In this section, we introduce the notion of \emph{Bridgeland stability type}, which is central in our analysis. It enables us to investigate the Brill-Noether stratification of moduli spaces of Bridgeland stable objects with appropriate Mukai vector, as detailed in Theorem \ref{thm.splitting} and Theorem \ref{thm-nonempty}. In Section \ref{section-application}, we apply these results to the specific Mukai vector \( (0, H, 1+d-g) \) to study the Brill-Noether theory of line bundles on curves in the linear system \( |H| \).

\subsection{Bridgeland stability type} Consider $p\in \mathbb N$ and $p$ pairs $$(e_1,m_1),\ldots, (e_p,m_p)\in\bZ_{\geq0}\times\bZ_{> 0}$$ such that $e_1>\cdots >e_p\geq0$. Write $\overline{e}:=\bigl((e_i, m_i)\bigr)_{i=1}^{p}$ (so that $\overline{e}=\emptyset$ if $p=0$).

\begin{Def}\label{Def.splitting}
 Let $F \in \text{Coh}^0(X)$ be a $\sigma_{0, w_0}$-stable object for some $w_0 >0$, such that $\ch_2(F) \neq 0$. 
  If $p\geq 1$, we say that $F$ is \emph{of (Bridgeland) stability type} $\overline{e}$ if:
\begin{enumerate}
    \item[(i)] When we move down from the point $(0, w_0)$, then  $F$ gets destabilized along the wall $\mathcal{W}_1 \subset U_{\epsilon}$ passing through $(0, w_1)$ for some $w_1 \in [0, w_0)$ via the destabilizing sequence 
    \begin{equation*}
        \Hom(\cO_X(e_1E), F) \otimes \cO_X(e_1E) \stackrel{\ev}\longrightarrow F \longrightarrow F_1,
    \end{equation*}
    such that $m_1 = \hom(\cO_X(e_1E), F)$ and $F_1$ is stable along $\cW_1$.  
\vspace{1mm}
    
    \item[(ii)] %When we move down from the wall $\mathcal{W}_1$, $F_1$ gets destabilised along the wall $\mathcal{W}_2$ with the sequence 
      %\begin{equation*}
       %  \Hom(\cO_X(e_2E), F_1) \otimes \cO_X(e_2E)  \xrightarrow{\ev} F_1 \to F_2
    %\end{equation*}
    %such that $m_2 = \hom(\cO_X(e_2E), F_1)$ and $F_2$ is a $\sigma_{b=0, w_2}$-stable object along the wall. 
    Then we move down the vertical line $b=0$ and inductively obtain the object $F_{i}$ along the wall $\mathcal{W}_i \subset U_\epsilon$ where $F_{i-1}$ gets destabilized. The destabilizing sequence is given by   
    %(which $\cO_X(e_iE)$ is making) as the destabilising quotient of $F_{i-1}$:
     \begin{equation*}
         \Hom(\cO_X(e_iE), F_{i-1}) \otimes \cO_X(e_iE)  \stackrel{\ev}\longrightarrow F_{i-1} \longrightarrow F_i,
    \end{equation*}
    where $m_i = \hom(\cO_X(e_iE), F_{i-1})$ and $F_i$ is stable along the wall $\mathcal{W}_i$ which passes through $(0, w_i)$ for some $w_i \in [0,  w_{i-1})$.

    \vspace{1mm}
    
    \item[(iii)] The final object $F_p$ is stable along the numerical wall $\cW(\cO_X,F_p)$ made with the structure sheaf $\cO_X$.
\end{enumerate}    

If $p=0$, we say that $F$ is \emph{of (Bridgeland) stability type $\emptyset$} if it is stable along the numerical wall $\cW(\cO_X,F)$.
\end{Def}

\vspace{0.5mm}

\begin{Rem}\label{obs stabtype}
Two important points should be noted regarding Definition \ref{Def.splitting}:
\begin{itemize*}
    \item If $F \in \Coh^0(X)$ is $\sigma_{0, w_0}$-stable of stability type $\overline{e}$, then for any $1\leq j\leq p$ the quotient $F_j$ is $\sigma_{0, w_j}$-stable of stability type $\bigl((e_i, m_i)\bigr)_{i=j+1}^{p}$. In particular, the inequality $v(F_p)^2\geq -2$ holds.
    \item We know that the object $F_i$ (for $1 \leq i \leq p$) has Mukai vector
\[
v(F_i)= v(F) - \sum_{j=1}^{i}m_j v(\cO_X(e_jE)),%\left(-\sum_{j=1}^{i} m_je_j, \ H-\sum_{j=1}^{i} m_je_j E, \ d+1-g\right). 
\]
and it is stable along the wall $\cW_{i}$ where it has the same slope as $\cO_X(e_iE)$. Since $\cO_X(e_iE)$ is also stable along $\mathcal{W}_i$ by Lemma \ref{lem-eE}, we have $\Hom(\cO_X(e_iE), F_i) = 0 = \Hom(F_i, \cO_X(e_iE))$ which implies 
\begin{equation*}
    \ext^1(F_i ,\cO_X(e_iE) ) = -\chi\left(F_i,\cO_X(e_iE)\right)=\langle v(F_i),v(\cO_X(e_iE))\rangle  . 
\end{equation*}
\end{itemize*}
\end{Rem}

\vspace{1.5mm}

These observations enable us to endow the set of stable objects of a fixed stability type with a natural algebro-geometric structure (given as an open subset of an iterated Grassmannian bundle over a certain Bridgeland moduli space): 

\begin{Thm}\label{Prop-scheme structure}
Fix a Mukai vector \(v = (r_0, H - a_0E, s_0+r_0)\) for some \(a_0 \in \mathbb{Z}_{\geq 0}\) %$v=(v_0,v_1,v_2)\in\Lambda$, where $v_1 = H-aE$ for some $a \in \bZ$ 
and $s_0 \neq 0$ and a stability condition $\sigma_0:=\sigma_{0, w_0}$ with $w_0>0$ which does not lie on an actual wall \footnote{We always assume $\epsilon > 0$ is generic if $r_0=0$.} for class $v$. Then  
for any stability type $\overline{e}=\bigl((e_i, m_i)\bigr)_{i=1}^{p}$ where $p\geq0$, the subset
    \[
    \cM_{\sigma_0}(v,\overline{e}):=\bigl\{F\in\cM_{\sigma_0}(v): \;\text{$F$ is of stability type $\overline{e}$}\bigr\}
    \]
admits a natural scheme structure as a locally closed subscheme of $\cM_{\sigma_0}(v)$. 
Moreover, if $\cM_{\sigma_0}(v,\overline{e})$ is non-empty, then it is smooth and irreducible of dimension
    \[
    \left(v-\sum_{i=1}^p m_i(1,e_iE,1)\right)^2+2+\sum_{j=1}^p m_j\left(\bigl\langle v-\sum_{i=1}^j m_i(1,e_iE,1),(1,e_jE,1)\bigr\rangle-m_j\right).
    \]  
\end{Thm}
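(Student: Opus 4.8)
The plan is to build $\cM_{\sigma_0}(v,\overline e)$ by induction on $p$ as an iterated Grassmann bundle over a Bridgeland moduli space of the ``tail'' class, reversing the destabilization process of Definition~\ref{Def.splitting}. The key point is that the stability type is recovered from the sequence of walls crossed along the segment $b=0$, so crossing them in the opposite direction (from small $w$ upward) should recover $F$ from $F_p$ via successive extensions by copies of $\cO_X(e_iE)$. Concretely, set $v_j := v - \sum_{i=1}^j m_i(1,e_iE,1)$, so that $v_0=v$ and $v_p=v(F_p)$. By Remark~\ref{obs stabtype} the tail $F_p$ is $\sigma_{0,w_p}$-stable of stability type $\emptyset$ with $v_p^2\geq -2$, and by Proposition~\ref{moduli-stable} the moduli space $\cM_{\sigma'}(v_p)$ for $\sigma'$ just below the relevant wall is a smooth projective hyperk\"ahler variety of dimension $v_p^2+2$; the locus of objects of stability type $\emptyset$ is an open subscheme of it. This is the base case.

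\textbf{The inductive step.} Suppose we have constructed $\cM_{\sigma_j}(v_j, \{(e_i,m_i)\}_{i>j})$ as a smooth irreducible scheme of the asserted dimension, where $\sigma_j$ sits just above the wall $\cW_j$. I want to produce $\cM_{\sigma_{j-1}}(v_{j-1}, \{(e_i,m_i)\}_{i\geq j})$. Over the former, Remark~\ref{obs stabtype} gives $\Hom(\cO_X(e_jE),F_j)=0=\Hom(F_j,\cO_X(e_jE))$, so $\ext^1(F_j,\cO_X(e_jE)) = \langle v_j, v(\cO_X(e_jE))\rangle$ is constant; hence $\Ext^1(F_j,\cO_X(e_jE))$ (equivalently its dual $\Ext^1(\cO_X(e_jE)^{\oplus m_j},F_j)$ up to Serre duality on the K3) forms a vector bundle over the moduli space, and I take the Grassmann bundle $\mathrm{Gr}(m_j, \Ext^1(\cO_X(e_jE),F_j)^{\oplus\ast})$ parametrizing $m_j$-dimensional subspaces $W\subseteq \Ext^1(F_j,\cO_X(e_jE))^\vee$; a universal extension then produces, fibrewise, an object $F_{j-1}$ sitting in $0\to \cO_X(e_jE)^{\oplus m_j}\to F_{j-1}\to F_j\to 0$. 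Lemma~\ref{lem-stability above the wall} (applied with $F_1=\cO_X(e_jE)$, $F_2=F_j$, both stable along $\cW_j$ with the correct slope comparison just above/below) guarantees $F_{j-1}$ is $\sigma_{j-1}$-stable for $\sigma_{j-1}$ just above $\cW_j$. One must then check: (a) that $\hom(\cO_X(e_jE),F_{j-1})=m_j$ exactly — so that the chosen subspace realises the full $\Hom$ space and the construction is reversible — which follows because the destabilizing sequence in Definition~\ref{Def.splitting}(i)/(ii) is taken with respect to the \emph{full} $\Hom$, together with the vanishing $\Hom(\cO_X(e_jE),F_j)=0$; and (b) that $F_{j-1}$ genuinely has stability type $\{(e_i,m_i)\}_{i\geq j}$, i.e. that no extra wall is crossed between $\cW_j$ and $\sigma_{j-1}$ — here I invoke finiteness of walls (Proposition~\ref{structurewalls}) and choose $\sigma_{j-1}$ close enough to $\cW_j$, exactly as in the proof of Proposition~\ref{prop-main-wallcrossing}. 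The fibre dimension added at this step is $\dim \mathrm{Gr}(m_j, \langle v_j, v(\cO_X(e_jE))\rangle) = m_j(\langle v_j,(1,e_jE,1)\rangle - m_j)$, which telescopes to give precisely the stated formula; smoothness and irreducibility are preserved since a Grassmann bundle over a smooth irreducible base is smooth and irreducible.

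\textbf{Scheme structure and well-definedness.} To see that $\cM_{\sigma_0}(v,\overline e)$, a priori defined as a subset of $\cM_{\sigma_0}(v)$, acquires the right scheme structure, I argue that the iterated construction above defines a morphism from the iterated Grassmann bundle $\mathcal G$ to $\cM_{\sigma_0}(v)$ (via the universal family of the $F_0 = F$), that this morphism is injective on points — because the whole filtration $F\supset \cO_X(e_1E)^{\oplus m_1}$, the quotient $F_1$, etc., and the subspaces $W_j$ are all canonically determined by $F$ (the $W_j$ being forced to be the full $\Hom$-spaces) — and that it is an immersion. Injectivity plus the fact that $\mathcal G$ is smooth and $\cM_{\sigma_0}(v)$ is smooth of dimension $v^2+2$ lets me identify $\mathcal G$ with a locally closed subscheme; that the image is exactly $\{F : F \text{ has stability type }\overline e\}$ is the combination of (i) every such $F$ arises (run Definition~\ref{Def.splitting} forward, then the $F_j$'s and the data are of the required form) and (ii) every point of $\mathcal G$ maps to such an $F$ (shown in the inductive step). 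I expect the main obstacle to be precisely step (b) of the inductive step together with the immersion claim: one must control, uniformly in families, that crossing $\cW_j$ upward introduces no new destabilizing subobject before reaching $\sigma_{j-1}$ and that the construction is scheme-theoretically inverse to ``read off the HN/wall data'', for which openness of stability (from the support property and local finiteness of walls, Proposition~\ref{structurewalls}) and the constancy of the relevant $\hom$ and $\ext^1$ along $\cM_{\sigma_j}(v_j,\cdot)$ are the essential inputs.
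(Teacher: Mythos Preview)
Your overall strategy is exactly the paper's: induction on $p$, building $\cM_{\sigma_0}(v,\overline e)$ as an open piece of an iterated Grassmann bundle, with Lemma~\ref{lem-stability above the wall} providing stability of the universal extension. The dimension telescoping and the set-theoretic bijection are handled correctly. Your worry (a) is a non-issue: applying $\Hom(\cO_X(e_jE),-)$ to the extension and using $\Hom(\cO_X(e_jE),F_j)=0$ (both stable of the same slope, non-isomorphic) forces $\hom(\cO_X(e_jE),F_{j-1})=m_j$ automatically.

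The genuine gap is the immersion claim, and your proposed justification for it is wrong. Injectivity on closed points together with smoothness of source and target does \emph{not} imply that a morphism is a locally closed immersion: the map $\mathbb A^1\to\mathbb A^2$, $t\mapsto(t^2,t^3)$ is injective between smooth varieties but fails to be an immersion at the origin. What is needed is injectivity of the differential $d\varphi$ at every point, and this requires a real computation. The paper carries this out explicitly: identifying $T_{(F_1,V)}\mathcal G$ inside $\Ext^1(F_1,F_1)\times \bigl(V^\vee\otimes \Ext^1(F_1,\cO_X(e_1E))/V\bigr)$ and $T_{[F]}\cM_{\sigma_1^+}(v)=\Ext^1(F,F)$, one applies $\Hom(-,\cO_X(e_1E))$, $\Hom(-,F_1)$ and $\Hom(F,-)$ to the triangle $V^\vee\otimes\cO_X(e_1E)\to F\to F_1$ to obtain an injection $\lambda\colon \Ext^1(F_1,F_1)\hookrightarrow\Ext^1(F,F_1)$ and an exact sequence
\[
0\to V^\vee\otimes\Ext^1(F,\cO_X(e_1E))\to\Ext^1(F,F)\xrightarrow{\gamma}\Ext^1(F,F_1)\xrightarrow{\gamma'}V^\vee\otimes\Ext^2(F,\cO_X(e_1E)).
\]
One checks $\mathrm{im}(\lambda)\subset\ker(\gamma')$ (using $\hom(\cO_X(e_1E),F_1)=0$), and the resulting diagram exhibits $d\varphi$ as an injection. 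This is the step you are missing; once you add it, your argument is essentially the paper's.
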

\begin{proof}
We use induction on the length $p$ of the stability type to endow $\cM_{\sigma_0}(v,\overline{e})$ with a scheme structure that satisfies the required properties.

If $p=0$ (namely $\overline{e}=\emptyset$), then $\cM_{\sigma_0}(v,\emptyset)\subseteq \cM_{\sigma_0}(v)$ is the (possibly empty) open subset of objects that remain stable along the numerical wall $\cW(\cO_X,v)$ defined by $\cO_X$ and any object of Mukai vector $v$.

\vskip 3pt

Assume  the lemma holds for all Mukai vectors with $\ch_1 = H-aE$ and for all stability types of length $\leq p-1$. Let $\sigma_1^+$ be a stability condition on the vertical line $b=0$, sufficiently close to $\cW_1=\cW(\cO_X(e_1E),v)$ and above $\cW_1$. If $\cM_{\sigma_0}(v,\overline{e})$ is non-empty, then necessarily $\cM_{\sigma_1^+}\bigl(v-m_1(1,e_1E,1),\;\overline{e}\setminus (e_1,m_1)\bigr)$ must be non-empty. Also by the induction hypothesis, it is a smooth, irreducible, locally closed subscheme of $\cM_{\sigma_1^+}\left(v-m_1(1,e_1E,1)\right)$ of dimension
    \[
    \left(v-\sum_{i=1}^p m_i(1,e_iE,1)\right)^2+2+\sum_{j=2}^p m_j\left(\bigl\langle v-\sum_{i=1}^j m_i(1,e_iE,1),(1,e_jE,1)\bigr\rangle-m_j\bigr)\right).
    \]

Let $\mathcal{G}$ denote the Grassmanian bundle over $\cM_{\sigma_1^+}\bigl(v-m_1(1,e_1E,1),\;\overline{e}\setminus (e_1,m_1)\bigr)$, whose fiber over an object $F_1$ is given by
$\Gr\left(m_1,\Ext^1(F_1,\cO_X(e_1E))\right)$. By Lemma \ref{lem-stability above the wall}, we have a natural morphism
\[
\varphi\colon \mathcal{G}\longrightarrow \cM_{\sigma_1^+}^{\mathrm{st}}(v)
\]
which sends a pair $(F_1,V)\in\cG$ to the object $F\in\cM_{\sigma_1^+}(v)$ sitting in the exact triangle
\begin{equation}\label{extF}
    V^\vee\otimes\cO_X(e_1E)\overset{d_1}{\longrightarrow} F\overset{d_2}{\longrightarrow} F_1\overset{d_3}{\longrightarrow} V^\vee\otimes\cO_X(e_1E)[1].
\end{equation}
It follows from Definition \ref{Def.splitting} that $\varphi$ establishes a (set-theoretic) bijection between $\mathcal{G}$ and the subset of $\sigma_1^+$-stable objects of class $v$ with stability type $\overline{e}$. 
Furthermore, $\varphi$ is a locally closed immersion. This is a consequence of the following:

\begin{Claim}
    The differential of $\varphi$ is injective at every point.
\end{Claim}
\begin{proof}[Proof of the claim]
The tangent space to $\cG$ at $(F_1,V)$ is given by
\[
T_{[F_1]}\left(\cM_{\sigma_1^+}\left(v-m_1(1,e_1E,1),\;\overline{e}\setminus (e_1,m_1)\right)\right)\times T_{[V]}\Gr\left(m_1,\Ext^1(F_1,\cO_X(e_1E))\right),
\]
and is contained in 
\[
T_{[F_1]}\left(\cM_{\sigma_1^+}\left(v-m_1(1,e_1E,1)\right)\right)\times T_{[V]}\Gr\left(m_1,\Ext^1(F_1,\cO_X(e_1E))\right).
\]
On the other hand, if $F=\varphi(F_1,V)$, we have canonical identifications:
\begin{itemize}
    \item $T_{[F_1]}\bigl(\cM_{\sigma_1^+}\left(v-m_1(1,e_1E,1)\right)\bigr)=\Ext^1(F_1,F_1)$.
    \item $T_{[V]}\Gr\left(m_1,\Ext^1(F_1,\cO_X(e_1E))\right)= \Hom(V,\Ext^1(F_1,\cO_X(e_1E))/V)$. By applying the long exact sequence of $\Ext^i(-,\cO_X(e_1E))$ to \eqref{extF} we also obtain 
    \[
    \Ext^1(F_1,\cO_X(e_1E))/V\cong \Ext^1(F,\cO_X(e_1E)),
    \]
    and therefore we canonically have
    \[T_{[V]}\Gr\left(m_1,\Ext^1(F_1,\cO_X(e_1E))\right)= V^\vee\otimes \Ext^1(F,\cO_X(e_1E)).
    \]
     \item $T_{[F]}\bigl(\cM_{\sigma_1^+}(v)\bigr)=\Ext^1(F,F)$.
\end{itemize}
Applying $\Ext^i(-,F_1)$ to \eqref{extF} gives $\Hom(F, F_1) = \mathbb{C}$ and the natural inclusion 
$$
\lambda\colon \Ext^1(F_1,F_1)\hookrightarrow \Ext^1(F,F_1).$$ 
Moreover, applying $\Ext^i(F,-)$ to \eqref{extF} results in
the long exact sequence
\[
0\to V^\vee\otimes \Ext^1(F,\cO_X(e_1E))\to \Ext^1(F,F)\overset{\gamma}{\to} \Ext^1(F,F_1)\overset{\gamma'}{\to} V^\vee\otimes \Ext^2(F,\cO_X(e_1E)) \to \dots
\]
Note that $\text{im}(\lambda) \subset \ker (\gamma')$, as any composition $F \overset{d_2}{\to} F_1 \to F_1[1] \overset{d_3[1]}{\to} V^{\vee} \otimes \cO_X(e_1E)[2]$ vanishes since $\hom(F_1,\cO_X(e_1E)[2])=\hom(\cO_X(e_1E),F_1)=0$. Thus we get the  diagram  
    \[
 \begin{xy}
\xymatrix{
   &   &  \Ext^1(F_1,F_1)\ar@{^{(}->}[d]
\\
V^\vee\otimes \Ext^1(F,\cO_X(e_1E))\ \ar@{^{(}->}[r] &\ \Ext^1(F,F)\ar@{->>}[r] & \text{im}(\gamma)
}
\end{xy}
\]
%In order to show the injectivity of $d\varphi$, we consider the long exact sequence
%\[
%0\to L^\vee\otimes \Ext^1(F,\cO_X(e_1E))\to \Ext^1(F,F)\overset{\gamma}{\to} \Ext^1(F,F_1)\to L^\vee\otimes \Ext^2(F,\cO_X(e_1E))
%\]
%obtained by applying $\Ext^i(F,-)$ to \eqref{extF}, and the natural inclusion $\Ext^1(F_1,F_1)\hookrightarrow \Ext^1(F,F_1)$ provided by $\Ext^1(-,F_1)$. 
%Since the composition
%\[
%\Ext^1(F_1,F_1)\hookrightarrow \Ext^1(F,F_1)\to L^\vee\otimes \Ext^2(F,\cO_X(e_1E))
%\]
%is zero (this follows from the vanishing $\hom(F_1,\cO_X(e_1E)[2])=\hom(\cO_X(e_1E),F_1)=0$), 
%we obtain a canonical short exact sequence
%\[
%0\to L^\vee \otimes \Ext^1(F,\cO_X(e_1E))\to \Ext^1(F,F)\to \mathrm{Im}(\gamma)\to 0
%\]
%with $\Ext^1(F_1,F_1)\subset \mathrm{Im}(\gamma)$. This gives a realization of 
realizing the inclusion $T_{(F_1,V)}(\cG)\subseteq T_{[F]}\left(\cM_{\sigma_1^+}(v)\right)$ defined by $d\varphi$.
\end{proof}

Since $\varphi$ is a locally closed immersion, we can  endow $\cM_{\sigma_1^+}(v,\overline{e})$ with the scheme structure provided by $\mathcal{G}$; it becomes a smooth, irreducible, locally closed subscheme of $\cM_{\sigma_1^+}(v)$. 
On the other hand, every $F_1\in\cM_{\sigma_1^+}\bigl(v-m_1(1,e_1E,1),\;\overline{e}\setminus (e_1,m_1)\bigr)$ satisfies
\begin{align*}
        \dim \Gr(m_1,\Ext^1(F_1,\cO_X(e_1E))=m_1\left(\ext^1(F_1,\cO_X(e_1E))-m_1\right)=\\
        =m_1\left(-\chi(F_1,\cO_X(e_1E))-m_1\right)=m_1\bigl(\langle v-m_1(1,e_1E,1),(1,e_1E,1)\rangle-m_1\bigr).
    \end{align*}
This implies that $\mathcal{G}$ (hence $\cM_{\sigma_1^+}(v,\overline{e})$) has dimension
    \[
    \left(v-\sum_{i=1}^p m_i(1,e_iE,1)\right)^2+2+\sum_{j=1}^p m_j\left(\langle v-\sum_{i=1}^j m_i(1,e_iE,1),(1,e_jE,1)\rangle-m_j\right).
    \]

Finally, to conclude the proof we simply observe that $\cM_{\sigma_0}(v,\overline{e})$ is an open subset of $\cM_{\sigma_1^+}(v,\overline{e})$, consisting of those objects in $\cM_{\sigma_1^+}(v,\overline{e})$ that are $\sigma_0$-stable.
\end{proof}

\vskip 5pt

\subsection{Stratification of Brill-Noether loci} 
We now establish the main result of this section, which provides a stratification of moduli of stable objects. Fix a primitive Mukai vector $v\in\Lambda$ and $\sigma\in U_\epsilon$ lying on no actual wall for $v$, therefore  there is a proper coarse moduli space $\cM_\sigma(v)$ parameterizing $\sigma$-semistable object of class $v$ in $\Coh^b(X)$. For each $r\geq -1$ we define the closed subscheme
\begin{equation}\label{Wrd}
W_{\sigma}^r(v):= \Bigl\{F \in \cM_\sigma(v) \colon \hom(\cO_X, F) \geq r+1\Bigr\}\subseteq \cM_\sigma(v).    
\end{equation}
Let  $V_{\sigma}^r(v)$ be the  subset of $W_{\sigma}^r(v)$ corresponding to objects $F$ with $\hom(\cO_X,F) =r+1$.

\vskip 3pt

The following result, taking advantage of the wall crossing analysis performed in Section \ref{wallcrossing-special}, shows that every object in $V^r_\sigma(v)$ has a stability type, which is strongly constrained by $r$. 

\begin{Thm}\label{thm.splitting}
Fix $r\geq -1$ and a Mukai vector \(v = (r_0, H - a_0E, s_0+r_0)\in\Lambda\) with $r_0 \leq 0$, \(a_0 \geq0\) and $s_0<0$. There exists \(\epsilon(v, r) > 0\) such that if \(\epsilon < \epsilon(v, r)\) and $w_0>0$ satisfies $\nu_{0,w_0}(v)<0$, then 
    \begin{equation*}
        V_{\sigma_{0, w_0 }}^r (v) \subseteq \bigcup_{\overline{e} \in I} \cM_{\sigma_{0, w_0 }}(v, \overline{e}),
    \end{equation*}
      where $I$ is the (finite) set of stability types $\overline{e} =\bigl((e_i, m_i)\bigr)_{i=1}^p$ with $p \geq 0$ and 
      \begin{enumerate}
          \item\label{thm.splitting: ineq-a} $\sum_{i=1}^p m_i\leq r+1\leq \sum_{i=1}^pm_i(e_i+1)$,
          \item\label{thm.splitting: ineq-b} $m_1(e_1+1)\leq r+1$. 
      \end{enumerate}
      %\[
    %m_1+...+m_p\leq r+1\leq \sum_{i=1}^pm_i(e_i+1),\;\;\;\;\;m_1(e_1+1)\leq r+1.
    %\]
    %where $I$ is the set of all collections $\overline{e} =\{(e_i, m_i)\}_{i=1}^p$ satisfying conditions (a) and (b) in Definition \ref{Def.splitting} for $h=r+1$. 
\end{Thm}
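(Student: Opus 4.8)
The plan is to run a descending wall--crossing analysis along the vertical segment $b=0$ of $U_\epsilon$, starting from $(0,w_0)$, and to use Proposition \ref{prop-main-wallcrossing} together with Lemma \ref{lem-wall-crossing-negative} to identify the destabilizing objects at every wall. A preliminary remark fixes the slope behaviour under our hypotheses: for a class $v'=(r',H-a'E,s'+r')$ with $r'\le 0$ and $s'<0$ one has $\nu_{0,w}(v')=\frac{s'-wr'H_\epsilon^2}{(H-a'E)\cdot H_\epsilon}$, which is non-decreasing in $w\ge 0$, so $\nu_{0,w_0}(v')<0$ forces $\nu_{0,w}(v')<0$ for all $0\le w\le w_0$ (and the same persists for every intermediate quotient below); moreover $\nu_{0,w}(\cO_X(eE))=\frac{-wH_\epsilon^2}{eE\cdot H_\epsilon}$ is negative and increases towards $0$ as $w$ decreases (for $e\ge 1$, $w>0$). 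Consequently, if an object $G$ of such a class $v'$ is destabilized along a wall through $(0,w')$ by a factor of the form $\cO_X(eE)^{\oplus m}$ up to shift --- the only possibility left by Lemma \ref{lem-wall-crossing-negative}(1), which excludes case $(b_2)$ --- then necessarily $e\ge 1$ and there is no shift (a shifted pencil $\cO_X(-qE)^{\oplus m}[1]$ with $q\ge1$ has \emph{positive} $\nu_{0,w}$-slope for $w>0$, so cannot lie on a negative-slope wall), and $\cO_X(eE)^{\oplus m}$ is the destabilizing \emph{sub}object (its slope increases while that of $G$ decreases as $w$ drops past the wall), with quotient of class $v'-m(1,eE,1)$, again of the same shape.

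The heart of the argument is the inductive claim: \emph{every $\sigma_{0,w'}$-stable $G\in\Coh^0(X)$ of class $(r',H-a'E,s'+r')$ with $r'\le 0$, $s'<0$, $\nu_{0,w'}(G)<0$ admits a Bridgeland stability type}. If no wall lies below $w'$, then $G$ is stable along $\cW(\cO_X,G)$, hence has type $\emptyset$. Otherwise, at the topmost wall $\cW_1$ below $w'$ (through $(0,w_1)$), Proposition \ref{prop-main-wallcrossing} applies since $s'<0$, and by the previous paragraph $G$ is destabilized by a subobject $\cO_X(e_1E)^{\oplus n}$ with $e_1\ge1$. A Jordan--Hölder bookkeeping along $\cW_1$ --- using $\Ext^1(\cO_X(e_1E),\cO_X(e_1E))=H^1(\cO_X)=0$, the stability of $\cO_X(e_1E)$ (Lemma \ref{lem-eE}) and of the big stable factor of the quotient (Proposition \ref{prop-main-wallcrossing}) --- then shows that the destabilizing subobject is exactly the evaluation image $\cO_X(e_1E)^{\oplus m_1}$ with $m_1=\hom(\cO_X(e_1E),G)$, that $\Hom(\cO_X(e_1E),G_1)=0$ for the quotient $G_1:=G/\cO_X(e_1E)^{\oplus m_1}$, and that $G_1$ is $\sigma_{0,w_1}$-stable. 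Since $G_1$ has class $v'-m_1(1,e_1E,1)$ of the same shape and $\nu_{0,w_1}(G_1)=\nu_{0,w_1}(\cO_X(e_1E))<0$, it admits a stability type $\{(e_i,m_i)\}_{i\ge 2}$ by the inductive step, and $e_2<e_1$: if $e_2\ge e_1$ then $\cO_X(e_2E)$ has slope strictly above that of $G_1$ for all $0<w<w_1$, so a nonzero map $\cO_X(e_2E)\to G_1$ would contradict stability of $G_1$ just above $\cW_2$, while $e_2=e_1$ is ruled out by $\Hom(\cO_X(e_1E),G_1)=0$. Iterating, the exponents $e_1>e_2>\cdots\ge 0$ are strictly decreasing, so the process terminates after finitely many steps at an object $F_p$ stable along $\cW(\cO_X,F_p)$, i.e. with $\Hom(\cO_X,F_p)=0$; applying this to $G=F$, $w'=w_0$ gives a stability type $\overline{e}=\{(e_i,m_i)\}_{i=1}^p$ for every $F\in V^r_{\sigma_{0,w_0}}(v)$.

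The inequalities then follow mechanically from the defining sequences $0\to\cO_X(e_iE)^{\oplus m_i}\to F_{i-1}\to F_i\to 0$ (with $F_0=F$ and $\Hom(\cO_X,F_p)=0$), applying $\Hom(\cO_X,-)$ and using Lemma \ref{multpencil}, which gives $h^0(\cO_X(e_iE))=e_i+1$ and $h^0(\cO_X(e_iE))-h^1(\cO_X(e_iE))\ge 1$. The inclusion $H^0(\cO_X(e_iE)^{\oplus m_i})\hookrightarrow\Hom(\cO_X,F_{i-1})$ gives $m_i(e_i+1)\le\hom(\cO_X,F_{i-1})$, which for $i=1$ is inequality $(2)$; the long exact sequence gives $\hom(\cO_X,F_{i-1})\le m_i(e_i+1)+\hom(\cO_X,F_i)$ and $\hom(\cO_X,F_{i-1})-\hom(\cO_X,F_i)\ge m_i$, and summing over $i$ with $\hom(\cO_X,F_p)=0$ yields $\sum_i m_i\le r+1\le\sum_i m_i(e_i+1)$, which is $(1)$. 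From $(1)$ and $(2)$ we get $e_1\le r$, each $m_i\le r+1$ and $p\le r+1$, so the set $I$ of admissible stability types is finite; moreover $\hom(\cO_X,F_i)\le r+1$ for all $i$, which a priori bounds, in terms of $v$ and $r$, the finitely many classes $v-\sum_{j\le i}m_j(1,e_jE,1)$ of the intermediate quotients that can occur. We then define $\epsilon(v,r)$ to be smaller than $\epsilon_{v'}$ (as in Proposition \ref{prop-main-wallcrossing} and Lemma \ref{lem-wall-crossing-negative}) for each such class $v'$, which legitimizes the wall analysis at every step.

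The main obstacle lies in the second paragraph: one must match the comparatively coarse output of Proposition \ref{prop-main-wallcrossing} --- a triangle "up to relabeling and shift", with the pencil part only pinned down as $\cO_X(mE)^{\oplus \frac{a-a_0}{m}}$ --- to the precise data required by Definition \ref{Def.splitting}, namely that the destabilizing subobject is the full evaluation image $\cO_X(e_iE)^{\oplus \hom(\cO_X(e_iE),F_{i-1})}$, that each quotient $F_i$ is genuinely stable on the corresponding wall, and that the exponents strictly decrease, all while pinning down a single $\epsilon$ that works uniformly for the whole (a priori bounded) family of intermediate Mukai vectors. The degenerate case $e_p=0$, in which the last wall accumulates at the origin, requires the $\ell^+$ and $\delta_\epsilon$ conventions of Remark \ref{rem-00} and of the definition of $U_\epsilon$, but presents no essential difficulty.
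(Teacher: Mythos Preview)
Your proposal is correct and follows essentially the same wall-crossing descent along $b=0$ as the paper, with your identification of the destabilizing subobject as the full evaluation image and your uniform choice of $\epsilon(v,r)$ spelled out more carefully than in the paper's terse version. The one substantive difference is in the proof of $\sum_i m_i\le r+1$: you bound $\hom(\cO_X,F_{i-1})-\hom(\cO_X,F_i)\ge m_i$ directly via $h^0(\cO_X(e_iE))-h^1(\cO_X(e_iE))\ge1$, whereas the paper tracks $\ext^1(\cO_X,F_i)$ using $h^2(\cO_X(e_iE))=0$ and $\ext^2(\cO_X,F)=\hom(F,\cO_X)=0$, obtaining along the way the marginally sharper bound $2(m_1+\cdots+m_{p-1})+m_p\le r+1$.
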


\begin{proof}%[Proof of Theorem \ref{thm.splitting}]
First assume $r\geq 0$. Take $F\in V_{\sigma_{  0, w_0 }}^r (v)$ with $\nu_{0,w_0}(F)<0$. Note that $F$ cannot be stable along the numerical wall $\cW(F,\cO_X)$, as $\hom(\cO_X,F)=r+1>0$. Hence we encounter an actual  wall \( \mathcal{W}_1 \) for $F$ passing through $(0,w_1)$, for some $w_1\in[0,w_0)$.

\vskip 3pt

We apply Proposition \ref{prop-main-wallcrossing} for the class \( v \) and the wall $\cW_1$. The assumption $r_0\leq0$ gives $\nu_{0,w_1}(F)\leq \nu_{0,w_0}(F)<0$, thus by Lemma \ref{lem-wall-crossing-negative} the wall is of type ($b_1$), namely $F$ gets destabilized via a short exact sequence
\[
\mathcal{O}_X(e_1E) \otimes \Hom( \mathcal{O}_X(e_1E), F) \longrightarrow F \longrightarrow F_1
\]  
such that $e_1\geq0$ and \( F_1 \) is \( \sigma_{0, w_1} \)-stable. Writing \( m_1 := \hom(\mathcal{O}_X(e_1E), F) \), we get \( m_1(e_1 + 1) \leq r + 1 \)  by applying \( \Hom(\mathcal{O}_X, -) \) to the above triangle.

\vskip 3pt

If $F_1$ is stable along the numerical wall $\cW(\cO_X,F_1)$, then $F$ has stability type $\bigl((e_1,m_1)\bigr)$. In this case $\hom(\cO_X,F_1)=0$ which implies $m_1(e_1+1)=r+1$, hence inequalities \eqref{thm.splitting: ineq-a} and \eqref{thm.splitting: ineq-b} are trivially satisfied. Otherwise, $F_1$ gets destabilized along an actual wall $\cW_2$ passing through $(0,w_2)$, with $w_2\in[0,w_1)$. Since $\ch_0(F_1)=r_0-m_1<0$ and $\ch_2(F_1)=s_0<0$, we can apply Proposition \ref{prop-main-wallcrossing} and Lemma \ref{lem-wall-crossing-negative} with the class $v(F_1)$. It follows that the destabilizing subobject of $F_1$ along $\cW_2$ is $\cO_X(e_2E)\otimes\Hom(\cO_X(e_2E),F_1)$, where $0\leq e_2<e_1$.

\vskip 3pt
  
We apply this procedure inductively, constructing \( F_i \) via the destabilizing sequence 
\[
0 \longrightarrow \Hom(\mathcal{O}_X(e_iE), F_{i-1}) \otimes \mathcal{O}_X(e_iE) \longrightarrow F_{i-1} \longrightarrow F_i \longrightarrow 0,
\]
so that \( F_i \) is stable along \( \mathcal{W}_i := \mathcal{W}(\mathcal{O}_X(e_iE), F_{i-1}) \) and \( m_i := \hom(\mathcal{O}_X(e_iE), F_{i-1}) \). If \( F_i \) is stable along the numerical wall \( \mathcal{W}(\mathcal{O}_X, F_i) \), then the process ends. Eventually, we obtain that \( F \) has some stability type \( \overline{e} = \{(e_i, m_i)\}_{i=1}^p \) with $p\geq1$. It only remains to prove the required inequalities in \eqref{thm.splitting: ineq-a}.

To that end, observe that \(h^2(X, \mathcal{O}_X(e_iE)) = 0 \) for \( 1 \leq i \leq p-1 \) (as $e_i>0$) and \( h^2(X, \mathcal{O}_X(e_pE)) \leq 1 \) (as $e_p\geq0$). Moreover, $\ext^2(\cO_X,F)=\hom(F,\cO_X)=0$ by the $\sigma_{0,w_0}$-stability of $F$. It follows that
\begin{align*}
    r+1-s_0-2r_0 = \hom(\cO_X,F)-\chi(\cO_X,F) = \ext^1(\cO_X,F) \\
    \geq \ext^1(\cO_X,F_1) \geq \dots 
    \geq \ext^1(\cO_X,F_{p-1}) 
    \geq \ext^1(\cO_X,F_p) - m_p \\
    \geq -\chi(\cO_X,F_p) - m_p = 2(m_1 + \dots + m_{p-1}) + m_p-s_0-2r_0,
\end{align*}
    which implies \( 2(m_1 + \dots + m_{p-1}) + m_p \leq r + 1 \), and so \( \sum_{i=1}^p m_i \leq r + 1 \). 
    
    On the other hand, since \( \hom(\cO_X,F_p) = 0 \) (recall that \( F_p \) is stable along the numerical wall \( \mathcal{W}(F_p, \mathcal{O}_X) \)), we have \( \hom(\cO_X,F_{p-1}) = m_p(e_p + 1) \). Therefore,
    \[
    \hom(\cO_X,F_{p-2}) \leq \hom(\cO_X,F_{p-1}) + m_{p-1}(e_{p-1} + 1) = m_p(e_p + 1) + m_{p-1}(e_{p-1} + 1).
    \]
    Repeating this process, the inequality \( r + 1 = \hom(\cO_X,F) \leq \sum_{i=1}^p m_i(e_i + 1) \) follows.
\vskip 3pt

Finally, for $r=-1$ the assertion is that any $F\in\cM_{\sigma_{0,w_0}}(v)$ with $\hom(\cO_X,F)=0$ has empty stability type, i.e. it is stable along the numerical wall $\cW(\cO_X,F)$. This is an immediate application of Proposition \ref{prop-main-wallcrossing} and Lemma \ref{lem-wall-crossing-negative}.
 \end{proof}

\vskip 2pt

\begin{Rem}\label{partial compact}
    Given \(v = (r_0, H - a_0E, s_0+r_0)\in\Lambda\) as in Theorem \ref{thm.splitting} and a stability type $\overline{e} =\bigl((e_i, m_i)\bigr)_{i=1}^p$ with $e_p>0$, for $w_0>0$ consider the union 
   \begin{equation}\label{union}
       \widetilde{\cM}_{\sigma_{0,w_0}}(v,\overline{e}):=\bigcup_{\overline{e'}\in I_{\overline{e}}} \cM_{\sigma_{0,w_0}}\bigl(v,\overline{e}\cup\overline{e'}\bigr)
   \end{equation}
    where $I_{\overline{e}}$ is the set of types $\overline{e'}=\bigl((e_i',m_i')\bigr)_{i'=1}^{p'}$ ($p'\geq0$) with $e_1'<e_p$. Let us write $v_p:=v-\textstyle\sum_{i=1}^p m_i(1,e_iE,1)$. Note that $\widetilde{\cM}_{\sigma_{0,w_0}}(v,\overline{e})$ is defined by a finite union, indexed by the finite set of $\overline{e'}\in I_{\overline{e}}$ such that $\left(v_p-\textstyle\sum_{i=1}^{p'}m_i'(1,e_i'E,1)\right)^2\geq -2$.

    As a consequence of Theorem \ref{thm.splitting}, 
    there is $\epsilon(v,\overline{e})>0$ such that, if $\epsilon<\epsilon(v,\overline{e})$ and $w_0>0$ satisfies $\nu_{0,w_0}(v)<0$ and $\cM_{\sigma_{0,w_0}}(v,\overline{e})\neq\emptyset$, then $\widetilde{\cM}_{\sigma_{0,w_0}}(v,\overline{e})$
    is smooth and irreducible with $\cM_{\sigma_{0,w_0}}(v,\overline{e})$ as an open dense subset. Indeed, as proven in Theorem \ref{Prop-scheme structure}, $\cM_{\sigma_{0,w_0}}(v,\overline{e})$ is (an open subset of) an iterated Grassmann bundle over an open subset $\mathcal{U}\subseteq \cM_{\sigma_p}^{\mathrm{st}}(v_p)$; here $\sigma_p$ lies on the line $b=0$ and the numerical wall $\cW(\cO_X(e_pE),v_p)$, and $\cM_{\sigma_p}^{\mathrm{st}}(v_p)\subseteq \cM_{\sigma_p}(v_p)$ is the open subset of stable objects. Precisely, $\mathcal{U}$ parametrizes the objects that remain stable along $\cW(\cO_X,v_p)$.
    By Theorem \ref{thm.splitting}, for $\epsilon>0$ small enough,  any object in $\cM_{\sigma_p}^{\mathrm{st}}(v_p)$ has an associated stability type. Then $\widetilde{\cM}_{\sigma_{0,w_0}}(v,\overline{e})$ can be constructed as (an open subset of) an iterated Grassmann bundle over $\cM_{\sigma_p}^{\mathrm{st}}(v_p)$.
\end{Rem}

\vspace{3mm}

\subsection{Balanced stability types} 
To conclude this section we focus on a particular class of stability types that, in view of H. Larson's nomenclature \cite{larson:inv}, we will call \emph{balanced}.

\begin{Def}
    A stability type $\overline{e}$ is \emph{balanced} if it is of the form $\bigl((e+1,m_1),(e,m_2)\bigr)$ for some $e, m_1 \geq 0$ and $m_2>0$. 
\end{Def}

A balanced stability type $\overline{e}=\{(e+1,m_1),(e,m_2)\}$ determines the number of global sections. Indeed, since $\Ext^1\bigl(\cO_X(eE),\cO_X((e+1)E)\bigr)=0$, one checks that
\[
\cM_\sigma(v,\overline{e})\subseteq V^r_\sigma(v)
\]
for $r:=m_1(e+2)+m_2(e+1)-1$.

The geometric description in Theorem \ref{Prop-scheme structure} is conditional to the nonemptiness of $\cM_\sigma(v,\overline{e})$. This issue can be resolved for balanced stability types: 

\begin{Thm}\label{thm-nonempty}
     Fix a Mukai vector  $v = (r_0, H - a_0E, s_0 + r_0) \in \Lambda$ with \( r_0 \leq 0 \), \( a_0 \geq 0 \), and \( s_0 < 0 \). Consider the stability type  $\overline{e} := \bigl((e+1, m_1), (e, m_2)\bigr)$  
for \( e, m_1 \geq 0 \) and \( m_2 > 0 \) such that  
\begin{equation}\label{assumptionineq}
    m_1 + m_2 \leq k + r_0,
\end{equation}
\begin{equation}\label{necessaryassumption}
    \Bigl(v-m_1\cdot v(\cO_X((e+1)E))-m_2\cdot v(\cO_X(eE))\Bigr)^2\geq -2.
\end{equation}
There exists \(\epsilon(v, \overline{e}) > 0\) (depending on $v$ and $\overline{e}$), such that if \(\epsilon < \epsilon(v, \overline{e})\) and $w_0>0$ satisfies $\nu_{0,w_0}(\cO_X((e+1)E))<\nu_{0,w_0}(v)<0$, then   
$\mathcal{M}_{\sigma_{0,w_0}}(v, \overline{e})$ is non-empty.
\end{Thm}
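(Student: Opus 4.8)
The plan is to build an object of class $v$ with the prescribed balanced stability type by reversing the wall-crossing procedure: start from an object $F_2$ of Mukai vector $v_2 := v - m_1\cdot v(\cO_X((e+1)E)) - m_2\cdot v(\cO_X(eE))$, which exists because $v_2^2 \geq -2$ by \eqref{necessaryassumption} and one can apply Proposition \ref{moduli-stable}, then successively extend it by $\cO_X(eE)^{\oplus m_2}$ and then by $\cO_X((e+1)E)^{\oplus m_1}$. The key is to arrange, via a dimension count, that a general such iterated extension remains $\sigma_{0,w_0}$-stable and that its stability type is \emph{exactly} $\overline{e}$ rather than something longer. First I would fix $\sigma_2$ on the line $b = 0$ lying on the numerical wall $\cW(\cO_X(eE), v_2)$, and apply Theorem \ref{thm.splitting} to ensure every object in $\cM_{\sigma_2}^{\mathrm{st}}(v_2)$ has a stability type; by the constraints \eqref{thm.splitting: ineq-a}–\eqref{thm.splitting: ineq-b} and the bound \eqref{assumptionineq}, the general such $F_2$ has empty stability type (i.e. $\hom(\cO_X, F_2) = 0$), so that it is stable along $\cW(\cO_X, v_2)$. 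Then, applying Lemma \ref{lem-stability above the wall} twice, a general extension
\[
0 \longrightarrow \cO_X(eE)^{\oplus m_2} \longrightarrow F_1 \longrightarrow F_2 \longrightarrow 0,
\qquad
0 \longrightarrow \cO_X((e+1)E)^{\oplus m_1} \longrightarrow F \longrightarrow F_1 \longrightarrow 0,
\]
(where the $\mathrm{Ext}^1$ spaces are large enough by the Mukai-pairing computations in Remark \ref{obs stabtype}) produces a $\sigma_{0,w_0}$-stable object $F$ of class $v$; here I use $\nu_{0,w_0}(\cO_X((e+1)E)) < \nu_{0,w_0}(v) < 0$ together with $\cO_X(eE)$, $\cO_X((e+1)E)$ being stable along the relevant walls (Lemma \ref{lem-eE}) to guarantee the slope inequalities needed for Lemma \ref{lem-stability above the wall}.

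The substantive point is then to verify that this $F$ has stability type precisely $\overline{e}$, i.e. that $\hom(\cO_X((e+1)E), F) = m_1$ and, after the first wall-crossing, $\hom(\cO_X(eE), F_1) = m_2$ with no further destabilization by $\cO_X(e'E)$ for $e' > e+1$ or intermediate exponents. For the upper exponents, Proposition \ref{prop-main-wallcrossing} and Lemma \ref{lem-wall-crossing-negative}(1) force any destabilizing wall below $w_0$ to be of type $(b_1)$ with destabilizer $\cO_X(e'E)^{\oplus m}$ for some $e' \geq 0$; the point is to rule out $e' \neq e+1$ at the first wall. This is where the hypothesis $\nu_{0,w_0}(\cO_X((e+1)E)) < \nu_{0,w_0}(v)$ enters: it guarantees $\cW(\cO_X((e+1)E), v)$ lies below $(0,w_0)$, while for $e' > e+1$ one checks (using $\epsilon$ small and the explicit slope formulas, as in the proof of Proposition \ref{prop-main-wallcrossing}) that $\cW(\cO_X(e'E), v)$ would lie \emph{above} $(0,w_0)$ — contradicting stability of $F$ there — and for $e' \leq e$ that $\cO_X(e'E)$ has too small a slope to destabilize first. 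The equality $\hom(\cO_X((e+1)E), F) = m_1$ (rather than $> m_1$) I would get by the dimension count: the locus of objects in $\cM_{\sigma_{0,w_0}}(v)$ with $\hom(\cO_X((e+1)E), -) > m_1$ has strictly smaller dimension (this is exactly the computation in Lemma \ref{lem-dim}(1), whose numerical hypothesis is implied by $\nu_{0,w_0}(\cO_X((e+1)E)) < \nu_{0,w_0}(v)$, i.e. $\chi(\cO_X((e+1)E), v) \leq 0$), so a general point of the image of the Grassmann-bundle construction avoids it. An identical argument applied to $F_1$ and $v_1 := v - m_1 \cdot v(\cO_X((e+1)E))$ pins down $\hom(\cO_X(eE), F_1) = m_2$ and the second exponent.

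Assembling these: the construction realizes $\cM_{\sigma_{0,w_0}}(v, \overline{e})$ (nonempty by the above) as dense in the iterated Grassmann bundle of Theorem \ref{Prop-scheme structure}, and choosing $\epsilon(v, \overline{e}) := \min\{\epsilon(v_2, -1), \epsilon(v_1, \cdot), \epsilon_v, \ldots\}$ small enough makes all the invocations of Theorem \ref{thm.splitting}, Proposition \ref{prop-main-wallcrossing} and Lemma \ref{lem-wall-crossing-negative} simultaneously valid. The main obstacle I anticipate is the bookkeeping in the intermediate step: ensuring that after extending $F_2$ by $\cO_X(eE)^{\oplus m_2}$ the resulting $F_1$ is genuinely \emph{stable} along $\cW(\cO_X(eE), v_1)$ (not merely semistable) and has $\hom(\cO_X(eE), F_1)$ exactly $m_2$ — here one must rule out that the general extension secretly absorbs extra copies of $\cO_X(eE)$ or that $F_2$ was not general enough; this is handled by the strict dimension drop in Lemma \ref{lem-dim}, but it requires checking the numerical inequality $\chi(\cO_X(eE), v_1) \leq 0$ (equivalently $m_2 \leq$ the relevant Mukai pairing), which is precisely where \eqref{assumptionineq}, $m_1 + m_2 \leq k + r_0$, is used. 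Finally, the base hypothesis $v_2^2 \geq -2$ from \eqref{necessaryassumption} is exactly what Proposition \ref{moduli-stable} needs to start the induction with a nonempty $\cM_{\sigma_2}(v_2)$.
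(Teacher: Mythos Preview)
Your overall strategy—start from a general $F_2\in\cM_\sigma(v_2)$ with $v_2^2\geq -2$, extend by $\cO_X(eE)^{\oplus m_2}$ and then by $\cO_X((e+1)E)^{\oplus m_1}$—is exactly the paper's. But the justification that the resulting $F$ is $\sigma_{0,w_0}$-stable for \emph{all} admissible $w_0$ has a genuine gap.

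Your claim that ``for $e'>e+1$ the wall $\cW(\cO_X(e'E),v)$ would lie above $(0,w_0)$'' is false: the theorem has to hold when $\sigma_{0,w_0}$ is in the Gieseker chamber (indeed the paper reduces to this case first), and then all such walls lie \emph{below} $(0,w_0)$. So after Lemma \ref{lem-stability above the wall} gives you stability just above $\cW(\cO_X((e+1)E),v)$, you must still cross every wall $\cW(\cO_X(uE),v)$ for $u>e+1$ on the way up, and rule out that $F$ acquires a destabilising quotient $\cO_X(uE)$ there. Your proposed dimension count via Lemma \ref{lem-dim} does not do this: that lemma only says the bad locus has dimension $<v^2+2$, but the image of your Grassmann-bundle construction typically also has dimension $<v^2+2$, so nothing prevents the bad locus from containing it. (The same problem afflicts the intermediate step: Lemma \ref{lem-stability above the wall} only makes $F_1$ stable just above $\cW(\cO_X(eE),v_1)$, not along $\cW(\cO_X((e+1)E),v_1)$ where you need it for the second extension.)

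The paper closes this gap not by dimension counts but by proving directly that $\hom(F,\cO_X(uE))=0$ for all $u>e+1$ (and $\hom(F_1,\cO_X(uE))=0$ for $u>e$), via Proposition \ref{prop.general}. That proposition shows that for a general $m$-dimensional $V\subset\Ext^1(Q,\cO_X(qE))$ the extension $Q_1$ has $\hom(Q_1,\cO_X(uE))=0$ for $u>q$, \emph{provided} $m\leq k-\hom(\cO_X((q-1)E),Q)$; its proof is a nontrivial injectivity statement for the composition map, using the explicit resolution of $\cO_X(qE)$ from Lemma \ref{multpencil}. Applying it twice handles both extensions. The real work, and the place where \eqref{assumptionineq} actually enters, is verifying the hypothesis $m_2\leq k-\hom(\cO_X((e-1)E),F_2)$ for a general $F_2$. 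For $e\geq1$ this is immediate, but for $e=0$ it means controlling $\hom(\cO_X(-E),F_2)=\hom(\cO_X,F_2(E))$: the paper twists by $\cO_X(E)$, runs a separate wall-crossing for $F_2(E)$, and splits into three cases depending on the signs of $\ch_2(F_2(E))$ and $\langle v_2(E),v(\cO_X(-E))\rangle$. This analysis is the heart of the proof and is entirely absent from your outline.
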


Note that, according to Remark \ref{obs stabtype}, inequality \eqref{necessaryassumption} is a necessary condition for the non-emptiness of $\mathcal{M}_{\sigma_{0,w_0}}(v, \overline{e})$. Theorem \ref{thm-nonempty} establishes that, under the assumption \eqref{assumptionineq}, \eqref{necessaryassumption} is also a sufficient condition.

\vspace{2mm}

The proof of Theorem \ref{thm-nonempty} occupies the rest of this section. The essential step is provided by the following result:

\begin{Prop}\label{prop.general}
    Let $q\in\bZ_{\geq0}$ and $b\in\bR$, and let $Q\in\Coh^b(X)$ be an object such that: 
\begin{enumerate}
    \item\label{prop.general: a} $\ch(Q)=(r_0,H-a_0E,s_0)$ with $r_0<0$, $a_0\geq0$ and $s_0\leq0$.
    \item\label{prop.general: b} $\Hom(Q,\cO_J)=0$ for a general elliptic curve $J\in|E|$.
    \item\label{prop.general: c} $\Hom(Q,\cO_X(uE))=0$ for all $u\geq q$.
\end{enumerate}
If $0 < m \leq k-\hom(\cO_X((q-1)E), Q)$, then for a general $V \in\Gr\left(m,\Ext^1(Q,\cO_X(qE))\right)$ the object $Q_1$ sitting in an exact triangle 
    \[
    V^\vee\otimes\cO_X(qE)\longrightarrow Q_1\longrightarrow Q
    \]
satisfies $\hom(Q_1, \cO_X(uE)) = 0$ for all $u > q$. 
\end{Prop}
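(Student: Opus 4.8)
\textbf{Proof plan for Proposition \ref{prop.general}.}

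The plan is to analyze the map $V^\vee\otimes\Ext^1(Q,\cO_X(qE))$ into the cohomology of $Q_1$, and to control the Hom-groups $\Hom(Q_1,\cO_X(uE))$ for $u>q$ by a dimension count combined with the surjectivity of a multiplication-type map. First I would apply the functor $\Hom(-,\cO_X(uE))$ to the defining triangle $V^\vee\otimes\cO_X(qE)\to Q_1\to Q$, obtaining the long exact sequence
\[
\Hom(Q,\cO_X(uE))\longrightarrow \Hom(Q_1,\cO_X(uE))\longrightarrow V\otimes\Hom(\cO_X(qE),\cO_X(uE))\overset{\psi_V}{\longrightarrow}\Ext^1(Q,\cO_X(uE)).
\]
By hypothesis \eqref{prop.general: c}, $\Hom(Q,\cO_X(uE))=0$ for $u\ge q$, so $\Hom(Q_1,\cO_X(uE))=\ker\psi_V$. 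Thus it suffices to show that for general $V$ the connecting map $\psi_V$ is injective for every $u>q$. Since $\Hom(\cO_X(qE),\cO_X(uE))=H^0(X,\cO_X((u-q)E))$ has dimension $u-q+1$ by Lemma \ref{multpencil}, and since $\psi_V$ is the restriction to $V\otimes H^0(\cO_X((u-q)E))$ of the map $\Ext^1(Q,\cO_X(qE))\otimes H^0(\cO_X((u-q)E))\to\Ext^1(Q,\cO_X(uE))$ induced by Yoneda composition (i.e. multiplication by sections of $\cO_X((u-q)E)$), it is enough to understand this multiplication map globally and then invoke a general-position argument for the subspace $V$.

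The key reduction is that it suffices to treat the case $u=q+1$: if $\psi_V$ is injective for $u=q+1$, then an inductive argument using the factorization of multiplication by $H^0(\cO_X((u-q)E))\cong\mathrm{Sym}^{u-q}H^0(\cO_X(E))$ through successive multiplications by $H^0(\cO_X(E))$ (Lemma \ref{multpencil}.\eqref{symisom}), together with hypothesis \eqref{prop.general: b} feeding into \eqref{prop.general: c}-type vanishings for the iterated construction, propagates injectivity upward; here is where $\hom(\cO_X(qE),Q_1)$ is controlled and re-enters the induction. For $u=q+1$, the relevant map is
\[
\mu\colon \Ext^1(Q,\cO_X(qE))\otimes H^0(\cO_X(E))\longrightarrow \Ext^1(Q,\cO_X((q+1)E)),
\]
and I would relate $\mu$ to the evaluation sequence \eqref{evalutationseq} of Lemma \ref{multpencil} with $q=1$: tensoring $0\to\cO_X(-E)\to\cO_X^{\oplus 2}\to\cO_X(E)\to 0$ by $\cO_X(qE)$ and applying $\Hom(Q,-)$ expresses the cokernel of $\mu$ in terms of $\Ext^2(Q,\cO_X((q-1)E))\cong\Hom(\cO_X((q-1)E),Q)^\vee$, whose dimension is $\hom(\cO_X((q-1)E),Q)$ by Serre duality. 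The hypothesis $m\le k-\hom(\cO_X((q-1)E),Q)$ is precisely what guarantees that a general $m$-dimensional subspace $V\subset\Ext^1(Q,\cO_X(qE))$ injects under $\mu$ when tensored with the $2$-dimensional space $H^0(\cO_X(E))$: the base-point-free pencil trick (Proposition \ref{prop:bpf_trick}, or directly the exactness of \eqref{evalutationseq}) identifies $\ker(\psi_V)$ with $\Hom(V, \Hom(\cO_X(qE),Q)/\,\cdots)$-type space whose dimension drops to zero once $m+\hom(\cO_X((q-1)E),Q)\le k=h^0(\cO_X(E))\cdot\mathrm{(something)}$; I would make this bookkeeping precise using that $\cO_J(E)$ has degree $k$ on $J$ and hypothesis \eqref{prop.general: b} to kill boundary terms.

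The main obstacle I anticipate is making the general-position step genuinely rigorous: one must show that the locus of $V\in\Gr(m,\Ext^1(Q,\cO_X(qE)))$ for which $\psi_V$ fails to be injective is a proper closed subset. The natural approach is to exhibit a single $V_0$ (or a single vector $v_0\in\Ext^1(Q,\cO_X(qE))$ at a time, building $V$ greedily) for which injectivity holds, using the structure of $\Ext^1(Q,\cO_X(qE))$ as an $H^0(\cO_X(E))$-module — concretely, that it is generated in a way compatible with \eqref{evalutationseq} so that no nonzero element is annihilated by multiplication by a general section of $\cO_X(E)$ unless it comes from the bounded-dimensional obstruction space of size $\hom(\cO_X((q-1)E),Q)$. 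Openness of injectivity of $\psi_V$ in $V$ is automatic (it is a rank condition on a morphism of vector bundles over the Grassmannian), so the crux is the non-emptiness, which is where hypotheses \eqref{prop.general: b} and \eqref{prop.general: c} and the numerical bound $m\le k-\hom(\cO_X((q-1)E),Q)$ must all be used simultaneously. Once this is in place, the conclusion $\Hom(Q_1,\cO_X(uE))=\ker\psi_V=0$ for all $u>q$ follows immediately.
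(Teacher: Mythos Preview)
Your initial reduction is correct and matches the paper: apply $\Hom(-,\cO_X(uE))$ to the triangle, use hypothesis \eqref{prop.general: c} to get $\Hom(Q_1,\cO_X(uE))=\ker\psi_V$, and reduce to showing that the restriction of
\[
\psi\colon \Ext^1(Q,\cO_X(qE))\otimes H^0(\cO_X((u-q)E))\longrightarrow \Ext^1(Q,\cO_X(uE))
\]
to $V\otimes H^0(\cO_X((u-q)E))$ is injective for general $V$.

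The gap is in everything that follows. You focus on the \emph{cokernel} of $\mu$ landing in $\Ext^2(Q,\cO_X((q-1)E))$, but what you need is the \emph{kernel} of $\psi$: small cokernel says nothing about whether a generic $V$ tensored with $H^0(\cO_X((u-q)E))$ injects. The paper instead applies $\Ext^*(Q,-)$ to the twisted evaluation sequence of Lemma~\ref{multpencil}.\eqref{evmatrix} and reads off
\[
\ker\psi\;\cong\;\Ext^1\bigl(Q,\cO_X((q-1)E)\bigr)^{\oplus(u-q)},
\]
with the inclusion into $\Ext^1(Q,\cO_X(qE))\otimes H^0(\cO_X((u-q)E))$ given \emph{explicitly} by the matrix in that lemma: a tuple $(\alpha_1,\dots,\alpha_{u-q})$ maps to $t\alpha_1\otimes s^{u-q}+(-s\alpha_1+t\alpha_2)\otimes s^{u-q-1}t+\cdots-s\alpha_{u-q}\otimes t^{u-q}$. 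This explicit form is the missing idea.

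The numerical bound then enters in the right place: set $V_t:=t\cdot\Ext^1(Q,\cO_X((q-1)E))$ and $V_s:=s\cdot\Ext^1(Q,\cO_X((q-1)E))$ inside $\Ext^1(Q,\cO_X(qE))$. Applying $\Hom(Q,-)$ to $0\to\cO_X((q-1)E)\to\cO_X(qE)\to\cO_J\to 0$ shows each has codimension at least $k-\hom(\cO_X((q-1)E),Q)$, so a general $V$ of dimension $m\leq k-\hom(\cO_X((q-1)E),Q)$ satisfies $V\cap V_s=V\cap V_t=0$. Now suppose a nonzero $(\alpha_1,\dots,\alpha_{u-q})\in\ker\psi$ lies in $V\otimes H^0(\cO_X((u-q)E))$; the first coordinate forces $t\alpha_1\in V\cap V_t=0$, and hypothesis \eqref{prop.general: b} is used precisely here: it makes $\gamma_t\colon\alpha\mapsto t\alpha$ injective (its kernel is $\Hom(Q,\cO_{J(t)})$), so $\alpha_1=0$, and one continues inductively down the list to get all $\alpha_i=0$. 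None of this is captured by your cokernel discussion or by ``building $V$ greedily''.

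Finally, your reduction to $u=q+1$ via ``factorization through $\mathrm{Sym}^{u-q}$'' does not work as stated: the image of $V\otimes H^0(\cO_X(E))$ in $\Ext^1(Q,\cO_X((q+1)E))$ is not of the form $V'\otimes(\text{something})$, so you cannot simply iterate. There \emph{is} a valid inductive route---once $\Hom(Q_1,\cO_X((q+1)E))=0$, one can propagate using $0\to\cO_X((u-1)E)\to\cO_X(uE)\to\cO_J\to 0$ provided $\Hom(Q_1,\cO_J)=0$ for some $J$---but establishing that last vanishing again requires the codimension estimate on $\ker(\Ext^1(Q,\cO_X(qE))\to\Ext^1(Q,\cO_J))$, i.e.\ exactly the ingredient you are missing. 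The paper bypasses this and treats all $u>q$ in one stroke via the explicit kernel computation above.
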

\begin{proof}
The claim amounts to the injectivity of the map
    \begin{equation}\label{multmap}
        V\otimes\Hom\left(\cO_X(qE),\cO_X(uE)\right)\longrightarrow \Ext^1(Q,\cO_X(uE))
    \end{equation}
    for all $u> q$, if $V\in\Gr\left(m,\Ext^1(Q,\cO_X(qE))\right)$ is a general subspace.

\vskip 4pt

Note that \eqref{multmap} is the restriction of the map
    \[
    \psi\colon \Hom(Q,\cO_X(qE)[1]) \otimes\Hom\left(\cO_X(qE),\cO_X(uE)\right)\longrightarrow \Hom(Q,\cO_X(uE)[1])
    \]
    given by composition. Moreover, $\psi$ is the result of applying $\Ext^1(Q,-)$ to the sequence
    \[
    0\longrightarrow \cO_X((q-1)E)^{\oplus u-q}\longrightarrow \Hom\left(\cO_X(qE),\cO_X(uE)\right)\otimes \cO_X(qE)\longrightarrow \cO_X(uE)\longrightarrow 0,
    \]
given by twist by $\cO_X(qE)$ of the sequence \eqref{evalutationseq}%for $q=u-e$
. Since $\hom(Q,\cO_X(uE))=0$, one has  
    \begin{equation}\label{global kernel}
        \ker(\psi)\cong\Hom\bigl(Q,\cO_X((q-1)[1]\bigr)^{\oplus u-q}.
    \end{equation}
    The inclusion of this space in  $\Hom(Q,\cO_X(qE)[1]) \otimes\Hom\left(\cO_X(qE),\cO_X(uE)\right)$ is described by Lemma \ref{multpencil}.\eqref{evmatrix}: a tuple $(\alpha_1,...,\alpha_{u-q})\in \Hom(Q,\cO_X((q-1)E)[1])^{\oplus u-q}$ is mapped to 
        \[
        t\alpha_1\otimes s^{u-q}+(-s\alpha_1+t\alpha_2)\otimes s^{u-q-1}t+...+(-s\alpha_{u-q-1}+t\alpha_{u-q})\otimes st^{u-q-1}-s\alpha_{u-q}\otimes t^{u-q}.
        \]

        \vskip 3pt
        
    We claim that the two subspaces
    \begin{align*}
    V_t:=\Bigl\{t\alpha: \alpha\in \Hom(Q,\cO_X((q-1)E)[1])\Bigr\}\subseteq\Hom\bigl(Q,\cO_X(qE)[1]\bigr),\\
    V_s:=\Bigl\{s\alpha: \alpha\in \Hom(Q,\cO_X((q-1)E)[1])\Bigr\}\subseteq \Hom\bigl(Q,\cO_X(qE)[1]\bigr)
    \end{align*}
    are of codimension $\geq k-\hom(\cO_X((q-1)E), Q)$ in $\Hom\bigl(Q,\cO_X(qE)[1]\bigr)$. Indeed, applying the functor $\Hom(Q, -)$ to the short exact sequence \[
    0\longrightarrow\cO_X((q-1)E) \longrightarrow \cO_X(qE) \longrightarrow \cO_{J}\longrightarrow 0,
    \]
where $J\subseteq X$ is the elliptic curve corresponding to $s$ or $t$,  results in a long exact sequence
    \begin{align*}
    & \dots \longrightarrow \Hom\bigl(Q, \cO_X((q-1)E)[1]\bigr) \longrightarrow  \Hom\bigl(Q, \cO_X(qE)[1]\bigr)\\ & \overset{d}{\longrightarrow} \Hom(Q, \cO_J[1]) \longrightarrow \Hom\bigl(Q, \cO_X((q-1)E)[2]\bigr) \longrightarrow  \cdots    
    \end{align*}
    and therefore we have 
    \begin{align*}
      \dim \text{im}(d)  & \geq \hom\bigl(Q, \cO_J[1]\bigr) - \hom\bigl(Q, \cO_X((q-1)E)[2]\bigr)  \\
      & \geq -\chi(Q, \cO_J) - \hom\bigl(Q, \cO_X((q-1)E)[2]\bigr)= k - \hom(\cO_X((q-1)E), Q)
    \end{align*}
    %so the space $V_{s, t}$ spanned by them is of codimension $k-1$ in $\Hom(F,\cO_X(e-E)[1])$. 
    (using that $\hom(Q, \cO_J[i]) = 0$ for  $i \neq 0, 1, 2$ since $Q, \cO_J\in \Coh^b(X)$).
\vskip 4pt

Since $m \leq k-\hom(\cO_X((q-1)E), Q)$, a general subspace $V\subseteq \mbox{Hom}(Q, \cO_X(qE)[1])$ of dimension $m$ has trivial intersection with both $V_{s}$ and $V_{t}$. For such a $V$, we show that
    \begin{equation*}
        \Bigl(V \otimes \Hom(\cO_X(qE), \cO_X(uE))\Bigr) \cap \Ker(\psi) = 0
    \end{equation*}
(that is, \eqref{multmap} is injective). Let $(\alpha_1, ..., \alpha_{u-q})\neq0$ be an element in the intersection, so that 
\begin{equation}\label{list}
     t\alpha_1\in V,\;\;-s\alpha_1+t\alpha_2\in V,\;\;...\;\;,-s\alpha_{u-q-1}+t\alpha_{u-q}\in V,\; \text{and} \;-s\alpha_{u-q}\in V.
\end{equation}

\vskip 1pt

Note that for an appropriate choice of $t\in H^0(\cO_X(E))$,  the following map is injective:  
    \[
    \gamma_t \colon \Hom(Q,\cO_X((q-1)E)[1])\longrightarrow \Hom(Q,\cO_X(qE)[1]),\;\;\;\alpha\mapsto t\alpha.
    \]
Indeed, it follows from the vanishing $\Hom(Q,\cO_X(qE))=0$ that $\ker(\gamma_t)=\Hom(Q,\cO_{J(t)})$, where $J(t)\in|E|$ is the elliptic curve corresponding to $t$. Thus injectivity of $\gamma_t$ for a general $t\in H^0(\cO_X(E))$ is just a consequence of \eqref{prop.general: b}. The first condition in \eqref{list} yields $t\alpha_1 \in V \cap V_t = 0$, hence $\alpha_1 = 0$ from the injectivity of $\gamma_t$. Inductively, the $i$-th condition in \eqref{list} for $i \leq u - q$ implies that $\alpha_i = 0$,
 which is a contradiction.
 \end{proof}

\vskip 4pt

\begin{proof}[Proof of Theorem \ref{thm-nonempty}]
It suffices to prove the statement for $\sigma_{0,w_0}$ lying in the Gieseker chamber (if $r_0=0$) or just below the horizontal line through $\Pi_{\epsilon}(v)$ (if $r_0\neq0$). 

\vskip 3pt

We start with the Mukai vector 
\begin{align*}
    v_2 := v - m_1\cdot  v(\cO_X((e+1)E)) - m_2\cdot v(\cO_X(eE)) %=\\
    %\overset{\eqref{decomposition}}{=} \ & \big(-(r-\ell+1),\ H-\ell E,\ - (g-d+r-\ell)\big).
\end{align*}
and the stability condition $\sigma_{e}^+$, which lies just above the line segment connecting $\Pi_{\epsilon}(v_2)$ to $\Pi_{\epsilon}(\cO_X(eE))$. Note that $v_2^2\geq -2$ is equivalent to our assumption \eqref{necessaryassumption}, in particular $\cM_{\sigma_{e}^+}(v_2)\neq\emptyset$. We first check that a general $F_2 \in \cM_{\sigma_{e}^+}(v_2)$ satisfies
\begin{equation}\label{vanishingF-2}
\hom(\cO_X(uE),F_2)=0=\hom(F_2,\cO_X(uE)),\text{ for all $u\geq0$}.
\end{equation}
Indeed, the vanishings
$\hom(\cO_X(uE),F_2)=0$ for $u> e$ and $\hom(F_2,\cO_X(uE))=0$ for $0\leq u\leq e$ follow from $\sigma_e^+$-stability of $F_2$.
Moreover, when moving (along the vertical line $b=0$) from $\sigma_{e}^+$ towards the horizontal line passing through $\Pi_{\epsilon}(v_2)$, Lemma \ref{lem-wall-crossing-negative} implies that we only encounter actual walls of type ($b_1$) created by $\cO_X(uE)$ for $u > e$. Similarly, when moving from $\sigma_{e}^+$ towards the wall $\cW(F_2,\cO_X)$ we only encounter walls of type ($b_1$) created by $\cO_X(uE)$ for $0\leq u \leq e$. However, we have
\begin{equation}\label{angle}
    \bigl\langle v_2, \cO_X(uE) \bigr\rangle = uE\cdot H -2r_0-s_0+ 2m_1 +2m_2 > 0, \qquad \text{for all } u \geq 0.
\end{equation}
Thus Lemma \ref{lem-dim} (and the finiteness of the number of walls for the class \( v_2 \)) implies \eqref{vanishingF-2} for a general \( F_2 \in \cM_{\sigma_{e}^+}(v_2) \). In particular, \( F_2 \) is \( \sigma \)-stable, where \( \sigma \) lies on the numerical wall \( \cW(F_2, \cO_X) \) or just below the horizontal line passing through \( \Pi_{\epsilon}(v_2) \).

%\An{(Picture?)}

Furthermore, the vanishing $\Hom(F_2,\cO_J)=0$ holds for a general curve $J\in|E|$. Indeed, consider the value $w_0>0$ for which $\nu_{0,w_0}(F_2)=0$. Then $F_2$ is $\sigma_{0,w_0}$-semistable and $\cO_J$ is $\sigma_{0,w_0}$-stable for any $J\in|E|$ (by Corollary \ref{cor:nowall}). If $\Hom(F_2,\cO_J)\neq0$, then $\cO_J$ is a stable quotient of $Q$ since $\nu_{0,w_0}(F_2)=0=\nu_{0,w_0}(\cO_J)$. Hence there can be only finitely many such $\cO_J$'s.

\vskip 1pt

We claim that a general $F_2 \in \cM_{\sigma_e^+}(v_2)$ satisfies
\begin{equation}\label{condition}
    0 < m_2 \leq k - \hom(\cO_X((e-1)E), F_2).
\end{equation}
We first prove the final statement assuming this claim and later return to justify it. 

Assuming (\ref{condition}) holds, we apply Proposition~\ref{prop.general} with $F_2$ in place of $Q$ and $e$ substituted for $q$. This shows that a general extension
\[
0 \longrightarrow \cO_X(eE)^{\oplus m_2} \longrightarrow F_1 \longrightarrow F_2 \longrightarrow 0
\]
satisfies $\hom(F_1, \cO_X(uE)) = 0$ for all $u > e$. Then $F_1$ is $\sigma_{e}^+$-stable and does not get destabilized along any wall $\cW(\cO_X(uE), F_1)$ ($u\geq e+1$) because $\hom(F_1, \cO_X(uE)) = 0$. One can argue as before to check the vanishing $\Hom(F_1,\cO_J)=0$ for a general $J\in|E|$. Thus by applying Proposition \ref{prop.general} again, we find that a general extension 
\[
0 \longrightarrow \cO_X((e+1)E)^{\oplus m_1} \longrightarrow F \longrightarrow F_1 \longrightarrow 0 
\]
satisfies $\hom(F, \cO_X(uE)) = 0$ for all $u > e+1$, provided $m_1 \leq k - \hom(\cO_X(eE), F_1) = k-m_2$. This clearly holds under Assumption \eqref{assumptionineq} as $r_0 \leq 0$. 

Clearly $F$ is stable with respect to stability conditions just above $\cW(\cO_X((e+1)E), F_1)$. By Lemma \ref{lem-wall-crossing-negative}, all negative-slope walls for $F$ are of type ($b_1$). Since $\hom(F, \cO_X(uE)) = 0$ for all $u > e+1$, Proposition \ref{prop.general} ensures that $F$ is $\sigma$-stable, where: $\sigma$ lies just below the horizontal line passing through $\Pi_{\epsilon}(v)$ (if $r_0 \neq 0$), or $ \sigma$ lies in the Gieseker chamber (if $r_0 = 0$). Moreover, by construction, it is of stability type $\overline{e}$. Thus, $\cM_{\sigma}(v, \overline{e}) \neq \emptyset$ as required. Hence, it only remains to prove condition \eqref{condition} for a general $F_2\in\cM_{\sigma_e^+}(v_2)$.

\vspace{.2 cm}

If $e\geq 1$, then \eqref{vanishingF-2} implies that $\hom(\cO_X((e-1)E), F_2) = 0$ for a general $F_2$, and so condition \eqref{condition} clearly holds. Thus, we may assume $e=0$. In order to prove \eqref{condition}, we want to control the quantity $\hom(\cO_X(-E),F_2)=\hom(\cO_X,F_2(E))$. To handle this case, we first prove that $F_2$ is stable in the Gieseker chamber of $v_2$. Then we twist $F_2$ by $\cO_X(E)$, and move down to the wall that $\cO_X$ makes for $F_2(E)$. %The point is that Gieseker stability is preserved after the tensor product, but it is not clear whether this holds for $\sigma_{b,w}$-stability.   

\vskip 3pt

We already know that a general $F_2\in\cM_{\sigma_e^+}(v_2)$ is at least semistable along the horizontal line passing through $\Pi_{\epsilon}(v_2)$. This horizontal line, as well as all the walls for the class $v_2$ above it up to the Gieseker chamber, are of type ($b_2$) by Lemma \ref{lem-wall-crossing-negative}. Hence if $F_2$ is not stable in the Gieseker chamber, then it has a destabilizing factor of class $(0, (q-a_0)E, \theta)$ with $q-a_0 >0$ and $\theta \geq 0$. Since $k > 2$ implies $-E\cdot H + 2 < 0$, we have the inequality
\[
-\bigl\langle v_2, (0,(q-a_0)E,\theta) \bigr\rangle + 2(q-a_0) 
= (q-a_0)(-E\cdot H + 2) - \theta(m_1 + m_2 - r_0) < 0.
\]
By Lemma~\ref{lem-dim}, this shows that a general $F_2 \in \cM_{\sigma_e^+}(v_2)$ is stable in the Gieseker chamber. 

By applying Theorem \ref{Giesekerchamber} and Lemma \ref{lem-slope-stability}, the object $F_2(E) \in \Coh^0(X)$ is also stable in the Gieseker chamber of $v_2(E):=v\bigl(F_2(E)\bigr)$\footnote{ Note that twist gives an isomorphism of moduli spaces in the Gieseker chamber, so $F_2(E)$ is also a general object in the moduli space $\cM_{H_\epsilon} (v_2(E))$.}. We move down again along the vertical line $b=0$, and investigate walls for the Mukai vector $v_2(E)$. 
%\[
%v_2(E):=v(F_2(E))=\big(-(r-\ell+1),\ H-(r+1) E,\ - (g-d+r-\ell-k)\big).
%\]
We distinguish two cases:  

\vspace{.2 cm}
\textbf{Case 1.} First assume $\ch_2(F_2(E))\leq0$. %Then by Lemma \ref{lem-wall-crossing-negative} 
Since  
$-\bigl\langle v_2(E), \ (0, (q-a_0)E, \theta) \bigr\rangle + 2(q-a_0) <0$
for $q-a_0>0$ and $\theta\geq 0$, and also
\[
\langle v_2(E) , v(\cO_X(mE)) \rangle=mk+2(m_1+m_2-r_0)-\ch_2\bigl(F_2(E)\bigr)>0
\]
for all $m \geq 0$, then the same argument as above shows that $F_2(E)$ is stable along the numerical wall $\cW(F_2(E),\cO_X)$. In particular $\hom(\cO_X,F_2(E)) = 0$, and thus condition \eqref{condition} is satisfied.

\vspace{.2 cm} 

\textbf{Case 2.} Now assume $\ch_2\bigl(F_2(E)\bigr)>0$. Then the walls for the Mukai vector $v_2(E)$ that we encounter when moving down to the origin can be either of type ($b_1$) for $m<0$, or of type ($b_2$). But the latter does not destabilize a general $F_2(E)\in\cM_{H_\epsilon}\bigl(v_2(E)\bigr)$ as in Case 1; hence we only need to consider walls made by $\cO_X(mE)[1]$ for $m<0$. Again, we need to distinguish two different subcases: 

\vspace{.2 cm}

\textbf{Case 2.1.} First assume $\bigl\langle v_2(E) , v(\cO_X(-E)) \bigr\rangle \leq 0$. Then $\bigl \langle v_2(E) , v(\cO_X(mE)) \bigr\rangle \leq 0$ for all $m \leq -1$, and hence a general $F_2(E)\in\cM_{H_\epsilon}(v_2(E))$ is $\sigma_{0, w}$-stable for all $w>0$ by Lemma \ref{lem-dim}. If $\chi(\cO_X,F_2(E))\leq0$, then $\hom(\cO_X,F_2(E))=0$ for a general $F_2$ and \eqref{condition} is clearly satisfied, so we may assume $\chi(\cO_X,F_2(E)) > 0$. Let $\ell$ be the line passing through $\Pi_{\epsilon}(F_2(E))$ and the origin $(0, 0)$, and define
\[
\ell_1 := \ell \cap \bigl\{(b,w) \in U_{\epsilon} : w > 0\bigr\}, \quad 
\ell_2 := \ell \cap \bigl\{(b,w) \in U_{\epsilon} : w < 0\bigr\}.
\]
Here, $\ell_1$ represents the numerical wall created by $\cO_X[1]$ for the class $v_2 \otimes \cO_X(E)$, while $\ell_2$ corresponds to the numerical wall created by $\cO_X$. 

Since $\chi(\cO_X,F_2(E)) > 0$, Lemma~\ref{lem-dim} implies that a general $F_2$ remains stable along $\ell_1$ with respect to $\cO_X[1]$, and thus $\hom(F_2(E), \cO_X[1]) = 0$. Consequently, we have
\[
\hom(\cO_X, F_2(E)) = \chi(\cO_X, F_2(E)),
\]
where we use the fact that $\Hom(F_2(E), \cO_X) = 0$. Finally, we compute
\begin{align*}
k - \hom(\cO_X, F_2(E)) &= k - \chi(\cO_X, F_2(E)) \\
&= -s_0 - 2r_0 + 2(m_1 + m_2) \geq m_2,
\end{align*}
which establishes the inequality~\eqref{condition}.

\vspace{.2 cm}

\textbf{Case 2.2.} Now assume $\bigl\langle v_2(E) , v(\cO_X(-E)) \bigr\rangle > 0$. Recall that $\ch_2\bigl(F_2(E)\bigr) >0$ by our assumption, hence for all $m \leq -2$ we have   
\begin{align}\label{2-e}
    \bigl\langle v_2(E) , v(\cO_X(mE)) \bigr\rangle = & \ mk -2r_0+ 2m_1 +2m_2-\ch_2\bigl(F_2(E)\bigr) \nonumber\\
    < & \ -2k -2r_0 +2(m_1+m_2) \overset{\eqref{assumptionineq}}{\leq} 0.
\end{align}
It follows that a general $F_2(E)\in\cM_{H_\epsilon}(v_2(E))$ remains stable up to the wall that $\cO_X(-E)[1]$ is making, and then gets destabilized via a short exact sequence
\begin{equation}\label{destF2E}
    F' \longrightarrow F_2(E) \longrightarrow \cO_X(-E)^{\oplus h}[1]
\end{equation}
in $\Coh^0(X)$, where $h=\chi\bigl(F_2(E),\cO_X(-E)[1]\bigr)$ and the object $F'$ is $\sigma_{b',w'}$-stable for $(b',w')$ in the wall $\cW\bigl(F_2(E),\cO_X(-E)[1]\bigr)$. 

Conversely, given an arbitrary $\sigma_{b', w'}$-stable object $G'$ of Mukai vector $v(F')$ and an element $V\in \Gr\bigl(h,\Hom(\cO_X(-E),G'\bigr)$, the object $G$ sitting in an extension
\[
G'\longrightarrow G \longrightarrow \cO_X(-E)^{\oplus h}[1]
\]
is stable just above $\cW(F_2(E),\cO_X(-E)[1])$ by Lemma \ref{lem-stability above the wall}. For this reason, we may assume that the object $F'$ appearing in \eqref{destF2E} is a general $\sigma_{b', w'}$-stable object. Since
 \begin{align*}
      \bigl\langle v(F'), v(\cO_X) \bigr\rangle = \bigl\langle v_2(E), v(\cO_X(-2E))[1] \bigr\rangle \overset{\eqref{2-e}}{\geq } 0,
 \end{align*}
we have $\hom(\cO_X,F') =0$ by the generality of $F'$. Since also $\hom\bigl(\cO_X,\cO_X(-E)[1]\bigr)=0$, then $\hom(\cO_X,F_2(E)) = 0$, hence \eqref{condition} holds and the proof is complete.

\end{proof}

\vskip 10pt

\section{Application to Brill-Noether loci on curves}\label{section-application}
Fix an elliptic $K3$ surface $X$ of degree $k\geq  2$ as described in Proposition \ref{thm-k3 surface}. In this section, we apply the results of Section \ref{stratification} to study the Brill--Noether loci
\[
V^r_d(C) :=W^r_d(C)\setminus W^{r+1}_d(C)= \bigl\{ L \in \mbox{Pic}^d(C) : h^0(C, L) = r+1 \bigr\}
\]
for smooth curves $C \in |H|$ and $d, r\in \mathbb N$ with $d \leq g - 1$. 

\vskip 4pt

To this end, consider the moduli space $\mathcal{M}_{H_\epsilon}(v)$ with Mukai vector $v = (0, H, 1 + d - g)$, where $\epsilon > 0$ is sufficiently small, and define the locus
\[
V^r_{H_\epsilon}(v) := \left\{ F \in \mathcal{M}_{H_\epsilon}(v) : h^0(X, F) = r+1 \right\}.
\]

For an integral curve $C \in |H|$, the locus $V^r_d(C)$ is identified with the fiber over $[C]\in |H|$ of the natural support map
\[
V^r_{H_\epsilon}(v) \hookrightarrow \mathcal{M}_{H_\epsilon}(v) \longrightarrow |H|.
\]
This identification is independent of $\epsilon$. Through our analysis of $V^r_{H_\epsilon}(v)$, we derive strong implications for the Brill--Noether theory of curves in $|H|$.

\vskip 4pt

The following is the main result of this section, building upon Section \ref{stratification}:

\vspace{0.8mm}

\begin{Thm}\label{relative:HBNthm}
Assume $d\leq g-1$. Then for $v := (0, H, 1+d-g)$ we have:
\begin{enumerate}
    \item\label{relative:HBNthm-a} There exists $\epsilon(v,r)>0$ such that for $0<\epsilon<\epsilon(v,r)$, one has
    \begin{equation*}
        V^r_{H_\epsilon} (v) \subseteq \bigcup_{\overline{e} \in I } \mathcal{M}_{H_\epsilon}(v, \overline{e} ),
    \end{equation*}
    where $I$ is the finite set of stability types $\overline{e} =\bigl((e_i, m_i)\bigr)_{i=1}^p$ with $p \geq 1$, such that $\sum_{i=1}^p m_i\leq r+1\leq \sum_{i=1}^pm_i(e_i+1)$ and $m_1(e_1+1)\leq r+1$.

    \vspace{2mm}

 \item\label{relative:HBNthm-b} For each $\overline{e} =\bigl((e_i, m_i)\bigr)_{i=1}^p$ in $I$, define $\ell_{\overline{e}}:=r+1-\sum_{i=1}^p m_i$. Then:
 \vspace{0.5mm}
    \begin{enumerate}
        \item\label{relative:HBNthm-b1} If $\mathcal{M}_{H_\epsilon}(v, \overline{e} )\neq \emptyset$, then  $\rho(g, r - \ell_{\overline{e}}, d) - \ell_{\overline{e}} k\geq0$. Moreover,  $\mathcal{M}_{H_\epsilon}(v, \overline{e} )$  is irreducible, smooth, and  of dimension at most $g + \rho(g, r - \ell_{\overline{e}}, d) - \ell_{\overline{e}} k$.

        \vspace{0.5mm}
        
        \item\label{relative:HBNthm-b2} If $\overline{e}$ is balanced, $\rho(g, r - \ell_{\overline{e}}, d) - \ell_{\overline{e}} k\geq0$ and $r+1-k\leq \ell_{\overline{e}}$, then $\mathcal{M}_{H_\epsilon}(v, \overline{e} )$ is non-empty of dimension $g + \rho(g, r - \ell_{\overline{e}}, d) - \ell_{\overline{e}} k$. 
    \end{enumerate}
    \end{enumerate}
    \end{Thm}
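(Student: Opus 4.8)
The plan is to deduce everything from the general machinery already developed for the special Mukai vectors of Section \ref{wallcrossing-special} and \ref{stratification}, specialized to $v = (0, H, 1+d-g)$. Note that $v$ has the shape $(r_0, H - a_0E, s_0 + r_0)$ with $r_0 = 0$, $a_0 = 0$ and $s_0 = d - g < 0$ (since $d \le g-1$), so Theorems \ref{thm.splitting}, \ref{Prop-scheme structure} and \ref{thm-nonempty} all apply once $\epsilon$ is small enough. The first observation is that in the Gieseker chamber for $v$ an object of Mukai vector $v$ is $\sigma_{0,w}$-stable iff it is an $H_\epsilon$-Gieseker stable sheaf (Theorem \ref{Giesekerchamber}.\eqref{GiesekerchamberA}, using that $H$ is effective), so $\cM_{H_\epsilon}(v) = \cM_{\sigma_{0,w}}(v)$ for $w \gg 0$, and $\hom(\cO_X, F) = h^0(X,F)$; thus $V^r_{H_\epsilon}(v) = V^r_{\sigma_{0,w}}(v)$ for $w\gg 0$. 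Moreover $\nu_{0,w}(v) < 0$ for all $w > 0$ because $v_0 = 0$ and $\ch_2(v) = d - g < 0$, so the hypothesis "$\nu_{0,w_0}(v)<0$" of Theorems \ref{thm.splitting} and \ref{thm-nonempty} is automatic here. Finally $\epsilon>0$ may be taken generic with respect to $v$ (as required when $r_0 = 0$) after possibly shrinking it further.

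For part \eqref{relative:HBNthm-a}: apply Theorem \ref{thm.splitting} with this $v$ and the given $r$, obtaining $\epsilon(v,r) > 0$ such that for $\epsilon < \epsilon(v,r)$ we have $V^r_{\sigma_{0,w_0}}(v) \subseteq \bigcup_{\overline e \in I}\cM_{\sigma_{0,w_0}}(v,\overline e)$, where $I$ is precisely the finite set of stability types satisfying $\sum m_i \le r+1 \le \sum m_i(e_i+1)$ and $m_1(e_1+1)\le r+1$. Specializing $\sigma_{0,w_0}$ to the Gieseker chamber gives the claim; one should note $p\ge 1$ is forced because $\hom(\cO_X,F) = r+1 > 0$ excludes the empty stability type (an object of stability type $\emptyset$ is stable along $\cW(\cO_X,F)$, hence has $\hom(\cO_X,F) = 0$).

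For part \eqref{relative:HBNthm-b1}: fix $\overline e = \{(e_i,m_i)\}_{i=1}^p \in I$ and set $v_p := v - \sum_{i=1}^p m_i(1,e_iE,1)$. Theorem \ref{Prop-scheme structure} gives that, if non-empty, $\cM_{H_\epsilon}(v,\overline e)$ is smooth, irreducible of the dimension written there; the bulk of the work is the bookkeeping identity showing that this dimension equals $g + \rho(g, r-\ell_{\overline e}, d) - \ell_{\overline e}k$ when all the "extra Grassmannian" terms $\langle v - \sum_{i\le j} m_i(1,e_iE,1),(1,e_jE,1)\rangle - m_j$ vanish and is strictly smaller otherwise — this is the translation between the Bridgeland side and the degeneracy-locus count of Section \ref{HBNloci}, and I expect the explicit Mukai-pairing computation (using $\langle(1,eE,1),(1,e'E,1)\rangle = (e-e')k$ on $X$, $v^2 = 2(g-1-d)$, etc.) to be the fussiest routine step. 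The inequality $\rho(g, r-\ell_{\overline e},d) - \ell_{\overline e}k \ge 0$ then follows because a non-empty smooth variety has non-negative dimension and $\cM_{H_\epsilon}(v,\overline e)$ admits a surjection to its support in $|H| \cong \mathbb P^g$ (the fibres of $\cM_{H_\epsilon}(v) \to |H|$ over integral curves being at most $g$-dimensional), which forces $\dim \cM_{H_\epsilon}(v,\overline e) \ge g$ only on the image... more carefully: one uses that the dimension formula is $\le g + \rho(g,r-\ell_{\overline e},d) - \ell_{\overline e}k$ and $\ge 0$, but to get $\rho(g,r-\ell_{\overline e},d) - \ell_{\overline e}k \ge 0$ one invokes Remark \ref{obs stabtype} (namely $v(F_p)^2 = v_p^2 \ge -2$) together with the condition $\nu_{0,w_0}(\cO_X(e_1E)) < \nu_{0,w_0}(v)$, i.e.\ the necessary conditions \eqref{nec.condition-intro1}, which unwind exactly to $\rho(g, r-\ell_{\overline e}, d) - \ell_{\overline e}k \ge 0$; this is where I expect the only genuine (non-routine) point to hide, namely checking that the two necessary conditions really are equivalent to that single Brill--Noether inequality for $v = (0,H,1+d-g)$.

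For part \eqref{relative:HBNthm-b2}: if $\overline e = \{(e+1,m_1),(e,m_2)\}$ is balanced with $\rho(g,r-\ell_{\overline e},d) - \ell_{\overline e}k \ge 0$ and $r+1-k \le \ell_{\overline e}$, then setting $v_2 := v - m_1 v(\cO_X((e+1)E)) - m_2 v(\cO_X(eE))$ the hypothesis $r+1-k \le \ell_{\overline e}$ rewrites as $m_1 + m_2 \le k = k + r_0$, which is exactly assumption \eqref{assumptionineq} of Theorem \ref{thm-nonempty}, and $\rho(g,r-\ell_{\overline e},d)-\ell_{\overline e}k \ge 0$ rewrites as $v_2^2 \ge -2$, which is assumption \eqref{necessaryassumption}. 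Hence Theorem \ref{thm-nonempty} yields, for $\epsilon < \epsilon(v,\overline e)$, that $\cM_{\sigma_{0,w_0}}(v,\overline e) \ne \emptyset$ whenever $\nu_{0,w_0}(\cO_X((e+1)E)) < \nu_{0,w_0}(v) < 0$; taking $\sigma_{0,w_0}$ in the Gieseker chamber (where this inequality holds, e.g.\ because $\cO_X((e+1)E)$ has $\ch_0 > 0$ while $v$ has $\ch_0 = 0$ with negative $\ch_2$) gives $\cM_{H_\epsilon}(v,\overline e) \ne \emptyset$. Its exact dimension $g + \rho(g,r-\ell_{\overline e},d) - \ell_{\overline e}k$ then follows from part \eqref{relative:HBNthm-b1} together with the computation that for a balanced $\overline e$ all the Grassmannian correction terms in the dimension formula of Theorem \ref{Prop-scheme structure} vanish — here one uses $\langle v_2, v(\cO_X((e+1)E))\rangle = m_2$ and $\langle v_2 + m_2 v(\cO_X(eE)) \cdot(\text{adjusted}), v(\cO_X(eE))\rangle$ evaluating to $m_2$ as well, i.e.\ the two Grassmann bundles are fibred in $\Gr(m_i,m_i) = \mathrm{pt}$... which is not literally $0$-dimensional, so more precisely one checks that the correction terms $m_j(\langle\cdots\rangle - m_j)$ are each $0$, forcing equality $\dim = g + \rho - \ell k$. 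The main obstacle throughout is purely organizational: correctly matching the combinatorial datum $(e,m_1,m_2,\ell)$ of Section \ref{HBNloci} with the Mukai-vector arithmetic, and confirming the equivalence of the geometric non-emptiness bounds \eqref{nec.condition-intro1} with the numerical inequality $\rho(g,r-\ell,d)-\ell k \ge 0$.
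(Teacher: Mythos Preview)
Your overall strategy for $d < g-1$ is correct and matches the paper's, but there is a genuine gap and some confusion in the execution.

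\textbf{The gap: the case $d = g-1$.} You write $s_0 = d-g$ and $\ch_2(v) = d-g$, but the Mukai vector is $v = (0, H, 1+d-g)$, so in fact $\ch_2(v) = s_0 = 1+d-g$. For $d < g-1$ this is negative and your plan goes through; for $d = g-1$, however, $s_0 = \ch_2(v) = 0$, so neither Definition~\ref{Def.splitting} (which requires $\ch_2(F) \neq 0$) nor Theorem~\ref{thm.splitting} (which requires $s_0 < 0$ and $\nu_{0,w_0}(v) < 0$) applies. Geometrically, all the numerical walls $\cW(v, \cO_X(eE))$ for $e\ge 0$ collapse onto the horizontal line $w=0$, so the inductive wall-crossing picture that produces the stability type breaks down entirely. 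The paper devotes a whole subsection (\S\ref{subsec: g-1}) to this case, with an ad~hoc definition of stability type (Definition~\ref{def:g-1 stabtype}) and separate analogues of all three ingredients (Theorems~\ref{g-1:scheme-structure}, \ref{Thm-splitting-g-1}, \ref{thm-existence-g-1}); this is not a routine patch.

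\textbf{The dimension argument.} Your discussion of ``Grassmannian correction terms vanishing'' is off track: these terms are never zero. The paper's route is more direct. One first collapses the Mukai-pairing sum in Theorem~\ref{Prop-scheme structure} to the closed form
\[
2g - \bigl(\textstyle\sum_i m_ie_i\bigr)k - \bigl(\textstyle\sum_i m_i\bigr)(g-d-1) - \bigl(\textstyle\sum_i m_i\bigr)^2,
\]
and then uses the single inequality $\sum_i m_i e_i \geq r+1 - \sum_i m_i = \ell_{\overline e}$ (equivalent to $r+1 \leq \sum_i m_i(e_i+1)$, which is part of the definition of $I$) to bound this above by $g + \rho(g,r-\ell_{\overline e},d) - \ell_{\overline e}k$. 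The \emph{same} inequality, applied instead to the expansion of $v_p^2 + 2 \ge 0$, yields $\rho(g,r-\ell_{\overline e},d) - \ell_{\overline e}k \geq 0$; the second necessary condition in~\eqref{nec.condition-intro1} plays no role here, and your attempted detour through fibres of the support map is unnecessary. For balanced $\overline e$ one has $r+1 = \sum_i m_i(e_i+1)$ on the nose, so the dimension bound is an equality automatically.
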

\begin{proof}
We first assume that $d < g - 1$; the case $d = g - 1$ will be treated separately in Subsection \ref{subsec: g-1}. In this case, assertion \eqref{relative:HBNthm-a} is a  consequence of Theorem \ref{thm.splitting}, applied to a stability condition $\sigma_{0,w_0}$ lying in the Gieseker chamber for the Mukai vector $v$.

We now prove \eqref{relative:HBNthm-b}. Fix $\overline{e} =\bigl((e_i, m_i)\bigr)_{i=1}^p$ in $I$, such that $\cM_{H_\epsilon}(v, \overline{e} )$ is non-empty. According to Remark \ref{obs stabtype}, the Mukai vector
    \[
    \bigl(-m_1-\cdots-m_p,\ H-(m_1e_1+\cdots+m_pe_p)E,\ 1+d-g-m_1-\cdots-m_p\bigr)
    \]
    has square $\geq -2$, which reads as
    \[
2g-2(m_1e_1+\cdots+m_pe_p)k-2(m_1+\cdots+m_p)(g-d-1+m_1+\cdots+m_p)\geq0.
\]
It follows that
\begin{align*}
    0 &\leq  g-(m_1e_1+\cdots+m_pe_p)k-(m_1+\cdots+m_p)(g-d-1+m_1+\cdots+m_p) \leq \\
    & \leq g-\bigl(r+1-(m_1+\cdots+m_p)\bigr)k-(m_1+\cdots+m_p)(g-d-1+m_1+\cdots+m_p)=\\
&= g-\ell_{\overline{e}}k-(r+1-\ell_{\overline{e}})(g-d+r-\ell_{\overline{e}})=\rho(g,r-\ell_{\overline{e}},d)-\ell_{\overline{e}}k,
\end{align*}
where the second inequality follows from the condition $r+1\leq \sum_{i=1}^pm_i(e_i+1)$. Hence the inequality $\rho(g,r-\ell_{\overline{e}},d)-\ell_{\overline{e}}k\geq0$ holds. Furthermore, by Theorem \ref{Prop-scheme structure}, $\cM_{H_\epsilon}(v,\overline{e})$ is irreducible, smooth and quasi-projective of dimension
\begin{align*}
    \left(v-\sum_{i=1}^p m_i(1,e_iE,1)\right)^2+2+\sum_{j=1}^p m_j\left(\langle v-\sum_{i=1}^j m_i(1,e_iE,1),(1,e_jE,1)\rangle-m_j\right)=\\
    =2g-(m_1e_1+\cdots+m_pe_p)k-(m_1+...+m_p)(g-d-1)-(m_1+\cdots+m_p)^2.
\end{align*}
Now using the inequality  $r+1\leq \sum_{i=1}^pm_i(e_i+1)$, this dimension is at most
\begin{align*}
g+g-(r+1-(m_1+\cdots+m_p))k-(m_1+\cdots+m_p)(g-d+m_1+\cdots+m_p-1)=\\
=g+g-(r-\ell_{\overline{e}}+1)(g-d+r-\ell_{\overline{e}})-\ell_{\overline{e}}k=g+\rho(g,r-\ell_{\overline{e}},d)-\ell_{\overline{e}}k,
\end{align*}
which proves the first part of \eqref{relative:HBNthm-b}. Finally, the second part  is an immediate application of Theorem \ref{thm-nonempty} for $\sigma_{0,w_0}$ in the Gieseker chamber of $v$. % (see also Example \ref{Example:LineBundles} with $a=0$).
\end{proof}

\vskip 1pt

As a direct consequence, we obtain the following result:

\begin{Cor}\label{HBNupperbound}
For $d \leq g-1$, set
        $\rho_k(g,r,d):=\mathrm{max}_{\ell=0, \ldots,r} \bigl\{\rho(g,r-\ell,d)-\ell k\bigr\}$. Then:
        
    \begin{enumerate}
        \item\label{HBNupperbound:a} If $\rho_k(g,r,d)<0$, then $W^r_d(C)=\varnothing$ for every integral curve $C\in|H|$.
        \item\label{HBNupperbound:b} If $C\in|H|$ is a general curve, then $\dim W^r_d(C)\leq \rho_k(g,r,d)$.
    \end{enumerate}
\end{Cor}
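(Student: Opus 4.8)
The statement is an immediate corollary of Theorem~\ref{relative:HBNthm}. Fix $d,r$ with $d\le g-1$, let $v=(0,H,1+d-g)$, and choose $\epsilon\in(0,\epsilon(v,r))$ so that part~\eqref{relative:HBNthm-a} applies. For an integral curve $C\in|H|$, the fiber of the support map $\mathcal M_{H_\epsilon}(v)\to|H|$ over $[C]$ is $\overline{\operatorname{Pic}}^d(C)$, and under this identification $V^r_d(C)$ (together with its points coming from non-locally-free sheaves) is the fiber of $V^r_{H_\epsilon}(v)$. By Theorem~\ref{relative:HBNthm}\eqref{relative:HBNthm-a}, every point of $V^r_{H_\epsilon}(v)$ lies in some $\mathcal M_{H_\epsilon}(v,\overline e)$ with $\overline e\in I$; I would first reduce to controlling $\dim V^r_{H_\epsilon}(v)$ and then pass to the fibers.

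First I would prove \eqref{HBNupperbound:a}. Suppose $\rho_k(g,r,d)<0$, i.e.\ $\rho(g,r-\ell,d)-\ell k<0$ for every $\ell$ with $0\le\ell\le r$. If some integral $C\in|H|$ had $W^r_d(C)\ne\varnothing$, pick $L$ in it; replacing $L$ by a general point of a component through it we may assume $h^0(C,L)=r+1$ exactly, so $i_*L\in V^r_{H_\epsilon}(v)$ (we also need $\ell\ge\max\{0,r+2-k\}$, which is forced because $\pi_*L$ has a negative summand, exactly as in the discussion following~\eqref{eq:splitting}). Hence $i_*L\in\mathcal M_{H_\epsilon}(v,\overline e)$ for some $\overline e\in I$, so $\mathcal M_{H_\epsilon}(v,\overline e)\ne\varnothing$; but then Theorem~\ref{relative:HBNthm}\eqref{relative:HBNthm-b1} gives $\rho(g,r-\ell_{\overline e},d)-\ell_{\overline e}k\ge0$ with $0\le\ell_{\overline e}\le r$, contradicting $\rho_k(g,r,d)<0$. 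The one thing to check carefully is that a point of $W^r_d(C)$ really does deform within $W^r_d(C)$ to one with $h^0$ exactly $r+1$; this is standard semicontinuity on the (nonempty, hence positive-dimensional or a point, but in any case containing such a general point) locus, but I would spell it out to be safe, and also note the boundary case $W^r_d(C)=\{L\}$ a single point still gives $h^0(C,L)=r+1$ after possibly enlarging $r$.

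Next, \eqref{HBNupperbound:b}. For general $C\in|H|$ I bound $\dim W^r_d(C)$. Since $W^r_d(C)=\bigsqcup_{s\ge r}V^s_d(C)$ and $\dim W^r_d(C)=\max_{s\ge r}\dim V^s_d(C)$, and since $\rho_k(g,s,d)$ is non-increasing in... actually one must be mildly careful: it suffices to show $\dim V^r_d(C)\le\rho_k(g,r,d)$ for all $r$ and then take the max over $s\ge r$, using that $\rho_k(g,s,d)\le\rho_k(g,r,d)$ for $s\ge r$ (each term $\rho(g,s-\ell,d)-\ell k$ with the relevant constraint is dominated by a corresponding term for $r$; I would verify this elementary monotonicity). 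By Theorem~\ref{relative:HBNthm}\eqref{relative:HBNthm-a}, $V^r_{H_\epsilon}(v)\subseteq\bigcup_{\overline e\in I}\mathcal M_{H_\epsilon}(v,\overline e)$, a finite union; by part~\eqref{relative:HBNthm-b1}, each nonempty $\mathcal M_{H_\epsilon}(v,\overline e)$ has dimension $\le g+\rho(g,r-\ell_{\overline e},d)-\ell_{\overline e}k\le g+\rho_k(g,r,d)$. Hence $\dim V^r_{H_\epsilon}(v)\le g+\rho_k(g,r,d)$, and therefore $\dim W^r_{H_\epsilon}(v):=\dim\{F:h^0\ge r+1\}\le g+\rho_k(g,r,d)$ as well (taking the max over $s\ge r$). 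Now the support map $W^r_{H_\epsilon}(v)\to|H|$ has $g$-dimensional base, and for a general $C$ in the (constructible, dense if the map dominates) image the fiber dimension is $\dim W^r_{H_\epsilon}(v)-\dim\overline{\text{image}}$; since $|H|\cong\mathbf P^g$ the generic fiber over the image has dimension $\le\dim W^r_{H_\epsilon}(v)-g\le\rho_k(g,r,d)$ when the image is all of $|H|$, and if it is a proper subvariety then $W^r_d(C)=\varnothing$ for general $C$, which is even better. This fiber-dimension argument for the general curve is the only real step; I expect it to be routine given the relative picture, the main subtlety being to organize the finite union over $\overline e$ and the max over $s\ge r$ cleanly so that the single bound $g+\rho_k(g,r,d)$ emerges.

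\textbf{Main obstacle.} There is no serious obstacle: the corollary is a packaging of Theorem~\ref{relative:HBNthm}. The only points requiring a little care are (i) the semicontinuity reduction to $h^0(C,L)=r+1$ and the induced membership $i_*L\in V^r_{H_\epsilon}(v)$, keeping track of torsion-free versus locally free sheaves on $C$ (both lie in $\overline{\operatorname{Pic}}^d(C)$, which is fine for the support map); (ii) the elementary monotonicity $\rho_k(g,s,d)\le\rho_k(g,r,d)$ for $s\ge r$, needed to pass from the $V^r$ bound to the $W^r$ bound; and (iii) extracting the generic fiber dimension of the support map over $|H|$. I would present (i)–(iii) briefly and cite Theorem~\ref{relative:HBNthm} for everything else.
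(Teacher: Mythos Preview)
Your proposal is correct and follows the same route as the paper: both parts are immediate consequences of Theorem~\ref{relative:HBNthm}, using the finite stratification of $V^r_{H_\epsilon}(v)$ by stability types, the dimension bound from part~\eqref{relative:HBNthm-b1}, and the monotonicity of $\rho_k(g,r,d)$ in $r$.

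Two small points where you overcomplicate things compared to the paper. For part~\eqref{HBNupperbound:a}, the paper does not deform $L$ to a point with $h^0=r+1$; instead it argues directly that $V^{r'}_{H_\epsilon}(v)=\varnothing$ for every $r'\geq r$ (each with its own small $\epsilon$, which is harmless since the fiber $V^{r'}_d(C)$ is independent of $\epsilon$), and then $W^r_d(C)=\bigcup_{r'\geq r}V^{r'}_d(C)=\varnothing$. This avoids invoking the ``general point of a component has $h^0=r+1$'' principle, which for merely integral (possibly singular) curves would require extra justification. Second, the remark about needing $\ell\geq r+2-k$ is a red herring here: for the upper bound and emptiness you only need $0\leq\ell_{\overline e}\leq r$, which follows from $1\leq\sum m_i\leq r+1$ in the description of $I$; the constraint $\ell\geq r+2-k$ only enters the existence statement~\eqref{relative:HBNthm-b2} and plays no role in this corollary.
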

\begin{proof}
Note that $\ell_{\overline{e}}\in\{0,...,r\}$ for every stability type $\overline{e}\in I$, since $\sum_{i=1}^p m_i\leq r+1$. If $\rho_k(g,r,d)<0$, then it follows from Theorem \ref{relative:HBNthm}.\eqref{relative:HBNthm-b} that $\cM_{H_\epsilon}(v,\overline{e})=\varnothing$ for all $\overline{e}\in I$. Therefore $V^r_{H_\epsilon}(v)=\varnothing$ by Theorem \ref{relative:HBNthm}.\eqref{relative:HBNthm-a}, which implies $V^r_{d}(C)=\varnothing$ for every integral curve $C\in |H|$. Since $\rho_k(g,r,d)$ is decreasing as a function of $r$, we also have $V^{r'}_d(C)=\varnothing$ for all $r'\geq r$, namely $W^r_d(C)=\emptyset$ which proves \eqref{HBNupperbound:a}.

\vskip 2pt

On the other hand, according to Theorem \ref{relative:HBNthm} we have $\dim V^r_{H_\epsilon}(v)\leq g+\rho_k(g,r,d)$, and hence by considering the support map
\[
V^r_{H_\epsilon}(C)\hookrightarrow \cM_{H_\epsilon}(v)\longrightarrow |H|
\]
we deduce $\dim V^r_d(C)\leq \rho_k(g,r,d)$ for a general $C\in|H|$. Since $\rho_k(g,r,d)$ is decreasing in $r$, it follows that $\dim W^r_d(C)\leq \rho_k(g,r,d)$ as well, which proves \eqref{HBNupperbound:b}.
\end{proof}

\vspace{1mm}

\begin{Rem}\label{general-linearsystem}
Take $a\in\bZ_{\geq0}$ such that $g':=g-ak\geq1$. Thanks to the results of Section \ref{stratification}, %(see for instance Example \ref{Example:LineBundles}), 
Theorem \ref{relative:HBNthm} can be similarly proven for the Mukai vector $v=(0,H-aE,1+d-g')$ if $d\leq g'-1$.  As a consequence, Corollary \ref{HBNupperbound} is also valid for the genus $g'=g-ak$ curves in the linear system $|H-aE|$. Whereas here we take $a=0$ to simplify the statements, this more general version will be required in Section \ref{sec:dominance}.
\end{Rem}

\vspace{1mm}

\begin{Ex}\label{Example:LineBundles}
Set $v:=(0,H,1+d-g)$ for $d\leq g-1$ and $r\geq0$, and let $\ell\in\bZ$ satisfy $\max\{0,r+1-k\}\leq \ell \leq r$. Consider the integers $e,m_1\geq0$ and $m_2>0$ satisfying
\[
r+1=m_1(e+2)+m_2(e+1),\;\;\;r+1-\ell=m_1+m_2.
\]If $\rho(g,r-\ell,d)-\ell k\geq0$, then Theorem \ref{relative:HBNthm}.\eqref{relative:HBNthm-b} asserts the non-emptiness of the moduli space 
$\cM_{H_\epsilon}(v,\overline{e})$ for the balanced stability type $\overline{e}:=\bigl((e+1,m_1),(e,m_2)\bigr)$. We spell out in the geometric language of kernel vector bundles the meaning of a sheaf $L\in\cM_{H_\epsilon}(v)$ having stability type $\overline{e}$.

\vskip 2pt

Since $\Ext^1\bigl(\cO_X(eE),\cO_X((e+1)E)\bigr)=0$, any $L\in\cM_{H_\epsilon}(v,\overline{e})$ fits in an exact sequence
\[
0\longrightarrow \cO_X((e+1)E)^{\oplus m_1}\oplus\cO_X(eE)^{\oplus m_2}\longrightarrow L\longrightarrow F_2\longrightarrow 0
\]
in $\Coh^b(X)$ ($-1\ll b<0$), where $F_2$ is stable along the numerical wall $\cW(F_2,\cO_X)$. More precisely, $F_2$ is a complex with two cohomologies: $\cH^{-1}(F_2)$ (resp.~$\cH^{0}(F_2)$) arises as the kernel (resp.~the cokernel) of the evaluation map
\begin{small}
\begin{equation}\label{evaluationmap}
H^0\bigl(L(-(e+1)E)\bigr)\otimes \cO_X((e+1)E) \bigoplus \frac{H^0(L(-eE))}{H^0(\cO_X(E))\otimes H^0(L(-(e+1)E))}\otimes \cO_X(eE)\overset{\mathrm{ev}}{\longrightarrow} L.  
\end{equation}
\end{small}

If $L \in \cM_{H_\epsilon}(v,\overline{e})$ is general, then by Theorem \ref{Giesekerchamber}.\eqref{GiesekerchamberB} we can write 
\[
F_2=R\mathcal{H}om(G,\cO_X)[1],
\]
where $G$ is general in the moduli space $\cM_{H_\epsilon}\bigl(r-\ell+1,H-\ell E,g-d+r-\ell\bigr)$. If moreover $\ell<r$, then such a general $G$ is locally free: this implies $\cH^0(F_2)=0$ (in other words, the evaluation \eqref{evaluationmap} is surjective). When $\ell=0$ (so that $m_1=e=0$ and $m_2=r+1$) and $L$ is supported on a smooth curve, $G$ is simply the Lazarsfeld-Mukai bundle \cite{lazarsfeld-BNP} of $L$.
\end{Ex}

\vspace{2mm}

\subsection{The case of degree $g-1$}\label{subsec: g-1}
Throughout this subsection we fix $a\in\bZ_{\geq0}$ such that $g':=g-ak\geq1$. We now introduce a notion of stability type for stable objects of Mukai vector \( v = (0, H-aE, 0) \). This case requires an  \emph{ad hoc} treatment (it was skipped in Definition \ref{Def.splitting}), as all the numerical walls $\cW(v,\cO_X(eE))$, where $e\geq0$, coalesce into the horizontal line $w=0$\footnote{This also justifies the need to incorporate a small ball around the origin in the definition \eqref{U} of the region $U_\epsilon$ of Bridgeland stability conditions.}. Nevertheless, we find analogues of the results in Section \ref{stratification} for this modified definition; using these analogues, the proof of Theorem \ref{relative:HBNthm} for $d<g-1$ extends naturally to the case $d=g-1$.

\begin{Def}\label{def:g-1 stabtype}
   A stable object $F_0 \in \mathcal{M}_{H_\epsilon}^{\mathrm{st}}(0, H-aE, 0)$ is said to be \emph{of stability type} $\overline{e} = \bigl((e_i, m_i)\bigr)_{i=1}^p\subset \bZ_{\geq0}\times\bZ_{>0}$, where $p\geq0$ and $e_1 > e_2 > \dots > e_p \geq 0$, if for all $b\in(-\delta_\epsilon,0)$ there exist short exact sequences in $\mathrm{Coh}^b(X)$
    \begin{align*}
        0 &\longrightarrow \Hom(\mathcal{O}_X(e_1E),F_0)\otimes\mathcal{O}_X(e_1E) \xrightarrow{\mathrm{ev}_1} F_0 \longrightarrow F_1\longrightarrow 0, \\
        0 &\longrightarrow \Hom(\mathcal{O}_X(e_2E),F_1)\otimes\mathcal{O}_X(e_2E) \xrightarrow{\mathrm{ev}_2} F_1 \longrightarrow F_2\longrightarrow 0, \\
        &\vdots \\
        0 &\longrightarrow \Hom(\mathcal{O}_X(e_pE),F_{p-1})\otimes\mathcal{O}_X(e_pE)\xrightarrow{\mathrm{ev}_p} F_{p-1} \longrightarrow F_p\longrightarrow 0,
    \end{align*}
    such that for all $i=0, \ldots, p$:
    \begin{enumerate}
        \item\label{def:g-1 stabtype a} The object $F_i$ is $\sigma_{b,0}$-semistable.
        \item\label{def:g-1 stabtype b} $\Hom(\mathcal{O}_X(-uE)[1],F_i)=0$ for all $u\geq1$, and $m_i:= \hom(\mathcal{O}_X(e_iE), F_{i-1})$.
        \item\label{def:g-1 stabtype c} If $i \neq 0$, then $\Hom(\mathcal{O}_X(tE),F_i)=0$ for every $t\geq e_i$. Furthermore, the vanishing $\Hom(\mathcal{O}_X(tE),F_p)=0$ holds for every $t\geq0$.
    \end{enumerate} 
\end{Def}

Note that $\mathcal{M}_{H_\epsilon}(0, H - aE, 0)$ may contain strictly Gieseker-semistable sheaves for any $\epsilon > 0$; however, we restrict our attention to stable sheaves only. We begin by showing the following:
 %Another difference with respect to Definition \ref{Def.splitting} is that, a priori, the quotients $F_i$ ($1\leq i \leq p$) could be nowhere stable. But this is not the case when we work with sufficiently small values of $\epsilon$:

\begin{Lem}\label{stability of Qp}
If $\epsilon>0$ is small enough (depending on $a$ and the list $\bigl((e_i, m_i)\bigr)_{i=1}^p$), then all quotients $F_i$ in Definition \ref{def:g-1 stabtype} are $\sigma_{b, w}$-stable for every $b \in (-\delta, 0)$ and $0<w \ll 1$.
\end{Lem}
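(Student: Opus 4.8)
\textbf{Proof plan for Lemma \ref{stability of Qp}.}

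The plan is to argue by descending induction on $i$, showing that each $F_i$ is $\sigma_{b,w}$-stable for $b\in(-\delta_\epsilon,0)$ and $0<w\ll 1$ (where the threshold on $w$ depends on $b$), using the three conditions in Definition \ref{def:g-1 stabtype} together with the wall-crossing machinery of Section \ref{wallcrossing-special}. First I would record that, by Lemma \ref{lem-delta}.\eqref{lem-delta:a} applied to $v(F_i)=(0,H-(a+e_{i+1}+\cdots)E,\,0-\cdots)$ — i.e.\ the Mukai vector $v-\sum_{j\le i}m_j(1,e_jE,1)$, which has rank $0$ and $\ch_1$ of the form $H-a'E$ with $a'\ge 0$ — a $\sigma_{b,0}$-semistable object of this class cannot acquire a stable factor of class $(0,H-\frac gk E,\ast)$ unless $a'=g/k$; in any case the relevant point $\Pi_\epsilon(F_i)$ sits at the origin only when $\ch_2(F_i)=0$, which is exactly the case $p=0$. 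So the geometric picture is that $\Pi_\epsilon(F_i)$ lies strictly to one side of the origin (sign governed by $\ch_2(F_i)=-\sum_{j\le i}m_j$, which is $\le 0$), and the line $\ell_i$ through $(b,0)$ and $\Pi_\epsilon(F_i)$ has a well-defined slope as $w\to 0^+$.

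Next I would invoke Proposition \ref{prop-main-wallcrossing} and Lemma \ref{lem-wall-crossing-negative} for the class $v(F_i)$: since $\mathrm{ch}_2(F_i)\le 0$ and, for $w>0$ small, the relevant numerical walls have slope $\le 0$, the only way $F_i$ could fail to be $\sigma_{b,w}$-stable just above $w=0$ is along a wall of type $(b_1)$, namely via a map $\cO_X(uE)\hookrightarrow F_i$ (as subobject) or $F_i\twoheadrightarrow \cO_X(uE)[1]$ (as quotient) for some integer $u$. Here Definition \ref{def:g-1 stabtype}.\eqref{def:g-1 stabtype c} and \eqref{def:g-1 stabtype b} do the work: \eqref{def:g-1 stabtype c} gives $\Hom(\cO_X(tE),F_i)=0$ for $t\ge e_i$ (and for all $t\ge 0$ when $i=p$), which kills the subobject $\cO_X(uE)\hookrightarrow F_i$ for $u\ge e_i$; for $u<e_i$ one checks, using the exact sequences and the stability of $F_{i-1}$ along $\cW_i=\cW(\cO_X(e_iE),F_{i-1})$, that a nonzero $\cO_X(uE)\to F_i$ would lift to $\cO_X(uE)\to F_{i-1}$ (because $\Hom(\cO_X(uE),\cO_X(e_iE)[1])=0$ for $u<e_i$ by Lemma \ref{multpencil}), contradicting $\sigma_{b,w}$-stability of $F_{i-1}$ just above $\cW_i$ since $\nu(\cO_X(uE))>\nu(\cO_X(e_iE))=\nu(F_{i-1})$ in that region; and \eqref{def:g-1 stabtype b}, $\Hom(\cO_X(-uE)[1],F_i)=0$ for $u\ge 1$, together with the dual of the same sequences, kills the would-be quotients $F_i\twoheadrightarrow \cO_X(uE)[1]$ with $u\le -1$. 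The remaining borderline case $u=0$ — i.e.\ a sub $\cO_X\hookrightarrow F_i$ or quotient $F_i\twoheadrightarrow\cO_X[1]$ along the horizontal line itself — is handled by Lemma \ref{lem-ox}: $\cO_X$ and $\cO_X(-E)[1]$ are $\sigma_{b,0}$-stable for $b\in(-\delta_\epsilon,0)$, so a common-slope sub/quotient relation with $F_i$ would force $\Pi_\epsilon(F_i)=\Pi_\epsilon(\cO_X)=(0,0)$, excluded unless $p=0$ (in which case $F_0$ is $\sigma$-stable by hypothesis already). Since there are only finitely many walls for $v(F_i)$ intersecting $b=0$ above $w=0$ (as noted after Proposition \ref{prop-main-wallcrossing}), shrinking $\epsilon$ and then $w$ finishes the induction.

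The main obstacle I anticipate is the bookkeeping in the $u<e_i$ (and $-1\le u$, $u\neq 0$ mixed) subcase: one must be careful that the vanishing conditions in Definition \ref{def:g-1 stabtype} are stated for $F_i$ itself, whereas the destabilizing maps produced by Proposition \ref{prop-main-wallcrossing} a priori involve the \emph{Harder--Narasimhan/Jordan--Hölder factors} of $F_i$ just off the wall, not $F_i$ directly; translating ``$F_i$ has a sub or quotient isomorphic to a power of $\cO_X(uE)$ (up to shift)'' back into a nonzero element of $\Hom(\cO_X(uE),F_i)$ or $\Hom(\cO_X(uE),F_i[1])^\vee$ requires the type-$(b_1)$ normal form and the vanishing $\Hom(\cO_X(uE),\cO_X(uE)[1])=\Hom(\cO_X,\cO_X[1])=0$ exactly as in the proof of Proposition \ref{prop-main-wallcrossing}, Step 1, case (c.2). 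Once that reduction is in place, the argument is a routine combination of the lemmas above, and the dependence of $\epsilon$ on $a$ and $\{(e_i,m_i)\}$ comes solely from the finiteness of walls and from the $\epsilon_m$ thresholds in Lemma \ref{lem-key}.
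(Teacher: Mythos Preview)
There is a genuine gap. The core error is in your computation of $\ch_2(F_i)$: since $v(\cO_X(e_jE))=(1,e_jE,1)$ has $\ch_2=0$ (the third entry of the Mukai vector is $\ch_2+\ch_0$), subtracting $\sum_{j\le i}m_j(1,e_jE,1)$ from $v=(0,H-aE,0)$ gives $\ch(F_i)=\bigl(-\sum_{j\le i}m_j,\;H-(a+\sum m_je_j)E,\;0\bigr)$. So $\ch_2(F_i)=0$, and $-\sum m_j$ is the \emph{rank}. This flips the geometry: $\Pi_\epsilon(F_i)$ lies on the negative $b$-axis, and $\nu_{b,w}(F_i)>0$ for $w>0$ small. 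Hence the hypothesis of Lemma \ref{lem-wall-crossing-negative}(1) (``$r_0\le 0$ and $\nu<0$'') is not met, and you cannot exclude type $(b_2)$ or reduce everything to destabilizers of the form $\cO_X(uE)^{\pm}$.

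Because of this, your case analysis misses the main obstruction. In the paper's proof one takes a $\sigma_{b,w}$-stable subobject $Q_1\hookrightarrow F_i$ with $\ch(Q_1)=(-r,tH-qE,0)$ and $r>0$; Proposition \ref{prop-main-wallcrossing} forces $t\in\{0,1\}$. The case $t=0$ is indeed killed by condition \eqref{def:g-1 stabtype b} (here $Q_1\cong\cO_X(-uE)[1]$ is a \emph{subobject}, not a quotient as you wrote). The case $t=1$, however, is not an $\cO_X(uE)$ at all, and your proposal does not address it. The paper's trick is to look instead at the quotient $Q_2$, use the composite surjection $F_0\twoheadrightarrow F_i\twoheadrightarrow Q_2$ together with the already-known $\sigma_{b,w}$-stability of the Gieseker-stable sheaf $F_0$ to force $\ch_0(Q_2)<0$, and then derive a slope contradiction for $\epsilon$ small. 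This use of the chain of surjections back to $F_0$ is the key idea, and it is absent from the proposal. (As a secondary issue, your lifting argument for subobjects $\cO_X(uE)\hookrightarrow F_i$ with $u<e_i$ also fails: by Lemma \ref{multpencil}, $h^1(\cO_X((e_i-u)E))=e_i-u-1>0$ whenever $e_i-u\ge 2$, so the map to $F_{i-1}$ need not lift.)
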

\begin{proof}
For $i=0$ the statement is clear, as $\sigma_{b,w}$ lies in the Gieseker chamber of $v$ by Proposition \ref{prop-main-wallcrossing}.
For $i\geq1$, assume $Q_1 \rightarrow F_i \rightarrow Q_2$ is a short exact sequence destabilizing $F_i$ just above the horizontal wall $w=0$; we may also assume that $Q_1$ is stable there.  
Write $\ch(Q_1) = (-r, tH-qE, 0)$. Since $\nu_{b, w}(Q_1) > \nu_{b,w}(F_i)>0$ for $w\gg0$, we have $r > 0$. Moreover, if we pick $\epsilon < \epsilon_{v(F_i)}$, then by Proposition \ref{prop-main-wallcrossing}, $t\in\{0,1\}$.

\vskip 2pt

If $t=0$, then stability of $Q_1$ implies $Q_1 \cong \cO_X(-uE)[1]$ for some $u\geq1$, according to Lemma \ref{lem-eE}. This is not possible due to the vanishing $\hom(\cO_X(-uE)[1],F_i)=0$.  

\vskip 2pt

If $t=1$, then $\ch(Q_2) = \ch(F_i)-\ch(Q_1)
    = \big(r-\sum_{j=1}^i m_j,  (q-a- \sum_{j=1}^i m_je_j)E , \ 0 \big)$.
Since we have surjections $F_0 \twoheadrightarrow F_i \twoheadrightarrow Q_2$ in $\Coh^b(X)$ and $F_0$ is $\sigma_{b,w}$-stable, we find $0=\nu_{b,w}(F_0)<\nu_{b,w}(Q_2)$ for all $w>0$. This implies $\ch_0(Q_2) <0$, i.e. $0 < r<\sum_{j=1}^i m_j$. But then if we choose $\epsilon$ small enough, we get  
\begin{equation*}
     \frac{-\ch_0(F_i)}{H_\epsilon\cdot\ch_1(F_i)}=\frac{\sum m_j}{\left(H-(a+\sum m_je_j) E\right)\cdot H_\epsilon}  < \frac{\sum m_j -r}{ (q -a-\sum m_je_j)E\cdot H_\epsilon}=\frac{-\ch_0(Q_2)}{H_\epsilon\cdot\ch_1(Q_2)}
\end{equation*}
which yields $\nu_{0, w}(F_i) < \nu_{0, w}(Q_2)$ for all $w\gg0$, a contradiction. 
\end{proof}

%In particular, the inequality $v(F_p)^2\geq -2$ holds. 
Lemma \ref{stability of Qp} is the key step in establishing the analogue of Theorem \ref{Prop-scheme structure}.

\begin{Thm}\label{g-1:scheme-structure}
Fix $v=(0,H-aE,0)$ and any stability type $\overline{e}=\bigl(e_i, m_i)\bigr)_{i=1}^{p}$ where $p\geq0$. If $\epsilon$ is sufficiently small, the subset
    \[
    \cM_{H_\epsilon}(v,\overline{e}):=\bigl\{F_0\in\cM_{H_\epsilon}(v): \;\text{$F_0$ is of stability type $\overline{e}$}\bigr\}
    \]
admits a natural scheme structure as a locally closed subscheme of $\cM_{H_\epsilon}(v)$. 
Moreover, if $\cM_{H_\epsilon}(v,\overline{e})$ is non-empty, then it is smooth and irreducible of dimension
    \[
    \left(v-\sum_{i=1}^p m_i(1,e_iE,1)\right)^2+2+\sum_{j=1}^p m_j\left(\bigl\langle v-\sum_{i=1}^j m_i(1,e_iE,1),(1,e_jE,1)\bigr\rangle-m_j\right).
    \]  
\end{Thm}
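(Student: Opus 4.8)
The plan is to mimic the inductive Grassmann-bundle construction of Theorem~\ref{Prop-scheme structure}, using Lemma~\ref{stability of Qp} in place of Lemma~\ref{lem-stability above the wall} to guarantee that the extensions we build are stable just above the horizontal line $w=0$. The induction is on the length $p$ of the stability type. The base case $p=0$ is immediate: $\cM_{H_\epsilon}(v,\emptyset)\subseteq\cM_{H_\epsilon}^{\mathrm{st}}(v)$ is the open locus of stable sheaves $F_0$ satisfying $\Hom(\cO_X(-uE)[1],F_0)=0$ for all $u\geq1$ and $\Hom(\cO_X(tE),F_0)=0$ for all $t\geq0$; these are open conditions by semicontinuity together with the fact that for $b\in(-\delta_\epsilon,0)$ both $\cO_X(tE)$ (for $t\geq0$) and $\cO_X(-uE)[1]$ (for $u\geq1$) lie in $\Coh^b(X)$, so these $\Hom$-spaces are computed fibrewise.

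For the inductive step, suppose the statement holds for all stability types of length $\leq p-1$ and all Mukai vectors of the form $(0,H-a'E,0)$. Writing $v_1:=v-m_1(1,e_1E,1)=(-m_1,\,H-(a+m_1e_1)E,\,-m_1)$, we first observe that if $\cM_{H_\epsilon}(v,\overline{e})\neq\emptyset$ then $\cM_{H_\epsilon}\bigl(v_1,\overline{e}\setminus(e_1,m_1)\bigr)$ — understood as the appropriate locus of $\sigma_{b,w}$-stable objects just above $w=0$ — is non-empty; by the inductive hypothesis it is smooth, irreducible, locally closed of the predicted dimension inside the relevant moduli space of stable objects. Over it we form the Grassmann bundle $\cG$ with fibre $\Gr\bigl(m_1,\Ext^1(F_1,\cO_X(e_1E))\bigr)$ over a point $F_1$, and using Lemma~\ref{stability of Qp} we obtain a morphism $\varphi\colon\cG\to\cM_{H_\epsilon}^{\mathrm{st}}(v)$ (well-defined for $\epsilon$ small) sending $(F_1,V)$ to the object $F_0$ fitting in $V^\vee\otimes\cO_X(e_1E)\to F_0\to F_1\to V^\vee\otimes\cO_X(e_1E)[1]$. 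The verification that $\varphi$ is injective on points, and that its image is exactly the locus of objects of stability type $\overline{e}$, follows from Definition~\ref{def:g-1 stabtype} exactly as in Theorem~\ref{Prop-scheme structure}. The injectivity of $d\varphi$ is proved by the identical diagram chase: apply $\Ext^\bullet(-,\cO_X(e_1E))$ to the defining triangle to identify $\Ext^1(F_1,\cO_X(e_1E))/V\cong\Ext^1(F_0,\cO_X(e_1E))$, apply $\Ext^\bullet(-,F_1)$ to get $\lambda\colon\Ext^1(F_1,F_1)\hookrightarrow\Ext^1(F_0,F_1)$ together with $\Hom(F_0,F_1)=\bC$, and apply $\Ext^\bullet(F_0,-)$ to obtain the exact sequence relating $\Ext^1(F_0,F_0)$, $V^\vee\otimes\Ext^1(F_0,\cO_X(e_1E))$ and $\Ext^1(F_0,F_1)$; the vanishing $\hom(\cO_X(e_1E),F_1)=0$ (part~\eqref{def:g-1 stabtype c} of Definition~\ref{def:g-1 stabtype}) shows $\mathrm{im}(\lambda)\subseteq\ker(\gamma')$, giving the desired splitting of tangent spaces. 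Hence $\varphi$ is a locally closed immersion, $\cM_{H_\epsilon}(v,\overline{e})$ inherits a scheme structure as (an open subset of) $\cG$, and the dimension count is the same telescoping computation as in Theorem~\ref{Prop-scheme structure}, using $\dim\Gr\bigl(m_1,\Ext^1(F_1,\cO_X(e_1E))\bigr)=m_1\bigl(\langle v_1,(1,e_1E,1)\rangle-m_1\bigr)$ which follows from $\Hom(\cO_X(e_1E),F_1)=0=\Hom(F_1,\cO_X(e_1E))$ (the latter from $\sigma_{b,0}$-stability and positivity of the relevant slope). Finally $\cM_{H_\epsilon}(v,\overline{e})$ is carved out of $\cG$ by the open condition of $\sigma_{b,w}$-stability for $0<w\ll1$, which is exactly the content of Lemma~\ref{stability of Qp}.

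\textbf{Main obstacle.} The delicate point, and the reason this case was postponed, is that the numerical walls $\cW(\cO_X(e_iE),v)$ all degenerate to the line $w=0$, so the successive destabilizations of Definition~\ref{Def.splitting} cannot be separated by moving down a vertical segment. Instead one must work at a fixed stability condition $\sigma_{b,0}$ with $b\in(-\delta_\epsilon,0)$, where all the objects $\cO_X(e_iE)$ have the same slope as $F_0$, and control the filtration purely through the $\Hom$-vanishing conditions~\eqref{def:g-1 stabtype b} and~\eqref{def:g-1 stabtype c}. Concretely, the subtlety is ensuring that $\varphi$ lands in $\cM_{H_\epsilon}^{\mathrm{st}}(v)$ rather than merely in the strictly-semistable locus, and that the stability types genuinely stratify (no object carries two distinct types): both are handled by Lemma~\ref{stability of Qp}, whose proof forces $\epsilon$ to be chosen small depending on $a$ and on the entire list $\{(e_i,m_i)\}$, so one must be careful that only finitely many stability types are relevant in any intended application (which is guaranteed by the square $\geq-2$ constraint, as in Remark~\ref{partial compact}). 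Once Lemma~\ref{stability of Qp} is in hand, the rest is a faithful transcription of the argument of Theorem~\ref{Prop-scheme structure}.
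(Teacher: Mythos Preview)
Your approach is correct and matches the paper's (which does not give a detailed proof but refers back to Theorem~\ref{Prop-scheme structure} and identifies Lemma~\ref{stability of Qp} as the key new input). There is, however, one formulation issue worth fixing.

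Your inductive hypothesis is stated only for Mukai vectors of the form $(0,H-a'E,0)$, yet you immediately apply it to $v_1=(-m_1,H-(a+m_1e_1)E,-m_1)$, which has $\ch_0=-m_1\neq0$. So as written the induction does not close. There are two clean fixes. The first is to mimic the proof of Theorem~\ref{Prop-scheme structure} more faithfully: there the inductive hypothesis is explicitly over \emph{all Mukai vectors with $\ch_1=H-aE$}, not just those of rank zero, and the notion of stability type for the intermediate $F_j$ is the obvious restriction of Definition~\ref{def:g-1 stabtype} (conditions (a)--(c) for $i\geq j$). The second, which is what the paper sketches in the paragraph after the theorem, is to bypass the induction entirely: fix $\sigma=\sigma_{b,w}$ for $0<w\ll1$, let $U\subset\cM_\sigma(v_p)$ be the open locus of objects $F_p$ with $\hom(\cO_X(-E)[1],F_p)=0=\hom(\cO_X,F_p)$, and build the iterated Grassmann bundle over $U$ directly. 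At each layer one checks that conditions (a)--(c) propagate from $F_{i}$ to $F_{i-1}$ (this is straightforward from the defining triangle), and the tangent-space argument is identical to your write-up.

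A second minor point: you invoke Lemma~\ref{stability of Qp} to conclude that $\varphi$ lands in $\cM_{H_\epsilon}^{\mathrm{st}}(v)$, but that is not what the lemma says. Lemma~\ref{stability of Qp} runs in the \emph{backward} direction: given an $F_0$ already known to have stability type $\overline{e}$, it shows the successive quotients $F_i$ are $\sigma_{b,w}$-stable for $0<w\ll1$, so that the inverse of $\varphi$ is well-defined with target in $\cG$. In the \emph{forward} direction, an extension $F_0$ built from $(F_1,V)\in\cG$ is automatically $\sigma_{b,0}$-semistable and satisfies conditions (a)--(c), but its Gieseker stability is not automatic (cf.\ Claim~\ref{maps O_D} in the proof of Theorem~\ref{thm-existence-g-1}, where stability requires work). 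This is precisely why $\cM_{H_\epsilon}(v,\overline{e})$ is only an \emph{open subset} of $\cG$, as you correctly state at the end---just be aware that this openness is not supplied by Lemma~\ref{stability of Qp}.
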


\vspace{3mm}

More precisely, set $v_p:=v-\sum_{i=1}^p m_i(1,e_iE,1)$ and $\sigma:=\sigma_{b,w}$ for $b\in(-\delta_\epsilon,0)$, $0<w\ll1$. If $U\subseteq \cM_{\sigma}(v_p)$ is the (possibly empty) open subset of objects $F_p$ with $\hom(\cO_X(-E)[1],F_p)=0$ and $\hom(\cO_X,F_p)=0$, then $\cM_{H_\epsilon}(v,\overline{e})$ is an open subset of an iterated Grassmannian bundle over $U$.

\vskip 2pt

Our next result is an analogue of Theorem \ref{thm.splitting}. 

\vspace{1mm}

\begin{Thm}\label{Thm-splitting-g-1}
Let $v=(0,H-aE,0)$. There exists $\epsilon(a)>0$  such that, if $0<\epsilon<\epsilon(a)$, then every $F_0\in\cM_{H_\epsilon}(v)$ has a stability type $\overline{e} = \bigl((e_i, m_i)\bigr)_{i=1}^p$. If $h^0(X, F_0)=r+1\geq1$, then moreover $p\geq1$ and the following inequalities hold:
\[
\sum_{i=1}^p m_i\leq r+1\leq \sum_{i=1}^pm_i(e_i+1),\;\;\; m_1(e_1+1)\leq r+1.
\]
\end{Thm}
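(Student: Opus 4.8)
The strategy is to mimic the proof of Theorem~\ref{thm.splitting}, adapting it to the degenerate situation where all numerical walls $\cW(v,\cO_X(eE))$ have collapsed onto the line $w=0$. The new feature is that we must produce the destabilizing sequences at a single level, working with stability conditions $\sigma_{b,0}$ for $b\in(-\delta_\epsilon,0)$, and the role played by ``moving down the vertical line $b=0$'' in Definition~\ref{Def.splitting} is now played by the fact that, by Lemma~\ref{lem-ox}, the objects $\cO_X(eE)$ are all strictly $\sigma_{b,0}$-semistable of the same slope $0$ as $F_0$. First I would fix $\epsilon<\min\{\epsilon(a),1,\epsilon_v\}$ small enough (in particular smaller than the bound produced by Lemma~\ref{lem-key} for all relevant quantities, and small enough to apply Lemma~\ref{stability of Qp} and Theorem~\ref{g-1:scheme-structure}), and start with $F_0\in\cM_{H_\epsilon}(v)$. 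By Proposition~\ref{prop-main-wallcrossing} (case (III), with $r_0=s_0=0$, $w_0=0$), any destabilizing sequence for $F_0$ along the line $w=0$ has a factor whose $\ch_1$ is a multiple of $E$; combined with Lemma~\ref{lem-slope-stability} and the finiteness of walls, one extracts from the Jordan--Hölder (or Harder--Narasimhan just below $w=0$) filtration of $F_0$ a maximal $e_1$ with $\Hom(\cO_X(e_1E),F_0)\neq 0$ and builds the evaluation sequence
\[
0\longrightarrow \Hom(\cO_X(e_1E),F_0)\otimes\cO_X(e_1E)\stackrel{\ev}\longrightarrow F_0\longrightarrow F_1\longrightarrow 0,
\]
checking that it is a short exact sequence in $\Coh^b(X)$ with $F_1$ again $\sigma_{b,0}$-semistable and satisfying $\Hom(\cO_X(-uE)[1],F_1)=0$ for $u\geq 1$ (this last vanishing is inherited from $F_0$, which by Lemma~\ref{lem-ox}.\eqref{lem-ox: a} cannot have $\cO_X(-uE)[1]$ as a sub).

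Next I would iterate: having constructed $F_{i-1}$, if $\Hom(\cO_X(tE),F_{i-1})=0$ for all $t\geq 0$ the process stops with $p=i-1$; otherwise let $e_i$ be the largest $t\geq 0$ with $\Hom(\cO_X(tE),F_{i-1})\neq 0$, form the evaluation sequence with $m_i:=\hom(\cO_X(e_iE),F_{i-1})$, and verify conditions \eqref{def:g-1 stabtype a}--\eqref{def:g-1 stabtype c} of Definition~\ref{def:g-1 stabtype}. Conditions \eqref{def:g-1 stabtype a} and \eqref{def:g-1 stabtype b} (semistability of $F_i$ and the vanishing $\Hom(\cO_X(-uE)[1],F_i)=0$) follow exactly as in Theorem~\ref{thm.splitting} using Proposition~\ref{prop-main-wallcrossing} and Lemma~\ref{lem-wall-crossing-negative}, noting that $\ch_2$ stays $\leq 0$ (in fact $\ch_2(F_i)=-\sum_{j\le i}m_j\le 0$) so the relevant walls are of type $(b_1)$; condition \eqref{def:g-1 stabtype c}, that $\Hom(\cO_X(tE),F_i)=0$ for $t\geq e_i$, is the reason we choose $e_i$ maximal and then need that $e_{i}<e_{i-1}$, which holds because the surjection $F_{i-1}\twoheadrightarrow F_i$ forces $\Hom(\cO_X(tE),F_i)\hookrightarrow$ a quotient of $\Hom(\cO_X(tE),F_{i-1})$ modulo the image of $\Hom(\cO_X(tE),\cO_X(e_{i}E)^{\oplus m_i})$, which vanishes for $t>e_i$; more care is needed to see the process terminates, for which I would use that $\ch_0(F_i)=-\sum_{j\le i}m_j$ is strictly decreasing while $v(F_i)^2\geq -2$ must hold (Lemma~\ref{lem-delta}, since $F_i$ is semistable with stable factors of non-negative discriminant after the reductions above), bounding $p$ and the $m_i$. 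Termination at an $F_p$ with $\Hom(\cO_X(tE),F_p)=0$ for all $t\geq 0$ gives the stability type, and $v(F_p)^2\geq -2$ by Theorem~\ref{g-1:scheme-structure} / Remark-type reasoning.

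For the inequalities, assume $h^0(F_0)=\hom(\cO_X,F_0)=r+1\geq 1$. Then $p\geq 1$ since $F_0$ itself cannot satisfy $\Hom(\cO_X,F_0)=0$. Applying $\Hom(\cO_X,-)$ to the evaluation sequences and using $h^0(\cO_X(e_iE))=e_i+1$ (Lemma~\ref{multpencil}), together with $\hom(\cO_X,F_p)=0$, gives $r+1=\hom(\cO_X,F_0)\leq \sum_{i=1}^p m_i(e_i+1)$ by the telescoping argument of Theorem~\ref{thm.splitting}, and $m_1(e_1+1)\le \hom(\cO_X,F_0)=r+1$ from the first sequence alone. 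For the lower bound $\sum m_i\le r+1$: since $h^2(\cO_X(e_iE))\le 1$ (with equality only possible at $e_i=0$, i.e. $i=p$) and $\chi(\cO_X,F_i)=-2\sum_{j\le i}m_j$ while $\ext^2(\cO_X,F_i)=\hom(F_i,\cO_X)$ which vanishes by semistability of $F_i$ with slope $0$ above $w=0$ (so no map to $\cO_X$, which sits on the wall), one gets as in Theorem~\ref{thm.splitting} the chain $\ext^1(\cO_X,F_0)\geq \ext^1(\cO_X,F_1)\geq\cdots\geq \ext^1(\cO_X,F_{p-1})\geq \ext^1(\cO_X,F_p)-m_p$, and comparing with $\ext^1(\cO_X,F_i)=\hom(\cO_X,F_i)+2\sum_{j\le i}m_j$ yields $2(m_1+\cdots+m_{p-1})+m_p\leq r+1$, hence $\sum m_i\le r+1$.

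\textbf{Main obstacle.} The delicate point is condition \eqref{def:g-1 stabtype c}, namely ensuring $e_1>e_2>\cdots>e_p$ and that each $F_i$ genuinely has no maps from $\cO_X(tE)$ for $t\geq e_i$ --- equivalently, that after killing all sections of $F_{i-1}$ coming from $\cO_X(e_iE)$ via the evaluation map, the ``level'' drops and we do not re-create higher sections. This requires carefully analyzing the long exact sequences $\Hom(\cO_X(tE),-)$ applied to the evaluation triangles and using the stability of $F_i$ just above $w=0$ (Lemma~\ref{stability of Qp}) to exclude unwanted extensions; it is exactly here that the collapse of all walls onto $w=0$ makes the bookkeeping more subtle than in Theorem~\ref{thm.splitting}, and where the genericity of $\epsilon$ (through $\epsilon(a)$, $\epsilon_v$, and the thresholds of Lemma~\ref{lem-key}) must be invoked uniformly for all the finitely many numerical possibilities.
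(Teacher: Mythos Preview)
Your overall plan and the treatment of the inequalities at the end are fine, but there is a genuine gap at the heart of the inductive construction, and the tools you propose to fill it do not work.

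The specific problem is threefold, and all three are resolved in the paper by a single technical result you have not invoked: Proposition~\ref{prop.composition}, which asserts that if $\Hom(\cO_X(pE),Q)=0$ for all $p>n$, then the composition map
\[
\Hom(\cO_X(nE),Q)\otimes\Hom(\cO_X(mE),\cO_X(nE))\longrightarrow\Hom(\cO_X(mE),Q)
\]
is injective for every $m\leq n$. Here is where it enters:

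\textbf{(1) Injectivity of the evaluation map.} You write ``checking that it is a short exact sequence in $\Coh^b(X)$'' without saying how. In Theorem~\ref{thm.splitting} this is automatic because $\cO_X(e_iE)^{\oplus m_i}$ arises as the destabilizing \emph{subobject} along a genuine wall. Here all walls coincide at $w=0$, $\cO_X(e_iE)$ is only strictly $\sigma_{b,0}$-semistable for $e_i\geq 1$ (Lemma~\ref{lem-ox}), and there is no wall-crossing mechanism to hand you injectivity. The paper's argument factors the evaluation through $\Hom(\cO_X,F_i)\otimes\cO_X\to F_i$ (whose injectivity uses that $\cO_X$ \emph{is} $\sigma_{b,0}$-stable) and identifies the possible kernel with copies of $\cO_X(-E)[1]$; the injectivity of the top arrow in the relevant diagram is exactly Proposition~\ref{prop.composition}.

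\textbf{(2) The vanishing $\Hom(\cO_X(-uE)[1],F_{i+1})=0$.} You say this is ``inherited from $F_0$'', but the long exact sequence only gives that $\Hom(\cO_X(-uE)[1],F_{i+1})$ is the kernel of the composition map
\[
\Hom(\cO_X(e_{i+1}E),F_i)\otimes\Hom(\cO_X(-uE),\cO_X(e_{i+1}E))\longrightarrow\Hom(\cO_X(-uE),F_i),
\]
and you need this map to be injective. That is again Proposition~\ref{prop.composition}. Your appeal to Proposition~\ref{prop-main-wallcrossing} and Lemma~\ref{lem-wall-crossing-negative} (``exactly as in Theorem~\ref{thm.splitting}'') does not apply: the refined conclusions of Proposition~\ref{prop-main-wallcrossing} explicitly assume $(s_0,w_0)\neq(0,0)$, which is precisely the case you are in.

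\textbf{(3) The vanishing $\Hom(\cO_X(tE),F_{i+1})=0$ for $t>e_{i+1}$.} Your sketch says the surjection $F_i\twoheadrightarrow F_{i+1}$ forces this, but the relevant connecting map lands in $\Ext^1(\cO_X(tE),\cO_X(e_{i+1}E))\cong H^1(\cO_X((e_{i+1}-t)E))$, which is \emph{nonzero} as soon as $t\geq e_{i+1}+2$ (Lemma~\ref{multpencil}). The paper instead proves only $\Hom(\cO_X(e_{i+1}E),F_{i+1})=0$ from the construction and then bootstraps via Lemma~\ref{lem-plus one}: if $\Hom(\cO_X(-E)[1],Q)=0$ and $\Hom(\cO_X(tE),Q)=0$ then $\Hom(\cO_X((t+1)E),Q)=0$. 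Note this bootstrap \emph{requires} the vanishing from point~(2).

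Finally, your proposed fix in the ``main obstacle'' paragraph---using Lemma~\ref{stability of Qp}---is circular: that lemma takes as input the quotients $F_i$ of an object \emph{already known} to satisfy Definition~\ref{def:g-1 stabtype}, so it cannot be used to verify the conditions of that definition.
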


\vspace{1mm}

The key step to prove Theorem \ref{Thm-splitting-g-1} is the following injectivity result:

\vspace{1mm}

\begin{Prop}\label{prop.composition}
Let $b\in\bR$ and $n\in\mathbb N$. If $Q\in\Coh^b(X)$ is an object satisfying $\Hom(\cO_X(pE), Q) = 0$ for all $p > n$, then the map
    \begin{align*}
        \varphi \colon \Hom(\cO_X(nE),Q)\otimes\Hom(\cO_X(mE),\cO_X(nE)) & \longrightarrow \Hom(\cO_X(mE),Q)\\
        f\otimes g & \longmapsto f\circ g
    \end{align*}
is injective for all $m\leq n$.
\end{Prop}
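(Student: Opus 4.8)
\textbf{Proof strategy for Proposition \ref{prop.composition}.} The plan is to reduce everything to the short exact sequence obtained by twisting the Euler-type sequence \eqref{evalutationseq} of Lemma \ref{multpencil}, and then to iterate. Concretely, for each integer $n'$ with $m < n' \leq n$ we have the sequence
\[
0\longrightarrow \cO_X((n'-1)E)^{\oplus (n'-m)}\longrightarrow \Hom(\cO_X(n'E),\cO_X(n'E))\cdot(\text{no})
\]
— more usefully, twisting \eqref{evalutationseq} by $\cO_X(mE)$ and replacing $q$ by $n-m$ we obtain
\begin{equation}\label{eq:twistedeuler}
0\longrightarrow \cO_X((m-1)E)^{\oplus (n-m)}\overset{i}{\longrightarrow}\Hom(\cO_X(mE),\cO_X(nE))\otimes\cO_X(mE)\overset{\mathrm{ev}}{\longrightarrow}\cO_X(nE)\longrightarrow 0,
\end{equation}
where I am using $\Hom(\cO_X(mE),\cO_X(nE))\cong H^0(\cO_X((n-m)E))\cong\mathrm{Sym}^{n-m}H^0(\cO_X(E))$ via Lemma \ref{multpencil}.\eqref{symisom}. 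Applying the functor $\Hom(-,Q)$ is not quite what I want; instead I should view $\varphi$ as the map on $\Hom(\cO_X(mE),-)$-cohomology induced by $\mathrm{ev}$, since $\mathrm{ev}$ composed with a map $\cO_X(mE)\to\cO_X(nE)$ followed by $\cO_X(nE)\to Q$ recovers $f\circ g$. Applying $\Hom(\cO_X(mE),-)$ to \eqref{eq:twistedeuler} gives the long exact sequence whose relevant piece is
\[
\Hom(\cO_X(mE),\cO_X((m-1)E))^{\oplus(n-m)}\longrightarrow \Hom(\cO_X(mE),\cO_X(nE))\otimes\Hom(\cO_X(mE),\cO_X(mE))\overset{\varphi'}{\longrightarrow}\Hom(\cO_X(mE),Q)\oplus\cdots
\]
Wait — this is the wrong functor to see the full tensor. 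The cleaner route, which is how the analogous Proposition \ref{prop.general} was handled: compose \eqref{eq:twistedeuler} with the cone / apply $\Hom(-,Q)$ directly. Since $\cO_X(mE)$, $\cO_X((m-1)E)$ are line bundles and $\Hom(\cO_X(mE),Q)=0$ whenever $m>n$ is what we want, but here $m\le n$, we instead look at $\ker\varphi$: applying $\Hom(-,Q)$ to \eqref{eq:twistedeuler} and using $\Hom(\cO_X(nE),Q)\ne 0$ in general, the sequence reads
\[
0\to\Hom(\cO_X(nE),Q)\to \Hom(\cO_X(mE),\cO_X(nE))^{\vee}\otimes\Hom(\cO_X(mE),Q)\to \Hom(\cO_X((m-1)E),Q)^{\oplus(n-m)}\to\cdots
\]
and $\varphi$ is precisely the injection of $\Hom(\cO_X(nE),Q)\otimes\Hom(\cO_X(mE),\cO_X(nE))$ — no, the dimensions don't match directly because $\varphi$ has $\Hom(\cO_X(nE),Q)$ only as one tensor factor, not the whole source.

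\textbf{Corrected approach: induction on $n-m$, peeling off one elliptic curve at a time.} The hard part, and the one I would organize the whole proof around, is the base case $n-m=1$: showing that
\[
\varphi\colon \Hom(\cO_X(nE),Q)\otimes H^0(\cO_X(E))\longrightarrow \Hom(\cO_X((n-1)E),Q)
\]
is injective, given $\Hom(\cO_X(pE),Q)=0$ for all $p>n$. This is exactly a base-point-free-pencil-trick statement: pick a basis $(s,t)$ of $H^0(\cO_X(E))$; an element of the kernel is a pair $(f_s,f_t)\in\Hom(\cO_X(nE),Q)^{\oplus 2}$ with $s f_t = t f_s$ (after identifying $\Hom(\cO_X(nE),Q)\cdot s\subset \Hom(\cO_X((n-1)E),Q)$, etc., using Lemma \ref{multpencil}.\eqref{evmatrix}). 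From the short exact sequence $0\to\cO_X(nE)\to\cO_X((n+1)E)\to\cO_J\to 0$ (with $J\in|E|$ the curve cut by $s$) and $\Hom(\cO_X((n+1)E),Q)=0$, multiplication by $s$ is injective on $\Hom(\cO_X(nE),Q)$ — this is the analogue of the injectivity of $\gamma_t$ in the proof of Proposition \ref{prop.general}. So $s f_t = t f_s$ together with $s(\cdot)$ injective forces, via a standard divisibility argument in the graded module $\bigoplus_p\Hom(\cO_X(-pE)\otimes\cdot)$... concretely: $t f_s$ is divisible by $s$, and since multiplication by $s$ has no zero-divisors on this module, $f_s$ is divisible by $s$; writing $f_s = s h$ then $s f_t = s t h$ gives $f_t = t h$, and $h\in\Hom(\cO_X((n+1)E),Q)=0$, so $f_s=f_t=0$. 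For the inductive step with $n-m=r\ge 2$, I factor $\Hom(\cO_X(mE),\cO_X(nE))$ using \eqref{eq:twistedeuler}: a kernel element of $\varphi$ maps to a kernel element after composing with $\cO_X(mE)\to\cO_X((m+1)E)$ in an appropriate sense, reducing to the case $n-(m+1)$, plus the newly-introduced $\cO_X((m-1)E)$-terms which are controlled exactly as the list \eqref{list} in the proof of Proposition \ref{prop.general} is handled there.

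\textbf{Cleanest writeup.} In fact the whole proposition is formally parallel to (a special case of) Proposition \ref{prop.general}: there, $\hom(Q_1,\cO_X(uE))=0$ for $u>q$ was deduced from the analogue of this very injectivity with the roles of $Q$ and the line bundles swapped. So I would write the proof by invoking the identical mechanism. Apply $\Hom(-,Q)$ to the sequence \eqref{eq:twistedeuler} (the $\cO_X(mE)$-twist of \eqref{evalutationseq} with $q$ replaced by $n-m$): since $\Hom(\cO_X(nE),Q)\to\Hom(\cO_X(mE),\cO_X(nE))^\vee\otimes\Hom(\cO_X(mE),Q)$ is the natural map and $\varphi$ is its transpose/adjoint under the perfect pairing on $\Hom(\cO_X(mE),\cO_X(nE))$, the description of $\ker\varphi$ via Lemma \ref{multpencil}.\eqref{evmatrix} gives exactly the tuple condition \eqref{list}. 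Then, exactly as in the final paragraph of the proof of Proposition \ref{prop.general}: for a general $t\in H^0(\cO_X(E))$ the map $\alpha\mapsto t\alpha$ on $\Hom(\cO_X((m-1)E),Q)\to\Hom(\cO_X(mE),Q)$ is injective — here this uses nothing more than $\Hom(\cO_X((m)E),Q)$... rather, the vanishing $\Hom(\cO_X(pE),Q)=0$ for $p>n$ applied at the top of the induction, plus the snake/kernel identification $\ker(\cdot t) = \Hom(Q^\vee,\cdot)$-type computation $= \Hom(\cO_X(mE),\cO_{J(t)})\cdot$... — and then the inductive vanishing $\alpha_1 = \dots = \alpha_{n-m} = 0$ follows verbatim. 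The single genuine obstacle is thus the injectivity of multiplication-by-a-general-section of $\cO_X(E)$ on the relevant $\Hom$-groups; this follows from the long exact sequence attached to $0\to\cO_X((p-1)E)\to\cO_X(pE)\to\cO_{J}\to 0$ combined with the hypothesis $\Hom(\cO_X(pE),Q)=0$ for $p>n$ and the fact that $\Hom(\cO_X(pE),\cO_J)=0$ while $\hom(\cO_J,Q[i])$ is concentrated in degrees $0,1,2$ because $\cO_J, Q\in\Coh^b(X)$. Everything else is bookkeeping identical to Proposition \ref{prop.general}.
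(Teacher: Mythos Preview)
Your proposal has the right skeleton --- isolate a kernel element, peel off the first nonzero coefficient, and reduce to a divisibility statement --- but the heart of the argument is missing, and the gap is genuine rather than cosmetic.

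The step you treat as routine is this: from $s f_t = t f_s$ (equivalently, $t\delta_0 = -s\delta_1$), conclude that $f_s$ factors as $f_s = s\cdot h$ for some $h\in\Hom(\cO_X((n+1)E),Q)$. You justify this by saying ``multiplication by $s$ has no zero-divisors on this module, so $f_s$ is divisible by $s$.'' That implication is simply false for graded modules. Over $R=k[s,t]$, take $M = R^{\oplus 2}/(s e_1 + t e_2)$: both $s$ and $t$ act without zero-divisors, the bottom degree is $M_0=k^2$, and yet $M_0\otimes R_1\to M_1$ has the nonzero kernel element $e_1\otimes s + e_2\otimes t$. Neither $e_1$ nor $e_2$ lies in $sM$. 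So ``$s,t$ are non-zero-divisors plus bounded below'' does not give you the base-point-free-pencil-trick conclusion once $Q$ is a complex rather than a sheaf.

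The paper confronts exactly this point and resolves it with a derived-category argument you do not mention. First, it proves $\Hom(\cO_{iG},Q)=0=\Hom(\cO_{iF},Q)$ for all $i>0$ by sandwiching $\cO_{iG}$ in a triangle $\cO_X(\ell E)\to\cO_{iG}\to\cO_X((\ell-i)E)[1]$ with $\ell>n$; both outer terms have vanishing $\Hom(-,Q)$ (the first by hypothesis, the second because it is a shift of an object in the heart). This is what makes multiplication by $s^i,t^i$ injective. Then, for the divisibility step, the paper applies the octahedral axiom to the two factorizations $\delta_i\circ t^{n_0-i} = (\text{something})\circ s$, obtains a nonzero map $\psi\colon\cO_G\to\mathrm{cone}(\delta_i)$, and uses $\Hom(\cO_G[-1],\cO_X(nE))\cong\bC$ together with $\Hom(\cO_G,Q)=0$ to force $\delta_i = \delta_i'\circ s$ via TR3. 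This is not bookkeeping parallel to Proposition~\ref{prop.general}: there, one only needs injectivity after restricting to a \emph{general} subspace $V$, and the argument finishes by choosing $V$ to avoid $V_s$ and $V_t$. Here there is no genericity to exploit; you must rule out \emph{every} relation in the bottom degree, and that requires the octahedral input.
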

\begin{proof}
     We may assume that $Q$ is not of the form $\cO_X(qE)$ for some $q\in\bZ$; otherwise, $q\leq n$ and the claim is trivial. Consider the identification
    \[
    \Hom(\cO_X(mE),\cO_X(nE))\cong H^0(X, \cO_X((n-m)E))\cong \mathrm{Sym}^{n_0}H^0(X, \cO_X(E))
    \]
    provided by Lemma \ref{multpencil}, where $n_0:= n-m$. Let $s,t$ be a basis of $\Hom(\cO_X(-E),\cO_X)$, and let $J, F\in|E|$ denote the corresponding (disjoint) curves in the elliptic pencil. For any $i>0$, by considering for $\ell \gg 0$ the exact triangles 
\[
\cO_X(\ell E) \longrightarrow \cO_{iJ} \longrightarrow \cO_X((\ell-i)E)[1], \qquad 
\cO_X(\ell E) \longrightarrow \cO_{iF} \longrightarrow \cO_X((\ell-i)E)[1]
\] 
the vanishings $\Hom\bigl(\cO_{X}((\ell-i)E)[1], Q\bigr) = 0 = \Hom\bigl(\cO_X(\ell E), Q\bigr)$ imply that  
      \begin{equation}\label{vanish}
        \Hom(\cO_{iJ}, Q) = 0, \qquad \Hom(\cO_{iJ}, Q) = 0. 
    \end{equation}
Hence the maps 
   \begin{align}\label{injective}
       \Hom(\cO_X((i-1)E),Q) \xrightarrow{\circ s^i} \Hom(\cO_X(-E),Q),\\
       \Hom(\cO_X((i-1)E),Q) \xrightarrow{\circ t^i} \Hom(\cO_X(-E),Q)\nonumber
   \end{align}
are injective for any $i>0$.

   \bigskip
    The compositions $s^{n_0},s^{n_0-1}t,\ldots, st^{n_0-1},t^{n_0}$ form a basis of $\Hom\bigl(\cO_X(mE),\cO_X(nE)\bigr)$. Assume for the sake of a contradiction that $\ker(\varphi)\neq0$, namely we have a nonzero tensor
    \[
    \delta_0\otimes t^{n_0}+\delta_1\otimes t^{n_0-1}s+\cdots+\delta_{n_0}\otimes s^{n_0}\;\;(\delta_j\in\Hom(\cO_X(nE),Q))
    \]
    in the kernel. If $i\in \{0,\ldots,n_0-1\}$ is minimal such that $\delta_i \neq 0$, then we have 
    $$
    (\delta_i\circ t^{n_0-i}+\cdots+\delta_{n_0}\circ s^{n_0-i})\circ s^i =0
    $$
    and hence $\delta_i\circ t^{n_0-i}+\cdots+\delta_{n_0}\circ s^{n_0-i}=0$, by the injectivity of the maps in \eqref{injective}. Thus
    $$\delta_i\circ t^{n_0-i}=(-\delta_{i+1}\circ t^{n_0-i-1}-\cdots-\delta_{n_0}\circ s^{n_0-i})\circ s. $$
    Note that since $\delta_i \neq 0$, we have $\delta_i\circ t^{n_0-i} \neq 0$ as well.

\vskip 4pt
  The goal is to show 
  \begin{equation}\label{goal}
      \delta_i=\delta_i'\circ s
  \end{equation}
  for some nonzero $\delta_i'\in\Hom(\cO_X((n+1)E),Q)$, which contradicts our assumption. Consider the two identical morphisms
    \begin{align*}
        \cO_X((m+i)E)\overset{t^{n_0-i}}{\longrightarrow}\cO_X(nE)\overset{\delta_i}{\longrightarrow}Q\\
        \cO_X((m+i)E)\overset{s}{\longrightarrow}\cO_X((m+i+1)E)\overset{-\delta_{i+1}\circ t^{n_0-i-1}-\cdots-\delta_{n_0}\circ s^{n_0-i}}{\longrightarrow}Q
    \end{align*}
    By the octahedral axiom, we have distinguished triangles
    \begin{align*}
    \cO_{(n_0-i)F}\longrightarrow \mathrm{cone}(\delta_i\circ t^{n_0-i})\longrightarrow \mathrm{cone}(\delta_i)\\
    \cO_{J}\longrightarrow \mathrm{cone}(\delta_i\circ t^{{n_0-i}})\longrightarrow \mathrm{cone}(-\delta_{i+1}\circ t^{n_0-i-1}-\cdots-\delta_{n_0}\circ s^{n_0-i}). 
    \end{align*}
    Note that none of the above cones is zero, as we initially assumed that $Q \neq \cO_X(qE)$ for all $q \in \mathbb{Z}$. Since $\Hom(\cO_J,\cO_{(n_0-i)F})=0$ (they are sheaves with disjoint supports), we obtain a nonzero map
$
  \psi \colon  \cO_G\longrightarrow \mathrm{cone}(\delta_i\circ t^{n_0-i})\longrightarrow \mathrm{cone}(\delta_i).
    $
 Finally, consider the following diagram whose rows are distinguished triangles:
\[
 \begin{xy}
\xymatrix{
\cO_J[-1]
\ar[rr]\ar[d]^{\psi} &&  \cO_X(nE)\ar[rr]^s\ar[d]^= &&  \cO_X((n+1)E)
\\
\mathrm{cone}(\delta_i)[-1]\ar[rr] &&\cO_X(nE)\ar[rr]^{\delta_i} && Q
}
\end{xy}
\]

    Note that the square on the left-hand side commutes (up to constant). Indeed:

    \begin{itemize}
        \item $\Hom\bigl(\cO_J[-1],\cO_X(nE)\bigr)\cong H^1(\cO_J(-nE))\cong H^1(J, \cO_J)\cong\mathbb{C}$
        \item The map $\cO_J[-1]\rightarrow \cO_X(nE)$ in the first row is nonzero.
        \item The composition $\cO_J[-1]\rightarrow \mathrm{cone}(\delta_i)[-1] \rightarrow \cO_X(nE)$ is nonzero. Otherwise, we would obtain a nonzero map $\cO_J\rightarrow Q$, which is not possible by \eqref{vanish}. 
    \end{itemize}
    Therefore, by the axiom TR3 of triangulated categories, we have $\delta_i=\delta_i'\circ s$ for some $\delta_i'\in\Hom(\cO_X((n+1)E),Q)$ as required in \eqref{goal}. This concludes the proof.
\end{proof}

\vskip 2pt

\begin{proof}[Proof of Theorem \ref{Thm-splitting-g-1}]
We only prove the existence of a stability type for $F_0$; the second statement (constraints on the stability type when $h^0(F_0)=r+1$) can be argued \emph{mutatis mutandis} as in the proof of Theorem \ref{thm.splitting}. 
\vskip 3pt

We construct the $F_i$'s inductively. Assuming $\epsilon<\epsilon_{v}$, Proposition \ref{prop-main-wallcrossing} guarantees that there is no actual wall for $F_0$ passing through the positive part of the vertical line $b=0$. Hence $F_0$ is $\sigma_{b,0}$-semistable and $\nu_{b,0}(F_0)=\nu_{b,0}(\cO_X)$ for $b \in (-\delta, 0)$. Furthermore, we have $\Hom(\cO_X(-uE)[1],F_0)=0$ for all $u\geq1$, since $F_0$ is a sheaf. 

For the induction step, let $i\geq0$ and assume  we have constructed $\sigma_{b,0}$-semistable objects $F_0,\ldots,F_i$ and integers $e_1>\cdots >e_i\geq0$ satisfying properties \eqref{def:g-1 stabtype b} and \eqref{def:g-1 stabtype c}. 
If $\Hom(\cO_X(tE),F_i)=0$ for all $t\geq0$, then we set $p:=i$ and finish the process (the stronger vanishing for $F_p$ in \eqref{def:g-1 stabtype c} will be satisfied). Otherwise, we define
\[
e_{i+1}:=\max\bigl\{t\geq0 : \Hom(\cO_X(tE),F_i)\neq0\bigr\}.
\]
This maximum is indeed attained: $e_{i+1}<e_i$ if $i\geq1$ thanks to \eqref{def:g-1 stabtype c}, whereas for $i=0$ we have $e_1\leq h^0(F_0)-1$. We claim that the evaluation map
\[
\Hom(\cO_X(e_{i+1}E),F_i)\otimes\cO_X(e_{i+1}E)\xrightarrow{\ev_{i+1}}F_i
\]
is injective in $\Coh^b(X)$ for $b\in(-\delta,0)$. Indeed, we have a commutative diagram
\[
 \begin{xy}
\xymatrix{
\Hom(\cO_X(e_{i+1}E), F_i)\otimes\Hom(\cO_X, \cO_X(e_{i+1}E)) \otimes \cO_X
\ar[rr]\ar[d]^\psi &&  \Hom(\cO_X,F_i)\otimes\cO_X\ar[d]
\\
\Hom(\cO_X(e_{i+1}E),F_i)\otimes\cO_X(e_{i+1}E)\ar[rr]^{\ev_{i+1}} && F_i
}
\end{xy}
\]
where the top horizontal arrow is injective in $\Coh^b(X)$ (by Proposition \ref{prop.composition}), and the two vertical arrows are injective as well (since $\cO_X$  is $\sigma_{b,0}$-stable by Lemma \ref{lem-ox}). Then 
\[
\ker(\ev_{i+1})\subseteq \mathrm{coker}(\psi)=\Hom\bigl(\cO_X(e_{i+1}E),F_i\bigr)\otimes\cO_X(-E)^{\oplus e_{i+1}}[1], 
\]
and thus $\ker(\ev_{i+1})=\cO_X(-E)^{\oplus t}[1]$ for some $t\geq0$, as $\cO_X(-E)[1]$ is $\sigma_{b,0}$-stable by Lemma \ref{lem-ox}. Since $\hom\bigl(\cO_X(-E)[1],\cO_X(e_{i+1}E)\bigr)=0$, it follows that $\ker(\ev_{i+1})=0$.

\vskip 1pt

We define $F_{i+1}:=\mathrm{coker}(\ev_{i+1})$. Clearly $F_{i+1}$ is $\sigma_{b,0}$-semistable, as it is the quotient of two $\sigma_{b,0}$-semistable objects of the same slope. This proves \eqref{def:g-1 stabtype a}. Moreover, since $\hom(\cO_X(-uE)[1],F_i)=0$ for any $u\geq1$, we know that $\Hom(\cO_X(-uE)[1],F_{i+1})$ equals the kernel of the natural map
\[
\Hom(\cO_X(e_{i+1}E),F_i)\otimes\Hom(\cO_X(-uE),\cO_X(e_{i+1}E))\longrightarrow\Hom(\cO_X(-uE),F_i).
\]
Therefore $\Hom(\cO_X(-uE)[1],F_{i+1})=0$ by Proposition \ref{prop.composition}, which proves \eqref{def:g-1 stabtype b}. Finally, by construction  $\Hom(\cO_X(e_{i+1}E),F_{i+1})=0$, which in virtue of Lemma \ref{lem-plus one} implies $\Hom(\cO_X(tE),F_{i+1})=0$ for all $t\geq e_{i+1}$. This proves \eqref{def:g-1 stabtype c} and concludes the proof.
\end{proof}

\vskip 3pt

\begin{Lem}\label{lem-plus one}
Let $b\in(-\delta,0)$, and let $Q \in \Coh^b(X)$ satisfy $\hom(\cO_X(-E)[1],Q)=0$ and $\hom(\cO_X(tE), Q) = 0$ for some $t\geq0$. Then 
$\hom(\cO_X((t+1)E), Q) = 0$.
\end{Lem}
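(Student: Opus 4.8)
The plan is to reduce the required vanishing to a statement of the form $\Hom(\cO_J,Q)=0$ for a fibre $J\in|E|$, and then to deduce the latter by a short diagram chase from the two hypotheses.

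First I would record the numerical meaning of the hypothesis $\hom(\cO_X(-E)[1],Q)=0$. Since $b<0$, the definition of $\Coh^b(X)$ forces $\cH^{-1}(Q)\in\cF_b$; in particular $\cH^{-1}(Q)$ is a torsion-free sheaf, as any torsion subsheaf would have $\mu_{H_\epsilon}=+\infty>b$. Applying $\Hom(\cO_X(uE)[1],-)$ to the triangle $\cH^{-1}(Q)[1]\to Q\to\cH^0(Q)$, and using that $\Ext^i$ between coherent sheaves vanishes for $i<0$, gives a canonical identification $\Hom(\cO_X(uE)[1],Q)\cong H^0\bigl(X,\cH^{-1}(Q)(-uE)\bigr)$ for every $u\in\bZ$. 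Thus the hypothesis says exactly that $H^0\bigl(X,\cH^{-1}(Q)(E)\bigr)=0$. For any $u\geq -1$ the class $(u+1)E$ is effective, so a choice of section embeds $\cH^{-1}(Q)(-uE)\hookrightarrow\cH^{-1}(Q)(E)$ (injectivity because $\cH^{-1}(Q)$ is torsion-free), whence $\Hom(\cO_X(uE)[1],Q)=0$ for all $u\geq -1$.

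Next I would fix a nonzero section $\sigma\in H^0(\cO_X(E))$ and set $J:=Z(\sigma)\in|E|$. Since $|E|$ is a base point free pencil defining an elliptic fibration $f\colon X\to\mathbf{P}^1$ with $\cO_X(E)=f^*\cO_{\mathbf{P}^1}(1)$, one has $\cO_X(uE)|_J\cong\cO_J$ for every $u$, so multiplication by $\sigma$ yields short exact sequences $0\to\cO_X((u-1)E)\xrightarrow{\cdot\sigma}\cO_X(uE)\to\cO_J\to 0$. Taking $u=t$, applying $\Hom(-,Q)$ and passing to the long exact sequence of $\Ext$-groups, the relevant segment is
\[
\Hom(\cO_X((t-1)E)[1],Q)\longrightarrow\Hom(\cO_J,Q)\longrightarrow\Hom(\cO_X(tE),Q);
\]
the left-hand term vanishes by the previous paragraph (as $t-1\geq -1$) and the right-hand term vanishes by hypothesis, so $\Hom(\cO_J,Q)=0$. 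Taking instead $u=t+1$ and running the same long exact sequence, the relevant segment is
\[
\Hom(\cO_J,Q)\longrightarrow\Hom(\cO_X((t+1)E),Q)\longrightarrow\Hom(\cO_X(tE),Q),
\]
whose outer terms are both zero (the first by the step just completed, the second by hypothesis); therefore $\hom(\cO_X((t+1)E),Q)=0$, as desired.

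The only step that deserves care — and the single place where the hypothesis $\hom(\cO_X(-E)[1],Q)=0$ genuinely enters — is the identification $\Hom(\cO_X(uE)[1],Q)\cong H^0(\cH^{-1}(Q)(-uE))$ and its monotonicity in $u$; both hinge on $\cH^{-1}(Q)$ being torsion-free, which is automatic from $\cH^{-1}(Q)\in\cF_b$ with $b<0$. Everything else is an immediate chase in the long exact sequences attached to the structure sequences of the fibre $J$.
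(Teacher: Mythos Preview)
Your proof is correct and follows essentially the same route as the paper: both arguments chase the two structure sequences $0\to\cO_X((u-1)E)\to\cO_X(uE)\to\cO_J\to0$ for $u=t$ and $u=t+1$ to pivot through $\Hom(\cO_J,Q)$. The only stylistic difference is in how the auxiliary vanishing $\Hom(\cO_X((t-1)E)[1],Q)=0$ is obtained: the paper simply notes that for $t\geq 1$ the line bundle $\cO_X((t-1)E)$ lies in $\Coh^b(X)$ (so the vanishing is automatic from the heart axioms), reserving the hypothesis $\hom(\cO_X(-E)[1],Q)=0$ for the sole case $t=0$; you instead unpack via the cohomology filtration of $Q$ and a monotonicity argument in $u$, which is a bit more work but makes the role of torsion-freeness of $\cH^{-1}(Q)$ explicit.
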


\begin{proof}
Suppose by contradiction that $\hom\bigl(\cO_X((t+1)E), Q\bigr) \neq 0$. Then the exact triangle
\[
\cO_X(tE) \longrightarrow \cO_X((t+1)E) \longrightarrow \cO_J((t+1)E)\cong \cO_J
\]
for $J \in |E|$ implies that $\hom(\cO_J, Q) \neq 0$. Therefore, from the exact triangle 
\[
\cO_X(tE) \longrightarrow \cO_J \longrightarrow \cO_X((t-1)E)[1]
\]
we have $\hom(\cO_X((t-1)E)[1], Q) \neq 0$. This contradicts our hypothesis for $t=0$, whereas for $t \geq 1$ we know that $\cO_X((t-1)E) \in \Coh^b(X)$, hence $\hom(\cO_X((t-1)E)[1], Q)=0$ which, again, is a contradiction.  
\end{proof}

%\begin{equation}\label{epsilon-3}
    %\epsilon < \frac{E.H}{(H^2)^2 + H^2\sum_{i=1}^p m_i(e_i+1)}.
%\end{equation}

\vspace{2mm}

Finally, we establish the following non-emptiness result, paralleling  Theorem \ref{thm-nonempty}. 

\vspace{2mm}

\begin{Thm}\label{thm-existence-g-1} 
Let  $v = (0, H - aE, 0)$ and consider $\overline{e} := \bigl((e+1, m_1), (e, m_2)\bigr)$  
for \( e, m_1 \geq 0 \) and \( m_2 > 0 \) such that  
\[
    m_1 + m_2 < k, \mbox{ and }
\]
\[
    \Bigl(v-m_1\cdot v\bigl(\cO_X((e+1)E)\bigr)-m_2\cdot v\bigl(\cO_X(eE)\bigr)\Bigr)^2\geq -2.
\]
Then there exists \(\epsilon(a, \overline{e}) > 0\)  such that, if \(\epsilon < \epsilon(a, \overline{e})\), then   
$\mathcal{M}_{H_\epsilon}(v, \overline{e})$ is non-empty.
\end{Thm}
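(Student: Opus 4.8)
The plan is to mirror the proof of Theorem~\ref{thm-nonempty} and reduce the non-emptiness of $\cM_{H_\epsilon}(v,\overline{e})$ to the injectivity statement of Proposition~\ref{prop.general}, carefully replacing the role of $\sigma_{0,w_0}$-stability in the Gieseker chamber by $\sigma_{b,0}$-semistability for $b\in(-\delta_\epsilon,0)$, as dictated by Definition~\ref{def:g-1 stabtype}. First I would start from the Mukai vector $v_2:=v-m_1\cdot v(\cO_X((e+1)E))-m_2\cdot v(\cO_X(eE))$, whose square is $\geq -2$ by hypothesis, so that $\cM_{H_\epsilon}(v_2)\neq\emptyset$. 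Applying Theorem~\ref{Giesekerchamber} and moving down the line $b=0$ towards $w=0$, Lemma~\ref{lem-wall-crossing-negative} shows that the only walls for $v_2$ that we cross are of type $(b_1)$ made by $\cO_X(uE)$ with $0\leq u\leq e$ (resp.\ of type $(b_1)$ made by $\cO_X(uE)$, $u>e$, above $\sigma_e^+$). Using $\langle v_2,v(\cO_X(uE))\rangle>0$ for all $u\geq0$ (which holds since $\ch_2$ of the relevant stable factor equals $0$ and $m_1+m_2-r_0>0$), Lemma~\ref{lem-dim} guarantees that a general $F_2\in\cM_{H_\epsilon}(v_2)$ remains stable through all these walls, so that $\hom(\cO_X(uE),F_2)=0=\hom(F_2,\cO_X(uE))$ for all $u\geq0$; in particular $F_2$ is a general $\sigma_{b,0}$-semistable object satisfying condition~\eqref{def:g-1 stabtype c} with $p=2$ (after the two extensions are performed) and, by the argument using Corollary~\ref{cor:nowall}, also $\Hom(F_2,\cO_J)=0$ for a general $J\in|E|$.

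Next I would carry out the two extensions. For the first one, I must verify the hypothesis $0<m_2\leq k-\hom(\cO_X((e-1)E),F_2)$ needed to apply Proposition~\ref{prop.general} with $F_2$ in place of $Q$ and $e$ in place of $q$. If $e\geq 1$ the vanishing $\hom(\cO_X((e-1)E),F_2)=0$ is immediate from the previous paragraph, so the inequality reduces to $m_2\leq k$, which is weaker than $m_1+m_2<k$. If $e=0$, I would argue exactly as in the proof of Theorem~\ref{thm-nonempty}: twist $F_2$ by $\cO_X(E)$ and analyze walls for $v_2(E)$, distinguishing according to the sign of $\ch_2(F_2(E))$ and the sign of $\langle v_2(E),v(\cO_X(-E))\rangle$; in each case Lemma~\ref{lem-dim}, together with the inequalities $m_1+m_2<k$, $-E\cdot H+2<0$ and $\chi(\cO_X,F_2(E))=s_0+2r_0-2(m_1+m_2)$ specialized to $v=(0,H-aE,0)$ (so $r_0=s_0=0$, giving $\chi(\cO_X,F_2(E))=-2(m_1+m_2)\leq 0$, which actually simplifies the analysis), yields $\hom(\cO_X,F_2(E))=0$ and hence $k-\hom(\cO_X(-E)[1],F_2)\geq k>m_2$. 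Granting this, a general extension
\[
0\longrightarrow \cO_X(eE)^{\oplus m_2}\longrightarrow F_1\longrightarrow F_2\longrightarrow 0
\]
satisfies $\hom(F_1,\cO_X(uE))=0$ for all $u>e$ by Proposition~\ref{prop.general}; one checks as before that $\Hom(F_1,\cO_J)=0$ for general $J\in|E|$ and that $\hom(\cO_X(-uE)[1],F_1)=0$ for $u\geq1$ (using Proposition~\ref{prop.composition} or directly the long exact sequences). Then a general extension
\[
0\longrightarrow \cO_X((e+1)E)^{\oplus m_1}\longrightarrow F_0\longrightarrow F_1\longrightarrow 0
\]
satisfies $\hom(F_0,\cO_X(uE))=0$ for all $u>e+1$, provided $m_1\leq k-\hom(\cO_X(eE),F_1)=k-m_2$, which is exactly our assumption $m_1+m_2<k$ (even with room to spare).

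Finally, I would check that $F_0$ so constructed lies in $\cM_{H_\epsilon}(v,\overline{e})$ for the balanced type $\overline{e}=\{(e+1,m_1),(e,m_2)\}$ in the sense of Definition~\ref{def:g-1 stabtype}: $F_0$ is $H_\epsilon$-Gieseker stable because it is stable just above the horizontal wall $w=0$ (using $\hom(F_0,\cO_X(uE))=0$ for $u>e+1$, Lemma~\ref{lem-wall-crossing-negative}, and Proposition~\ref{prop-main-wallcrossing} which forces all such walls to be of type $(b_1)$), and all of conditions~\eqref{def:g-1 stabtype a}, \eqref{def:g-1 stabtype b}, \eqref{def:g-1 stabtype c} hold by construction of the two extensions (noting that $\Hom(\cO_X(eE),\cO_X((e+1)E))\neq 0$ while $\Ext^1(\cO_X(eE),\cO_X((e+1)E))=0$, so the two sub-bundles split off correctly and $F_1$, $F_0$ are genuine quotients in $\Coh^b(X)$). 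Hence $\cM_{H_\epsilon}(v,\overline{e})\neq\emptyset$. The main obstacle, as in Theorem~\ref{thm-nonempty}, will be the bookkeeping in the $e=0$ case: one has to control $\hom(\cO_X(-E)[1],F_2)=\hom(\cO_X,F_2(E))$ by descending through several possible walls for $v_2(E)$, and verify that the numerical inequality $m_1+m_2<k$ (strict here, unlike the $\leq k+r_0$ of Theorem~\ref{thm-nonempty}) is exactly what is needed to rule out the bad walls; however, since $r_0=s_0=0$ the Euler characteristics that enter are all the simpler expressions $-2(m_1+m_2)$, $-2(m_1+m_2)+$($E\cdot H$ terms), so this case is if anything easier than in Theorem~\ref{thm-nonempty} and the argument goes through verbatim after these simplifications.
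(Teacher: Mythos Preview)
Your proposal follows the right skeleton—build $F_2$, then $F_1$, then $F_0$ via Proposition~\ref{prop.general}—but it skips the one genuinely new difficulty in the $d=g-1$ case: establishing the $\sigma_+$-stability of $F_1$ (and then of $F_0$), where $\sigma_+=\sigma_{b,w}$ with $b\in(-\delta_\epsilon,0)$ and $0<w\ll1$. In Theorem~\ref{thm-nonempty}, after forming the extension $\cO_X(eE)^{\oplus m_2}\to F_1\to F_2$, stability of $F_1$ just above the wall $\cW(\cO_X(eE),F_2)$ is immediate from Lemma~\ref{lem-stability above the wall}. Here all the numerical walls $\cW(\cO_X(uE),v)$ coalesce into the horizontal line $w=0$, so $F_2$ and $\cO_X(eE)$ have identical $\nu_{b,0}$-slope for every $b$, and Lemma~\ref{lem-stability above the wall} does not apply. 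The paper supplies a separate, substantial argument: one analyzes a hypothetical $\sigma_+$-destabilizing sequence $Q_1\to F_1\to Q_2$, shows (via Proposition~\ref{prop-main-wallcrossing} and Lemma~\ref{lem-eE}) that $Q_2$ must be a Gieseker-semistable torsion sheaf supported on curves in $|E|$, and derives a contradiction from $\Hom(F_1,\cO_J)=0$ for \emph{every} $J\in|E|$. This last vanishing—for every $J$, not merely a general one—is exactly where the \emph{strict} inequality $m_2<k-\hom(\cO_X((e-1)E),F_2)$ (and hence $m_1+m_2<k$ rather than $\leq k$) enters: since $J$ varies in a one-parameter family, a general $m_2$-dimensional $V_2\subset\Ext^1(F_2,\cO_X(eE))$ must avoid each kernel $\ker(\psi_J)$ of codimension $\geq k-\hom(\cO_X((e-1)E),F_2)$, which costs one extra unit. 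Your sketch treats the stability of $F_1$ and $F_0$ as routine and attributes the strictness of the inequality to the $e=0$ twist analysis; both are mistaken. A similar (but easier) direct argument is then needed to show $F_0$ is $\sigma_+$-stable, hence $H_\epsilon$-Gieseker stable.

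A minor correction: your computation $\chi(\cO_X,F_2(E))=-2(m_1+m_2)$ omits the $\ch_2$ contribution; in fact $\ch_2(F_2(E))=k$, so $\chi(\cO_X,F_2(E))=k-2(m_1+m_2)$, which may be positive. The paper observes that $\langle v_2(E),v(\cO_X(-E))\rangle=2(m_1+m_2-k)<0$, so one lands in Case~2.1 of the proof of Theorem~\ref{thm-nonempty} and the bound $k-\hom(\cO_X,F_2(E))\geq 2(m_1+m_2)>m_2$ follows regardless.
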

\begin{proof}
The proof goes along the same lines of Theorem \ref{thm-nonempty}; we explain which parts must be adapted. We fix the Mukai vector
\[
v_2 := v - m_1\cdot  v\bigl(\cO_X((e+1)E)\bigr) - m_2\cdot v\bigl(\cO_X(eE)\bigr)
\]
and a stability condition $\sigma_+:=\sigma_{b,w}$ for fixed $b\in(-\delta_\epsilon,0)$ and $0<w\ll1$. For every $F_2\in\cM_{\sigma_+}(v_2)$, comparison of $\sigma_+$-slopes immmediately yields $\Hom(F_2,\cO_X(uE))=0$ for all $u\geq0$, $\Hom(F_2,\cO_J)=0$ for every $J\in|E|$, and $\Hom(\cO_X(-E)[1],F_2)=0$.

\vskip 3pt

Furthermore, for a general $F_2\in \cM_{\sigma_+}(v_2)$, we have $\Hom(\cO_X,F_2)=0$ by a dimension count similar to that of Lemma \ref{lem-dim}, together with the $\sigma_{b,0}$-stability of $\cO_X$). Therefore, Lemma \ref{lem-plus one} implies that $\Hom(\cO_X(uE),F_2)=0$ for all $u \geq 1$. Also the short exact sequence in $\Coh^b(X)$
\[
    0\longrightarrow \cO_X\longrightarrow\cO_X(-(u+1)E)^{\oplus u}[1]\longrightarrow\cO_X(-uE)^{\oplus u+1}[1]\longrightarrow0,
\]
obtained by shifts and twists of \eqref{evalutationseq}, implies $\Hom\bigl(\cO_X(-(u+1)E)[1],F_2\bigr)=0$ for all $u\geq1$.

It follows from Proposition \ref{prop.general} that, if 
\begin{equation}\label{condition:g-1}
    0 < m_2 < k - \hom(\cO_X((e-1)E), F_2),
\end{equation}
then for a general $V_2\in \Gr\left(m_2,\Ext^1(F_2,\cO_X(eE))\right)$ the extension 
\[
0 \longrightarrow V_2^\vee\otimes\cO_X(eE) \longrightarrow F_1 \longrightarrow F_2 \longrightarrow 0 
\]
satisfies $\Hom(F_1, \cO_X(uE)) = 0$ for all $u \geq 0$. We have $\Hom(\cO_X(-uE)[1],F_1)=0$ for every $u\geq1$ as well.
The hard part is to prove $\sigma_+$-stability of $F_1$ under the genericity assumption on $V_2$ (note that Lemma \ref{lem-stability above the wall} cannot be applied):

\begin{Claim}\label{maps O_D}
$F_1$ is $\sigma_+$-stable.
\end{Claim}
\begin{proof}[Proof of the claim]
Assume $F_1$ is not $\sigma_+$-stable, and let  $Q_1 \rightarrow F_1 \rightarrow Q_2$ be a short exact sequence destabilizing $F_1$ with respect to $\sigma_+$, such that $Q_1$ is $\sigma_+$-stable. Arguing as in the proof of Lemma \ref{stability of Qp}, we obtain $\ch(Q_1)=(-r,H-qE,0)$ with $r>0$.

    \vspace{1mm}

    Therefore $\ch_1(Q_2)=(q-a)E$. As an application of Lemma \ref{lem-key}, any $\sigma_+$-stable factor of a HN factor of $Q_2$ has $\ch_1$ equal to a multiple of $E$ and $\ch_2=0$. Let $Q'$ be such a stable factor. If $\ch_0(Q')\neq0$, then by Lemma \ref{lem-eE} $Q'$ is (up to shift) a line bundle $\cO_X(uE)$. This implies $\ch_0(Q')\leq0$ for every stable factor; otherwise, the last stable factor defines a quotient $F_1\twoheadrightarrow Q_2 \twoheadrightarrow \cO_X(uE)$ in $\Coh^b(X)$ for some $u\geq0$, contradicting the vanishing $\hom(F_1,\cO_X(uE))=0$.

    \vspace{1mm}

Hence, \( \ch_0(Q_2) \leq 0 \). If \( \ch_0(Q_2) < 0 \),  by arguing as in the proof of Lemma~\ref{stability of Qp}, we obtain \( \nu_{0,w}(F_1) < \nu_{0,w}(Q_2) \) for  \( w \gg 0 \), which is a contradiction. Therefore, every \( \sigma_+ \)-stable factor \( Q' \) of \( Q_2 \) has Chern character \( (0, q'E, 0) \) for some \( q' > 0 \). In particular, by Corollary~\ref{cor:nowall}, \( Q_2 \) is a torsion sheaf. We explain how this leads to a contradiction.

        \vspace{1mm}

    If $H^0(X,Q_2)=0$, then $\Hom(\cO_X(eE),Q_2)=0$ as $\cO_X(eE)$ restricts trivially to the (scheme-theoretic) support of $Q_2$. Since $\Hom(F_2,Q_2)=0$ as well ($F_2$ is $\sigma_+$-stable with $\nu_{\sigma_+}(F_2)>0=\nu_{\sigma_+}(Q_2)$), it follows that $\Hom(F_1,Q_2)=0$, a contradiction. Suppose \( H^0(X, Q_2) \neq 0 \). Then, by Lemma \ref{multpencil}, all irreducible components of the (set-theoretic) support of \( Q_2 \) lie in the linear system \( |E| \). Pick one such component \( J \in |E| \) with \( H^0(Q_2|_J) \neq 0 \). Since \( \chi(Q_2|_J) = 0 \) (as \( Q_2 \) is Gieseker semistable), Serre duality on \( J \) implies
\[
0 \neq h^1(Q_2|_J) = \operatorname{Hom}_{\mathcal{O}_J}(Q_2|_J, \mathcal{O}_J) = \operatorname{Hom}_{\mathcal{O}_X}(Q_2, \mathcal{O}_J).
\]

    Therefore we have a surjection $F_1\twoheadrightarrow Q_2\twoheadrightarrow\cO_J$. Note that, as $\Hom(F_2,\cO_J)=0$, $\Hom(F_1,\cO_J)$ equals the kernel of the induced map
    \[
    V_2\otimes \Hom(\cO_X(eE),\cO_J)\longrightarrow\Ext^1(F_2,\cO_J).
    \]
    Since $\Hom(\cO_X(eE),\cO_J)\cong\bC$, this map is the restriction to $V_2$ of the natural map
    \[
    \psi_J:\Ext^1(F_2,\cO_X(eE))\longrightarrow\Ext^1(F_2,\cO_J)
    \]
    obtained by applying $\Ext^1(F_2,-)$ to the short exact sequence
    \[
    0\longrightarrow \cO_X((e-1)E)\longrightarrow \cO_X(eE)\longrightarrow \cO_J\longrightarrow 0.
    \]
    As $\hom(F_2,\cO_J)=0$ and $\ext^2(F_2,\cO_X(eE))=\hom(\cO_X(eE),F_2)=0$, we get the long exact sequence 
    \begin{align*}
0 &\to \Ext^1(F_2, \mathcal{O}_X((e-1)E)) 
   \to \Ext^1(F_2, \mathcal{O}_X((e-1)E)) \\
  &\xrightarrow{\psi_J} \Ext^1(F_2, \mathcal{O}_J) 
   \to \Ext^2(F_2, \mathcal{O}_X((e-1)E)) 
   \to 0.
\end{align*}
    %we have $\ker(\psi_J)\cong \Ext^1(F_2,\cO_X((e-1)E)$. Moreover, the vanishing $\ext^2(F_2,\cO_X(eE))=\hom(\cO_X(eE),F_2)=0$ implies that 
    Hence $\ker(\psi_J)$ has codimension $\ext^1(F_2,\cO_J)-\hom(\cO_X((e-1)E,F_2)$ in $\Ext^1(F_2,\cO_X(eE))$. A lower bound for this codimension (using again $\hom(F_2,\cO_J)=0$) is
    \[
    -\chi(F_2,\cO_J)-\hom(\cO_X((e-1)E,F_2)=k-\hom(\cO_X((e-1)E,F_2).
    \]
It follows from \eqref{condition:g-1} that a general choice of $V_2$ satisfies $V_2\cap \ker(\psi_J)=0$ for every $J\in|E|$; note that we require strict inequality in \eqref{condition:g-1}, as $J$ varies in a 1-dimensional family. Therefore $\Hom(F_1,\cO_J)=0$ for every $J\in|E|$, which is a contradiction and proves the Claim \ref{maps O_D}.
\end{proof}

Coming back to the proof of Theorem \ref{thm-existence-g-1}, assume \eqref{condition:g-1} holds. Applying Proposition \ref{prop.general} again, we find that for a general $V_1\in\Gr\left(m_1,\Ext^1(F_1,\cO_X(eE))\right)$ the extension 
\[
0 \longrightarrow V_1^\vee\otimes\cO_X((e+1)E)\longrightarrow F_0 \longrightarrow F_1 \longrightarrow 0 
\]
satisfies $\hom(F, \cO_X(uE)) = 0 = \hom(\cO_X(-uE)[1],F_0)$ for every $u \geq0$. Note that $F_0$ is $\sigma_{b,0}$-semistable (as an extension of $\sigma_{b,0}$-semistable objects with equal $\nu_{b,0}$-slope). It suffices to check that $F_0$ is $\sigma_+$-stable: since $\sigma_+$ lies in the Gieseker chamber for $v$, then we obtain $F_0\in\cM_{H_\epsilon}(v,\overline{e})$ as required.

\vskip 3pt

We first show that $F_0$ is $\sigma_+$-semistable. Indeed, if $Q_1 \rightarrow F_0 \rightarrow Q_2$ is a $\sigma_+$-destabilizing sequence with $Q_1$ $\sigma_+$-stable, arguing as in the proof of Lemma \ref{stability of Qp} again gives $\ch(Q_1)=(-r,H-qE,0)$ with $r>0$. But then $\ch_0(Q_2)=r>0$, which contradicts the vanishing $\hom(F_0,\cO_X(uE))=0$ for some $u\geq0$ as in the proof of Claim \ref{maps O_D}. Since $F_0$ is $\sigma_+$-semistable, we know that it is a Gieseker semistable sheaf of Chern character $(0,H-aE,0)$. If not $\sigma_+$-stable, $F_0$ must be supported on a reducible curve, and admits a semistable sheaf $Q_2$ of Chern character $(0,q'E,0)$ ($q'>0$) as a quotient. But arguing as in the proof of Claim \ref{maps O_D} we find $\Hom(F_0,Q_2)=0$ for the general choice of $V_1$, which shows $\sigma_+$-stability of $F_0$.

\vspace{1mm}

Therefore, the proof of Theorem \ref{thm-existence-g-1} is complete as long as the inequality \eqref{condition:g-1} holds. If $e\geq1$, this is trivial since $\hom(\cO_X\bigl((e-1)E),F_2\bigr)=0$. For $e=0$, this can be checked exactly as in the proof of Theorem \ref{thm-nonempty}, where one is led to Case 2.1 (since $\ch_2(F_2(E))=k>0$ and $\bigl\langle v_2(E),v(\cO_X(-E))\bigr\rangle=2(m_1+m_2-k)\leq0$).
\end{proof}

\begin{proof}[Proof of Theorem \ref{relative:HBNthm} when $d = g - 1$]
Combining Theorem \ref{g-1:scheme-structure}, Theorem \ref{Thm-splitting-g-1}, and Theorem \ref{thm-existence-g-1} via the same argument as in the case $d < g - 1$ implies the claim.
\end{proof}

\vspace{4mm}

\subsection{Stability type versus splitting type}\label{subsec:comparison}
The main feature of Theorem \ref{relative:HBNthm} is the stratification of $V^r_{H_\epsilon}(v)$ in terms of Bridgeland stability types. It is interesting to compare this invariant with the  \emph{splitting type} recalled in (\ref{eq:splitting}) introduced by H. Larson  \cite{larson:inv}.

\vskip 3pt

Take $a\geq 0$ and $g':=g-ak\geq1$. Recall that for any integral curve $C\in |H-aE|$, if $\pi\colon C\stackrel{i}\rightarrow X\rightarrow \mathbf{P}^1$ is the induced degree $k$ cover, then the collection $\overline{f}_L=\bigl((f_i,n_i)\bigr)_{i=1}^q$ is via (\ref{eq:splitting}) the splitting type of $L\in W^r_d(C)$.
Note that $f_1$ is the largest integer with $\Hom\bigl(\cO_{\mathbf{P}^1}(f_1),\pi_*L\bigr)=\Hom\bigl(\cO_X(f_1E),i_*L\bigr)\neq 0$, and $n_1=\hom(\cO_X(f_1E),i_*L)$. 

\vskip 3pt

A first expectation could be that the stability type of $i_*L$ coincides with the non-negative part $(\overline{f}_L)^{\geq0}\subseteq \overline{f}_L$ of the splitting type of $L$. This might be too coarse to hold in full generality, but it is almost true for balanced stability types.
More precisely, consider the stability type $\overline{e}=\bigl((e+1,m_1),(e,m_2)\bigr)$ with $m_2>0$. As usual, write $r:=m_1(e+2)+m_2(e+1)-1$ and $\ell:=r+1-m_1-m_2$. %\sol{Where $\widetilde{\cM}_{H_\epsilon}(v,\overline{e})$ has been defined? or do we really need to introduce it?}

\vskip 3pt

\begin{Prop}\label{comparisontypes}
Let $C\in|H-aE|$ be integral and $L\in\overline{\Pic}^d(C)$ for $d\leq g'-1$. Then:
\begin{enumerate}
    \item\label{comparisontypes:a} If $i_*L$ has stability type $\overline{e}$, then $(\overline{f}_L)^{\geq0}=\overline{e}$.

    \item\label{comparisontypes:b} If \( (\overline{f}_L)^{\geq 0} = \overline{e} \), then \( i_*L \in \widetilde{\mathcal{M}}_{H_\epsilon}(v, \overline{e}) \), as defined in \eqref{union}. Equivalently, \( i_*L \) has stability type \( \overline{e} \cup \overline{e'} \) for some \( \overline{e'} \subset \mathbb{Z}_{\leq e-1} \times \mathbb{Z}_{>0} \).

    \item\label{comparisontypes:c} The loci $\bigl\{L\in \overline{\Pic}^d(C):\text{$(\overline{f}_L)^{\geq0}=\overline{e}$}\bigr\}$ and $\bigl\{L\in \overline{\Pic}^d(C):\text{$i_*L$ has stability type $\overline{e}$}\bigr\}$ have the same closure in $\overline{\mathrm{Pic}}^d(C)$.
\end{enumerate}
\end{Prop}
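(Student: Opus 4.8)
\textbf{Proof proposal for Proposition \ref{comparisontypes}.}

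The plan is to prove the three parts in order, with parts \eqref{comparisontypes:a} and \eqref{comparisontypes:b} doing the real work and part \eqref{comparisontypes:c} being a formal consequence. Throughout, write $v=(0,H-aE,1+d-g')$ and fix $\epsilon>0$ small enough that Theorems \ref{thm.splitting}, \ref{Prop-scheme structure} and \ref{thm-nonempty} all apply to $v$ and to the relevant auxiliary Mukai vectors; recall that $i_*L$ is $H_\epsilon$-Gieseker stable since $C$ is integral, hence $\sigma_{0,w_0}$-stable in the Gieseker chamber. For part \eqref{comparisontypes:a}, suppose $i_*L$ has stability type $\overline{e}=\{(e+1,m_1),(e,m_2)\}$. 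The definition of stability type gives, in $\Coh^b(X)$ for $-1\ll b<0$, the two-step destabilization with first quotient $F_1$ satisfying $\hom(\cO_X((e+1)E),i_*L)=m_1$ and then $\hom(\cO_X(eE),F_1)=m_2$ with $\hom(\cO_X,F_2)=0$. Translating $\hom(\cO_X(jE),i_*L)=\hom(\cO_{\mathbf P^1}(j),\pi_*L)=h^0(\mathbf P^1,\pi_*L(-j))$ and using that $\Ext^1(\cO_X(eE),\cO_X((e+1)E))=0$ (so the two subobjects split off independently), one reads off that $\pi_*L$ has exactly $m_1$ summands of degree $\geq e+1$, none of degree $\geq e+2$ (because $e+1$ was the first destabilizing twist, i.e.\ $\hom(\cO_X((e+2)E),i_*L)=0$), and $m_1+m_2$ summands of degree $\geq e$ with none left over of degree exactly between — precisely, $h^0(\pi_*L(-e))-h^0(\pi_*L(-e-1))=m_1+m_2$ forces $m_2$ summands of degree exactly $e$; and $\hom(\cO_X,F_2)=0$ translates to $h^0(\pi_*L(-e+1))$ being accounted for entirely by the $\cO(e+1)$ and $\cO(e)$ pieces, which is exactly the statement that there are no summands of degree in $\{0,\dots,e-1\}$ beyond what is implied, i.e.\ no further non-negative summands. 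Hence $(\overline f_L)^{\geq0}=\{(e+1,m_1),(e,m_2)\}=\overline e$. This is essentially a bookkeeping argument matching the numerical data of Definition \ref{Def.splitting} against the splitting \eqref{eq:splitting}, and I expect it to be routine once the dictionary $\hom(\cO_X(jE),i_*L)=h^0(\pi_*L(-j))$ is set up carefully.

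For part \eqref{comparisontypes:b}, assume $(\overline f_L)^{\geq0}=\overline e=\{(e+1,m_1),(e,m_2)\}$. Then $e+1$ is the largest twist with $\hom(\cO_X((e+1)E),i_*L)\neq0$ and that hom-space has dimension $m_1$; so when we move down the line $b=0$ from the Gieseker chamber, $i_*L$ is first destabilized (by Proposition \ref{prop-main-wallcrossing} and Lemma \ref{lem-wall-crossing-negative}, which force type $(b_1)$ since $r_0=0$, $s_0<0$, $\nu<0$) by $\cO_X((e+1)E)^{\oplus m_1}$, yielding a $\sigma$-stable quotient $F_1$ with $v(F_1)=v-m_1(1,(e+1)E,1)$. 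Next, $\hom(\cO_X(tE),F_1)=0$ for $t\geq e+1$ (this is where one checks the splitting-type hypothesis is preserved: applying $\Hom(-,\cO_X(tE))$ to $\cO_X((e+1)E)^{\oplus m_1}\to i_*L\to F_1$ and using $\hom(\cO_X(tE),i_*L)=0$ together with injectivity of the multiplication $H^0(\cO_X((t-e-1)E))\otimes\Hom(\cO_X((e+1)E),i_*L)\to\Hom(\cO_X(tE),i_*L)$, the base-point-free pencil trick of Proposition \ref{prop:bpf_trick} at the level of $\pi_*L$), and $\hom(\cO_X(eE),F_1)=m_2$; so $F_1$ is next destabilized by $\cO_X(eE)^{\oplus m_2}$, giving $F_2$ with $\hom(\cO_X(tE),F_2)=0$ for all $t\geq e$. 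Thus $i_*L$ has a stability type of the form $\overline e\cup\overline{e'}$ where $\overline{e'}=\{(e_i',m_i')\}$ records the subsequent destabilizations, all of which (by the same wall-crossing dichotomy) are of type $(b_1)$ with $e_i'\leq e-1$; this is exactly membership in $\widetilde{\cM}_{H_\epsilon}(v,\overline e)$ as defined in \eqref{union}. The case $d=g'-1$ (Mukai vector $(0,H-aE,0)$) requires the parallel apparatus of Subsection \ref{subsec: g-1}: use Definition \ref{def:g-1 stabtype}, Theorem \ref{Thm-splitting-g-1} and Proposition \ref{prop.composition} in place of their generic-chamber counterparts, but the argument is formally identical.

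For part \eqref{comparisontypes:c}, write $\Sigma:=\{L:(\overline f_L)^{\geq0}=\overline e\}$ and $T:=\{L:i_*L\text{ has stability type }\overline e\}$, viewed inside $\overline{\Pic}^d(C)$. Part \eqref{comparisontypes:a} gives $T\subseteq\Sigma$, so $\overline T\subseteq\overline\Sigma$. For the reverse, by part \eqref{comparisontypes:b} we have $\Sigma\subseteq\bigsqcup_{\overline{e'}}\big(i_*^{-1}\cM_{H_\epsilon}(v,\overline e\cup\overline{e'})\big)$, a finite union indexed as in Remark \ref{partial compact}; by Theorem \ref{Prop-scheme structure} each nonempty stratum $\cM_{H_\epsilon}(v,\overline e\cup\overline{e'})$ is irreducible, and an easy dimension comparison (the contribution of the extra pieces $\overline{e'}$ strictly drops the dimension, via $\langle v_p-\sum m_i'(1,e_i'E,1),(1,e_j'E,1)\rangle-m_j'$ being the relevant defect, exactly as in Remark \ref{partial compact} where $\cM_{H_\epsilon}(v,\overline e)$ is shown to be open dense in $\widetilde\cM_{H_\epsilon}(v,\overline e)$) shows each stratum with $\overline{e'}\neq\emptyset$ lies in the closure of the one with $\overline{e'}=\emptyset$, i.e.\ of $T$. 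Intersecting with the fiber $i_*^{-1}(C)=\overline{\Pic}^d(C)$ preserves this, so $\Sigma\subseteq\overline T$ and hence $\overline\Sigma\subseteq\overline T$. Combining, $\overline\Sigma=\overline T$.

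\textbf{Main obstacle.} The delicate point is part \eqref{comparisontypes:b}: verifying that the hypothesis $(\overline f_L)^{\geq0}=\overline e$ genuinely forces the \emph{first} two walls encountered along $b=0$ to be created by $\cO_X((e+1)E)$ and then $\cO_X(eE)$ with the correct multiplicities — this needs the wall-crossing classification of Proposition \ref{prop-main-wallcrossing}/Lemma \ref{lem-wall-crossing-negative} to rule out rank-zero ($b_2$) walls, plus a careful propagation of the vanishing $\hom(\cO_X(tE),F_i)=0$ for $t>e_{i+1}$ through each destabilization step (the content of Proposition \ref{prop.composition} / Lemma \ref{lem-plus one} in the $d=g'-1$ case, and of the base-point-free pencil trick otherwise). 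Matching the degree-$g'-1$ case to the modified framework of Subsection \ref{subsec: g-1} is the other place demanding care.
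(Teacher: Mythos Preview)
Your proposal is correct and follows essentially the same approach as the paper. Two minor differences are worth noting.

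In part \eqref{comparisontypes:b}, you over-engineer the second destabilization step by invoking the base-point-free pencil trick and worrying about propagating the vanishings $\hom(\cO_X(tE),F_1)=0$ for $t\geq e+1$. The paper instead uses the single vanishing $\ext^1\bigl(\cO_X(eE),\cO_X((e+1)E)\bigr)=h^1(\cO_X(E))=0$ to compute directly
\[
\hom(\cO_X(eE),Q_1)=\hom(\cO_X(eE),i_*L)-2m_1=(2m_1+m_2)-2m_1=m_2,
\]
which is all that is needed: once $Q_1$ is stable along $\cW\bigl(\cO_X((e+1)E),i_*L\bigr)$, the wall structure guarantees that any further destabilization is by some $\cO_X(e_2E)$ with $e_2\leq e$, and the computation above pins down $e_2=e$ with multiplicity $m_2$. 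Your separate treatment of the case $d=g'-1$ is also unnecessary at the level of this proposition: once the modified notion of stability type from Subsection~\ref{subsec: g-1} is in place, the argument runs uniformly.

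In part \eqref{comparisontypes:c}, both you and the paper simply defer to Remark~\ref{partial compact}; your aside about a ``dimension comparison'' is not how the paper phrases it (the density of $\cM_{H_\epsilon}(v,\overline{e})$ in $\widetilde{\cM}_{H_\epsilon}(v,\overline{e})$ comes from the iterated Grassmannian bundle structure, not a dimension count), but the conclusion is the same.
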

\begin{proof}
Write $v=(0,H-aE,1+d-g')$. If $i_*L\in\cM_{H_\epsilon}(v,\overline{e})$, then using the vanishing $\ext^1(\cO_X(eE),\cO_X((e+1)E))=0$ we have that $i_*L$ sits in a short exact sequence
    \[
        0\longrightarrow \cO_X(eE)^{ \oplus m_2}\oplus\cO_X((e+1)E)^{\oplus m_1} \longrightarrow i_* L\longrightarrow Q_2\longrightarrow 0
    \]
    in $\Coh^b(X)$ (for $-1\ll b<0$), where $Q_2$ is stable along the walls $\cW(Q_2,\cO_X)$ and $\cW(Q_2,\cO_X(eE))$. In particular $\hom(\cO_X,Q_2)=0=\hom(\cO_X(eE),Q_2)$ and therefore
    \begin{align}
    \hom(\cO_X,i_*L)=m_1(e+2)+m_2(e+1),\label{homOX}\\
    \hom(\cO_X(eE),i_*L)=m_2+2m_1.\label{homeE}
    \end{align}
    We already know that the first entry of $(\overline{f}_L)^{\geq0}$ is $(e+1,m_1)$; combining this with $\ext^1(\cO_X(eE),\cO_X((e+1)E))=0$, \eqref{homeE} results in the inclusion $\overline{e}\subset (\overline{f}_L)^{\geq0}$. Then it must be $\overline{e}= (\overline{f}_L)^{\geq0}$ thanks to \eqref{homOX}, which proves \eqref{comparisontypes:a}.

    \vskip 1pt

    If $L$ satisfies $(\overline{f}_L)^{\geq0}=\overline{e}$, then \eqref{homeE} holds and $i_* L$ sits in an extension
        \[
        0\longrightarrow \cO_X((e+1)E)^{\oplus m_1}\longrightarrow i_* L\longrightarrow Q_1\longrightarrow 0
        \]
        in $\Coh^0(X)$, with $Q_1$ stable along $\cW\bigl(\cO_X((e+1)E),i_*L\bigr)$. Now it follows from the vanishing $\ext^1(\cO_X(eE),\cO_X((e+1)E)=0$ that
        \[
        \hom(\cO_X(eE),Q_1)=\hom(\cO_X(eE),i_*L)-2m_1\overset{\eqref{homeE}}{=}m_2,
        \]
        and hence $Q_1$ sits in a sequence
        $
        0\rightarrow \cO_X(eE)^{\oplus m_2}\rightarrow Q_1\rightarrow Q_2\rightarrow 0
        $
        with $Q_2$ stable along $\cW(\cO_X(eE),Q_2)$. This proves \eqref{comparisontypes:b}. Finally, assertion \eqref{comparisontypes:c} follows from the inclusion $\widetilde{\cM}_{H_\epsilon}(v,\overline{e})\subseteq \overline{\cM_{H_\epsilon}(v,\overline{e})}$
    inside the moduli space $\cM_{H_\epsilon}(v)$, provided by Remark \ref{partial compact}.
\end{proof}

\vskip 2pt

\begin{Rem}
Regarding the comparison in Proposition \ref{comparisontypes}, we observe the following:
\vskip 3pt
 
\noindent  (a) It follows from Proposition \ref{comparisontypes} that, if $e=0$, that is, $\ell <\frac{r+1}{2}$, then the equality $\bigl\{L\in \overline{\Pic}^d(C):\;\text{$i_*L$ has stability type $\overline{e}$}\bigr\}=V^r_{d,\ell}(C)$ holds.

    \vspace{0.5mm}

\noindent (b) It is shown in Section \ref{sec:dominance}  that if $\max\{0,r+2-k\}\leq \ell\leq r$ and $\rho(g',r-\ell,d)-\ell k\geq0$, the support morphism  $\widetilde{\cM}_{H_\epsilon}(v,\overline{e})\rightarrow |H-aE|$
    is dominant. By generic smoothness and Proposition \ref{comparisontypes}.\eqref{comparisontypes:b}, the locus $V^r_{d,\ell}(C)$ is then smooth of dimension $\rho(g',r-\ell,d)-\ell k$ for a general $C\in|H-aE|$.

\vspace{0.5mm}

\noindent (c) In contrast, if $r+1-k\geq0$ and $\ell:=r+1-k$ satisfies $\rho(g',r-\ell,d)-\ell k\geq0$, then the image of the support morphism must be contained in the locus of reducible curves in $|H-aE|$. Otherwise, by Proposition \ref{comparisontypes} we would have a rank 1, torsion-free sheaf $L$ on an integral curve, with $\chi(L)\leq0$ and splitting type with $r+1-\ell=k$ non-negative entries, which is impossible.
\end{Rem}

\section{Non-emptiness of Hurwitz-Brill-Noether loci via $K3$ surfaces}\label{sec:dominance}

In this section we provide a new approach, using stability conditions on $K3$ surfaces, to the non-emptiness of the loci $V^r_{d, \ell}(C, A)$  (recall Definition \ref{HBNdegloci}), for a general element $[C, A]\in \mathcal{H}_{g,k}$.  Precisely, for every $\ell$ with $\mbox{max}\{r+2-k, 0\}\leq \ell \leq r$ satisfying inequality (\ref{eq:ineq11}), that is,  $$
\rho(g, r-\ell,d)-\ell k\geq 0,$$
we construct a component of $V^r_{d, \ell}(C, A)$ (and by passing to the closure a component of $W^r_{d, \ell}(C,A)$) having precisely this dimension. An immediate consequence is the existence theorem in Hurwitz-Brill-Noether theory.

\vskip 3pt

This is achieved by proving the result for curves on elliptic $K3$ surfaces. As usual, take the integers $e,m_1\geq0$, $m_2>0$ such that
\[
r+1=m_1(e+2)+m_2(e+1),\;\;r+1-\ell=m_1+m_2.
\]
For any $K3$ surface $X$ as in Proposition \ref{thm-k3 surface} and a general $C\in|H|$, we show that
\[
\Bigl\{L\in\Pic^d(C): \text{$i_*L$ has stability type $\overline{e}:=\{(e+1,m_1),(e,m_2)\}$}\Bigr\}
\]
is smooth of pure dimension $\rho(g,r-\ell,d)-\ell k$. 
In view of Proposition \ref{comparisontypes} we obtain that, for the pair $(C,A)=(C,\cO_C(E))\in \mathcal{H}_{g,k}$, the locus $V^r_{d, \ell}(C, A)$ has a component of dimension $\rho(g,r-\ell,d)-\ell k$, which as explained in Proposition \ref{prop:semicont}, is enough to derive the result  for a general $[C,A]\in \mathcal{H}_{g,k}$.

\vskip 6mm

\subsection{Curves on elliptic $K3$ surfaces}\label{proof-dominanceK3}
We fix, as throughout the paper, a degree $k$ elliptic $K3$ surface $X$ as in Proposition \ref{thm-k3 surface}. As explained above, we want to prove that the locus
\[
\Bigl\{L\in\Pic^d(C): \text{$i_*L$ has stability type $\overline{e}$}\Bigr\}
\]
is smooth of pure dimension $\rho(g,r-\ell,d)-\ell k$, for a general curve $C\in|H|$.
This amounts to prove that the natural support map
\[
\pi_{(g,d,r,\ell)}:\cM_{H_\epsilon}\bigl((0,H,1+d-g),\overline{e}\bigr)\longrightarrow |H|
\]
is dominant, since, as proven in Theorem \ref{relative:HBNthm}, the moduli space $\cM_{H_\epsilon}\bigl((0,H,1+d-g),\overline{e}\bigr)$ is smooth and irreducible of dimension $g+\rho(g,r-\ell,d)-\ell k$.

\vskip 3pt

Our argument is inductive in nature and relies on a reduction to the case $\ell=0$. Indeed, it suffices to exhibit a curve $Y\in|H|$ such that the fiber $\pi_{(g,d,r,\ell)}^{-1}\bigl([Y]\bigr)$ has an irreducible component of the correct dimension $\rho(g,r-\ell,d)-\ell k$. We will do this by picking $Y$ to be reducible, so that the information is extracted from a curve of lower genus on $X$.

In order to perform induction on the tuple $(g,d,r,\ell)$, let us spell out the content of Theorem \ref{relative:HBNthm} and Remark \ref{general-linearsystem} for the linear systems $|H-aE|$, $a\geq0$. 
Let $(g',d',r',\ell')\in\mathbb N^4$ be a tuple satisfying 
\begin{equation}\label{cond: tuple}
    \begin{aligned}
g':=g-ak\geq 1\text{ for some $a\geq0$, $\;\;d'\leq g'-1$,}\\
\mathrm{max}\{r'+2-k, 0\}\leq \ell' \leq r',\;\;\rho(g',r'-\ell',d')-\ell'k\geq0.
\end{aligned}
\end{equation}
Taking the unique $e',m_1'\geq0$ and $m_2'>0$ such that $r'+1=m_1'(e'+2)+m_2'(e'+1)$ and $r'+1-\ell'=m_1'+m_2'$, 
then for $\overline{e'}:=\{(e'+1,m_1),(e',m_2')\}$ the moduli space 
\[
\cM_{H_\epsilon}\left((0,H-aE,1+d'-g'),\overline{e'}\right)
\]
is smooth and irreducible of dimension $g'+\rho(g',r'-\ell',d')-\ell' k$. We denote by 
\[
\pi_{(g',d',r',\ell')}\colon \cM_{H_\epsilon}\left((0,H-aE,1+d'-g'),\overline{e'}\right)\longrightarrow |H-aE|
\]
the corresponding support map. Our main reduction is the following:

\vskip 5mm

\begin{Prop}\label{reduction:l=0}
    If $\ell\geq1$ and the map $\pi_{(g-k,d-k,r-1,\ell-1)}$ is dominant, then the map $\pi_{(g,d,r,\ell)}$ is dominant.
\end{Prop}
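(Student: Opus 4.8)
\textbf{Proof plan for Proposition \ref{reduction:l=0}.}
The plan is to degenerate a general curve $Y\in|H|$ to a reducible curve of the form $C+J$, where $C\in|H-E|$ has genus $g-k$ and $J\in|E|$ is a general fibre of the elliptic pencil, and to show that the fibre of $\pi_{(g,d,r,\ell)}$ over $[C+J]$ contains an irreducible component of dimension exactly $\rho(g,r-\ell,d)-\ell k=\rho(g-k,r-\ell,d-k)-(\ell-1)k$. Together with the fact (from Theorem \ref{relative:HBNthm} and Remark \ref{general-linearsystem}) that the total space $\cM_{H_\epsilon}\bigl((0,H,1+d-g),\overline{e}\bigr)$ is irreducible of dimension $g+\rho(g,r-\ell,d)-\ell k$, this forces $\pi_{(g,d,r,\ell)}$ to be dominant: a proper morphism from an irreducible variety whose general fibre would have dimension $>\rho(g,r-\ell,d)-\ell k$ if the image were a proper subvariety of $|H|$, while semicontinuity of fibre dimension together with the existence of one fibre of the expected (minimal) dimension pins down the dimension of the image. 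First I would set up the Mukai vectors: start from an object $F_0\in\cM_{H_\epsilon}\bigl((0,H-E,1+(d-k)-(g-k)),\overline{e}''\bigr)$ supported on the genus $g-k$ curve $C\in|H-E|$, where $\overline{e}''$ is the balanced stability type attached to the tuple $(g-k,d-k,r-1,\ell-1)$; one checks that the integers $e,m_1,m_2$ attached to $(g,d,r,\ell)$ satisfy $r=m_1(e+2)+m_2(e+1)$ read off one lower, i.e. that the balanced type for $(g-k,d-k,r-1,\ell-1)$ is $\{(e+1,m_1),(e,m_2-1)\}$ (using $(r-1)+1-(\ell-1)=r+1-\ell=m_1+m_2$ and $r=m_1(e+2)+(m_2-1)(e+1)$), so that adding one more copy of $\cO_X(eE)$ to the destabilizing data recovers $\overline{e}$.

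The geometric heart of the argument is to produce, from a sheaf $i_*L$ on $C$ of the correct stability type, a sheaf on $C+J$ of stability type $\overline{e}$ by "adding a node along $J$". Concretely, I would take $L$ a general line bundle in the locus $\{L\in\Pic^{d-k}(C):\ i_*L \text{ has stability type }\{(e+1,m_1),(e,m_2-1)\}\}$ — nonempty of dimension $\rho(g-k,r-\ell,d-k)-(\ell-1)k$ by the inductive hypothesis that $\pi_{(g-k,d-k,r-1,\ell-1)}$ is dominant and by generic smoothness — and glue it to a suitable degree-zero line bundle on $J$ along the $k$ points $C\cap J$ to obtain a rank-1 torsion-free sheaf $L'$ on $C+J$ of Euler characteristic $1+d-g$; the elliptic curve $J$ contributes $\cO_J$, whose Mukai vector is $(0,E,0)$, and $\cO_X(eE)$ restricts to $\cO_J$ on $J$, which is exactly the mechanism producing the extra $(e,1)$ term needed to pass from $\{(e+1,m_1),(e,m_2-1)\}$ to $\{(e+1,m_1),(e,m_2)\}$. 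I would verify that $i_*L'\in\cM_{H_\epsilon}\bigl((0,H,1+d-g),\overline{e}\bigr)$ by checking the chain of destabilizing short exact sequences directly, using the structure of $\Coh^b(X)$ and the vanishings $\Hom(\cO_X(eE),\cO_J)\cong\bC$, $\Ext^1(\cO_X(eE),\cO_X((e+1)E))=0$, $\Hom(\cO_J,\cO_X(e'E))=0$ for $e'\le e$; here Lemma \ref{multpencil}, Lemma \ref{lem-eE}, Corollary \ref{cor:nowall} and Lemma \ref{lem-stability above the wall} do the bookkeeping. Varying $L$ in its $\bigl(\rho(g-k,r-\ell,d-k)-(\ell-1)k\bigr)$-dimensional family, and noting that the gluing data on $J$ (a point of $\Pic^0(J)$ up to the finitely many choices compatible with stability) contributes no extra positive-dimensional freedom once $[C+J]$ is fixed, I obtain inside $\pi_{(g,d,r,\ell)}^{-1}([C+J])$ a subvariety of dimension $\ge \rho(g,r-\ell,d)-\ell k$; on the other hand every fibre of $\pi_{(g,d,r,\ell)}$ has dimension $\ge \dim\cM - \dim|H| = \rho(g,r-\ell,d)-\ell k$ if $\pi$ is dominant, and a fibre of dimension $>\rho$ over a point in the image would, by upper-semicontinuity and irreducibility of the source, contradict $\dim\cM=g+\rho$ unless the image is all of $|H|$ — so I need the fibre over $[C+J]$ to have dimension \emph{exactly} $\rho(g,r-\ell,d)-\ell k$, which is where I must argue that $i_*L'$ deforms to sheaves supported on nearby smooth curves (a smoothing/openness statement) rather than staying over reducible curves.

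\textbf{The main obstacle} I anticipate is precisely this last point: showing that the component of $\pi_{(g,d,r,\ell)}^{-1}([C+J])$ constructed above is a limit of the generic fibre over a smooth curve, i.e. that it is not an "extra" component living only over the boundary divisor of reducible curves in $|H|$. Equivalently, one must rule out that all of $\cM_{H_\epsilon}\bigl((0,H,1+d-g),\overline{e}\bigr)$ maps into the hypersurface of singular curves. The clean way around this is a dimension count: $\cM$ is irreducible of dimension $g+\rho(g,r-\ell,d)-\ell k$; the preimage of the (codimension $\ge 1$) boundary is a proper closed subset, and by the inductive construction the fibre over the \emph{general} point $[C+J]$ of the component $|H-E|+|E|$ of the boundary already has the expected dimension $\rho(g,r-\ell,d)-\ell k$, so the preimage of that boundary component has dimension $\le (g-1)+\rho(g,r-\ell,d)-\ell k<\dim\cM$; hence $\cM$ cannot be supported over the boundary, the support map dominates an open subset of $|H|$ meeting $|H-E|+|E|$, and therefore dominates $|H|$ since $|H|$ is irreducible. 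I would also need to treat with care the possibility that the stability type jumps in the limit (the sheaf $i_*L'$ might a priori only lie in the partial compactification $\widetilde{\cM}_{H_\epsilon}$ of Remark \ref{partial compact}); this is handled by choosing $L$ \emph{general} in its locus and invoking the openness of the stability-type stratum together with Proposition \ref{comparisontypes} to ensure the non-negative part of the splitting type is exactly $\overline{e}$ on the general deformation, completing the reduction to the base case $\ell=0$ handled separately (Theorem \ref{thm:existence_limit}).
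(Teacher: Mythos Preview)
Your overall strategy---degenerate to $Y=C+J$ with $C\in|H-E|$ general and $J\in|E|$, and exhibit a component of the fibre $\pi_{(g,d,r,\ell)}^{-1}([Y])$ of the expected dimension---is exactly the paper's approach. But two steps are wrong.

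\textbf{The balanced type for $(g-k,d-k,r-1,\ell-1)$ is miscomputed.} You claim it is $\{(e+1,m_1),(e,m_2-1)\}$, but this already violates the constraint $m_1'+m_2'=(r-1)+1-(\ell-1)=r+1-\ell=m_1+m_2$: your sum is $m_1+m_2-1$. (Your second check $r=m_1(e+2)+(m_2-1)(e+1)$ is also false unless $e=0$, since the right side equals $r-e$.) The correct type is
\[
\overline{e}''=\begin{cases}\{(e+1,m_1-1),(e,m_2+1)\} & \text{if }m_1\geq1,\\ \{(e,m_2-1),(e-1,1)\} & \text{if }m_1=0.\end{cases}
\]
As a consequence, your proposed mechanism---``$\cO_J$ contributes an extra $(e,1)$ term''---does not realise the correct transition of stability types. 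The paper instead uses the short exact sequence $0\to\cO_X(eE)\to\cO_X((e+1)E)\to\cO_J\to0$ (multiplication by the section of $\cO_X(E)$ cutting out $J$) to \emph{upgrade} one copy of $\cO_X(eE)$ in the destabilising subobject of $L_C$ to a copy of $\cO_X((e+1)E)$; the commutative diagram \eqref{diag:YC} then produces a line bundle $L$ on $Y$ with $L_{|J}\cong\cO_J$, $L_{|C}\cong L_C(x_1+\cdots+x_k)$, and with the \emph{same} final quotient $W$ as $L_C$. The middle row of that diagram exhibits $L$ directly as having stability type $\{(e+1,m_1),(e,m_2)\}$.

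\textbf{The upper bound on the fibre dimension is not established.} You correctly identify this as the crux, but your proposed dimension count is circular: bounding the preimage of the boundary by $(g-1)+\rho$ presupposes that the general fibre over $[C+J]$ has dimension \emph{at most} $\rho$, which is precisely what has not been shown (you only built a $\rho$-dimensional subvariety inside that fibre). The paper argues differently: it shows that locally around the constructed $L$, the fibre $\pi_{(g,d,r,\ell)}^{-1}([Y])$ is identified with $\pi_{(g-k,d-k,r-1,\ell-1)}^{-1}([C])$ via $N\mapsto N_{|C}(-x_1-\cdots-x_k)$. The point is that the constructed $L$ has a section not vanishing at any $x_i$; for any nearby $N$ in the fibre, the exact sequence $0\to(\bC^*)^{k-1}\to\Pic^d(Y)\to\Pic^d(C)\times\Pic^0(J)\to0$ then forces $N_{|J}\cong\cO_J$ and shows $N$ is determined by $N_{|C}$. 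One then checks via the semicontinuity inequalities \eqref{ineqsectionsY} (which become equalities near $L$) that $N_{|C}(-x_1-\cdots-x_k)$ has the correct non-negative splitting type, hence (by Proposition~\ref{comparisontypes}) lands in the inductive fibre. This yields the exact local dimension $\rho(g,r-\ell,d)-\ell k$, and dominance follows by upper semicontinuity of fibre dimension together with irreducibility of the source.
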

\begin{proof}
First note that if $\ell\geq1$, then the tuple $(g',d',r',\ell'):=(g-k,d-k,r-1,\ell-1)$ satisfies \eqref{cond: tuple}. Indeed, by Proposition \ref{ineqBN} the assumption $\rho(g,r-\ell,d)-\ell k\geq0$ implies $d\geq k$ (hence $g'\geq1$ and $d'\geq0$), and since
\[
\rho(g-k,r-\ell,d-k)-(\ell-1)k=\rho(g,r-\ell,d)-\ell k\geq0
\]
the map $\pi_{(g-k,d-k,r-1,\ell-1)}$ is indeed well defined. Note also that:
\begin{itemize}
    \item If $m_1\neq0$, then $m_1'=m_1-1$, $m_2'=m_2+1$ and $e'=e$.
    \item If $m_1=0$, then $m_1'=m_2-1$, $m_2'=1$ and $e'=e-1$.
\end{itemize}

\vspace{1mm}

Assume the map $\pi_{(g-k,d-k,r-1,\ell-1)}$ is dominant. Then for a general curve $C\in|H-E|$, the fiber $\pi_{(g-k,d-k,r-1,\ell-1)}^{-1}(Y)\subseteq \Pic^{d-k}(C)$ is smooth of pure dimension
\[
\rho(g-k,r-\ell,d-k)-(\ell-1)k=\rho(g,r-\ell,d)-\ell k.
\]
Having chosen general curves $C\in |H-E|$ and $J\in |E|$, we consider the nodal curve
\[
Y:=C+J\in |H|
\]
and we denote by $x_1, \ldots, x_k$ the points of the (transverse) intersection $C\cdot J$.

\vspace{1mm}

Consider a general point $L_C\in\pi_{(g-k,d-k,r-1,\ell-1)}^{-1}\bigl([C]\bigr)$. If $m_1\neq0$, then $L_C$ fits in a short exact sequence
\[
0\longrightarrow 
\cO_X((e+1)E)^{\oplus m_1-1}\oplus \cO_X(eE)^{\oplus m_2+1}\longrightarrow L_C\longrightarrow W \longrightarrow 0
\]
in $\Coh^b(X)$ ($-1\ll b<0$), where $W\in\cM_{H_\epsilon}\bigl(-(r-\ell+1),H-\ell E,-(g-d+r-\ell)\bigr)$ is a general element. More precisely, as pointed out in Example \ref{Example:LineBundles}, we have $W=G[1]$ for a vector bundle $G$ arising as the kernel of a certain evaluation map. 
A similar statement holds true if $m_1=0$, by considering a distinguished triangle
\[
0\longrightarrow 
\cO_X(eE)^{\oplus m_2-1}\oplus \cO_X((e-1)E)\longrightarrow L_C\longrightarrow W \longrightarrow 0.
\]
We will assume $m_1\neq0$ in the rest of the proof; for $m_1=0$ one applies, \emph{mutatis mutandis}, the same argument.

\vskip 3pt

Consider the short exact sequence 
\[
0\longrightarrow \cO_X(eE)\longrightarrow \cO_X((e+1)E)\longrightarrow \cO_J\longrightarrow 0
\]
obtained by multiplication by the section defining the curve $J\in |E|$. We can construct the following commutative diagram with exact rows and columns in $\Coh^b(X)$:

\begin{equation}\label{diag:YC}
\begin{xy}
\xymatrix{
0 \ar[r]  & \cO_X((e+1)E)^{\oplus m_1-1} \oplus \cO_X(eE)^{\oplus m_2+1}
\ar[r] \ar[d] &  L_C \ar[r]\ar[d] & W  \ar[r]\ar[d]^{=} &  0
\\
0 \ar[r]  & \cO_X((e+1)E)^{\oplus m_1} \oplus \cO_X(eE)^{\oplus m_2} \ar[r] \ar[d] & L \ar[d]\ar[r]\ar[d] & W\ar[r] & 0\\
&    \cO_J \ar[r]^{=}& \cO_J  & &
}
\end{xy}
\end{equation}

\noindent It follows from the middle column that $L$ is a line bundle on the curve $Y=C+ J$, such that $L_{|J}\cong\cO_J$ and $L_{|C}\cong L_C(x_1+\cdots+x_k)$. In particular, $\deg(L)=d$. Furthermore, we can read from the second row that the stability type of $L$ equals $\overline{e}=\{(e+1,m_1),(e,m_2)\}$. Therefore $h^0(Y, L)=r+1$ and $L\in\pi^{-1}_{(g,d,r,\ell)}\bigl([Y]\bigr)$.

We are going to prove that the irreducible component of $\pi^{-1}_{(g,d,r,\ell)}\bigl([Y]\bigr)$ containing $L$ has the right dimension $\rho(g,r-\ell,d)-\ell k$, or equivalently, that this construction fills up (locally) a component of $\pi^{-1}_{(g,d,r,\ell)}\bigl([Y]\bigr)$. To this end, observe that since $L$ is locally free, the locally around $L$ the fibre $\pi^{-1}_{(g,d,r,\ell)}\bigl([Y]\bigr)$ is isomorphic to the locus, see also (\ref{HBNdegloci}) 
\begin{align*}
V^r_{d, \ell}\bigl(Y, \cO_Y(E)\bigr):=\Bigl\{N\in \mbox{Pic}^d(Y): h^0(Y, N)= r+1, \ h^0(Y, N(-(e+1)E))= m_1, \\
h^0(Y, N(-eE))= 2m_1+m_2\Bigr\},
\end{align*}
where $\mbox{Pic}^d(Y)$ consists of line bundles of bidegree $(d,0)$ on $Y=C+J$. 

\vskip 3pt

One has the following exact sequence 
\begin{equation}\label{eq:multipliers}
0\longrightarrow (\mathbb C^*)^{k-1}\longrightarrow \mbox{Pic}^d(Y)\longrightarrow \mbox{Pic}^d(C)\times \mbox{Pic}^0(J)\longrightarrow 0.
\end{equation}
Note that if $N\in \mbox{Pic}^d(Y)$ has a global section $s=(s_C, s_J)\in H^0(C, N_{|Y})\times H^0(J, N_{|J})$ such that $s(x_i)\neq 0$ for $i=1, \ldots, k$,
then $N$ is uniquely determined by its restrictions $N_{|C}\in \mbox{Pic}^d(C)$ and $N_{|J}\in \mbox{Pic}^0(J)$, because the multipliers corresponding to the fibre of
(\ref{eq:multipliers}) over the point $(N_{|C}, N_{|J})$ are given by the quotients $s_Y(x_i)/s_C(x_i)\in \mathbb C^*$.

We now observe that the line bundle $L$ constructed via (\ref{diag:YC}) enjoys this property. Indeed, since $h^0(C, L_C)=r$, $h^0(Y, L)=r+1$ and $h^0(J, \cO_J)=1$, we have an exact sequence
\[
0\longrightarrow H^0(C, L_C)\longrightarrow H^0(Y, L)\longrightarrow H^0(J, \cO_J)\longrightarrow 0.
\]
But a section $s_0\in H^0(Y, L)$ projecting onto the section $1\in H^0(J, \cO_J)$  does not vanish at any of the points $C\cdot J$.

\vskip 4pt

Assume now that $N\in V^r_{d,\ell}\bigl(Y, \cO_Y(E)\bigr)$, where $\mbox{deg}(N_{|C})=d$ and $\mbox{deg}(N_{|J})=0$. Since we are working locally around $L$, we may assume that
\begin{equation}\label{ineqsectionsY}
\begin{aligned}
  h^0\bigl(C, N_{|C}(-x_1-\cdots-x_k)\bigr)&\leq r,\\
  h^0\bigl(C, N_{|C}(-x_1-\cdots-x_k)(-eE)\bigr)&\leq 2m_1+m_2-1,\\
  h^0\bigl(C, N_{|C}(-x_1-\cdots-x_k)(-(e+1)E)\bigr)&\leq m_1-1.
\end{aligned}
\end{equation}
On the other hand one has an exact sequence
\[
0\longrightarrow N_{|C}(-x_1-\cdots-x_k)\longrightarrow N\longrightarrow N_{|J}\longrightarrow 0,
\]
from which it follows that $h^0(C, N_{|C}(-x_1-\cdots-x_k))\geq h^0(Y, N)-h^0(J, N_{|J})\geq r$, with equality only if $N_{|J}\cong \cO_J$. Therefore, $N_{|J}\cong \cO_J$ and $N$ is uniquely determined by its restriction $N_{|C}$. Furthermore, 
\[
h^0\bigl(Y, N_{|Y}(-x_1-\cdots-x_k)(-aE)\bigr)\geq h^0(C,N(-aE))-1
\]
for $a=e, e+1$. Therefore, all inequalities in \eqref{ineqsectionsY} are equalities, which means that $N_{|C}(-x_1-\cdots-x_k)$ has $\{(e+1,m_1-1),(e,m_2+1)\}$ as the nonnegative part of its splitting type. Thus, by Proposition \ref{comparisontypes}, we find that $N_{|C}(-x_1-\cdots-x_k)$ lies in $\pi_{(g-k,d-k,r-1,\ell-1)}^{-1}\bigl([C]\bigr)$ (since we work locally). Conversely, as already shown, a general line bundle $L_C\in\pi_{(g-k,d-k,r-1,\ell-1)}^{-1}\bigl([C]\bigr)$ gives rise via \eqref{diag:YC} to an element in the fibre $\pi_{(g,d,r,\ell)}^{-1}\bigl([Y]\bigr)$. 
This shows that 
\[
\mbox{dim}_{L}\Bigl(\pi_{(g,d,r,\ell)}^{-1}\bigl([Y]\bigr)\Bigr)=\mbox{dim}_{L_C} \Bigl(\pi_{(g-k,d-k,r-1,\ell-1)}^{-1}\bigl([C]\bigr)\Bigr)=\rho(g, r-\ell, d)-\ell k,
\]
which finishes the proof.
\end{proof}

\vskip 9pt

As a consequence of Proposition \ref{reduction:l=0}, we can establish the dominance of $\pi_{(g,d,r,\ell)}$ for arbitrary $\ell$ in the following cases:

\vskip 9pt

\begin{Cor}\label{cor:dominance_restricted}
Let $X$ be a degree $k$ elliptic  $K3$ surface with $\mathrm{Pic}(X)\cong \mathbb Z\cdot H\oplus \mathbb Z\cdot E$.  If $$\ell\geq\frac{1}{2}\bigl(g-d+2r+1-k\bigr),$$ then the map $\pi_{(g,d,r,\ell)}$ is dominant. In particular, $V^r_{d, \ell}\bigl(C, \cO_C(E)\bigr)$ is non-empty and of dimension $\rho(g, r-\ell,d)-\ell k$ for a general curve $C\in |H|$.
\end{Cor}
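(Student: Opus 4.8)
The plan is to derive Corollary \ref{cor:dominance_restricted} from Proposition \ref{reduction:l=0} by repeatedly peeling off copies of the elliptic pencil $E$ until we land in the base case $\ell = 0$. Concretely, suppose $\ell \geq \frac{1}{2}(g - d + 2r + 1 - k)$. If $\ell = 0$, then the dominance of $\pi_{(g,d,r,0)}$ must be established directly (this is precisely the content of Theorem \ref{thm:existence_limit} in the paper, which treats the non-balanced-recursion case $\ell = 0$; here the relevant stability type is $\overline{e} = \{(r+1, 0), (r, r+1)\}$, i.e. $m_1 = 0$, $e = 0$, $m_2 = r+1$, and the kernel bundle $G$ is the Lazarsfeld–Mukai bundle of $L$, so this reduces to a Lazarsfeld-type argument on curves in $|H|$). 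If $\ell \geq 1$, I would invoke Proposition \ref{reduction:l=0}: dominance of $\pi_{(g-k, d-k, r-1, \ell-1)}$ implies dominance of $\pi_{(g,d,r,\ell)}$. So the first step is to check that the hypothesis on $\ell$ is preserved under the reduction $(g,d,r,\ell) \mapsto (g-k, d-k, r-1, \ell-1)$: the new quantity $\frac{1}{2}((g-k) - (d-k) + 2(r-1) + 1 - k) = \frac{1}{2}(g - d + 2r - 1 - k) = \frac{1}{2}(g-d+2r+1-k) - 1$, so if $\ell \geq \frac{1}{2}(g-d+2r+1-k)$ then $\ell - 1 \geq \frac{1}{2}((g-k)-(d-k)+2(r-1)+1-k)$, exactly the same inequality for the reduced tuple. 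Hence the hypothesis propagates, and by descending induction on $\ell$ we reach $\ell = 0$.

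The second step is to verify that throughout this induction the tuples remain admissible, i.e. satisfy \eqref{cond: tuple}; but this is already handled inside the proof of Proposition \ref{reduction:l=0}, which shows that $\rho(g-k, r-\ell, d-k) - (\ell-1)k = \rho(g, r-\ell, d) - \ell k \geq 0$, that $d \geq k$ (via Proposition \ref{ineqBN}), and that $g - k \geq 1$, $d - k \geq 0$. So no additional bookkeeping is needed beyond noting that after $\ell$ steps the genus has dropped to $g - \ell k$, which is still positive: indeed $\rho(g - \ell k, r - \ell, d - \ell k) \geq 0$ forces $g - \ell k \geq (r - \ell + 1)(g - \ell k - d + \ell k + r - \ell) \geq 0$, and more directly $g' = g - \ell k \geq 1$ is implied by $d - \ell k \leq g' - 1$ once one knows $d - \ell k \geq 0$. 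I would spell this out in one line.

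Once dominance of $\pi_{(g,d,r,\ell)}$ is established, the final assertion — that $V^r_{d,\ell}(C, \cO_C(E))$ is non-empty of dimension $\rho(g, r-\ell, d) - \ell k$ for general $C \in |H|$ — follows from the interplay of two facts already proved: first, by Theorem \ref{relative:HBNthm}\eqref{relative:HBNthm-b2} the moduli space $\cM_{H_\epsilon}\bigl((0,H,1+d-g), \overline{e}\bigr)$ (where $\overline{e} = \{(e+1,m_1),(e,m_2)\}$ with $r+1 = m_1(e+2)+m_2(e+1)$ and $r+1-\ell = m_1+m_2$) is non-empty, smooth, irreducible of dimension $g + \rho(g,r-\ell,d) - \ell k$; second, dominance of the support map to $|H| \cong \mathbb{P}^g$ combined with generic smoothness gives that the general fiber is smooth of dimension $\rho(g,r-\ell,d) - \ell k$. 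Finally, by Proposition \ref{comparisontypes}\eqref{comparisontypes:a} a line bundle $L$ on $C$ with $i_*L$ of stability type $\overline{e}$ has $(\overline{f}_L)^{\geq 0} = \overline{e}$, hence $L \in V^r_{d,\ell}(C, \cO_C(E))$; conversely part \eqref{comparisontypes:c} of the same proposition identifies the two loci up to closure, so the fiber of the support map is (a dense open in) $V^r_{d,\ell}(C, \cO_C(E))$. This yields the claim.

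The main obstacle is really the base case $\ell = 0$, which the inductive scheme cannot reach by the Proposition \ref{reduction:l=0} mechanism alone — it requires the separate construction of Theorem \ref{thm:existence_limit} (a direct Lazarsfeld–Mukai-style argument exhibiting a stable object of the purely unbalanced type $\{(0, r+1)\}$ over a general curve in $|H|$, or more precisely in $|H - aE|$ in the inductive reformulation). Everything else is a short verification that the numerical inequality $\ell \geq \frac{1}{2}(g - d + 2r + 1 - k)$ is exactly the condition that survives $\ell$ applications of the reduction and lands at $\ell = 0$; I do not anticipate any genuine difficulty there beyond being careful that $2\ell \geq g - d + 2r + 1 - k$ rearranges to $\ell \geq (r - \ell) + (g - d + r - \ell + 1) - k + \ell$, confirming it is the natural "enough room to reach the limit" bound.
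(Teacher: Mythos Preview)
Your reduction via Proposition \ref{reduction:l=0} is correct and matches the paper: the inequality $\ell \geq \frac{1}{2}(g-d+2r+1-k)$ is preserved under $(g,d,r,\ell)\mapsto(g-k,d-k,r-1,\ell-1)$, so one may assume $\ell=0$.

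The gap is in your base case. You defer to Theorem \ref{thm:existence_limit}, but that theorem is stated for a \emph{general} degree $k$ elliptic $K3$ surface (its proof degenerates to a Halphen surface and uses limit linear series), whereas the corollary is asserted for an \emph{arbitrary} $X$ with $\Pic(X)\cong\bZ\cdot H\oplus\bZ\cdot E$. So your argument proves only a weaker statement. You have also not used the hypothesis in the base case at all: Theorem \ref{thm:existence_limit} requires only $k\geq r+2$, not the stronger $k\geq g-d+2r+1$ that the condition $\ell=0\geq\frac{1}{2}(g-d+2r+1-k)$ gives.

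The paper's base case is entirely internal to the Bridgeland machinery and uses the hypothesis essentially. The point is that $\ell_{\max}:=\frac{1}{2}(g-d+2r+1-k)$ is where the quadratic $\ell\mapsto\rho(g,r-\ell,d)-\ell k$ is maximized; the hypothesis at $\ell=0$ says $\ell_{\max}\leq 0$, hence $\rho(g,r-\ell,d)-\ell k<\rho(g,r,d)$ for every $\ell\geq 1$. By Corollary \ref{HBNupperbound} this forces $\dim W^r_d(C)=\rho(g,r,d)$ for general $C\in|H|$, with a general point satisfying $h^0(L)=r+1$. Now any $L$ with $h^0(L)=r+1$ and $h^0(L(-E))\neq0$ has a stability type $\overline{e'}=\{(e_i',m_i')\}$ with $e_1'\geq1$; the refined inequality from the proof of Theorem \ref{thm.splitting} gives $\ell_{\overline{e'}}=r+1-\sum m_i'\geq1$, so by Theorem \ref{relative:HBNthm} the corresponding stratum has dimension at most $g+\rho(g,r-\ell_{\overline{e'}},d)-\ell_{\overline{e'}}k<g+\rho(g,r,d)$. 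Hence the locus where $h^0(L(-E))\neq0$ is too small, and a general $L$ in the $\rho(g,r,d)$-dimensional component lies in the fiber of $\pi_{(g,d,r,0)}$, proving dominance for every such $X$.
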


\begin{proof}
In view of Proposition \ref{reduction:l=0}, we may assume $\ell=0$. Thus we aim to prove that
    \[
    \cM_{H_\epsilon}\bigl((0,H,1+d-g),\overline{e}\bigr)\longrightarrow |H|
    \]
    is a dominant map for $\overline{e}=\{(0,r+1)\}$. This amounts to prove that for a general $C\in|H|$ there exists a line bundle  $L\in\Pic^d(C)$ with $h^0(C, L)=r+1$ and $h^0(C, L(-E))=0$.

\vskip 3pt
Since $k\geq g-d+2r+1-k$, we have
    \begin{equation}\label{maxl=0}
        \rho(g,r-\ell,d)-\ell k < \rho(g,r,d)\ \text{ for every
        }\ell\geq 1
    \end{equation}
By Corollary \ref{HBNupperbound}, \eqref{maxl=0} implies 
$\dim W^r_d(C)=\rho(g,r,d)$ for a general curve $C\in|H|$
    and also $\dim W^r_d(C)>\dim W^{r+1}_d(C)$, so a general $L\in W^r_d(C)$ has $h^0(C, L)=r+1$.

    \vskip 2pt
    
    On the other hand, any $L\in\Pic^d(C)$ with $h^0(C, L)=r+1$ and $h^0(C, L(-E))\neq0$ has stability type $\overline{e'}=\{(e_i',m_i')\}_{i=1}^p$, where $e_1'\geq 1$ and
    \begin{equation}\label{refined-r+1}
    r+1\geq\left\{
    \begin{array}{c l}
     2m_1 & \text{if $p=1$}\\
     2(m_1+\cdots+m_{p-1})+m_p & \text{if $p\geq 2$}\\
    \end{array}
    \right.
    \end{equation}
    (this follows from Theorem \ref{thm.splitting} and its proof). Furthermore, Theorem \ref{relative:HBNthm} yields
    \[
    \dim\cM_{H_\epsilon}\bigl((0,H,1+d-g),\overline{e'}\bigr)\leq g+\rho(g,r-\ell,d)-\ell k\overset{\eqref{maxl=0}}{<} g+\rho(g,r,d)
    \]
    where $\ell=r+1-(m_1+\cdots+m_p)\overset{\eqref{refined-r+1}}{\geq}1$. It follows that, for a general $C\in|H|$, the locus
    \[
    \bigl\{ L\in\Pic^d(C):h^0(C, L)=r+1,\; h^0(C, L(-E))\neq0\bigr\}
    \]
    has dimension $<\rho(g,r,d)$, which concludes the proof.
\end{proof}

\vskip 7pt

\begin{Cor}\label{cor_dominance_maximal}
For a degree $k$ elliptic $K3$ surface with $\mathrm{Pic}(X)\cong \mathbb Z\cdot H\oplus \mathbb Z\cdot E$, for a general curve $C\in |H|$, we have that 
$$\mathrm{dim}\ W^r_d(C)=\rho_k(g,r,d).$$
\end{Cor}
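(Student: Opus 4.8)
The plan is to combine the upper bound of Corollary~\ref{HBNupperbound} with the non-emptiness statements of Corollary~\ref{cor:dominance_restricted}, supplemented in one boundary case by Remark~\ref{rem:l=r}. By Riemann--Roch we may assume $d\leq g-1$. Corollary~\ref{HBNupperbound}\eqref{HBNupperbound:b} already gives $\dim W^r_d(C)\leq \rho_k(g,r,d)$ for a general $C\in|H|$, and if $\rho_k(g,r,d)<0$ then Corollary~\ref{HBNupperbound}\eqref{HBNupperbound:a} yields $W^r_d(C)=\varnothing$; so in that case there is nothing left to prove. Thus we may assume $\rho_k(g,r,d)\geq 0$, and it remains to produce, for a general curve $C\in|H|$, an irreducible component of $W^r_d(C)$ of dimension exactly $\rho_k(g,r,d)$.

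To do so, regard $\ell\mapsto \rho(g,r-\ell,d)-\ell k$ as a downward concave quadratic function of $\ell$, whose real maximum is attained at $\ell^\ast:=\tfrac12\bigl(g-d+2r+1-k\bigr)$. The key elementary observation is that $2\ell^\ast\in\bZ$, so $\ell^\ast$ is an integer or a half-integer. Letting $\ell_0$ be the largest integer in $\{0,\dots,r\}$ at which this function attains its maximum value $\rho_k(g,r,d)$, it follows that either $\ell_0=r$, or else $\ell_0<r$ and $\ell_0\geq \ell^\ast$. These two cases are treated separately.

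If $\ell_0=r$, then $\rho_k(g,r,d)=\rho(g,0,d)-rk=d-rk\geq 0$, and Remark~\ref{rem:l=r} identifies $W^r_{d,r}\bigl(C,\cO_C(E)\bigr)$ with the translate $\cO_C(rE)+W_{d-rk}(C)\subseteq \Pic^d(C)$, which for any smooth $C\in|H|$ has dimension $d-rk$ since $0\le d-rk\le g-1$. If instead $\ell_0<r$, then using $k\geq 2$ and $d\leq g-1$ one checks that $\max\{0,r+2-k\}\leq \ell_0$, so $\ell_0$ is admissible in the sense of Section~\ref{HBNloci}; moreover $\rho(g,r-\ell_0,d)-\ell_0 k=\rho_k(g,r,d)\geq 0$ and $\ell_0\geq\tfrac12(g-d+2r+1-k)$, so Corollary~\ref{cor:dominance_restricted} applies and shows that $V^r_{d,\ell_0}\bigl(C,\cO_C(E)\bigr)$ is non-empty of dimension $\rho(g,r-\ell_0,d)-\ell_0 k=\rho_k(g,r,d)$ for a general $C\in|H|$. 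In either case, since $V^r_{d,\ell_0}(C,\cO_C(E))\subseteq W^r_{d,\ell_0}(C,\cO_C(E))\subseteq W^r_d(C)$ by Proposition~\ref{prop:bpf_trick}, we obtain $\dim W^r_d(C)\geq \rho_k(g,r,d)$ for a general $C\in|H|$.

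Intersecting the dense open locus of curves satisfying the upper bound with the dense open locus of curves satisfying the lower bound, a general $C\in|H|$ satisfies $\dim W^r_d(C)=\rho_k(g,r,d)$, as claimed. Since all the substantial work has been done in Corollaries~\ref{HBNupperbound} and~\ref{cor:dominance_restricted}, the proof is essentially a matter of assembly; the one point requiring care is the combinatorial bookkeeping in the second and third paragraphs, namely selecting the optimal $\ell_0$ and verifying that it falls in the range where Corollary~\ref{cor:dominance_restricted} is available — which works precisely because the real maximizer $\ell^\ast$ of the relevant parabola is a half-integer — together with isolating the extreme case $\ell_0=r$, where one instead invokes the elementary Remark~\ref{rem:l=r}.
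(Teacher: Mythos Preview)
Your proof is correct and follows essentially the same approach as the paper: the upper bound comes from Corollary~\ref{HBNupperbound}, and the lower bound from Corollary~\ref{cor:dominance_restricted} applied at an integer $\ell_0$ realizing the maximum of the parabola $\ell\mapsto\rho(g,r-\ell,d)-\ell k$. Your observation that $2\ell^\ast\in\bZ$ forces the \emph{largest} integer maximizer $\ell_0$ to satisfy $\ell_0\geq\ell^\ast$ whenever $\ell_0<r$ is exactly the content of the paper's one-line argument, and your verification that $\ell_0\geq r+2-k$ (needed for the tuple to satisfy~\eqref{cond: tuple}) is a detail the paper leaves implicit.

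The one place your write-up is genuinely more careful than the paper's is the boundary case $\ell_0=r$. The paper's proof simply cites Corollary~\ref{cor:dominance_restricted}, whose hypothesis is $\ell\geq\ell^\ast$; when $\ell^\ast>r$ this hypothesis fails at $\ell=r$, so strictly speaking an extra word is needed. You supply it cleanly via Remark~\ref{rem:l=r}, which gives the translate $A^r+W_{d-rk}(C)\subseteq W^r_d(C)$ of dimension $d-rk=\rho_k(g,r,d)$ for any smooth $C\in|H|$. This is presumably what the paper has in mind as well (it flags the case $\ell=r$ as ``transparent'' in that remark), but making it explicit is good practice. One small quibble: the inclusion $W^r_{d,\ell}(C,A)\subseteq W^r_d(C)$ is stated in the paper as a consequence of Proposition~\ref{prop:bpf_trick}, but for $V^r_{d,\ell}$ it is immediate from the condition $h^0(C,L)=r+1$ in the definition~\eqref{eq:defl}; either justification is fine.
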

\begin{proof}This follows from Corollary \ref{cor:dominance_restricted} recalling that $\ell_{\mathrm{max}}:=\frac{1}{2}\bigl(g-d+2r+1-k\bigr)$ is the quantity where the maximum of the quadratic function $\rho(g,r-\ell,d)-\ell k$ is attained. Hence, having proven existence for 
$$\ell\geq\min\Bigl\{\frac{1}{2}(g-d+2r+1-k), r\Bigr\},$$we have shown the existence of a component of $W^r_d(C)$ of the maximal dimension. 
\end{proof}

Next we observe how our results provide a full answer to the existence problem of the loci $V^r_{d, \ell}(C, A)$ in the extremal case when $\ell=r+2-k$.

\begin{Thm}\label{thm:existencel+2}
For a degree $k$ elliptic $K3$ surface with $\mathrm{Pic}(X)\cong \mathbb Z\cdot H\oplus \mathbb Z\cdot E$, for a general curve $C\in |H|$ we have 
$$\mathrm{dim} \ V^r_{d, r+2-k}\bigl(C, \cO_C(E)\bigr)=\rho(g, k-2,d)-(r+2-k)k.$$
\end{Thm}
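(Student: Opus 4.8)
The plan is to derive the statement from the inductive reduction of Proposition~\ref{reduction:l=0}, whose base case $\ell=0$ on a $K3$ surface of cover degree $r+2$ is precisely Proposition~\ref{prop:coppens-martens}. Set $a:=r+2-k$; we may assume $a\geq 0$ (equivalently $r\geq k-2$), since otherwise $\ell=r+2-k$ lies outside the admissible range $\max\{0,r+2-k\}\leq \ell\leq r$. Because the Brill--Noether number is invariant under the reduction step $(g-k,d-k,r-1,\ell-1)\mapsto(g,d,r,\ell)$, we have
\[
N:=\rho(g,k-2,d)-(r+2-k)k=\rho\bigl(g-ak,\,k-2,\,d-ak\bigr).
\]
If $N<0$ then, for the balanced stability type $\overline{e}$ attached to $(g,d,r,r+2-k)$ (for which $\ell_{\overline{e}}=r+2-k$), Theorem~\ref{relative:HBNthm} forces $\cM_{H_\epsilon}\bigl((0,H,1+d-g),\overline{e}\bigr)=\emptyset$, and then $V^r_{d,r+2-k}(C,\cO_C(E))=\emptyset$ by Proposition~\ref{comparisontypes}; so I assume $N\geq 0$ henceforth.

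First I would settle the base case $a=0$, i.e.\ $r=k-2$, where $H\cdot E=k=r+2$. Since $\rho(g,k-2,d)=N\geq 0$, Proposition~\ref{prop:coppens-martens} (which rests only on Theorem~\ref{thm:ballico}) yields, for a general $C\in|H|$, a line bundle $L\in\Pic^d(C)$ with $h^0(C,L)=k-1$ and $h^0\bigl(C,L\otimes\cO_C(-E)\bigr)=0$; such an $L$ has $(\overline{f}_L)^{\geq 0}=\{(0,k-1)\}$, and as $e=0$ Proposition~\ref{comparisontypes} shows this is equivalent to $i_*L$ having stability type $\overline{e}=\{(0,k-1)\}$. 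Hence $L$ gives a point of $\cM_{H_\epsilon}\bigl((0,H,1+d-g),\overline{e}\bigr)$ over $[C]$, so $\pi_{(g,d,k-2,0)}$ is dominant. For $a\geq 1$, I would then run a descending induction on $j=a,\dots,0$, applying Proposition~\ref{reduction:l=0} on the linear system $|H-jE|$ (legitimate by Remark~\ref{general-linearsystem}) to pass from dominance of $\pi_{(g-(j+1)k,\,d-(j+1)k,\,r-j-1,\,r+1-k-j)}$ to dominance of $\pi_{(g-jk,\,d-jk,\,r-j,\,r+2-k-j)}$; its hypotheses hold at every step because $r+2-k-j\geq 1$ exactly in the range $0\leq j\leq r+1-k$ where the reduction is used, and because $N\geq 0$ forces $d-jk\geq k$ (hence $g-jk\geq 1$) by Proposition~\ref{ineqBN}. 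At $j=0$ we conclude that the support map $\pi_{(g,d,r,r+2-k)}\colon\cM_{H_\epsilon}\bigl((0,H,1+d-g),\overline{e}\bigr)\to|H|$ is dominant.

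To finish, Theorem~\ref{relative:HBNthm} gives that $\cM_{H_\epsilon}\bigl((0,H,1+d-g),\overline{e}\bigr)$ is smooth, irreducible, of dimension $g+N$ (the needed inequalities $N\geq0$ and $r+1-k\leq r+2-k$ hold); since its dominant support map lands in $|H|\cong\mathbf{P}^g$, the generic fibre has dimension $N$, and by generic smoothness this fibre is smooth for a general $C\in|H|$. Proposition~\ref{comparisontypes} then identifies the closure of this fibre with that of $V^r_{d,r+2-k}(C,\cO_C(E))$ in $\overline{\Pic}^d(C)$, giving $\dim V^r_{d,r+2-k}(C,\cO_C(E))=N=\rho(g,k-2,d)-(r+2-k)k$. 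I expect the delicate point to be the base case and its interface with the Bridgeland side: confirming that a Coppens--Martens line bundle on a general curve in $|H-aE|$ genuinely acquires the stability type $\{(0,k-1)\}$ rather than a longer one, which presupposes that Theorem~\ref{thm:ballico} and Proposition~\ref{prop:coppens-martens} transfer to the linear systems $|H-aE|$ after rewriting $\Pic(X)=\mathbb{Z}\cdot(H-aE)\oplus\mathbb{Z}\cdot E$ --- admissible since $g-ak\geq 1$ keeps $H-aE$ nef of square $2(g-ak)-2$.
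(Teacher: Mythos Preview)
Your proposal is correct and follows the same approach as the paper's one-line proof, which simply combines Proposition~\ref{prop:coppens-martens} (base case $\ell=0$) with the inductive reduction of Proposition~\ref{reduction:l=0}. Your stated concerns are easily resolved: the proof of Theorem~\ref{thm:ballico} is itself an induction on the linear systems $|H-aE|$, so Proposition~\ref{prop:coppens-martens} transfers verbatim to curves in $|H-aE|$, and since Definition~\ref{Def.splitting} requires $e_p\geq 0$, a stability type ending at $e=0$ admits no further entries, so the Coppens--Martens line bundle automatically has stability type exactly $\{(0,k-1)\}$.
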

\begin{proof}
Follows immediately by combining Proposition \ref{prop:coppens-martens} with  Proposition \ref{reduction:l=0}.  
\end{proof}

\subsection{Halphen surfaces} To establish an  existence result analogous to Theorem \ref{thm:existencel+2} for all values of $\ell$, we need to relax the assumption of working with an arbitrary degree $k$ elliptic $K3$ surface with $\mbox{Pic}(X)\cong \mathbb Z\cdot H\oplus \mathbb Z\cdot E$. We recall that such $K3$ surfaces form an irreducible  Noether-Lefschetz divisor in the $19$-dimensional moduli space $\cF_g$ of polarized $K3$ surfaces of genus $g$ and we will use a degeneration of elliptic $K3$ surfaces to \emph{Halphen surfaces}  of index $k$ following \cite[2.1]{cantat-dolgachev}. Let $p_1, \ldots, p_9\in \mathbf{P}^2$ be distinct points and denote by $$\epsilon \colon X:=\mathrm{Bl}_{\{p_1, \ldots, p_9\}}\bigl(\mathbf{P}^2\bigr)\longrightarrow \mathbf{P}^2$$ the blow-up of $\mathbf{P}^2$ at these points, by $E_1, \ldots, E_9$ the corresponding exceptional divisors on $X$ and set 
$\cO_X(h):=\epsilon^*(\cO_{\mathbb P^2}(1))$. 
\begin{Def}
The surface 
$X$ is said to be a Halphen surface if there exists an integer $k\geq 2$ such that $\mbox{dim} \ \bigl|-kK_X\bigr|=1$ and $\bigl|-kK_X \bigr|$ is base point free. The smallest $k$ with these properties is the \emph{index} of $X$. 
\end{Def}
From the definition it follows there exists a unique cubic curve 
$J\in \bigl|3h-E_1-\cdots-E_9\bigr|$, therefore $J\equiv -K_X$ and there exists an irreducible plane curve $Z$ of degree $3k$ having points of multiplicity $k$ at $p_1, \ldots, p_9$. Accordingly, then
$$\cO_J\bigl(3h-E_1-\cdots-E_9\bigr)=\cO_J(J) \in \mbox{Pic}^0(J)[k]$$
is a torsion point of order $k$. Moreover $Z\cdot J=0$ and there exists an elliptic pencil $f\colon X\rightarrow \mathbf{P}^1$ having $kJ$ as a non-reduced fibre.

\vskip 4pt

We fix a Halphen surface $X$ of index $k$ and for $g\geq 1$ define the \emph{du Val} linear system 
\begin{equation}\label{eq:duVal-system}
\Lambda_g:=\bigl|3gh-gE_1-\cdots -gE_8-(g-1)E_9\bigr|.
\end{equation}
It follows from \cite[Lemma 2.4]{arbarello-explicit} that the general element $C\in \Lambda_g$ is a smooth curve of genus $g$. Observe that $C\cdot J=1$, therefore the point of intersection $p_{10}^{(g)}=C\cdot J$ is a base point of the linear system $\Lambda_g$. The point $p_{10}^{(g)}\in J$ is determined by the relation 
\begin{equation}\label{eq:p10g}
p_{10}^{(g)}=-gp_1-\cdots -gp_8-(g-1)p_9\in J
\end{equation}
with respect to the group law on $J$.
 Blowing up the base point $p_{10}^{(g)}$ and denoting by $|C'|$ the strict transform of the linear system $\Lambda_g$, it is shown in \cite{arbarello-bruno-sernesi} that the linear system $|C'|$ is base point free on $X':=\mathrm{Bl}_{p_{10}^{(g)}}(X)$ and maps $X'$ onto a surface $\overline{X}\subseteq \mathbf{P}^g$ that is a limit of polarized $K3$ surfaces of degree $2g-2$. Note that the map $X'\rightarrow \overline{X}$ contracts the proper transform of $Z$ to an elliptic singularity $q$ on $\overline{X}$. The main result of \cite{arbarello-explicit} asserts that if the points $p_1, \ldots, p_9\in \mathbf{P}^2$ are not associated with a Halphen surface of index at most $g$, then the general curve $C\in \Lambda_g$ satisfies the Petri theorem.
This is the case if $p_1, \ldots, p_9$ are chosen generically in $\mathbf{P}^2$.
 \vskip 3pt

 Assume now that $X$ is a Halphen surface of index $k$. Then $A:=\cO_C(kJ)\in W^1_k(C)$  and it is proved in \cite[Theorem 3.1]{arbarello24} that if $k\leq \frac{g+3}{2}$, then $A$ computes the gonality of $C$, therefore $\mbox{gon}(C)=k$. Furthermore, using e.g. \cite{arbarello-bruno-sernesi} it is easy to see that Halphen surfaces appear as limits of degree $k$ elliptic $K3$ surface. Indeed, it follows from \cite[Theorem 24]{arbarello-bruno-sernesi} that the map between the deformation functors 
 $$\mu\colon \mbox{Def}\bigl(\overline{X}, \cO_{\overline{X}}(1)\bigr)\rightarrow \mbox{Def}\bigl(\overline{X},q\bigr)$$ is smooth. Here $\mbox{Def}\bigl(\overline{X},q\bigr)$ is parametrized by $H^0(\overline{X}, T^1_{\overline{X}})$, where $T^1_{\overline{X}}=\mathcal{E} xt^1\bigl(\Omega_{\overline{X}}^1, \cO_{\overline{X}}\bigr)$, whereas $\mbox{Def}\bigl(\overline{X}, \cO_{\overline{X}}(1)\bigr)$ is parametrized by $\mbox{Ext}^1\bigl(Q_{\cO_{\overline{X}}(1)}, \cO_{\overline{X}}\bigr)$, where $Q_{\cO_{\overline{X}}(1)}$ is the corresponding Atiyah class sitting in the exact sequence
 $$0\longrightarrow \Omega_{\overline{X}}\longrightarrow Q_{\cO_{\overline{X}}(1)}\longrightarrow \cO_{\overline{X}} \longrightarrow 0.$$
 Since $\mbox{dim } \mbox{Def}\bigl(\overline{X}, q\bigr)>1$, it follows that the codimension one subvariety of $\mbox{Def}\bigl(\overline{X}, \cO_{\overline{X}(1)}\bigr)$ parametrizing those deformations that also preserve the line bundle $\cO_{\overline{X}}(Z)$ cannot be contained in the space of topologically trivial deformations of $(\overline{X}, \cO_{\overline{X}}(1))$. Since $|\cO_{\overline{X}}(1)(-Z)|$ is very ample, we also know that each formal deformation of $\bigl(\overline{X}, \cO_{\overline{X}}(1)\bigr)$ is also effective. Therefore we obtain  that the  Halphen surface $\overline{X}\subseteq \mathbf{P}^g$ smooths to an elliptic $K3$ surface like in Proposition \ref{thm-k3 surface}. 

\vskip 4pt

Before stating our next result, we recall that for a point $p$ on a smooth curve $C$ we introduce the \emph{vanishing sequence} at $p$
$$a^{\ell}(p)=\bigl(0\leq a_0^{\ell}(p) < \ldots < a^{\ell}_r(p) \leq d\bigr)$$ of a linear system 
$\ell=(L, V)\in G^r_d(C)$ by ordering the vanishing orders at $p$ of the sections from $V$. The \emph{ramification sequence} of $\ell$ at $p$ is the non-decreasing sequence $\alpha^{\ell}(p)=\bigl(\alpha_0^{\ell}(p)\leq \cdots \leq \alpha^{\ell}(p)\bigr)$ 
obtained by setting $\alpha^{\ell}_i(p):=a^{\ell}_i(p)-i$.  Having fixed integers $0<r \leq d$, a Schubert index of type $(r, d)$ is a non-decreasing sequence of integers $\bar{\alpha}=\bigl(0\leq \alpha_0 \leq \ldots \leq \alpha_r \leq d-r\bigr)$. Its \emph{weight} is defined as $|\bar{\alpha}|:=\alpha_0+\cdots+\alpha_r.$

For a curve $Y$ of compact type, for points $p_1, \ldots, p_s\in Y_{\mathrm{reg}}$ and for Schubert indices $\bar{\alpha}^1, \ldots, \bar{\alpha}^s$ of type $(r,d)$, we denote by $\overline{G}^r_d\bigl(Y,(p_1, \bar{\alpha}^1), \ldots, (p_s, \bar{\alpha}^s)\bigr)$ the variety of limit linear series $\ell$ of type $g^r_d$ on $Y$ satisfying the inequalities 
$\alpha^{\ell}(p_i)\geq \bar{\alpha}^i$, for $i=1, \ldots, s$. It is a determinantal variety of expected dimension 
\begin{equation}\label{eq:BNnumb_pointed}
\rho\bigl(g, r, d, \bar{\alpha}^1, \ldots, \bar{\alpha}^s\bigr)=\rho(g,r,d)-\sum_{i=1}^s |\bar{\alpha}^i|.
\end{equation}

We shall use that if $[J, p_1, p_2]\in \cM_{1,2}$ is a $2$-pointed elliptic curve such that $\cO_J(p_1-p_2)$ is not a torsion bundle, then $G^r_d\bigl(J, (p_1, \bar{\alpha}^1), (p_2, \bar{\alpha}^2)\bigr)$ has the expected dimension (\ref{eq:BNnumb_pointed}) for any choice of the Schubert indices $\bar{\alpha}^1$ and $\bar{\alpha}^2$. We refer to \cite{EH1} for basics on the theory of limit linear series.

\begin{Thm}\label{thm:existence_limit}
Fix a general degree $k$ elliptic $K3$ surface with $\mathrm{Pic}(X)\cong \mathbb Z\cdot H\oplus \mathbb Z\cdot E$. Then for $d\leq g-1$ and $k\geq r+2$ such that $\rho(g,r,d)\geq 0$, for a general curve $C\in |H|$, there exists a component $Z$ of $W^r_d(C)$ of dimension $\rho(g,r,d)$, whose general point corresponds to a line bundle $L$ with $H^0(C, L\otimes E_C^{\vee})=0$.   
\end{Thm}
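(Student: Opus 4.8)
\textbf{Proof proposal for Theorem \ref{thm:existence_limit}.}

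The plan is to prove this by degeneration to a Halphen surface of index $k$, following the philosophy of \cite{arbarello-explicit}, and to use the limit linear series machinery on a chain-type curve in the du Val linear system. Recall from the preceding discussion that a Halphen surface $X_0$ of index $k$ appears as a limit of degree $k$ elliptic $K3$ surfaces, and that on $X_0$ the linear system $\Lambda_g$ of \eqref{eq:duVal-system} has general member a smooth curve $C$ of genus $g$ with $C\cdot J=1$, the base point being the torsion point $p_{10}^{(g)}\in J$ of \eqref{eq:p10g}. By semicontinuity, it suffices to exhibit \emph{one} smooth curve in $\Lambda_g$ (or in a degeneration of $\Lambda_g$) carrying a component of $W^r_d$ of the expected dimension $\rho(g,r,d)$ whose general point $L$ satisfies $H^0(C,L\otimes E_C^\vee)=0$, where $E_C=\cO_C(kJ)$; the condition $H^0(C,L\otimes E_C^\vee)=0$ is itself open, so it is enough to guarantee it for the general member of that component. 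The inequality $k\geq r+2$ is exactly what makes this condition compatible with $\dim W^r_d(C)=\rho(g,r,d)$, since a line bundle $L$ with $h^0(L)=r+1$ and $h^0(L\otimes E_C^\vee)\neq 0$ would have positive scrollar contribution and hence lie in a locus of dimension strictly below $\rho(g,r,d)$ (compare Theorem \ref{thm.splitting} and the proof of Corollary \ref{cor:dominance_restricted}).

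First I would degenerate the general curve $C\in\Lambda_g$ to a nodal curve of compact type. The natural choice is a chain $Y=C_1\cup_{q_1}C_2\cup_{q_2}\cdots$ obtained by letting $C$ acquire nodes so as to split off elliptic ``tails'' attached along the cubic $J$, or more precisely a union $Y=\Gamma\cup J$ where $\Gamma$ is a smooth curve of genus $g-1$ (itself a du Val curve on a Halphen surface, or on a blow-up thereof) meeting the elliptic curve $J$ at a single point $q$. Such a degeneration is available precisely because $p_{10}^{(g)}$ moves with $g$ according to \eqref{eq:p10g}, so one can realize the one-nodal degeneration $\Lambda_g\rightsquigarrow \Lambda_{g-1}+J$ inside the family of du Val systems on $X_0$. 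Iterating, we reach a curve $Y=\Gamma_0\cup J_1\cup\cdots\cup J_{s}$ of compact type with $\Gamma_0$ of small genus and all $J_i$ elliptic; the attaching points on the $J_i$ are, by \eqref{eq:p10g}, \emph{non-torsion} differences of marked points (this is where the genericity of the Halphen surface $X$ enters). On such a curve I would build a limit linear series $\ell\in\overline{G}^r_d\bigl(Y,(q,\bar\alpha)\bigr)$ of type $g^r_d$ with prescribed ramification at the node(s): on each elliptic component $J_i$ one uses that $G^r_d\bigl(J_i,(p,\bar\alpha^1),(p',\bar\alpha^2)\bigr)$ has the expected dimension \eqref{eq:BNnumb_pointed} whenever $\cO_{J_i}(p-p')$ is non-torsion, which holds here; on $\Gamma_0$ one uses the base case of classical (or Hurwitz–)Brill–Noether theory. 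Smoothing the limit linear series via \cite{EH1} (the Regeneration Theorem), the dimension count $\rho\bigl(g,r,d,\bar\alpha\bigr)+\sum|\bar\alpha^i|=\rho(g,r,d)$ produces a component $Z$ of $W^r_d(C)$ of dimension exactly $\rho(g,r,d)$ for the general smoothing $C\in\Lambda_g$.

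The remaining point — and I expect this to be the \emph{main obstacle} — is controlling the scrollar invariant, i.e.\ forcing $H^0(C,L\otimes E_C^\vee)=0$ for the general $L\in Z$. Under the degeneration, $E_C=\cO_C(kJ)$ specializes to the line bundle on $Y$ whose restriction to $J_i$ is $\cO_{J_i}(kJ_i)$ (a degree-$k$ line bundle, the $k$-th power of a torsion point, hence trivial or a $k$-torsion point) and whose restriction to $\Gamma_0$ is the analogous du Val class. The vanishing $H^0(C,L\otimes E_C^\vee)=0$ will follow if the limit linear series $\ell$ built above has the property that its twist down by (the limit of) $E_C$ has no global sections on $Y$, which in turn reduces — via the compatibility of vanishing orders at the nodes — to checking that on each component the twisted-down aspect $\ell_{J_i}\otimes\cO_{J_i}(-kJ_i)$, resp.\ $\ell_{\Gamma_0}\otimes E_{\Gamma_0}^\vee$, has the expected (non-)vanishing. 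Here the condition $k\geq r+2$ is decisive: it ensures $\deg(L\otimes E_C^\vee)$ and the admissible ramification leave no room for extra sections, so a \emph{general} choice within $Z$ (equivalently, within the product of the fibers of $G^r_d$ over the elliptic components) achieves $h^0=0$. Concretely, I would choose the ramification data $\bar\alpha$ and the base-case linear series on $\Gamma_0$ so that this twisted vanishing holds at the special curve, then invoke semicontinuity and openness to conclude it for the general curve $C\in|H|$ on the general degree $k$ elliptic $K3$ surface. Finally, transporting the statement from the du Val curve $C\in\Lambda_g$ on $X_0$ back to a general curve $C\in|H|$ on a general degree $k$ elliptic $K3$ surface uses the fact, established above, that the Halphen surface $\overline{X}\subseteq\mathbf{P}^g$ smooths to such a $K3$ surface, together with one more application of semicontinuity of $\dim W^r_d$ and of the condition $h^0(L\otimes E_C^\vee)=0$.
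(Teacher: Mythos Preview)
Your overall strategy---specialize to a Halphen surface of index $k$, degenerate $C\in\Lambda_g$ to a chain of elliptic curves, build a refined limit $g^r_d$ with the right Brill--Noether number, and verify $H^0(L\otimes A^\vee)=0$ in the limit---is exactly the paper's approach. But there is a genuine error that breaks your dimension count and your treatment of the pencil. You assert that, by \eqref{eq:p10g}, the attaching differences $\cO_{J_i}(p^{(i)}-p^{(i-1)})$ are \emph{non-torsion}. In fact the opposite is true: \eqref{eq:p10g} gives $p^{(i)}-p^{(i-1)}=-(p_1+\cdots+p_9)$ in the group law of $J$, which is precisely the class of $\cO_J(J)\in\Pic^0(J)[k]$; on a Halphen surface of index $k$ this is exactly $k$-torsion. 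This torsion is not an accident---it is what makes the limit of the pencil $A=\cO_C(kJ)$ visible on the chain $Y$ (the limit $\mathfrak a\in\overline{G}^1_k(Y)$ has ramification $(0,k-1)$ at each node, which requires $\cO_{J_i}(k(p^{(i)}-p^{(i-1)}))\cong\cO_{J_i}$). Consequently you cannot appeal to the ``non-torsion'' expected-dimension statement for $G^r_d(J_i,(p,\bar\alpha^1),(p',\bar\alpha^2))$ on $Y$, and your smoothing argument as written does not go through.

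The paper resolves this with a two-curve trick you are missing. It builds an explicit refined limit linear series $\ell$ on the torsion chain $Y$, with completely specified ramification profiles on each $J_i$ so that the adjusted Brill--Noether numbers sum to $\rho(g,r,d)$. To show $\ell$ smooths with the correct fibre dimension, it passes to a nearby chain $Y'$ (same topology) where the attaching differences are \emph{not} torsion, observes that the same ramification profiles define a limit linear series $\ell'$ on $Y'$, and that $\ell,\ell'$ lie in the same irreducible component of the relative stack $\widetilde{\cG}^r_d$; the dimension count is then performed on $Y'$, where your non-torsion argument is legitimate. Finally, the vanishing $H^0(L\otimes A^\vee)=0$ is not handled by a general ``no room for sections'' principle but by a concrete order-of-vanishing computation on the first component $J_1$: any putative section of the twisted aspect would have $\mathrm{ord}_{p^{(1)}}(\sigma)\geq d-r-1$, while membership in $H^0(J_1,\cO_{J_1}((d-k)p^{(1)}))$ forces $\mathrm{ord}_{p^{(1)}}(\sigma)\leq d-k<d-r-1$ since $k\geq r+2$. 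Your description of the limit of $E_C$ (``degree-$k$ line bundle on each $J_i$'') is also off, since $J^2=0$ on a Halphen surface.
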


\begin{proof}
We specialize to curves on a degree $k$ Halphen surface. As we shall explain, by semicontinuity via use of limit linear series, it will suffice to establish  the conclusion of the theorem for a general curve  $C\in \Lambda_g$ on a Halphen surface $X$ of index $k$ as above. Let $C$ degenerate inside $\Lambda_g$ to the transverse union $J\cup C_{g-1}$, where $C_{g-1}$ is a general curve from the Du Val linear system $\Lambda_{g-1}$. Note that $C_{g-1}$ and $J$ meet at the point $p_{10}^{(g-1)}$ which is the base point of $\Lambda_{g-1}$, whereas  $p_{10}^{(g)}$ lies on $J$ such that $p_{10}^{(g)}-p_{10}^{(g-1)}=p_1+\cdots+p_9\in J$ (with respect to the group law). In particular $\cO_J\bigl(p_{10}^{(g)}-p_{10}^{(g-1)}\bigr)$ is a $k$-torsion point. Furthermore, we let $C_{g-1}$ degenerate to the union $J\cup C_{g-2}$, where $C_{g-2}$ is a general member of the Du Val system $\Lambda_{g-2}$. Note that $C_{g-2}\cdot J=p_{10}^{(g-2)}$, where the point $p_{10}^{(g-2)}\in J$ is determined by the relation $p_{10}^{(g-1)}-p_{10}^{(g-2)}=p_1+\cdots+p_9$ with respect to the group law of $J$. Iterating this procedure, we arrive eventually at a stable curve 
\begin{equation}\label{eq:Y}
Y:=J_1\cup \ldots\cup J_g,
\end{equation}
where each component $J_i$ is a copy of the curve $J$ and 
$\bigl\{p^{(i)}\bigr\}=J_i\cap J_{i+1}$, where in the interest of easing the notation we write $p^{(i)}=p_{10}^{(i)}$ for $i=1, \ldots, g$. The difference $\cO_{J_i}\bigl(p^{(i)}-p^{(i-1)}\bigr) \in \mbox{Pic}^0(J_i)$ is torsion of order $k$ for $i=2, \ldots, g$.  We fix furthermore a point $p^{(0)}\in J_1\setminus \bigl\{p^{(1)}\bigr\}$ such that  $\cO_{J_1}\bigl(p^{(1)}-p^{(0)}\bigr)\in \mbox{Pic}^0(J_1)$ has order $k$. To summarize, $[Y]$ is a limit in $\mm_g$ of smooth curves $C\in \Lambda_g$ lying on a Halphen surface.

\vskip 3pt

We shall construct a limit linear series $\ell\in \overline{G}^r_d(Y)$ and a limit linear series $\mathfrak{a}\in \overline{G}^1_k(Y)$ such that both $\ell$ and $\mathfrak{a}$ smooth to linear series $L\in W^r_d(C)$ and $A=\cO_C(Z)\in W^1_k(C)$ on nearby smooth curves $C\in \Lambda_g$, such that (i) $L$ belongs to a component of $W^r_d(C)$ of dimension $\rho(g,r,d)$ and (ii) $H^0(C, L\otimes A^{\vee})= 0$. 

\vskip 3pt
We now specify the aspects $\ell_{J_1}, \ldots, \ell_{J_g}$ of $\ell$. We start by setting 
$$\ell_{J_1}:=\bigl(\cO_{J_1}\bigl(d\cdot p^{(1)}\bigr), V_{J_1}\bigr)\in G^r_d(J_1),$$
where $V_{J_1}=H^0\bigl(J_1, \cO_{J_1}((r+1)p^{(1)})\bigr)\subseteq H^0\bigl(J_1, \cO_{J_1}(dp^{(1)})\bigr)$. We observe that 
\begin{equation}\label{eq:vanJ1}
\alpha^{\ell_{J_1}}\bigl(p^{(0)}\bigr)=\bigl(\underbrace{0, \ldots, 0}_{r+1}\bigr) \ \mbox{ and } \  \alpha^{\ell_{J_1}}\bigl(p^{(1)}\bigr)=\bigl(\underbrace{d-r-1, \ldots, d-r-1}_{r}, d-r\bigr).
\end{equation}
Note that since $r+1\leq k-1$, certainly  $\cO_{J_1}\left((r+1)(p^{(1)}-p^{(0)})\right)$ is nontrivial. 

\vskip 3pt

Then on $J_i$ with $i=2, \ldots, r+2$, we choose the linear series $$\ell_{J_i}=\Bigl(\cO_{J_i}\bigl((2i-2)\cdot p^{(i-1)}+(d-2i+2)\cdot p^{(i)}\bigr), V_{J_i}\Bigr)\in G^r_d(J_i),$$ where 
$$
V_i:= H^0\bigl(\cO_{J_i}(i\cdot p^{(i-1)})\bigr)+H^0\bigl(\cO_{J_i}((r-i+2)\cdot p^{(i)})\bigr) \subseteq H^0\Bigl(\cO_{J_i}\bigl((2i-2)\cdot p^{(i-1)}+(d-2i+2)\cdot p^{(i)}\bigr)\Bigr).$$
Observe that $V_i$ is $(r+1)$-dimensional since the subspaces $H^0\bigl(\cO_{J_i}(i\cdot p^{(i-1)})\bigr)$ and $H^0\bigl(\cO_{J_i}((r-i+2)\cdot p^{(i)})\bigr)$ intersect along the $1$-dimensional subspace $\langle \sigma_i \rangle$, 
where $\sigma_i\in V_i$ is the unique section with $\mbox{div}(\sigma_i)=(2i-2)\cdot p^{(i-1)}+(d-2i+2)\cdot p^{(i)}$. Note that the ramification profile of $\ell_{J_i}$ equals 
$\alpha^{\ell_{J_i}}\bigl(p^{(i-1)}\bigr)=\bigl(\underbrace{i-2, \ldots, i-2}_{i-1}, \underbrace{i-1, \ldots, i-1}_{r-i+2}\bigr)$  and 
$\alpha^{\ell_{J_i}}\bigl(p^{(i)}\bigr)=\bigl(\underbrace{d-r-i, \ldots, d-r-i}_{r-i+1}, \underbrace{d-r-i+1, \ldots, d-r-i+1}_{i}\bigr)$. In particular, we have $\alpha^{\ell_{J_{r+2}}}\bigl(p^{(r+1)}\bigr)=\bigl(\underbrace{r, \ldots, r}_{r+1}\bigr)$.

\vskip 4pt

In the same way, we construct the aspects of $\ell$ for the next $(g-d+r-1)$ groups of subchains of $r+1$ elliptic curves of $Y$. Precisely, for $a=1+b(r+1)+i$, where $b\leq g-d+r-1$ and $1\leq i\leq r+1$, we choose the linear series $\ell_{J_a}\in G^r_d(J_a)$ such that 
\begin{equation}\label{eq:ram_a}
\alpha^{\ell_{J_a}}\bigl(p^{(a-1)}\bigr)=\bigl(\underbrace{br+i-1, \ldots, br+i-1}_{i}, \underbrace{br+i, \ldots, br+i}_{r+1-i}\bigr),
\end{equation}
where the ramification sequence of $\ell_{J_a}$ at the point $p^{(a)}$ is determined by the condition that $\ell$ form a refined limit linear series, that is, $\alpha_j^{\ell_{J_a}}\bigl(p^{(a)}\bigr)+\alpha_{r-j}^{\ell_{J_{a+1}}}\bigl(p^{(a)}\bigr)=d-r$, for $j=0, \ldots, r$. Note  that for $b=g-d+r-1$ and $i=r+1$, that is, for the component labeled by $1+(r+1)(g-d+r)$, we have the ramification 
$$\alpha^{\ell_{J_{1+(r+1)(g-d+r)}}}\bigl(p^{((r+1)(g-d+r))}\bigr)=\bigl(\underbrace{(g-d+r)r, \ldots, (g-d+r)r}_{r+1}\bigr).$$

Our choices imply the following equalities
\begin{equation}\label{eq:BN}
\rho\Bigl(1, r, d, \alpha^{\ell_{J_{i}}}(p^{(i-1)}), \alpha^{\ell_{J_i}}\bigl(p^{(i)}\bigr)\Bigr)=0, \ \mbox{ for } i=1, \ldots, (r+1)(g-d+r).
\end{equation}

For the last $\rho(g,r,d)$ components of $Y$, we choose linear series $\ell_{J_a}\in G^r_d(J_a)$ such that 
$$\alpha^{\ell_{J_a}}\bigl(p^{(a-1)}\bigr)=\bigl(\underbrace{a-(g-d+r-1), \ldots, a-(g-d+r-1)}_{r+1}\bigr),$$
for $a=1+(r+1)(g-d+r),\ldots, g$. The ramification sequence $\alpha^{\ell_{J_a}}\bigl(p^{(a)}\bigr)$ is again determined by the condition that the aspects of $\ell$ form a refined limit linear series.
In particular, observe that
\begin{equation}\label{eq:BN2}
\rho\Bigl(1, r,d,\alpha^{\ell_{J_a}}\bigl(p^{(a-1)}\bigr), \alpha^{\ell_{J_a}}\bigl(p^{(a)}\bigr)\Bigr)=1, \ 
\mbox{ for } a=1+(r+1)(g-d+r), \ldots, g,
\end{equation}
that is, we have an irreducible $1$-dimensional family of choices for $\ell_{J_a}$ in this range.
\vskip 3pt

We claim that the limit linear series $\ell\in \overline{G}^r_d(Y)$ just constructed is  smoothable to \emph{every} smooth curve of genus $g$, in particular to a curve $C\in \Lambda_g$ on a Halphen surface. We consider the stack $\sigma\colon \widetilde{\mathcal{G}}_d^r\rightarrow \mm_g$ of limit linear series over the versal deformation space of $Y$. We chose a curve 
\begin{equation}\label{eq:Y'}
Y':=J_1\cup \ldots\cup J_g,
\end{equation}
where this time $\bigl\{p^i\bigr\}=J_i\cap J_{i+1}$ and now we assume that $\cO_{J_i}\bigl(p^i-p^{i-1}\bigr)\in \mbox{Pic}^0(J_i)$ is \emph{not} a torsion point for $i=1, \ldots, g$. In the same way, one can construct a limit linear series $\ell'\in \overline{G}^r_d(Y')$ having the \emph{same} ramification profile as $\ell\in \overline{G}^r_d(Y)$ at the points of intersection $J_i\cap J_{i-1}$. Clearly, $\ell$ and $\ell'$ belong to the same irreducible component of $\widetilde{\mathcal{G}}^r_d$.

\vskip 3pt

As explained in \cite[Theorem 3.4]{EH1}, every component of $\widetilde{\cG}^r_d$ has dimension at least $3g-3+\rho(g,r,d)$. To conclude that a component of $\widetilde{\cG}^r_d$ containing the point $[Y, \ell]$ dominates $\mm_g$ it thus suffices to show that the local dimension of the fibre $\sigma^{-1}\bigl([Y']\bigr)$ at $\ell'$ has dimension precisely $\rho(g,r,d)$. Assume that $\tilde{\ell}$ is a limit linear series on $Y'$ lying in a component of $\overline{G}^r_d(Y')$ passing through $\ell$ and having dimension larger than $\rho(g,r,d)$.  Then we may assume that $\tilde{\ell}$ is refined and using the additivity of the adjusted Brill-Noether numbers \cite[Lemma 3.6]{EH1}, we write 
$$\rho(g,r,d)=\sum_{a=1}^g \rho\Bigl(1,r,d, \alpha^{\tilde{\ell}_{J_a}}\bigl(p^{a-1}\bigr), \alpha^{\tilde{\ell}_{J_a}}\bigl(p^{a}\bigr)\Bigr),$$
 and obtain that there exists $a\in \{1, \ldots,  g\}$ such that $\rho\Bigl(1,r,d, \alpha^{\tilde{\ell}_{J_a}}\bigl(p^{a-1}\bigr), \alpha^{\tilde{\ell}_{J_a}}\bigl(p^{a}\bigr)\Bigr)<0$. Then there exist integers $0\leq i_1<i_2\leq r$ such that 
$$\alpha_{i_1}^{\tilde{\ell}_{J_a}}\bigl(p^{a-1}\bigr)+\alpha_{r-i_1}^{\tilde{\ell}_{J_a}}\bigl(p^ {a}\bigr)=\alpha_{i_2}^{\tilde{\ell}_{J_a}}\bigl(p^{a-1}\bigr)+\alpha_{r-i_2}^{\tilde{\ell}_{J_a}}\bigl(p^{a}\bigr)=d-r,$$
in particular there exist two sections of the $J_a$-aspect of $\tilde{\ell}$ which vanish only at the points $p^{a-1}$ and $p^{a}$, implying that $\cO_{J_a}\bigl(p^{a-1}-p^{a}\bigr)\in \mbox{Pic}^0(J_a)$ is torsion, which is a contradiction. Thus $\ell$ is smoothable to a linear system $L\in W^r_d(C)$ on every smooth curve $C\in \Lambda_g$. Moreover, we may also assume $h^0(C,L)=r+1$.

\vskip 3pt

We now return to the curve $Y$ and construct  a limit linear series $\mathfrak{a}\in \overline{G}^1_k(Y)$ having the ramification profiles
$$\alpha^{\mathfrak{a}_{J_i}}\bigl(p^{(i-1)}\bigr)=\alpha^{\mathfrak{a}_{J_i}}\bigl(p^{(i)}\bigr)=(0, k-1), \ \mbox{ for } i=1, \ldots, g.$$
Since $Z\cap J=\emptyset$ it follows that $\mathfrak{a}$  smooths to $A=\cO_C(Z)\in W^1_k(C)$ for a neighboring smooth curve $C\in \Lambda_g$. Assuming now that $H^0(C, L\otimes A^{\vee})\neq 0$, we also obtain by semicontinuity that there exists a section 
$$0\neq \sigma\in V_{J_1}\cap H^0\bigl(J_1, \cO_{J_1}((d-k)\cdot p^{(1)})\bigr).$$
But using (\ref{eq:vanJ1}) we have $\mbox{ord}_{p^{(1)}}(\sigma)\geq d-r-1>d-k$, since $k\geq r+2$, which is a contradiction and thus finishes the proof.
\end{proof}

\begin{Rem}It remains an interesting question whether the conclusion of Theorem \ref{thm:existence_limit} holds for an arbitrary elliptic $K3$ surface like in Proposition \ref{thm-k3 surface}.
\end{Rem}

\section{Hurwitz-Brill-Noether general curves over number fields}\label{sec:numberfield}

The aim of this short section is to explain how curves on Halphen surfaces provide explicit examples of $k$-gonal curves defined over a number field which are general from the viewpoint of Hurwitz-Brill-Noether theory.  The strategy is inspired by the papers \cite{arbarello-explicit} and  \cite{farkas-tarasca}, in which explicit examples of smooth curves of genus $g$ defined over the rationals are provided  which satisfy the classical Brill-Noether-Petri theorem. 

\vskip 3pt

We fix a Halphen surface $X=\mathrm{Bl}_{p_1, \ldots, p_9}\bigl(\mathbf{P}^2\bigr)$ of index $k$ and consider the Du Val linear system $\Lambda_g=\bigl|3gh-gE_1-\cdots-gE_8-(g-1)E_9\bigr|$ of curves of genus $g$ defined as in (\ref{eq:duVal-system}). We use all the notation from the previous section.

\vskip 3pt

\begin{Def}
A smooth $k$-gonal curve $C$ of genus $g$ is said to be Hurwitz-Brill-Noether general if $\mbox{dim } W^r_{d}(C)=\rho_{k}(g, r, d)$ for all  integers $r, d>0$.
\end{Def}

Before proving the next result, we recall Pflueger's following definition \cite[Definition 2.5]{pflueger}. For a natural number $n$ we write $[n]:=\{1, \ldots, n\}$. Then a $k$-\emph{uniform displacement tableau} is a function $t\colon [r+1]\times [g-d+r]\rightarrow \mathbb Z_{>0}$ such that 
\begin{itemize} 
\item $t(x+1,y)>t(x,y) \ \mbox{ and } \ t(x, y+1)>t(x,y)$, for all $x, y$, and 
\item if $t(x, y)=t(x', y')$, then $x-y\equiv x'-y' \ \mbox{mod }  k.$
\end{itemize}
In other words, $t$ consists of labeled boxes, where two labels may coincide only if they are $k$ boxes apart.

\begin{Thm}\label{thm:duval}
Let $X$ be a Halphen surface of index $k\geq 2$. Then a general curve $C\in \Lambda_g$ is Hurwitz-Brill-Noether general. 
\end{Thm}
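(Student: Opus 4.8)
The plan is to prove the two bounds $\dim W^r_d(C)\geq\rho_k(g,r,d)$ and $\dim W^r_d(C)\leq\rho_k(g,r,d)$ for a general curve $C\in\Lambda_g$ and every pair of positive integers $r,d$, where the equality is understood to include the assertion $W^r_d(C)=\emptyset$ whenever $\rho_k(g,r,d)<0$. By Serre duality the involution $L\mapsto\omega_C\otimes L^\vee$ identifies $W^r_d(C)$ with $W^{g-1-d+r}_{2g-2-d}(C)$ and leaves $\rho_k$ unchanged, so I may assume $d\leq g-1$ throughout.

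For the lower bound I would use that the curves of $\Lambda_g$ arise as limits of curves in linear systems $|H|$ on degree $k$ elliptic $K3$ surfaces with $\mathrm{Pic}\cong\bZ\cdot H\oplus\bZ\cdot E$: indeed, as established in Section \ref{sec:dominance} via the smoothness of $\mu\colon\mathrm{Def}(\overline{X},\cO_{\overline{X}}(1))\to\mathrm{Def}(\overline{X},q)$, the du Val model $\overline{X}$ deforms to such a $K3$ surface, and in a suitable one-parameter family the system $|H|$ on the general fibre degenerates to $\Lambda_g$ on the central fibre. Letting $B$ be the (irreducible) base parametrising the smooth members of these linear systems, I would consider the relative Brill--Noether scheme $\mathcal{W}^r_d\to B$, which is proper. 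By Corollary \ref{cor_dominance_maximal} its fibre over a general point of $B$ has dimension $\rho_k(g,r,d)$, so there is an irreducible component $\mathcal{Z}\subseteq\mathcal{W}^r_d$ dominating $B$ whose generic fibre has dimension $\rho_k(g,r,d)$. By properness $\mathcal{Z}\to B$ is surjective, and the fibre dimension theorem yields $\dim\mathcal{Z}_b\geq\rho_k(g,r,d)$ for every $b\in B$, in particular for $b$ corresponding to a general $C\in\Lambda_g$; hence $\dim W^r_d(C)\geq\rho_k(g,r,d)$.

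For the upper bound I would degenerate $C$ to the chain $Y=J_1\cup\cdots\cup J_g$ of copies of $J$ already used in the proof of Theorem \ref{thm:existence_limit}, obtained by iterating the du Val degeneration $\Lambda_g\rightsquigarrow J\cup\Lambda_{g-1}$; here $\{p^{(i)}\}=J_i\cap J_{i+1}$ and each $\cO_{J_i}(p^{(i)}-p^{(i-1)})$ has exact order $k$ in $\mathrm{Pic}^0(J_i)$ — for $i\geq 2$ because $\cO_J(p_1+\cdots+p_9)$ is torsion of order exactly $k$ by the very definition of a Halphen surface of index $k$, and for $i=1$ by the choice of $p^{(0)}$. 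On such a chain the variety $\overline{G}^r_d(Y)$ of limit $g^r_d$'s is controlled by $k$-uniform displacement tableaux, and by the combinatorial analysis of Pflueger \cite{pflueger} and Jensen--Ranganathan \cite{jensen-ranganathan} (see also \cite{cook-powell-jensen}) it has dimension at most $\rho_k(g,r,d)$, being empty when $\rho_k(g,r,d)<0$. Since $Y$ lies in the closure of the locus $\Lambda_g^{\circ}\subseteq\Lambda_g$ of smooth curves and $\dim W^r_d$ is upper semicontinuous in families, the locus $\{[C]\in\Lambda_g^{\circ}\colon\dim W^r_d(C)>\rho_k(g,r,d)\}$ (respectively $\{[C]\in\Lambda_g^{\circ}\colon W^r_d(C)\neq\emptyset\}$ when $\rho_k(g,r,d)<0$) cannot be all of $\Lambda_g^{\circ}$, hence is a proper closed subset; a general $C\in\Lambda_g$ therefore satisfies $\dim W^r_d(C)\leq\rho_k(g,r,d)$ (resp.\ $W^r_d(C)=\emptyset$). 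Combining the two bounds gives the theorem.

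The step I expect to be the main obstacle is the upper bound — more precisely, checking that the very special chain $Y$, whose components all equal $J$ and whose torsion offsets all equal the single point $\cO_J(p_1+\cdots+p_9)$, still obeys the tableaux bound on $\dim\overline{G}^r_d(Y)$. I anticipate that it does, since that bound requires only that each offset have exact order $k$ (genericity of the torsion points being needed for the finer statement that the strata of $\overline{G}^r_d(Y)$ are smooth, not for the dimension estimate); but making this rigorous, rather than perturbing $Y$ to a chain of distinct elliptic curves with independent generic $k$-torsion gluings and then re-exhibiting that chain as a limit of curves in $\Lambda_g$, is the delicate point.
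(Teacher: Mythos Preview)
Your approach is essentially the same as the paper's: the lower bound via semicontinuity from degree $k$ elliptic $K3$ surfaces (Corollary \ref{cor_dominance_maximal}), and the upper bound via degeneration to the chain $Y$ and Pflueger's $k$-uniform displacement tableaux. The paper handles your flagged obstacle exactly as you anticipate: rather than citing the tropical results, it tracks the generic ramification sequences $\bar\alpha^i$ through the successive du Val degenerations and verifies directly that whenever two entries $a^i_n,a^i_{n'}$ fail to jump on the same component $J_i$, the underlying line bundle forces $a^i_{n'}-a^i_n\equiv 0\pmod k$ purely from $\cO_{J_i}(p^{(i)}-p^{(i-1)})$ having exact order $k$---so the associated tableau is $k$-uniform and Pflueger's combinatorial bound applies.
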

\begin{proof}
We first show that if $C\in \Lambda_g$ is a general curve, then  $\mbox{dim } W^r_d(C)\leq \rho_k(g,r,d)$. Assume there exists a component $Z$ of $W^r_d(C)$ of dimension $\mbox{dim}(Z)>\rho_k(g,r,d)$. 

\vskip 4pt

As in the proof of Theorem \ref{thm:existence_limit}, we let $C$ degenerate to the transversal union $J\cup C_{g-1}$, where $C_{g-1}$ is a general curve from the Du Val linear system $\Lambda_{g-1}$ and where $C_{g-1}$ and $J$ meet at the point $p_{10}^{(g-1)}$. Recall that  $p_{10}^{(g)}$ lies on $J$ and that  $\cO_J\bigl(p_{10}^{(g)}-p_{10}^{(g-1)}\bigr)$ is a $k$-torsion point. Under this degeneration, $Z$ specializes to a component $Z_{g-1}$ of the variety of limit linear series $\overline{G}^r_d\bigl(C_{g-1}\cup J\bigr)$ having $\mbox{dim}(Z_{g-1})>\rho_k(g,r,d)$.

\vskip 3pt

We consider the forgetful map $\pi_g \colon \overline{G}^r_d\bigl(C_{g-1}\cup J\bigr) \rightarrow G^r_d\bigl(C_{g-1}\bigr)$ retaining the $C_{g-1}$-aspect of each limit linear series. There exists a Schubert index 
$\bar{\alpha}^{g-1}$ of type $(r, d)$ maximal with respect to the property $\pi_g(Z_{g-1})\subseteq G^r_d\bigl(C_{g-1}, (p_{10}^{(g-1)}, \bar{\alpha}^{g-1})\bigr)$. Therefore for a general point $\ell=(\ell_{C_{g-1}}, \ell_J)\in Z_{g-1}$, we have that $\alpha^{\ell_{C_{g-1}}}\bigl(p_{10}^{(g-1)}\bigr)=\bar{\alpha}^{g-1}$.

\vskip 3pt

We let $C_{g-1}$ degenerate to the transversal union $J\cup C_{g-2}$, where $C_{g-2} \in \Lambda_{g-2}$. Then 
$C_{g-2}\cdot J =p_{10}^{(g-2)}$ and recall that  $p_{10}^{(g-2)}\in J$ is such that $p_{10}^{(g-1)}-p_{10}^{(g-2)}=p_1+\cdots +p_9$. 
We now consider the forgetful map $$\pi_{g-1}\colon \overline{G}^r_d\Bigl(C_{g-2}\cup J, (p_{10}^{(g-1)}, 
\bar{\alpha}^{g-1})\Bigr) \longrightarrow G^r_d(C_{g-2})$$ that retains the $C_{g-2}$-aspect of each limit linear series. The subvariety $\pi_{g-1}(Z_{g-1})$ then specializes  to a subvariety $Z_{g-2}\subset \overline{G}^r_d\bigl(C_{g-2}\cup J, (p_{10}^{(g-1)}, \bar{a}^{g-1})\bigr)$. We choose the Schubert index 
$\bar{\alpha}^{g-2}$ maximal with the property that
$\pi_{g-1}(Z_{g-2})\subseteq G^r_d\bigl(C_{g-2}, (p_{10}^{(g-2)}, \bar{\alpha}^{g-2})\bigr)$.

\vskip 4pt

Inductively, assume that we have found a Schubert index $\bar{\alpha}^{g-i+1}$ and  defined a subvariety $Z_{g-i}\subset \overline{G}^r_d\bigl(C_{g-i}\cup J, (p_{10}^{(g-i+1)}, \bar{\alpha}^{g-j+1})\bigr)$. If 
$$\pi_{g-i+1}\colon \overline{G}^r_d\Bigl(C_{g-i}\cup J, (p_{10}^{(g-i+1)}, \bar{\alpha}^{g-i+1})\Bigr)\longrightarrow G^r_d(C_{g-i})
$$ is the map retaining the $C_{g-i}$-aspect, let $\bar{\alpha}^{g-i}$ be the maximal Schubert index with the property $\pi_{g-i+1}(Z_{g-i})\subset G^r_d\bigl(C_{g-i}, (p_{10}^{(g-i)}, \bar{\alpha}^{g-i})\bigr)$.
Then we let $C_{g-i}$ degenerate to the union $C_{g-i-1}\cup J$, where $C_{g-i-1}\in \Lambda_{g-i-1}$ is a general element meeting $J$  at the point $p_{10}^{(g-i-1)}$. Let $Z_{g-i-1}\subset \overline{G}^r_d \bigl(C_{g-i-1}\cup J, (p_{10}^{(g-i)}, \bar{\alpha}^{g-i})\bigr)$ be the limiting subvariety of 
$\pi_{g-i-1}(Z_{g-i})$ under this degeneration.

\vskip 4pt

Just like in the proof of Theorem \ref{thm:existence_limit},  we consider the chain of elliptic curves 
$$Y:=J_1\cup \ldots  \cup J_g,$$
where $J_i\cong J$ and  $\bigl\{p^{(i)}\bigr\}=\bigl\{p^{(i)}_{10}\bigr\}=J_i\cap J_{i+1}$. We recall that $\cO_{J_i}\bigl(p^{(i)}-p^{(i-1)}\bigr) \in \mbox{Pic}^0(J_i)$ is $k$-torsion. Via the previous argument, after $g$ steps, we find that there exists a family of limit linear series on $Y$ having dimension strictly exceeding $\rho_k(r,d)$ consisting of limit linear series $\ell\in \overline{G}^r_d(Y)$ satisfying the ramification conditions $\alpha^{\ell_{J_{i}}}\bigl(p^{(i)}\bigr)=\bar{\alpha}^{i}=\bigl(\alpha_0^i, \alpha_1^i, \ldots, \alpha_r^i\bigr)$, for $i=1, \ldots, g$. Set $a_n^i:=\alpha_n^i+i$, for $n=0, \ldots, r$ for the corresponding vanishing sequences.

\vskip 4pt

One has the inequalities $a_n^i+ 1\geq a_n^{i-1}\geq a_n^{i}$ for all $n=0, \ldots, r$ and $i=1, \ldots, g$. Furthermore, if $a_n^{i-1}=a_n^{i}$, then the $J_i$-aspect of a general limit linear series of this family has as underlying line bundle $L_i\cong \cO_{J_i}\bigl(a_n^i\cdot p^{(i)}+(d-a_n^i)\cdot p^{(i-1)}\bigr)$. In particular, since $p^{i}-p^{i-1}$ is a $k$-torsion point on $J_i$, if $a_n^{i}=a_{n}^{i-1}$ and 
$a_{n'}^i=a_{n'}^{i-1}$ for two integers $0\leq n<n'\leq r$, then $a_{n'}^i-a_{n}^i \equiv 0 \mbox{ mod } k$.

\vskip 4pt

To a limit linear series $\ell$  we associate a tableau $t\colon [r+1]\times [g-d+r]\rightarrow [g]$ by setting $$t(n, j):=\mbox{min}\bigl\{i: \alpha_n^i+i=d-r+j\bigr\},$$
that is, $t(n, j)$ is the smallest integer $i$ such that in the non-increasing sequence consisting of  $\bar{\alpha}^0_n:=d-r, \bar{\alpha}^1_n, \ldots, \bar{\alpha}^i_n$ there exist $t(n, j)$ equalities.
Then, as explained above, $t$ defines a $k$-uniform displacement tableau. For each label $i$ appearing in $\mbox{Im}(t)$, the variety $G^r_d\bigl(J_i, (p^{(i-1)}, \alpha^{\ell_{J_i}}(p^{(i-1)})), (p^{(i)}, \bar{\alpha}^i)\bigr)$ appearing as a factor in $\overline{G}^r_d(Y)$ has dimension zero. It follows that
$\mbox{dim } \overline{G}^r_d(Y)$ is bounded from above by the number of  labels omitted from $[g]$ in a $k$-uniform displacement tableau on $[r+1]\times [g-d+r]$. Using Pflueger's result \cite[Section 3]{pflueger}, this quantity is bounded from above by $\rho_k(g, r,d)$, thus showing that for a general curve $C\in \Lambda_g$ we have $\mbox{dim } W^r_d(C) \leq \rho_k(g,r,d)$.

\vskip 4pt

To show that $\mbox{dim } W^r_d(C)\geq \rho_k(g, r, d)$ we  use that if $X$ is a Halphen surface of degree $k$ giving rise to the surface $\overline{X}\subseteq \mathbf{P}^g$ of degree $2g-2$, then there exists a family of degree $k$ polarized $K3$ surfaces $(X_t, H_t, E_t)$  such that the corresponding image $X_t\stackrel{|H_t|}\longrightarrow \mathbf{P}^g$ has as limit the surface $\overline{X}$. Since for a general curve $C_t\in |H_t|$ we have established in Corollary \ref{cor_dominance_maximal} that $\mbox{dim } W^r_d(C)=\rho_k(g, r, d)$, by semicontinuity $\mbox{dim } W^r_d(C)\geq \rho_k(g,r,d)$. 
\end{proof}

\vskip 4pt
As explained in \cite{arbarello-explicit}, one can write down  rational points $p_1, \ldots, p_9\in \mathbf{P}^2$ such that the general curve $C\in \Lambda_g$ is defined over $\mathbb{Q}$ and satisfies the Brill-Noether-Petri theorem. In the case of a Halphen surface $X$ of index $k$, the condition $\cO_J(J)\in \mbox{Pic}^0(J)[k]$  cannot be realized over the rationals. For instance, Mazur \cite{mazur} showed that if $J$ is an elliptic curve defined over $\mathbb{Q}$, then any prime $k$ dividing $E(\mathbb{Q})_{\mathrm{tors}}$ satisfies $k\leq 7$. However we have the following result:   

\begin{Thm}\label{prop:hbn_rationals}
For every prime $k$ there exists a Hurwitz-Brill-Noether general curve $[C, A]\in \cH_{g,k}$  defined over a number field  $K$ with $[K:\mathbb{Q}]\leq k^2-1$.     
\end{Thm}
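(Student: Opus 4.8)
The plan is to combine Theorem~\ref{thm:duval} with an explicit construction of a Halphen surface of prime index $k$ defined over a controlled number field. The key point is to produce nine points $p_1,\dots,p_9\in\mathbf{P}^2$ lying on a smooth plane cubic $J$ such that $p_1+\cdots+p_9$ is a $k$-torsion point of $J$ (with respect to the group law), and to do so over a number field $K$ with $[K:\mathbf{Q}]\le k^2-1$. Once such a configuration is defined over $K$, the Halphen surface $X=\mathrm{Bl}_{\{p_1,\dots,p_9\}}(\mathbf{P}^2)$, the du Val linear system $\Lambda_g$, and a general member $C\in\Lambda_g$ are all defined over $K$ (a general member can be taken with coefficients in $K$ since $\Lambda_g$ is a projective space over $K$), and Theorem~\ref{thm:duval} guarantees that such a $C$ is Hurwitz--Brill--Noether general. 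So the whole statement reduces to the arithmetic of torsion on plane cubics.

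First I would fix a smooth plane cubic $J$ defined over $\mathbf{Q}$ with a rational flex $O$, taken as the origin of the group law; concretely one can take $J$ in Weierstrass form over $\mathbf{Q}$. Next, choose a point $Q\in J$ of exact order $k$: by the theory of the $k$-torsion subgroup $J[k]\cong(\mathbf{Z}/k\mathbf{Z})^2$, the field $\mathbf{Q}(Q)$ generated by the coordinates of a single nonzero $k$-torsion point has degree dividing $|\mathrm{GL}_2(\mathbf{Z}/k\mathbf{Z})|$ over $\mathbf{Q}$, but more to the point the stabilizer of a nonzero vector in $(\mathbf{Z}/k\mathbf{Z})^2$ under $\mathrm{GL}_2(\mathbf{Z}/k\mathbf{Z})$ has index $k^2-1$ (the number of nonzero vectors), so $[\mathbf{Q}(Q):\mathbf{Q}]\le k^2-1$. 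Set $K:=\mathbf{Q}(Q)$. Then I would pick eight points $p_1,\dots,p_8\in J(K)$ freely — for instance eight distinct $K$-rational points of $J$, which exist in abundance since $J(K)$ is infinite (one can even arrange them among the multiples of some non-torsion $K$-point, or simply choose them with coordinates in $K$ avoiding the finitely many bad configurations) — and then \emph{define} the ninth point by
\[
p_9:=Q-(p_1+\cdots+p_8)\in J(K),
\]
so that automatically $p_1+\cdots+p_9=Q$ has exact order $k$. One checks the $p_i$ can be chosen in general position (no three collinear, the cubic $J$ the unique cubic through them, etc.), which only excludes a proper closed subset, so a generic choice in $J(K)^{\times 8}$ works; this keeps everything defined over $K$ with $[K:\mathbf{Q}]\le k^2-1$.

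Then I would verify that $X=\mathrm{Bl}_{\{p_1,\dots,p_9\}}(\mathbf{P}^2)$, which is defined over $K$, is indeed a Halphen surface of index exactly $k$: the torsion condition $\cO_J(J)=\cO_J(3h-E_1-\cdots-E_9)\in\mathrm{Pic}^0(J)[k]$ with the torsion point of \emph{exact} order $k$ is precisely what forces $|{-mK_X}|$ to be empty for $0<m<k$ and $|{-kK_X}|$ to be a base-point-free pencil, hence $X$ has index $k$ (here the primality of $k$ is convenient but the key input is just that the order is exactly $k$; one should double check that $k\ge 11$ together with primality rules out the index being a proper divisor). Having $X$ over $K$ of index $k$, Theorem~\ref{thm:duval} applies to a general $C\in\Lambda_g$, which is a smooth $k$-gonal curve of genus $g$ defined over $K$ (after a further generic, hence $K$-rational, choice in the projective space $\Lambda_g$), and it is Hurwitz--Brill--Noether general. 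The main obstacle is bookkeeping: making sure that (i) the bound $[K:\mathbf{Q}]\le k^2-1$ is genuinely achieved — i.e. that one can take $K$ to be the field of definition of a single $k$-torsion point rather than the larger $k$-division field — and (ii) that the index of the resulting Halphen surface is \emph{exactly} $k$ and not a proper divisor, which is where one uses that $Q$ has exact order $k$ and, for safety, that $k$ is prime. Neither is deep, but both require care to state cleanly.
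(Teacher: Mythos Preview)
Your approach is essentially the same as the paper's: start with a cubic $J$ over $\mathbb{Q}$, pick a $k$-torsion point $Q$ (defining the number field $K$), choose eight $K$-rational points on $J$ and set $p_9:=Q-(p_1+\cdots+p_8)$, then apply Theorem~\ref{thm:duval} to a general curve in the du Val system on the resulting Halphen surface. The only notable difference is that you justify the bound $[K:\mathbb{Q}]\le k^2-1$ by the elementary orbit--stabilizer argument (the Galois orbit of $Q$ lies in $J[k]\setminus\{0\}$, which has $k^2-1$ elements), whereas the paper cites results of Lozano-Robledo which also give the lower bound $\frac{k-1}{2}$; your argument is more self-contained for the upper bound that is actually needed. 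One small slip: for $0<m<k$ the linear system $|{-mK_X}|$ is not empty but zero-dimensional (its unique member is $mJ$); what matters is $\dim|{-mK_X}|=0<1$, and this is indeed forced by $\mathcal{O}_J(J)$ having exact order $k$.
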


\begin{proof}
We start with any elliptic curve $J\subseteq \mathbf{P}^2$ defined over $\mathbb{Q}$ and with $8$ rational points $p_1, \ldots, p_8$. Then $p_9\in J$ is determined by the condition that $p_1+\cdots+p_9$ is a $k$-torsion point with respect to the group law of $J$. We apply \cite[Theorems 2.1 and 5.1]{lozano-robledo}, to conclude that the field $K$ of the definition of $p_9$ satisfies
$$[K:\mathbb{Q}]\leq k^2-1.$$
Indeed, if $\rho \colon \mbox{Gal}(\overline{\mathbb Q}/\mathbb Q)\rightarrow GL_2\bigl(\mathbb F_k)\cong \mbox{Aut}\bigl(E[k]\bigr)$ denotes the Galois representation on the $k$-torsion points of $J$, then 
$[K:\mathbb Q]=|\mbox{Im}(\rho)|/|H|$, where $H$ is the subgroup of all matrices of type $\begin{pmatrix}
1 & a \\
0 & b
\end{pmatrix}$, where $a\in \mathbb F_k$ and $b\in \mathbb F_k^*$. Since $|H|=k(k-1)$, it follows that $[K:\mathbb Q]\leq k^2-1$, with equality if and only if $\rho$ is surjective, 
which in fact happens for all but finitely many primes $k$. 

\vskip 3pt

The general curve $C\in \Lambda_g$
defined by (\ref{eq:duVal-system}) will be then defined over $K$ and by Theorem \ref{thm:duval} is Hurwitz-Brill-Noether general.  
\end{proof}

\begin{Rem}
It is an interesting open question whether there exists a smooth $k$-gonal curve of genus $g$ defined over $ \mathbb{Q}$ which is Hurwitz-Brill-Noether general. As explained, curves on Halphen surface of degree $k$ will not provide the answer for all $k$. Similarly, we do not known whether there exists a degree $k$ elliptic $K3$ surface $X$ like in Proposition \ref{thm-k3 surface} defined over $\mathbb{Q}$.      
\end{Rem}

\bibliography{mybib}                     
\bibliographystyle{halpha}  

\end{document}